\DeclareMathAlphabet{\mathpzc}{OT1}{pzc}{m}{it}
\newcommand{\ds}{\displaystyle}
\newcommand{\lb}{\langle}
\newcommand{\rb}{\rangle}
\newcommand{\wt}{\widetilde}
\newcommand{\scrd}{\mathscr{D}}
\newcommand{\bba}{\mathbb{A}}
\newcommand{\bbb}{\mathbb{W}}
\newcommand{\bbt}{\mathbb{T}}
\newcommand{\bbr}{\mathbb{R}}
\newcommand{\bbs}{S}
\newcommand{\bbd}{\mathbb{D}}
\newcommand{\bbf}{D}
\newcommand{\bbh}{\mathbb{H}}
\newcommand{\bbk}{W}
\newcommand{\bbz}{\mathbb{Z}}
\newcommand{\bbn}{\mathbb{N}}
\newcommand{\bbq}{\mathbb{Q}}
\newcommand{\mct}{\mathcal{T}}
\newcommand{\hX}{\widehat{X}}
\newcommand{\hx}{\widehat{x}}
\newcommand{\hf}{\widehat{f}}
\newcommand{\tX}{\widetilde{X}}
\newcommand{\cltg}{cl_{T,g}}
\newcommand{\cltgp}{cl_{T',g'}}
\newcommand{\ui}{[0,1]}
\newcommand{\tx}{\widetilde{x}}
\newcommand{\tXh}{\widetilde{X}_{H}}
\newcommand{\txh}{\widetilde{x}_{H}}
\newcommand{\pxxo}{P(X,x_0)}
\newcommand{\pionex}{\pi_{1}(X,x_0)}
\newcommand{\pioney}{\pi_{1}(Y,y_0)}
\newcommand{\pioned}{\pi_{1}(\bbd,d_0)}
\newcommand{\pioneh}{\pi_{1}(\bbh,b_0)}
\newcommand{\pionehp}{\pi_{1}(\bbhp,\bpp)}
\newcommand{\pionet}{\pi_{1}(\bbt,t_0)}
\newcommand{\scru}{\mathscr{U}}
\newcommand{\bbhp}{\mathbb{H}^+}
\newcommand{\finfty}{F}
\newcommand{\cinfty}{C}
\newcommand{\pinfty}{P}
\newcommand{\dinf}{d_{\infty}}
\newcommand{\mcc}{\mathcal{C}}
\newcommand{\bpp}{b_{0}^{+}}
\newcommand{\wild}{\mathbf{w}}
\newcommand{\linf}{\lambda_{\infty}}
\newcommand{\uinf}{\upsilon_{\infty}}
\newtheorem{theorem}{Theorem}
\newtheorem{lemma}[theorem]{Lemma}
\newtheorem{proposition}[theorem]{Proposition}
\newtheorem{corollary}[theorem]{Corollary}
\theoremstyle{definition}\newtheorem{definition}[theorem]{Definition}
\theoremstyle{definition}\newtheorem{example}[theorem]{Example}
\theoremstyle{definition}
\theoremstyle{definition}
\theoremstyle{definition}\newtheorem{remark}[theorem]{Remark}
\theoremstyle{definition}
\numberwithin{theorem}{section}
\begin{document}
\title[characterizations of local properties of fundamental groups]{Test map characterizations of local properties of fundamental groups}
\author{Jeremy Brazas and Hanspeter Fischer}
\date{\today}
\begin{abstract}
Local properties of the fundamental group of a path-connected topological space can pose obstructions to the applicability of covering space theory. A generalized covering map is a generalization of the classical notion of covering map defined in terms of unique lifting properties. The existence of generalized covering maps depends entirely on the verification of the unique path lifting property for a standard covering construction. Given any path-connected metric space $X$, and a subgroup $H\leq\pi_1(X,x_0)$, we characterize the unique path lifting property relative to $H$ in terms of a new closure operator on the $\pi_1$-subgroup lattice that is induced by maps from a fixed ``test" domain into $X$. Using this test map framework, we develop a unified approach to comparing the existence of generalized coverings with a number of related properties.
\end{abstract}
\maketitle
\section{Introduction and Preliminaries}\label{section1}
\subsection{Introduction}
According to classical covering space theory \cite{Spanier66}, if a path-connected space $X$ is locally path connected and semilocally simply connected, then for every subgroup $H\leq\pionex$ of the fundamental group at basepoint $x_0\in X$, there is a covering map $p : (Y,y_0)\to(X,x_0)$ such that $H$ is the image of the induced homomorphism $p_{\#}:\pioney\to\pionex$, and conjugates of $H$ correspond to equivalent covering maps. In particular, a universal covering over $X$ exists. Since all ``small" loops are null-homotopic in $X$, the natural topologies typically considered on $\pionex$ \cite{Braztopgrp,BFqtop,BDLM08,VZ14} are equivalent to the discrete topology, i.e. all subgroups of $\pionex$ are both open and closed. In this sense, the subgroup lattice of $\pionex$ is independent of the local structure of $X$.

When a space $X$ is not semilocally simply connected, the situation is more delicate, since the algebraic structure of $\pionex$ may depend heavily on the local topology of $X$. The variety of possible complications has given rise to the introduction of a number of important properties that a space $X$ might or might not satisfy, including:
\begin{enumerate}
\item Homotopically Hausdorff \cite{BS,CConedim} and its relative version \cite{FZ07},
\item (\textit{transfinite products}) Every homomorphism $f_{\#} : \pioneh \to\pionex$ induced by a map $f:\bbh\to X$ on the Hawaiian earring is uniquely determined by it's values $f_{\#}([\ell_n])$ on the individual loops $\ell_n$ of $\bbh$,
\item Existence of generalized universal and intermediate coverings \cite{FZ07},
\item Homotopically path Hausdorff \cite{FRVZ11} and it's relative version \cite{BFqtop},
\item ($1$-$UV_0$) For every $x\in X$ and every neighborhood $U$ of $x$ there is an open set $V$ in $X$ with $x\in V\subseteq U$ and such that for every map $f : D^2 \to X$ from the unit disk with $f (\partial D^2) \subseteq V$, there is a map $g : D^2\to U$ with $f|_{\partial D^2}=g|_{\partial D^2}$,
\item ($\pi_1$\textit{-shape injectivity}) The canonical homomorphism $\pionex\to\check{\pi}_1(X,x_0)$ to the first shape homotopy group is injective \cite{EK98,FZ07}.
\end{enumerate}
The above properties are not listed in any particular order. Some of them originated from the unpublished notes \cite{Zastrow}. See \cite{FRVZ11} for a diagram comparing properties (1),(3),(4), and (6). Property (2) plays a key role in Eda's remarkable classification of homotopy types of one-dimensional Peano continua according to the isomorphism types of their fundamental groups \cite{Edaonedim}. Property (5) first appeared in \cite{CMRZZ08}.

The primary purpose of this paper is to provide a unified approach to comparing local properties of fundamental groups such as those above. We are particularly motivated by the fact that even when $X$ fails to admit a traditional universal covering, it is often the case that $X$ admits a generalized universal covering in the sense of \cite{FZ07}, which acts in many ways as a suitable replacement. A generalized covering map is characterized only by its lifting properties and need not be a local homeomorphism. For instance, in the case that $X$ is a one-dimensional Peano continuum (e.g. the Hawaiian earring, Sierpi{\'n}ski Carpet, or Menger curve), a generalized universal covering exists, inherits the structure of an $\bbr$-tree, and functions as a generalized Caley graph for the fundamental group $\pionex$ \cite{FZ013caley}. Other spaces which admit generalized universal coverings include subsets of closed surfaces (including all planar sets) \cite{FZ05} and certain trees of manifolds \cite{FischerGuilbault} such as the Pontryagin surface $\prod_{2}$.

For a given space $X$, there may be many intermediate subgroups $H\leq \pionex$ which do not correspond to a covering map but which correspond to a generalized covering as defined under the name $\mathbf{lpc_0}$-covering in \cite{Brazcat}. Some examples of generalized \textit{regular} coverings corresponding to normal subgroups $N\trianglelefteq\pionex$, namely intersections of Spanier groups, appear in \cite{FZ07}. The same normal subgroups also appear in \cite{BDLM08} with an equivalent construction. In the current paper, we consider the existence of generalized coverings relative to an arbitrary subgroup $H$ of $\pionex$.

Roughly speaking, in order to verify any one of the properties (1)-(6), it is necessary to detect the existence of a specific homotopy given a certain, possibly infinite, arrangement of paths. In this paper, we formalize this viewpoint by characterizing the subgroup-relative versions of many of these properties. Our characterizations are stated in terms of set-theoretic closure operations on the subgroup lattice of $\pionex$. For a given property, we identify a based \textit{test space} $(\bbt,t_0)$, a subgroup $T\leq \pionet$, and an element $g\in \pionet$. We call $(T,g)$ a \textit{closure pair} for $(\bbt,t_0)$ and declare a subgroup $H\leq \pionex$ to be $(T,g)$\textit{-closed} if it satisfies the following criterion: for every map $f : (\bbt,t_0)\to(X,x_0)$ such that $f_{\#}(T)\leq H$, we also have $f_{\#}(g)\in H$. Using this test map criterion, we characterize properties (1)-(4) and compare them to a variety of other properties, including some new intermediate properties.

The structure of this article is as follows: The remainder of Section~\ref{section1} contains a summary of our main results, notational conventions, and a brief review of the theory of generalized covering spaces. In Section~\ref{Testmapsection}, we introduce our closure operators for subgroups of fundamental groups and discuss their general theory. In Section~\ref{section3}, we use the Hawaiian earring as a test space to study point-local properties, such as the homotopically Hausdorff property and the transfinite products property. In Section~\ref{section4}, we introduce a dyadic arc space to characterize the homotopically path-Hausdorff property and the existence of generalized covering spaces. In Section~\ref{section5}, we apply results from Sections~\ref{section3} and \ref{section4} to identify new types of non-trivial intermediate generalized coverings. In Section~\ref{section6}, we apply results from Sections~\ref{section3} and \ref{section4} to identify new conditions that are sufficient for the existence of a generalized universal covering. Finally, in Section \ref{section7}, we introduce a new property, called ``transfinite path products'', which serves as a practical intermediate for proving partial converses of well-known implications.

\subsection{Results}\label{resultssection}

The following diagram may serve as a reference for many of the results and definitions in this paper. It connects the relevant properties of a path-connected metric space $X$ and closure properties of a subgroup $H\leq \pionex$. If an extra assumption is required, it appears next to the corresponding arrow. For example, ``$\wild(X)$ t.p.d." denotes the property that the subset of points at which $X$ fails to be semilocally simply connected is a totally path-disconnected subspace of $X$, whereas $[\pi_1,\pi_1]\leq H$ indicates that $H$ contains the commutator subgroup of $\pionex$. For the non-reversibility of some of the implications see Corollary \ref{differencecellandctau}: $(C,c_{\infty})$-closed $\nRightarrow$ $(C,c_{\tau})$-closed, Theorem \ref{differencetheorem}: $(C,c_{\tau})$-closed $\nRightarrow$ $(P,p_{\tau})$-closed, Example \ref{normalsubgroupexample}: $(C,c_{\tau})$-closed $\nRightarrow$ $(D,d_{\infty})$-closed, and Example \ref{subgroupsexample}: $(D,d_{\infty})$-closed $\nRightarrow$ $(S,d_{\infty})$-closed.
\[\xymatrix{
\txt{homotopically\\path Hausdorff rel. $H$\\ \scriptsize{(Def. \ref{hompathhausdef})}} \ar@{<=>}[rr]^-{\scriptsize{\txt{(Thm. \ref{hompathhausdchar})}}} && (\bbs,\dinf)\txt{-closed \scriptsize{(Def. \ref{defsubgroups})}}
\ar@{=>}[d]^-{\text{(Rmk. \ref{sclosedisdclosed})}}\\
 \txt{$p_H:\tXh\to X$ has UPL\\ \scriptsize{(Section \ref{gencovsection})}} \ar@{<=>}[rr]^-{\scriptsize{\txt{(Thm. \ref{uplchar})}}} && (\bbf,\dinf)\text{-closed \scriptsize{(Def. \ref{defsubgroupD})}}
\ar@{=>}@<1ex>[d]^-{\text{(Prop. \ref{transfinitepathproductcomparisonprop})}}\\
\txt{transfinite\\path products rel. $H$\\ \scriptsize{(Def. \ref{tppdef})}} \ar@{<=>}[rr]^-{\text{(Prop. \ref{transfinitepathprodchar})}} && (W,w_{\infty})\text{-closed \scriptsize{(Def. \ref{defW})}} \ar@{=>}@<1ex>[dl(0.76)]^-{\scriptsize{\txt{ $H$ normal\\ (Prop. \ref{transfinitepathproductcomparisonprop})}}} \ar@{=>}@<1ex>[u]^{\scriptsize{\txt{$H$ normal and $\textbf{w}(X)$ t.p.d (Thm. \ref{tpdtheorem})}}} \ar@{=>}[dd]^-{\text{(Prop. \ref{transfinitepathproductcomparisonprop})}} \\
\txt{transfinite\\products rel. $H$\\ \scriptsize{(Def. \ref{tproddef})}} \ar@{<=>}[r]^-{\text{(Prop. \ref{transfiniteproductprop})}} & \txt{$(P,p_{\tau})$-closed\\ \scriptsize{(Def. \ref{defptau})}}
\ar@{=>}@<1ex>[ur(0.8)]^(0.3){\scriptsize{\txt{$H$ normal and $\wild(X)$ discrete \\ (Thm. \ref{discretewildsettheorem})}}}
\ar@{=>}@<0.9ex>[dr]^-{\quad\text{(Prop. \ref{ptautoctau})\quad}} \\
 &&  (\cinfty,c_{\tau})\text{-closed \scriptsize{(Def. \ref{defhplus})}} \ar@{=>}@<0.9ex>[ul]^(0.55){\scriptsize{\txt{$[\pi_1,\pi_1]\leq H$ (Prop. \ref{abelianfactorprop})}\quad\quad\quad}} \ar@{=>}@<1ex>[d]^-{\text{(Prop. \ref{normalsubgroup})}}\\
\txt{homotopically\\Hausdorff rel. $H$\\ \scriptsize{(Def. \ref{homhausdorffdef})}} \ar@{<=>}[rr]^-{\text{(Thm. \ref{homhausrelchar})}} && (\cinfty,c_{\infty})\text{-closed \scriptsize{(Def. \ref{defhplus})}} \ar@{=>}@<1ex>[u]^{H\text{ normal (Prop. \ref{normalsubgroup})}}
}\]
Equipped with this chart, we identify new types of subgroups that correspond to intermediate generalized coverings (Theorem \ref{largeextensionthm} and Corollaries \ref{heuplcorollary}, \ref{heuplcorollary2}) and shed more light on the relative position of the commutator subgroup of $\pioneh$ (Example \ref{commutatorexample}). We also extend the existence of generalized universal coverings for Peano continua with residually n-slender fundamental group to all metric spaces (Corollary \ref{nslendercor}).

Property (5) is not an invariant of homotopy type, but is an important property held by one-dimensional \cite{CF} and planar spaces \cite{FZ05} and is known to imply the homotopically Hausdorff property for metric spaces \cite{CMRZZ08}. We improve the latter result by showing that every metric space with the 1-$UV_0$ property admits a generalized universal covering space (Theorem \ref{uvzeroimpliesupl}).

\subsection{Notational considerations}

Throughout this paper, $X$ will denote a path-connected topological space with basepoint $x_0\in X$ and $H$ will denote a subgroup of the fundamental group $\pionex$. A \textit{map} $f:X\to Y$ means a continuous function and $f_{\#}:\pi_1(X,x)\to\pi_1(Y,y)$ will denote the homomorphism induced by $f$ on fundamental groups when $f(x)=y$.

If $\alpha:\ui\to X$ is a path, then $\alpha^{-}(t)=\alpha(1-t)$ is the reverse path. If $\alpha,\beta:\ui\to X$ are paths such that $\alpha(1)=\beta(0)$, then $\alpha\cdot\beta$ denotes the usual concatenation of paths. More generally, if $\alpha_1,\alpha_2,\dots,\alpha_n$ is a sequence of paths such that $\alpha_{j}(1)=\alpha_{j+1}(0)$ for each $j$, then $\prod_{j=1}^{n}\alpha_j=\alpha_1\cdot \alpha_2\cdot\;\cdots\;\cdot \alpha_n$ is the path defined as $\alpha_j$ on $\left[\frac{j-1}{n},\frac{j}{n}\right]$. The constant path at $x\in X$ is denoted by $c_x$. If $[a,b],[c,d]\subseteq \ui$ and $\gamma:[a,b]\to X$, $\delta:[c,d]\to X$ are maps, we write $\gamma\equiv\delta$ if $\gamma=\delta\circ \phi$ for some increasing homeomorphism $\phi: [a,b]\to [c,d]$; if $\phi$ is linear and if it does not create confusion, we will identify $\gamma$ and $\delta$.

A path $\alpha:[a,b]\to X$ is \textit{reduced} if whenever $a\leq s<t\leq b$ with $\alpha(s)=\alpha(t)$, the loop $\alpha|_{[s,t]}$ is not null-homotopic. Note that a constant path $\alpha:[a,b]\to X$ is reduced if and only if $\alpha$ is degenerate, that is, if $a=b$. If $X$ is a one-dimensional metric space, then every path $\alpha:\ui\to X$ is homotopic (rel. endpoints) within $\alpha(\ui)$ to either a constant path or a reduced path, which is unique up to reparameterization \cite{EdaSpatial}.

For a given space $X$, let $P(X)$ denote the space of paths in $X$ with the compact-open topology generated by the subbasic sets $\langle K,U\rangle =\{\alpha\mid\alpha(K)\subseteq U\}$ where $K\subseteq \ui$ is compact and $U\subseteq X $ is open. A convenient basis for the compact-open topology is given by neighborhoods of the form $\bigcap_{j=1}^{2^n}\left\langle \left[\frac{j-1}{2^n},\frac{j}{2^n}\right],U_j\right\rangle$ where $n\in\bbn$. It is well-known that if $(X,d)$ is a metric space, then the compact-open topology on $P(X)$ agrees with the topology of uniform convergence. For given $x\in X$, let $P(X,x)\subseteq P(X)$ denote the subspace of paths which start at $x$ and let $\Omega(X,x)$ denote the subspace of loops based at $x$.

If $H\leq \pionex$ is a subgroup and $\alpha:\ui\to X$ is a path from $\alpha(0)=x_0$ to $\alpha(1)=x$, let $H^{\alpha}=[\alpha^{-}]H[\alpha]\leq \pi_1(X,x)$ denote the path-conjugate subgroup under basepoint change.

\subsection{Generalized covering maps and the unique path lifting property}\label{gencovsection}

In \cite{FZ07}, Fischer and Zastrow initially define the notion of \textit{generalized universal covering}  and \textit{generalized regular covering} relative to a normal subgroup $N\trianglelefteq \pionex$. The following definition extends these definitions to general subgroups of $\pionex$; it appears in \cite{Brazcat} under the name ``$\mathbf{lpc_{0}}$-covering."

\begin{definition} \label{gencovdef}
A map $p:\hX\to X$ is a \textit{generalized covering map} if
\begin{enumerate}
\item $\hX$ is nonempty, path connected, and locally path connected,
\item for every path-connected, locally path-connected space $Y$, point $\hx\in\hX$, and based map $f:(Y,y)\to (X,p(\hx))$ such that $f_{\#}(\pi_1(Y,y))\leq p_{\#}(\pi_1(\hX,\hx))$, there is a unique map $\hf:(Y,y)\to (\hX,\hx)$ such that $p\circ \hf=f$.
\end{enumerate}
If $p_{\#}(\pi_1(\hX,\hx))$ is a normal subgroup of $\pionex$, we call $p$ a \textit{generalized regular covering map}. If $\hX$ is simply-connected, we call $p$ a \textit{generalized universal covering map}.
\end{definition}
Whenever a generalized covering $p:\hX\to X$ such that $p(\hx)=x_0$ exists, it is characterized up to equivalence by the conjugacy class of the subgroup $H=p_{\#}(\pi_1(\hX,\hx))\leq \pionex$. Since $\ui$ is simply-connected, it is clear that for each point $\hx\in\hX$, every path $\alpha\in P(X,p(\hx))$ has a unique lift $\widehat{\alpha}\in P(\hX,\hx)$ such that $p\circ\widehat{\alpha}=\alpha$. In particular $p$ has the unique path lifting property:

\begin{definition}
A map $f:X\to Y$ has the \textit{unique path lifting property} if whenever $\alpha,\beta:\ui\to X$ are paths with $\alpha(0)=\beta(0)$ and $f\circ \alpha=f\circ \beta$, then $\alpha=\beta$.
\end{definition}

In the attempt to construct generalized covering spaces, one is led to the following standard construction \cite{Spanier66}. We refer to \cite{FZ07} for proofs of the basic properties. Given a subgroup $H\leq \pionex$, let $\tXh=\pxxo/\sim$ where $\alpha\sim \beta$ iff $\alpha(1)=\beta(1)$ and $[\alpha\cdot\beta^{-}]\in H$. The equivalence class of $\alpha$ is denoted $H[\alpha]$. We give $\tXh$ the so-called \textit{standard topology} generated by the sets $$B(H[\alpha],U)=\left\{H[\alpha\cdot\epsilon]\mid\epsilon([0,1])\subseteq U\right\}$$ where $U$ is an open neighborhood of $\alpha(1)$ in $X$. This topology is called the \textit{whisker topology} by some authors \cite{BDLM08,VZ14}. The space $\tXh$ is path connected and locally path connected by construction and if $H[\beta]\in B(H[\alpha],U)$, then $B(H[\alpha],U)=B(H[\beta],U)$. We take the class $\txh=H[c_{x_0}]$ of the constant path to be the basepoint of $\tXh$. In the case that $H=1$ is the trivial subgroup, we simply write $\tX$ and $\tx$ for this space and its basepoint. Let $p_H:\tXh\to X$ denote the endpoint projection map defined as $p_H(H[\alpha])=\alpha(1)$. Since $p_H$ maps $B(H[\alpha],U)$ onto the path-component of $U$ which contains $\alpha(1)$, $p_H$ is an open map if and only if $X$ is locally path connected.

For a point $x\in X$, let $\mct_x$ be the set of all open neighborhoods of $x$ in $X$. For $U\in \mct_x$ and a path $\alpha:[0,1]\to X$ from $x_0$ to $x$, consider the subgroup $\pi(\alpha,U)=\{[\alpha\cdot\delta\cdot\alpha^{-}]\mid\delta\in \Omega(U,\alpha(1))\}\leq \pionex$. Let $\pi(x,U)=\langle \pi(\alpha,U)\mid\alpha(1)=x\rangle $ be the subgroup generated by all subgroups $\pi(\alpha,U)$ in $\pionex$ with $\alpha(1)=x$ and note that $\pi(x,U)$ is a normal subgroup of $\pionex$.

\begin{theorem}\label{coveringtheorem} \cite{Spanier66} Suppose $X$ is locally path connected. Then $p_H:\tXh\to X$ is a covering map in the classical sense if and only if for every $x\in X$, there is a $U\in \mathcal{T}_x$ such that $\pi(x,U)\leq H$,
\end{theorem}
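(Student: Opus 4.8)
The plan is to prove the two implications separately, using throughout the stated facts that, since $X$ is locally path connected, $\ph:\tXh\to X$ is a continuous, open, surjective map, that each basic set $B(H[\alpha],U)$ is path connected with $\ph(B(H[\alpha],U))$ equal to the path component of $\alpha(1)$ in $U$, and that two basic sets over the same $U$ are either equal or disjoint. Since path components of open sets are open in a locally path-connected space, in both directions I may replace any $U\in\mct_x$ by the (open, path-connected) path component of $x$ in $U$; as shrinking $U$ only shrinks $\pi(x,U)$, the hypothesis $\pi(x,U)\leq H$ is preserved. Hence I assume every neighborhood below is path connected.

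For the reverse implication, suppose that for each $x$ there is a path-connected $U\in\mct_x$ with $\pi(x,U)\leq H$; I will show $U$ is evenly covered. Every $H[\gamma]$ with $\gamma(1)\in U$ lies in $B(H[\gamma],U)$, and each such basic set is contained in $\ph^{-1}(U)$, so $\ph^{-1}(U)$ is the union of these basic sets; by the equal-or-disjoint property this exhibits $\ph^{-1}(U)$ as a disjoint union of open sheets. Each sheet maps onto $U$ because $U$ is path connected, so it remains to prove that $\ph$ restricted to a single sheet $B(H[\alpha],U)$ is injective: being then a continuous open bijection onto $U$, its restriction is a homeomorphism and the even covering follows.

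The injectivity is the crux of the argument. Given $H[\alpha\cdot\epsilon_1],H[\alpha\cdot\epsilon_2]\in B(H[\alpha],U)$ with $\epsilon_1(1)=\epsilon_2(1)$, the loop $\epsilon_1\cdot\epsilon_2^{-}$ lies in $U$ but is based at $\alpha(1)$, not at $x$, so $[\alpha\cdot\epsilon_1\cdot\epsilon_2^{-}\cdot\alpha^{-}]$ is not directly a generator of $\pi(x,U)$. The fix is a basepoint translation: choosing a path $\lambda$ in $U$ from $\alpha(1)$ to $x$ (possible since $U$ is path connected) and inserting $\lambda\cdot\lambda^{-}$, I rewrite this class as $[(\alpha\cdot\lambda)\cdot(\lambda^{-}\cdot\epsilon_1\cdot\epsilon_2^{-}\cdot\lambda)\cdot(\alpha\cdot\lambda)^{-}]\in\pi(\alpha\cdot\lambda,U)\leq\pi(x,U)\leq H$. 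Hence $H[\alpha\cdot\epsilon_1]=H[\alpha\cdot\epsilon_2]$, so $\ph$ is injective on the sheet. This is precisely the technical point that the definition of $\pi(x,U)$ as the subgroup generated over \emph{all} path-conjugates $\pi(\alpha,U)$ is designed to absorb; I expect it to be the main obstacle.

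For the forward implication, suppose $\ph$ is a covering map and fix $x$ together with an evenly-covered path-connected $U\in\mct_x$; since $\pi(x,U)$ is generated by the subgroups $\pi(\alpha,U)$ over paths $\alpha$ from $x_0$ to $x$, it suffices to fix such an $\alpha$ and a loop $\delta\in\Omega(U,x)$ and show $[\alpha\cdot\delta\cdot\alpha^{-}]\in H$. I will lift $\delta$ starting at $H[\alpha]$ via the explicit candidate $t\mapsto H[\alpha\cdot\delta_t]$, where $\delta_t=\delta|_{[0,t]}$; because $\delta([0,1])\subseteq U$, nearby values differ by a tail lying in $U$, so this map is continuous, projects to $\delta$, and starts at $H[\alpha]$. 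Being a connected lift of a path in $U$, it lies entirely within one sheet of the even covering, on which $\ph$ is a homeomorphism; as $\delta(0)=\delta(1)=x$, the lift is therefore a loop, giving $H[\alpha\cdot\delta]=H[\alpha]$ and thus $[\alpha\cdot\delta\cdot\alpha^{-}]\in H$. Consequently $\pi(\alpha,U)\leq H$ for every such $\alpha$, whence $\pi(x,U)\leq H$, completing the proof.
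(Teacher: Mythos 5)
Your proof is correct. Note that the paper offers no argument of its own for this statement---it is quoted from \cite{Spanier66} as a known classical fact---so there is no internal proof to compare against; your argument is the standard one for the whisker-topology construction. Both key points are handled properly: in the reverse direction, the sheet-injectivity step genuinely requires the basepoint translation you perform, since the loop $\epsilon_1\cdot\epsilon_2^{-}$ is based at $\alpha(1)$ rather than at $x$, and conjugating by a connecting path $\lambda$ inside the path-connected $U$ lands the class in $\pi(\alpha\cdot\lambda,U)\leq\pi(x,U)\leq H$ exactly as the definition of $\pi(x,U)$ (generated over \emph{all} paths ending at $x$) is designed to allow; in the forward direction, the explicit lift $t\mapsto H[\alpha\cdot\delta_t]$ is the standard lift, its continuity argument is sound, and confinement of a connected lift to a single sheet forces $H[\alpha\cdot\delta]=H[\alpha]$, i.e.\ $[\alpha\cdot\delta\cdot\alpha^{-}]\in H$. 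The preliminary reductions (replacing $U$ by the path component of $x$, monotonicity of $\pi(x,U)$ under shrinking $U$, and the equal-or-disjoint property of the basic sets $B(H[\alpha],U)$) are all justified by facts stated in Section~\ref{gencovsection}, so the proof is complete as written.
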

If $X$ is semilocally simply connected, then for every $x$, there is a $U\in\mathcal{T}_x$ such that $\pi(x,U)=1$, so $p_H$ is a covering map for every subgroup $H\leq \pionex$. The standard lifting properties of covering maps illustrate that every covering map is a generalized covering map.

Even when $p_H$ is not a covering map, it may still be a generalized covering. The authors of \cite{FZ07} show that $p_H$ is a generalized covering map whenever it has the unique path lifting property. Indeed, every path $\alpha:(\ui,0)\to (X,x_0)$ has a continuous \textit{standard lift} $\wt{\alpha}_{\mathscr{S}}:\ui\to\tXh$ starting at $\txh$ defined as $\wt{\alpha}_{\mathscr{S}}(t)=H[\alpha_t]$ where $\alpha_t(s)=\alpha(st)$. Thus, to verify whether or not $p_H$ is a generalized covering map, it is necessary and sufficient to verify that for each path $\alpha\in P(X,x_0)$, the standard lift $\wt{\alpha}_{\mathscr{S}}$ is the \textit{only} lift of $\alpha$ starting at $\txh$.

On the other hand, if $p:(\hX,\hx)\to (X,x_0)$ is a generalized covering map such that $p_{\#}(\pi_1(\hX,\hx))=H$, then there is a homeomorphism $q:(\hX,\hx)\to (\tXh,\txh)$ such that $p_H\circ q=p$ \cite{Brazcat}. This means that the topology of any generalized covering space must be equivalent to the standard topology. These observations are summarized in the following lemma.

\begin{lemma}\label{gencovtopologylemma}\cite[Theorem 5.11]{Brazcat}
For any subgroup $H\leq \pionex$, the following are equivalent:
\begin{enumerate}
\item $p_H$ has the unique path lifting property,
\item $p_H$ is a generalized covering map,
\item $(p_H)_{\#}(\pi_1(\tXh,\txh))=H$,
\item $X$ admits a generalized covering $p:(\hX,\hx)\to (X,x_0)$ such that $p_{\#}(\pi_1(\hX,\hx))$ $=H$.
\end{enumerate}
\end{lemma}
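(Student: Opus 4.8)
The plan is to show that all four statements collapse to one equivalence class by proving $(2)\Rightarrow(1)$, $(1)\Rightarrow(2)$, $(1)\Leftrightarrow(3)$, and $(2)\Leftrightarrow(4)$. Most of these are short. For $(2)\Rightarrow(1)$ I would simply feed the simply connected, locally path-connected space $Y=\ui$ into Definition~\ref{gencovdef}: since $\pi_1(\ui)=1$ the subgroup hypothesis is automatic, so each path in $X$ has a unique lift from a prescribed starting point, which is exactly unique path lifting. The converse $(1)\Rightarrow(2)$ is the Fischer--Zastrow result recalled in the paragraph preceding the lemma \cite{FZ07}: assuming unique path lifting, the standard lift supplies the required $\hf$ for an arbitrary test map out of a path-connected, locally path-connected domain, with continuity coming from the whisker topology together with local path connectedness of the domain, and uniqueness coming from unique path lifting.

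Next I would handle $(1)\Rightarrow(3)$. The inclusion $H\leq (p_H)_{\#}(\pi_1(\tXh,\txh))$ holds with no hypothesis: for $[\alpha]\in H$ the standard lift $\widetilde{\alpha}_{\scrs}$ ends at $H[\alpha]=\txh$, hence is a loop based at $\txh$ projecting to $\alpha$. For the reverse inclusion I would use unique path lifting: given a loop $\gamma$ based at $\txh$ with projection $\bar\gamma=p_H\circ\gamma$, both $\gamma$ and $\widetilde{\bar\gamma}_{\scrs}$ lift $\bar\gamma$ from $\txh$, so unique path lifting forces $\gamma=\widetilde{\bar\gamma}_{\scrs}$; evaluating at $1$ gives $\txh=H[\bar\gamma]$, i.e. $[\bar\gamma]\in H$.

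The crux is $(3)\Rightarrow(1)$, which I would prove by contraposition. Assuming unique path lifting fails, an initial-segment argument (combined with a basepoint change along a path in the path-connected space $\tXh$) reduces matters to a single path $\bar\gamma$ from $x_0$ admitting a lift $\gamma$ from $\txh$ with the \emph{wrong} endpoint, say $\gamma(1)=H[\beta]$ with $[\beta\cdot\bar\gamma^{-}]\notin H$; indeed, if every lift from $\txh$ landed at the expected class, restricting to initial segments would identify every lift with a standard lift and recover unique path lifting. The key construction is then to reconnect the stray endpoint to the standard one: let $\hat\delta$ be the standard lift of the loop $\delta=\beta^{-}\cdot\bar\gamma$ starting at $H[\beta]$, which terminates at $H[\beta\cdot\beta^{-}\cdot\bar\gamma]=H[\bar\gamma]=\widetilde{\bar\gamma}_{\scrs}(1)$. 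Consequently $L=\gamma\cdot\hat\delta\cdot(\widetilde{\bar\gamma}_{\scrs})^{-}$ is a genuine loop in $\tXh$ based at $\txh$, and its projection is homotopic to $\bar\gamma\cdot\beta^{-}$ with $[\bar\gamma\cdot\beta^{-}]=[\beta\cdot\bar\gamma^{-}]^{-1}\notin H$. Thus $(p_H)_{\#}(\pi_1(\tXh,\txh))\not\leq H$, contradicting $(3)$.

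Finally, for $(2)\Leftrightarrow(4)$: the forward direction takes $\hX=\tXh$ and $p=p_H$, which is a generalized covering by $(2)$ and has image exactly $H$ by the chain $(2)\Rightarrow(1)\Rightarrow(3)$; the reverse direction invokes the rigidity statement recalled before the lemma \cite{Brazcat}, producing a homeomorphism $q$ with $p_H\circ q=p$, so that $p_H=p\circ q^{-1}$ inherits the generalized covering property. I expect the main obstacle to be precisely $(3)\Rightarrow(1)$, since it must manufacture a geometric lifting statement from the purely algebraic condition on the image subgroup. The difficulty, absent in classical covering theory, is that two lifts of a path from a common start need not share an endpoint, so one cannot directly form a ``difference loop''; the resolution above is to use the standard lift of $\beta^{-}\cdot\bar\gamma$ to steer the stray endpoint $H[\beta]$ back to the standard endpoint $H[\bar\gamma]$, producing a loop whose projection witnesses an element outside $H$.
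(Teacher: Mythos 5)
Your proposal is correct, and it is worth noting that the paper itself gives no proof of this lemma: it is quoted from \cite[Theorem 5.11]{Brazcat}, with the two surrounding paragraphs merely recording the ingredients (the Fischer--Zastrow fact that unique path lifting makes $p_H$ a generalized covering via standard lifts, and the rigidity statement that any generalized covering with image $H$ is homeomorphic over $X$ to $p_H$). Your argument assembles exactly those ingredients for $(1)\Rightarrow(2)$ and $(4)\Rightarrow(2)$, and your $(2)\Rightarrow(1)$ (specializing $Y=\ui$ in Definition \ref{gencovdef}) and $(1)\Rightarrow(3)$ (comparing an arbitrary loop at $\txh$ with the standard lift of its projection) are sound. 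The genuine added value is your self-contained proof of $(3)\Rightarrow(1)$, which is precisely the content the paper outsources to the citation: the reduction is valid (prepending a path from $\txh$ and passing to an initial segment where two lifts first disagree does produce a lift $\gamma$ of some $\bar\gamma\in P(X,x_0)$ from $\txh$ with $\gamma(1)=H[\beta]$, $[\beta\cdot\bar\gamma^{-}]\notin H$), and the steering construction works because the lift $t\mapsto H[\beta\cdot\delta_t]$ of $\delta=\beta^{-}\cdot\bar\gamma$ starting at $H[\beta]$ is continuous in the whisker topology and terminates at $H[\bar\gamma]$, so that $L=\gamma\cdot\hat\delta\cdot\bigl(\wt{\bar\gamma}_{\mathscr{S}}\bigr)^{-}$ is a loop at $\txh$ with $(p_H)_{\#}([L])=[\bar\gamma\cdot\beta^{-}]=[\beta\cdot\bar\gamma^{-}]^{-1}\notin H$, contradicting $(3)$. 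This gives a reader a complete proof where the paper offers only a reference, at the cost of reproving part of \cite{Brazcat}; the paper's approach buys brevity and consistency with its role as a summary lemma.
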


\section{Test maps, closure pairs, and closure operators}\label{Testmapsection}
\begin{definition}
Suppose $(\bbt,t_0)$ is a based space, $T\leq \pionet$ a subgroup, and $g\in \pionet$. A subgroup $H\leq \pionex$ is $(T,g)${\it-closed} if for every map $f:(\bbt,t_0)\to (X,x_0)$ such that $f_{\#}(T)\leq H$, we also have $f_{\#}(g)\in H$. We often refer to $\bbt$ as a {\it test space} and $(T,g)$ as a {\it closure pair} for $(\bbt,t_0)$.
\end{definition}
Observe that the set of $(T,g)$-closed subgroups of $\pionex$ is closed under intersection and therefore forms a complete lattice. For any $H\leq \pionex$, we may define the $(T,g)${\it-closure} of $H$ as \[ \cltg(H)=\bigcap\{K\leq \pionex\mid K\text{ is }(T,g)\text{-closed and }H\leq K\}.\]Note that $\cltg(H)=H$ if and only if $H$ is $(T,g)$-closed. Moreover, $\cltg$ is a set-theoretic closure operator on the lattice of subgroups of $\pionex$ in the sense that $H\leq \cltg(H)$, $H\leq K$ implies $\cltg(H)\leq \cltg(K)$, and $\cltg(\cltg(H))=\cltg(H)$. 

The $(T,g)$-closure $\cltg(H)$ contains the subgroup $H'$ generated by $H$ and elements $f_{\#}(g)$ for all maps $f:(\bbt,t_0)\to (X,x_0)$ with $f_{\#}(T)\leq H$. However, $H'$ may be a proper subgroup of $\cltg(H)$ since there could be maps $(\bbt,t_0)\to (X,x_0)$ whose induced homomorphisms map $T$ into $H'$ but not into $H$. See Remark \ref{firststepremark} for an example of such an occurrence.

\begin{proposition}\label{continuity}
If $f:(X,x_0)\to (Y,y_0)$ is a map and $H\leq \pionex$, then $f_{\#}(\cltg(H))\leq \cltg(f_{\#}(H))$.
\end{proposition}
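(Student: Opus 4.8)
The plan is to use the definition of the closure operator together with a naturality argument for induced homomorphisms. We want to show $f_{\#}(\cltg(H)) \leq \cltg(f_{\#}(H))$. Since $\cltg(f_{\#}(H))$ is a $(T,g)$-closed subgroup of $\pioney$ containing $f_{\#}(H)$, the strategy is to exhibit a single $(T,g)$-closed subgroup $K \leq \pionex$ that contains $H$ and satisfies $f_{\#}(K) \leq \cltg(f_{\#}(H))$; then by definition of $\cltg(H)$ as the smallest such closed subgroup we get $\cltg(H) \leq K$, hence $f_{\#}(\cltg(H)) \leq f_{\#}(K) \leq \cltg(f_{\#}(H))$.

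The natural candidate is the preimage $K = f_{\#}^{-1}\bigl(\cltg(f_{\#}(H))\bigr)$, the set of elements of $\pionex$ whose image under $f_{\#}$ lies in $\cltg(f_{\#}(H))$. This is a subgroup of $\pionex$ since it is the preimage of a subgroup under a homomorphism, and it clearly contains $H$ because $f_{\#}(H) \leq \cltg(f_{\#}(H))$. By construction $f_{\#}(K) \leq \cltg(f_{\#}(H))$. So the entire proof reduces to the key step: verifying that $K$ is $(T,g)$-closed.

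\emph{The main obstacle is this $(T,g)$-closedness of $K$}, and here is how I would handle it. Let $h:(\bbt,t_0) \to (X,x_0)$ be any map with $h_{\#}(T) \leq K$; I must show $h_{\#}(g) \in K$, i.e.\ that $f_{\#}(h_{\#}(g)) \in \cltg(f_{\#}(H))$. Consider the composite map $f \circ h : (\bbt,t_0) \to (Y,y_0)$. By functoriality of $\pi_1$ we have $(f \circ h)_{\#} = f_{\#} \circ h_{\#}$. Now $h_{\#}(T) \leq K$ means $f_{\#}(h_{\#}(T)) \leq \cltg(f_{\#}(H))$, that is $(f\circ h)_{\#}(T) \leq \cltg(f_{\#}(H))$. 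Since $\cltg(f_{\#}(H))$ is $(T,g)$-closed, applying the defining property to the test map $f \circ h$ yields $(f\circ h)_{\#}(g) \in \cltg(f_{\#}(H))$, i.e.\ $f_{\#}(h_{\#}(g)) \in \cltg(f_{\#}(H))$. This says exactly $h_{\#}(g) \in K$, completing the verification that $K$ is $(T,g)$-closed.

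With $K$ established as a $(T,g)$-closed subgroup containing $H$, the minimality of $\cltg(H)$ gives $\cltg(H) \leq K$, and applying $f_{\#}$ delivers $f_{\#}(\cltg(H)) \leq f_{\#}(K) \leq \cltg(f_{\#}(H))$, as desired. I expect no subtlety beyond the functoriality identity $(f \circ h)_{\#} = f_{\#} \circ h_{\#}$ and the observation that preimages of subgroups are subgroups; the genuinely structural insight is simply recognizing that pulling back the target closure along $f_{\#}$ produces a closed subgroup, which is what makes the minimality argument go through.
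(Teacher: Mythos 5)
Your proof is correct and follows essentially the same route as the paper: both hinge on the observation that $f_{\#}^{-1}$ of a $(T,g)$-closed subgroup is $(T,g)$-closed (verified exactly by your composition-of-test-maps argument, which the paper states without proof), and then apply minimality of the closure. The only cosmetic difference is that you pull back the specific subgroup $\cltg(f_{\#}(H))$, while the paper pulls back an arbitrary $(T,g)$-closed $K$ containing $f_{\#}(H)$ and lets the intersection do the work.
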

\begin{proof}
First, observe that if $K\leq \pioney$ is $(T,g)$-closed, then so is $f_{\#}^{-1}(K)\leq \pionex$. Let $k\in \cltg(H)$ and $K\leq \pioney$ be any $(T,g)$-closed subgroup such that $f_{\#}(H)\leq K$. It suffices to show $f_{\#}(k)\in K$. Since $f_{\#}^{-1}(K)$ is $(T,g)$-closed and $H\leq f_{\#}^{-1}(K)$, we have $k\in \cltg(H)\leq f_{\#}^{-1}(K)$. Therefore, $f_{\#}(k)\in K$.
\end{proof}

\begin{proposition}\label{comparisonprop}
Suppose $(T,g)$ and $(T',g')$ are closure pairs for $(\bbt,t_0)$ and $(\bbt ',t_{0}')$ respectively. Then the following are equivalent:
\begin{enumerate}
\item $g'\in \cltg(T ')$,
\item for any space $(X,x_0)$ and subgroup $H\leq \pionex$, $H$ is $(T',g')$-closed whenever $H$ is $(T,g)$-closed,
\item for any space $(X,x_0)$ and subgroup $H\leq \pionex$, $\cltgp(H)\leq \cltg(H)$.
\end{enumerate}
\end{proposition}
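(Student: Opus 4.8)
The plan is to establish the cycle of implications $(1)\Rightarrow(2)\Rightarrow(3)\Rightarrow(1)$, leaning on the three formal properties of the closure operator recorded before the statement (extensivity $H\leq\cltg(H)$, monotonicity, and idempotence), together with the functoriality furnished by Proposition~\ref{continuity}. A guiding observation throughout is that the test space $\bbt'$ itself, equipped with the subgroup $T'\leq\pi_1(\bbt',t_0')$, is a ``universal'' instance: applying the identity map $\mathrm{id}:(\bbt',t_0')\to(\bbt',t_0')$ to the definition of $(T',g')$-closedness shows that $g'\in\cltgp(T')$, since $\mathrm{id}_{\#}(T')=T'$ sits inside every $(T',g')$-closed subgroup $K$ containing $T'$ and hence forces $\mathrm{id}_{\#}(g')=g'$ into each such $K$. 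This fact is what lets me read condition (1) off of the general statement in (3).

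For $(1)\Rightarrow(2)$, I would suppose $g'\in\cltg(T')$ and let $H\leq\pionex$ be $(T,g)$-closed; to verify $(T',g')$-closedness I take an arbitrary map $f:(\bbt',t_0')\to(X,x_0)$ with $f_{\#}(T')\leq H$ and must produce $f_{\#}(g')\in H$. Here Proposition~\ref{continuity} applied to $f$ (with the subgroup $T'$) gives $f_{\#}(\cltg(T'))\leq\cltg(f_{\#}(T'))$, so that $f_{\#}(g')\in f_{\#}(\cltg(T'))\leq\cltg(f_{\#}(T'))$. Monotonicity of $\cltg$ then yields $\cltg(f_{\#}(T'))\leq\cltg(H)$, and since $H$ is $(T,g)$-closed we have $\cltg(H)=H$; chaining these inclusions gives $f_{\#}(g')\in H$, as required.

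The equivalence $(2)\Leftrightarrow(3)$ is purely formal. For $(2)\Rightarrow(3)$, given any $X$ and $H$, idempotence makes $\cltg(H)$ a $(T,g)$-closed subgroup, so (2) upgrades it to a $(T',g')$-closed subgroup; as it contains $H$, the definition of $\cltgp$ as an intersection forces $\cltgp(H)\leq\cltg(H)$. Conversely, $(3)\Rightarrow(2)$ is immediate: if $H$ is $(T,g)$-closed then $\cltg(H)=H$, and (3) gives $\cltgp(H)\leq\cltg(H)=H\leq\cltgp(H)$, whence $\cltgp(H)=H$, i.e.\ $H$ is $(T',g')$-closed. Finally, for $(3)\Rightarrow(1)$ I specialize (3) to the ambient space $X=\bbt'$ with basepoint $t_0'$ and subgroup $H=T'$, obtaining $\cltgp(T')\leq\cltg(T')$; combined with the universal observation $g'\in\cltgp(T')$ from the first paragraph, this delivers $g'\in\cltg(T')$, which is (1).

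I expect no serious obstacle here: the content is entirely bookkeeping with the closure-operator axioms and Proposition~\ref{continuity}. The one step that requires genuine insight rather than formal manipulation is recognizing that $\bbt'$ plays the role of a universal test object, so that the seemingly abstract membership $g'\in\cltg(T')$ appearing in (1) is exactly the instance of (3) obtained by taking $X=\bbt'$ and $H=T'$. Keeping the quantifiers straight in the $(1)\Rightarrow(2)$ direction---that (1) is a statement inside $\pi_1(\bbt',t_0')$ while (2) ranges over all spaces $X$, with Proposition~\ref{continuity} bridging the two---is the only place where care is needed.
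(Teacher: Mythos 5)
Your proposal is correct and follows essentially the same route as the paper's proof: $(1)\Rightarrow(2)$ via Proposition~\ref{continuity} plus monotonicity and $\cltg(H)=H$, the formal $(2)\Rightarrow(3)$, and $(3)\Rightarrow(1)$ by noting $g'\in\cltgp(T')$ and specializing to $(X,x_0)=(\bbt',t_0')$, $H=T'$. You merely spell out a few steps the paper leaves implicit (the identity-map justification of $g'\in\cltgp(T')$ and the redundant direction $(3)\Rightarrow(2)$).
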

\begin{proof}
(1) $\Rightarrow$ (2) Suppose $g'\in \cltg(T')$ and that $H\leq \pionex$ is $(T,g)$-closed. Let $k:(\bbt ',t_{0}')\to (X,x_0)$ be a map such that $k_{\#}(T')\leq H$. By Proposition \ref{continuity} and monotonicity, we have $k_{\#}(\cltg(T'))\leq \cltg(k_{\#}(T'))\leq \cltg(H)$. Since $g'\in \cltg(T')$, it follows that $k_{\#}(g')\in  cl_{T,g}(H)=H$. This proves $H$ is $(\bbt ',g')$-closed. (2) $\Rightarrow$ (3) This follows directly from the definition of the closure operator. (3) $\Rightarrow$ (1) First, note that $g'\in \cltgp(T')$. Applying the inequality $\cltgp(H)\leq \cltg(H)$ in the case where $(X,x_0)=(\bbt ',t_{0}')$ and $H=T'$ completes the proof.
\end{proof}

\begin{remark}\label{comparisonpropremark}
If there exists a map $f:(\bbt,t_0)\to (\bbt ',t_{0}')$ such that $f_{\#}(T)\leq T'$ and $f_{\#}(g)=g'$, then we have $g'=f_{\#}(g)\in f_{\#}(\cltg(T))\leq \cltg(f_{\#}(T))\leq \cltg(T')$. Consequently, whenever such a map $f$ exists, we may conclude that all three of the equivalent conditions in Proposition \ref{comparisonprop} hold.
\end{remark}

The closure pairs of primary interest in this paper satisfy the following definition, which implies that the induced closure operator preserves conjugation.

\begin{definition}\label{normaltestpairdef}
A closure pair $(T,g)$ for the test space $(\bbt,t_0)$ is called \textit{normal} if given any space $(X,x_0)$ and subgroup $H\leq \pionex$, $H$ is $(T,g)$-closed if and only if for every path $\alpha\in P(X,x_0)$, $H^{\alpha}$ is a $(T,g)$-closed subgroup of $\pi_1(X,\alpha(1))$.
\end{definition}

\begin{proposition}\label{preserveconjugationprop}
If $(T,g)$ is a normal closure pair, then the closure operator $cl_{T,g}$ commutes with path conjugation, i.e. for all $(X,x_0)$, $H\leq \pionex$, and $\alpha\in P(X,x_0)$, we have $cl_{T,g}(H^{\alpha})=cl_{T,g}(H)^{\alpha}$.
\end{proposition}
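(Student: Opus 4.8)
The plan is to first isolate the single consequence of normality that does all the work, and then combine it with the formal properties of $\cltg$ already recorded before Proposition \ref{continuity}.

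\emph{Step 1 (transfer lemma).} I would first prove that for every path $\alpha\in P(X,x_0)$ from $x_0$ to $x:=\alpha(1)$ and every subgroup $H\leq\pionex$, the subgroup $H$ is $(T,g)$-closed if and only if $H^{\alpha}\leq\pi_1(X,x)$ is $(T,g)$-closed. The forward direction is immediate from Definition \ref{normaltestpairdef} applied to $(X,x_0)$ and $H$: since $H$ is $(T,g)$-closed, $H^{\gamma}$ is $(T,g)$-closed for \emph{every} $\gamma\in P(X,x_0)$, and in particular for $\gamma=\alpha$. For the converse, the key observation is that ``$(T,g)$-closed'' is an intrinsic, basepoint-free notion and that Definition \ref{normaltestpairdef} is quantified over all pointed spaces; I would therefore re-instantiate it at the pointed space $(X,x)$ and the subgroup $K:=H^{\alpha}\leq\pi_1(X,x)$. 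As $K$ is $(T,g)$-closed by hypothesis, normality yields that $K^{\beta}$ is $(T,g)$-closed for every $\beta\in P(X,x)$; taking $\beta=\alpha^{-}$ and using the identification $(H^{\alpha})^{\alpha^{-}}=H$ gives that $H$ is $(T,g)$-closed.

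\emph{Step 2 (two inclusions).} With the transfer lemma in hand, the proposition follows formally. Recall that $\cltg(H)$ is itself $(T,g)$-closed and is the smallest $(T,g)$-closed subgroup containing $H$, and that path conjugation $K\mapsto K^{\alpha}$ is an order-preserving lattice bijection with inverse $K\mapsto K^{\alpha^{-}}$. By the transfer lemma, $\cltg(H)^{\alpha}$ is $(T,g)$-closed; since $H\leq\cltg(H)$ gives $H^{\alpha}\leq\cltg(H)^{\alpha}$ by monotonicity of conjugation, minimality of the closure yields $\cltg(H^{\alpha})\leq\cltg(H)^{\alpha}$. Applying this same inequality to the subgroup $H^{\alpha}$ and the reverse path $\alpha^{-}$ gives $\cltg(H)=\cltg((H^{\alpha})^{\alpha^{-}})\leq\cltg(H^{\alpha})^{\alpha^{-}}$; conjugating both sides by $\alpha$ (again by monotonicity) produces the reverse inclusion $\cltg(H)^{\alpha}\leq\cltg(H^{\alpha})$. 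The two inclusions give the claimed equality.

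\emph{Main obstacle.} The only genuinely delicate point is the converse direction of Step 1: one must recognize that Definition \ref{normaltestpairdef} may be instantiated at the moved basepoint $x=\alpha(1)$ rather than only at $x_0$, so that the ``only if'' half of normality can be used to push a $(T,g)$-closed subgroup of $\pi_1(X,x)$ back along $\alpha^{-}$. Once the transfer lemma is available, the remainder is a routine manipulation of the closure operator's minimality and monotonicity together with the functoriality $(H^{\alpha})^{\alpha^{-}}=H$ of basepoint change; notably, only the ``$H$ closed $\Rightarrow$ $H^{\gamma}$ closed'' half of normality is ever needed.
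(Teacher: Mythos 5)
Your proof is correct and follows essentially the same route as the paper: both establish the inclusion $\cltg(H^{\alpha})\leq \cltg(H)^{\alpha}$ by noting that $\cltg(H)^{\alpha}$ is $(T,g)$-closed (the half of normality you isolate in your transfer lemma) and contains $H^{\alpha}$, and then obtain the reverse inclusion by applying the same inequality to $H^{\alpha}$ and $\alpha^{-}$ and conjugating back. The only difference is presentational — you state the transfer lemma explicitly (and prove its converse direction, which is never actually used), whereas the paper invokes normality inline.
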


\begin{proof}
Suppose $(T,g)$ is a normal closure pair, $H\leq \pionex$, and $\alpha\in P(X,x_0)$. Since $H\leq cl_{T,g}(H)$ and $cl_{T,g}(H)$ is $(T,g)$-closed, $H^{\alpha}\leq cl_{T,g}(H)^{\alpha}$ and $cl_{T,g}(H)^{\alpha}$ is $(T,g)$-closed by normality of $(T,g)$. Hence $cl_{T,g}(H^{\alpha})\leq cl_{T,g}(H)^{\alpha}$. Replacing $H\leq \pionex$ with $H^{\alpha}\leq \pi_1(X,\alpha(1))$ and $\alpha$ with $\alpha^{-}$, we obtain $cl_{T,g}(H)\leq cl_{T,g}(H^{\alpha})^{\alpha^{-}}$. Conjugation with $\alpha$ gives $cl_{T,g}(H)^{\alpha}\leq cl_{T,g}(H^{\alpha})$.
\end{proof}

\begin{corollary}\label{normalclosureprop}
If $(T,g)$ is a normal closure pair and $N\trianglelefteq\pionex$ is a normal subgroup, then $\cltg(N)$ is a normal subgroup of $\pionex$.
\end{corollary}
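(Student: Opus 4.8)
The plan is to reduce the normality of $\cltg(N)$ to the conjugation-invariance already recorded in Proposition \ref{preserveconjugationprop}. Recall that a subgroup $K\leq \pionex$ is normal precisely when $[\gamma]^{-1}K[\gamma]=K$ for every $[\gamma]\in\pionex$. I would first observe that for a loop $\gamma\in\Omega(X,x_0)\subseteq P(X,x_0)$, the path-conjugate subgroup $K^{\gamma}=[\gamma^{-}]K[\gamma]$ is nothing other than ordinary conjugation $[\gamma]^{-1}K[\gamma]$ inside $\pionex$, since here $\gamma(1)=x_0$ so no genuine change of basepoint occurs. Thus it suffices to show that $\cltg(N)^{\gamma}=\cltg(N)$ for every loop $\gamma$ based at $x_0$.

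The second step is a direct application of Proposition \ref{preserveconjugationprop}. Since $(T,g)$ is a normal closure pair, that proposition gives $\cltg(N^{\gamma})=\cltg(N)^{\gamma}$ for every $\gamma\in P(X,x_0)$, and in particular for every loop $\gamma$ at $x_0$. On the other hand, because $N$ is assumed normal in $\pionex$, we have $N^{\gamma}=[\gamma^{-}]N[\gamma]=[\gamma]^{-1}N[\gamma]=N$ for every loop $\gamma$. Combining these two facts yields $\cltg(N)^{\gamma}=\cltg(N^{\gamma})=\cltg(N)$, which is exactly the conjugation-invariance needed. Hence $\cltg(N)\trianglelefteq\pionex$.

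There is essentially no hard obstacle here: the content of the corollary is packaged entirely into Proposition \ref{preserveconjugationprop}, and the only point requiring care is the bookkeeping observation that path-conjugation $K\mapsto K^{\gamma}$ specializes to ordinary group conjugation when $\gamma$ is a loop, together with the fact that normality of $N$ makes $N$ a fixed point of each such conjugation. Once these identifications are in place, the proof is a one-line chain of equalities.
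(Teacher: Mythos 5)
Your proof is correct and is precisely the intended derivation: the paper states this corollary without proof immediately after Proposition \ref{preserveconjugationprop}, expecting exactly your argument that for a loop $\gamma$ path-conjugation is ordinary conjugation, so $\cltg(N)^{\gamma}=\cltg(N^{\gamma})=\cltg(N)$ by normality of $N$ and the proposition. Nothing is missing.
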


\begin{corollary}\label{agreeonnormalsubgroupscorollary}
Let $(T,g)$ and $(T',g')$ be normal closure pairs for $(\bbt,t_0)$ and $(\bbt ',t_{0}')$ respectively and let $X$ be a space. Suppose that for every normal subgroup $N\trianglelefteq\pionex$, $N$ is $(T,g)$-closed if and only if $N$ is $(T',g')$-closed. Then the closure operators $cl_{T,g}$ and $cl_{T',g'}$ agree on the normal subgroups of $\pionex$.
\end{corollary}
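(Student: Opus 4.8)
The plan is to establish the two inclusions $\cltgp(N)\leq\cltg(N)$ and $\cltg(N)\leq\cltgp(N)$ by a symmetric sandwiching argument, with the key enabling fact being Corollary \ref{normalclosureprop}: the closure of a normal subgroup under a normal closure pair is again normal. Fix a normal subgroup $N\trianglelefteq\pionex$ and set $M=\cltg(N)$.

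First I would observe that $M$ is $(T,g)$-closed, since the idempotence of the closure operator gives $\cltg(\cltg(N))=\cltg(N)=M$, and that $M$ is a normal subgroup of $\pionex$ by Corollary \ref{normalclosureprop} (using here that $(T,g)$ is a normal closure pair and that $N$ is itself normal). Because $M$ is normal, the hypothesis applies to it directly: since $M$ is $(T,g)$-closed, it is also $(T',g')$-closed. As $N\leq M$ and $M$ is $(T',g')$-closed, the definition of the closure operator forces $\cltgp(N)\leq M=\cltg(N)$.

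The reverse inclusion then follows by interchanging the roles of $(T,g)$ and $(T',g')$: the subgroup $\cltgp(N)$ is normal (again by Corollary \ref{normalclosureprop}) and $(T',g')$-closed, hence $(T,g)$-closed by the hypothesis, so $\cltg(N)\leq\cltgp(N)$. Combining the two inclusions yields $\cltg(N)=\cltgp(N)$ for every normal $N$, which is the claim.

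I expect the only subtle point to be the following: the hypothesis constrains the two closure operators only on \emph{normal} subgroups, so before the hypothesis may be invoked it is essential to know that each closure $\cltg(N)$ and $\cltgp(N)$ is itself normal. This is precisely what Corollary \ref{normalclosureprop} supplies, and with it in hand I anticipate no further obstacle; everything else is a formal manipulation of the definition of the closure operator.
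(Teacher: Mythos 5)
Your proof is correct and follows essentially the same route as the paper: both arguments show that $\cltg(N)$ is normal via Corollary \ref{normalclosureprop} and $(T,g)$-closed, invoke the hypothesis to conclude it is $(T',g')$-closed, deduce $\cltgp(N)\leq \cltg(N)$, and finish by symmetry. Your explicit remark about idempotence and the necessity of normality before applying the hypothesis is exactly the (implicit) content of the paper's argument.
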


\begin{proof}
If $N$ is a normal subgroup, then $cl_{T,g}(N)$ contains $N$ and is normal by Corollary \ref{normalclosureprop}. By assumption, $cl_{T,g}(N)$ is $(T',g')$-closed and therefore $cl_{T',g'}(N)\leq cl_{T,g}(N)$. By switching $(T,g)$ and $(T',g')$ and applying the same argument, we see that $cl_{T,g}(N)\leq cl_{T',g'}(N)$.
\end{proof}

\begin{lemma}\label{wellpointedlemma}
If $(\bbt,t_0)$ is a well-pointed space, i.e. if $\{t_0\}\to \bbt$ is a cofibration, then every closure pair $(T,g)$ for $(\bbt,t_0)$ is normal.
\end{lemma}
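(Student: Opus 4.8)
The plan is to verify the two implications in Definition~\ref{normaltestpairdef} separately. The backward implication is immediate: taking $\alpha=c_{x_0}$ gives $H^{c_{x_0}}=H$, so if $H^{\alpha}$ is $(T,g)$-closed for every $\alpha\in P(X,x_0)$, then in particular $H=H^{c_{x_0}}$ is $(T,g)$-closed. All the real work lies in the forward implication: assuming $H\leq\pionex$ is $(T,g)$-closed, I would show that $H^{\alpha}$ is a $(T,g)$-closed subgroup of $\pi_1(X,\alpha(1))$ for each path $\alpha$. The idea is to transport an arbitrary test map based at $\alpha(1)$ to a test map based at $x_0$, where the hypothesis on $H$ applies, and then transport the conclusion back. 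The tool that makes this transport possible is precisely the cofibration hypothesis.

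So fix a path $\alpha\in P(X,x_0)$, write $y=\alpha(1)$, and let $f:(\bbt,t_0)\to(X,y)$ be any map with $f_{\#}(T)\leq H^{\alpha}$; the goal is $f_{\#}(g)\in H^{\alpha}$. Since $\{t_0\}\hookrightarrow\bbt$ is a cofibration, it satisfies the homotopy extension property, so the homotopy of the single point $t_0$ given by $s\mapsto\alpha^{-}(s)$ (which starts at $\alpha^{-}(0)=y=f(t_0)$) extends to a homotopy $\wt{G}:\bbt\times\ui\to X$ with $\wt{G}(\cdot,0)=f$ and $\wt{G}(t_0,s)=\alpha^{-}(s)$. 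Setting $f'=\wt{G}(\cdot,1)$ then yields a map with $f'(t_0)=\alpha^{-}(1)=x_0$, i.e.\ a genuine test map $f':(\bbt,t_0)\to(X,x_0)$.

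Next I would record how $f'_{\#}$ and $f_{\#}$ are related. Because $\wt{G}$ is a free homotopy from $f$ to $f'$ whose basepoint traces the path $\alpha^{-}$ from $y$ to $x_0$, the standard ``filled square'' argument (applied to $L(t,s)=\wt{G}(\ell(t),s)$ for a loop $\ell$ representing a class in $\pionet$) shows that $f'_{\#}(c)=[\alpha]\,f_{\#}(c)\,[\alpha]^{-1}$ for every $c\in\pionet$. Consequently $f'_{\#}(T)=[\alpha]f_{\#}(T)[\alpha]^{-1}\leq[\alpha]H^{\alpha}[\alpha]^{-1}=[\alpha][\alpha^{-}]H[\alpha][\alpha^{-}]=H$. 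Since $H$ is $(T,g)$-closed and $f'$ is based at $x_0$, this forces $f'_{\#}(g)\in H$, whence $f_{\#}(g)=[\alpha^{-}]f'_{\#}(g)[\alpha]\in[\alpha^{-}]H[\alpha]=H^{\alpha}$, as required.

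The one point demanding care---and the step I expect to be the main obstacle---is fixing the direction of the conjugation so that it matches the convention $H^{\alpha}=[\alpha^{-}]H[\alpha]$; this is entirely pinned down by which end of $\alpha^{-}$ is traced by the basepoint under $\wt{G}$, and the square argument must be read off in the correct orientation. Everything else is routine: the cofibration hypothesis is used exactly once, to guarantee that the basepoint-moving homotopy $s\mapsto\alpha^{-}(s)$ extends over all of $\bbt$, and it is this extension that converts a test map based at $\alpha(1)$ into one based at $x_0$.
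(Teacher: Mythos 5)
Your proof is correct and follows essentially the same route as the paper's: both use the homotopy extension property to extend the basepoint homotopy $s\mapsto\alpha^{-}(s)$ over $\bbt$, converting a test map $f$ based at $\alpha(1)$ into a test map $f_1$ based at $x_0$ with $(f_1)_{\#}(\cdot)=[\alpha]f_{\#}(\cdot)[\alpha^{-}]$, and then transport the closedness of $H$ back to $H^{\alpha}$. Your conjugation formula and its direction match the paper's exactly, and your explicit treatment of the trivial converse (via the constant path) is the only addition, which the paper leaves implicit.
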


\begin{proof}
Fix a space $(X,x_0)$ and subgroup $H\leq \pionex$. Suppose $H$ is $(T,g)$-closed and $\alpha\in P(X,x_0)$ is any path. It suffices to show that $H^{\alpha}$ is $(T,g)$-closed. Let $f:(\bbt,t_0)\to (X,\alpha(1))$ be a map such that $f_{\#}(T)\leq H^{\alpha}$. Since $(\bbt,\{t_0\})$ has the homotopy extension property, there is a homotopy $H: \bbt\times [0,1]\to X$ such that $H(d,0)=f(d)$ and $H(t_0,s)=\alpha^{-}(s)$. Let $f_1: (\bbt,t_0)\to (X,x_0)$ be the map $f_1(d)=H(d,1)$. Then $(f_1)_{\#}([\gamma])=[\alpha]f_{\#}([\gamma])[\alpha^{-}]$. It follows that $(f_1)_{\#}(T)=[\alpha]f_{\#}(T)[\alpha^{-}]\leq [\alpha]H^{\alpha}[\alpha^{-}]=H$. By assumption, $(f_1)_{\#}(g)\in H$, which implies $f_{\#}(g)\in H^{\alpha}$.
\end{proof}
\begin{remark}\label{addawhisker}
If $(\bbt,t_0)$ is any space, we may add a ``whisker" by forming the one-point union $\bbt^+=(\bbt,t_0)\vee (\ui,1)$. This new test space will be well-pointed if we take the basepoint $t_{0}^{+}$ to be the image of $0$ in the union. If $j:\bbt\to\bbt^+$ and $\iota:\ui\to \bbt^+$ are the inclusion maps and $(T,g)$ is a closure pair for $(\bbt,t_0)$, then $([\iota]j_{\#}(T)[\iota^{-}],[\iota]j_{\#}(g)[\iota^{-}])$ is a normal closure pair for $(\bbt^+,t_{0}^{+})$
\end{remark}
\begin{remark}\label{homotopyinvarianceremark}
For a given space $X$ and point $x_0\in X$, we may assign to $(X,x_0)$ the lattice of $(T,g)$-closed subgroups of $\pionex$. If $(T,g)$ is a normal closure pair for $(\bbt,t_0)$, then this lattice is an invariant of the homotopy type of $X$. Indeed, if $h:X\to Y$ is a homotopy equivalence and $x_0\in X$ is any point, then the induced isomorphism $h_{\#}:\pionex\to\pi_1(Y,h(x_0))$ satisfies the property that $H\leq \pionex$ is $(T,g)$-closed if and only if $h_{\#}(H)$ is $(T,g)$-closed.
\end{remark}
\begin{definition}
A subgroup $H\leq \pi_1(X,x_0)$ is $(T,g)$\it{-dense} in $\pionex$ if $\cltg(H) $ $=\pionex$.
\end{definition}

\begin{remark}\label{densitysufficientremark}
It is apparently most practical to verify the following condition sufficient for density: if for every $k\in \pionex$, there is a map $f:(\bbt,t_0)\to (X,x_0)$ such that $f_{\#}(T)\leq H$ and $f_{\#}(g)=k$, then $H$ is $(T,g)$-dense in $\pionex$.
\end{remark}
By applying Proposition \ref{continuity}, we obtain the following.
\begin{corollary}\label{denseapplicationcor}
If $T\leq \pionet$ is $(T,g)$-dense in $\pionet$, then $H\leq \pionex$ is $(T,g)$-closed if and only if for every map $f:(\bbt,t_0)\to (X,x_0)$ such that $f_{\#}(T)\leq H$, we have $f_{\#}(\pionet)\leq H$.
\end{corollary}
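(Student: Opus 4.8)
The plan is to prove the two implications separately, noting that only the forward direction will actually use the density hypothesis. The reverse implication is immediate and I would dispatch it first: if every map $f:(\bbt,t_0)\to(X,x_0)$ with $f_{\#}(T)\leq H$ satisfies $f_{\#}(\pionet)\leq H$, then since $g\in\pionet$ we have $f_{\#}(g)\in f_{\#}(\pionet)\leq H$, which is precisely the definition of $(T,g)$-closedness. No appeal to density is needed here.

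For the forward direction, suppose $H$ is $(T,g)$-closed and let $f:(\bbt,t_0)\to(X,x_0)$ be an arbitrary map with $f_{\#}(T)\leq H$; the goal is to upgrade the conclusion $f_{\#}(g)\in H$ to the stronger $f_{\#}(\pionet)\leq H$. The crucial move is to apply Proposition \ref{continuity} to the map $f$, but with the subgroup $T\leq\pionet$ in the role of the source subgroup rather than $H$, which yields $f_{\#}(\cltg(T))\leq\cltg(f_{\#}(T))$.

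From here I would chain three observations. First, the density hypothesis states exactly that $\cltg(T)=\pionet$, so the left-hand side is $f_{\#}(\pionet)$. Second, monotonicity of the closure operator applied to $f_{\#}(T)\leq H$ gives $\cltg(f_{\#}(T))\leq\cltg(H)$. Third, because $H$ is $(T,g)$-closed we have $\cltg(H)=H$. Stringing these together produces $f_{\#}(\pionet)=f_{\#}(\cltg(T))\leq\cltg(f_{\#}(T))\leq\cltg(H)=H$, which is the desired inclusion.

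I do not expect a genuine obstacle: the content is essentially bookkeeping with the closure operator, and the argument is a short chain of inclusions. The one point that requires care is the substitution in Proposition \ref{continuity}—it must be invoked with $\bbt$ as the domain and $T$ (not $H$) as the subgroup, precisely so that the density identity $\cltg(T)=\pionet$ can be fed into it. The remaining ingredients, monotonicity and the characterization $\cltg(H)=H$ of $(T,g)$-closed subgroups, are both recorded in the discussion preceding the corollary.
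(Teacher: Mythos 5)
Your proof is correct and is exactly the argument the paper intends: the paper offers no written proof beyond the remark ``By applying Proposition \ref{continuity}, we obtain the following,'' and your chain $f_{\#}(\pionet)=f_{\#}(\cltg(T))\leq\cltg(f_{\#}(T))\leq\cltg(H)=H$ is precisely that application, combined with the trivial reverse implication.
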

%
%
\section{The Hawaiian earring as a test space}\label{section3}
Let $C_n\subseteq \bbr^2$ be the circle of radius $\frac{1}{n}$ centered at $\left(\frac{1}{n},0\right)$ and $\bbh=\bigcup_{n\in \bbn}C_n$ be the usual Hawaiian earring space with basepoint $b_0=(0,0)$. For $m\geq 1$, let $\bbh_{\geq n}=\bigcup_{m\geq n}C_m$ denote the smaller copies of the Hawaiian earring, all of which are homeomorphic to $\bbh$. We define some important loops in $\bbh$ as follows:
\begin{enumerate}
\item Let $\ell_n$ define the canonical counterclockwise loop traversing the circle $C_n$. These loops generate the free subgroup $\finfty=\lb [\ell_n]\mid n\in\bbn\rb\leq \pi_1(\bbh,b_0)$.
\item Let $\ell_{\geq m}$ denote the ``infinite concatenation" which is defined as $\ell_{n+m-1}$ on on the interval $\left[\frac{n-1}{n},\frac{n}{n+1}\right]$ and $\ell_{\geq m}(1)=b_0$. As a special case, we denote $\ell_{\infty}=\ell_{\geq 1}$.
\item Let $\mcc\subseteq \ui$ be the standard middle third Cantor set. Write $\ui\backslash \mcc=\bigcup_{n\geq 1}\bigcup_{k=1}^{2^{n-1}} I_{n}^{k}$ where $I_{n}^{k}$ is an open interval of length $\frac{1}{3^n}$ and, for fixed $n$, the sets $I_{n}^{k}$ are indexed by their natural ordering in $\ui$. Let $\ell_{\tau}:\ui\to \bbh$ be the ``transfinite concatenation" defined so that $\ell_{\tau}(\mcc)=b_0$ and $\ell_{\tau}:=\ell_{2^{n-1}+k-1}$ on $\overline{I_{n}^{k}}$ (see Figure \ref{elltaufigure}).
\end{enumerate}

\begin{figure}[H]
\centering \includegraphics[height=0.6in]{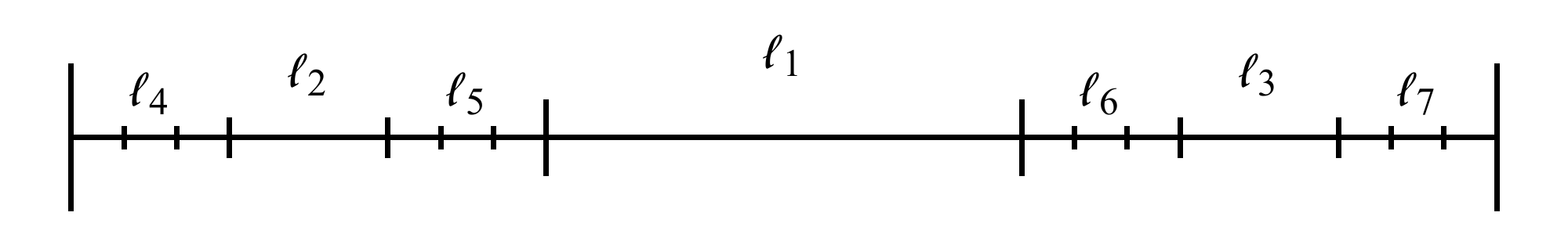}
\caption{\label{elltaufigure}The loop $\ell_{\tau}$}
\end{figure}

The fundamental group $\pioneh$ is uncountable and not free. However, it is naturally isomorphic to a subgroup of an inverse limit of free groups. Let $\bbh_{\leq n}=\bigcup_{m=1}^{n}C_n$ so that $\pi_1(\bbh_{\leq n},b_0)=F_n$ is the group freely generated by the elements $[\ell_1],[\ell_2],\dots,[\ell_n]$. For $n'>n$, the retractions $r_{n',n}:\bbh_{\leq n'}\to \bbh_{\leq n}$ collapsing $\bigcup_{n< m\leq n'}C_m$ to $b_0$ induce an inverse sequence
\[\dots \to F_{n+1}\to F_n\to \dots \to F_2\to F_1\]on fundamental groups, in which $F_{n+1}\to F_n$ deletes the letter $[\ell_{n+1}]$ from a given word. The inverse limit $\check{\pi}_{1}(\bbh,b_0)=\varprojlim_{n}F_n$ is the first shape homotopy group. The retractions $r_n:\bbh\to \bbh_{\leq n}$, which collapse $\bbh_{\geq n+1}$ to $b_0$, induce a canonical homomorphism \[\psi:\pioneh\to \check{\pi}_{1}(\bbh,b_0)\text{ where }\psi([\alpha])=((r_1)_{\#}([\alpha]),(r_2)_{\#}([\alpha]),\dots).\]
Since $\bbh$ a one-dimensional planar Peano continuum, $\psi$ is injective \cite{EK98,FZ05}. Thus a homotopy class $[\alpha]$ is trivial if and only if for every $n\in \bbn$, the projection $(r_n)_{\#}([\alpha])$ as a word in the letters $[\ell_1],[\ell_2],\dots,[\ell_n]$ reduces to the trivial word in $F_n$. Based on the injectivity of $\psi$, we also note that for every $n\in \bbn$, $\pioneh$ may be written as the free product $\pi_1(\bbh_{\leq n},b_0)\ast \pi_1(\bbh_{\geq n+1},b_0)$.

\begin{example}\label{nonnormaltestpair}
Consider the closure pair $(\finfty,[\ell_{\infty}])$ for the Hawaiian earring $\bbh$. Since $[\ell_{\infty}]\notin \finfty$, the subgroup $\finfty$ is not $(\finfty,[\ell_{\infty}])$-closed. On the other hand, if a space $X$ is semilocally simply connected at $x_0$, then for every map $f:(\bbh,b_0)\to (X,x_0)$, there is an $m\geq 1$ such that $f_{\#}(\pi_1(\bbh_{\geq m},b_0))=1$. Since every non-trivial element of $\pioneh$ may be written as a finite product of elements of $\pi_1(\bbh_{\geq m},b_0)$ and the free group $\langle [\ell_1],[\ell_2],\dots,[\ell_{m-1}]\rangle$, it is clear that every subgroup of $\pi_1(X,x_0)$ is $(\finfty,[\ell_{\infty}])$-closed. For example, if $(\bbhp,\bpp)$ is the space obtained by attaching a whisker as in Remark \ref{addawhisker} and $j:\bbh\to \bbhp$ and $\iota:\ui\to \bbhp$ are the inclusions, then $[\iota]j_{\#}(F)[\iota^{-}]$ is $(\finfty,[\ell_{\infty}])$-closed. However, the map $h:\bbhp\to \bbh$ collapsing the attached whisker to $b_0$ is a homotopy equivalence satisfying $h_{\#}([\iota]j_{\#}(F)[\iota^{-}])=F$. According to Remark \ref{homotopyinvarianceremark}, $(\finfty,[\ell_{\infty}])$ cannot be a normal closure pair.
\end{example}

\begin{definition}
An infinite sequence of paths $\alpha_1,\alpha_2,\dots$ such that $\alpha_{n}(1)=\alpha_{n+1}(0)$ for each $n\geq 1$ is \textit{null at} $x\in X$ if for every neighborhood $U$ of $x$, there is an $N$ such that $\alpha_n([0,1])\subseteq U$ for all $n\geq N$. The \textit{infinite concatenation} of such a null-sequence is the path $\alpha=\prod_{n=1}^{\infty}\alpha_n$ whose restriction to $\left[\frac{n-1}{n},\frac{n}{n+1}\right]$ is $\alpha_n$ and $\alpha(1)=x$.
\end{definition}
Note that a sequence $\alpha_n:(\ui,\{0,1\})\to (X,x)$ of loops is null at $x$ if and only if there is a map $f:(\bbh,b_0)\to(X,x)$ such that $f\circ \ell_n=\alpha_n$, in which case $f\circ \ell_{\geq m}=\prod_{n= m}^{\infty}\alpha_n$.
\subsection{The closure pairs $(C,c_{\infty})$ and $(C,c_{\tau})$ and the homotopically Hausdorff property}
\begin{definition}\label{homhausdorffdef} (Homotopically Hausdorff relative to a subgroup \cite{FZ07})
We call $X$ \textit{homotopically Hausdorff relative to a subgroup} $H\leq \pionex$ if for every $x\in X$ and every path $\alpha:\ui\to X$ from $\alpha(0)=x_0$ to $\alpha(1)=x$, only the trivial right coset of $H^{\alpha}=[\alpha^{-}]H[\alpha]$ in $\pi_1(X,x)$ has arbitrarily small representatives, that is, if for every $g\in \pi_1(X,x)\backslash H^{\alpha}$, there is an open set $U\in\mct_x$ such that there is no loop $\delta:(\ui,\{0,1\})\to (U,x)$ with $H^{\alpha}g=H^{\alpha}[\delta]$.

The space $X$ is \textit{homotopically Hausdorff} if it is homotopically Hausdorff relative to the trivial subgroup $H=1$.
\end{definition}
\begin{remark}\label{putdifferently}
Put differently, $X$ is homotopically Hausdorff relative to $H$ if and only if for every $x\in X$ and every path $\alpha$ from $x_0$ to $x$, we have \[\bigcap_{U\in\mct_x}H\pi(\alpha,U)=H.\]
\end{remark}
To characterize the homotopically Hausdorff property, we apply the construction in Remark \ref{addawhisker}.

\begin{definition}\label{defhplus}
Let $\bbh^+=\bbh\cup ([-1,0]\times \{0\})$. We take $\bpp=(-1,0)$ to be the basepoint of $\bbh^+$ so that $(\bbhp,\bpp)$ is well-pointed (see Figure \ref{hplusimage}). If $\iota:\ui\to \bbhp$ is the path $\iota(t)=(t-1,0)$, let
\begin{enumerate}
\item $c_n=[\iota\cdot \ell_n\cdot \iota^{-}]$,
\item $\cinfty=\lb c_n \mid n\in\bbn\rb\leq \pi_1(\bbhp,\bpp)$,
\item $c_{\infty}=[\iota\cdot\ell_{\infty}\cdot\iota^{-}]$,
\item and $c_{\tau}=[\iota\cdot\ell_{\tau}\cdot\iota^{-}]$.
\end{enumerate}
\end{definition}
\begin{figure}[H]
\centering \includegraphics[height=1.2in]{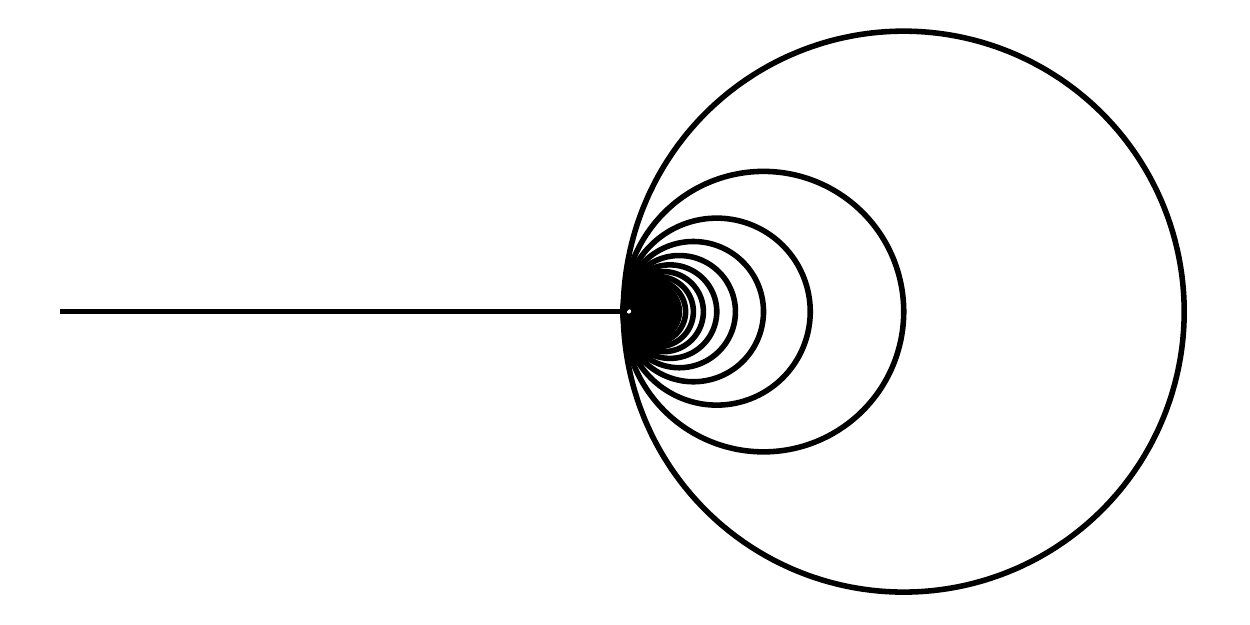}
\caption{\label{hplusimage}The space $\bbhp$}
\end{figure}
\begin{theorem}\label{homhausrelchar}
If $X$ is homotopically Hausdorff relative to $H\leq \pioneh$, then $H$ is $(\cinfty,c_{\infty})$-closed. The converse holds if $X$ is first countable; for instance, if $X$ is metrizable.
\end{theorem}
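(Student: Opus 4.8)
The plan is to route both conditions through a single test map $f:(\bbhp,\bpp)\to(X,x_0)$. Writing $\alpha=f\circ\iota$ for the image of the whisker and $x=\alpha(1)=f(b_0)$, the hypothesis $f_{\#}(\cinfty)\leq H$ is equivalent to $f_{\#}(c_n)=[\alpha][f\circ\ell_n][\alpha^{-}]\in H$ for all $n$, i.e. $[f\circ\ell_n]\in H^{\alpha}$, while the conclusion $f_{\#}(c_{\infty})\in H$ asserts $[f\circ\ell_{\infty}]\in H^{\alpha}$, where $f\circ\ell_{\infty}=\prod_{n=1}^{\infty}(f\circ\ell_n)$. On the homotopically Hausdorff side I would use the reformulation of Remark \ref{putdifferently}: relative to $H$, this property says that for every $x$ and every path $\alpha$ from $x_0$ to $x$, $\bigcap_{U\in\mct_x}H\pi(\alpha,U)=H$.

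For the forward implication, assume $X$ is homotopically Hausdorff relative to $H$ and let $f$ be as above with $f_{\#}(\cinfty)\leq H$. Fix $U\in\mct_x$. By continuity of $f$ at $b_0$, the nested sets $\bbh_{\geq N}=\bigcup_{k\geq N}C_k$ satisfy $f(\bbh_{\geq N})\subseteq U$ for some $N$. Splitting $\ell_{\infty}\equiv\ell_1\cdots\ell_{N-1}\cdot\ell_{\geq N}$ gives
\[ f_{\#}(c_{\infty})=\Big(\prod_{k=1}^{N-1}[\alpha][f\circ\ell_k][\alpha^{-}]\Big)\cdot[\alpha][f\circ\ell_{\geq N}][\alpha^{-}], \]
where the first $N-1$ factors equal $f_{\#}(c_1),\dots,f_{\#}(c_{N-1})$ and hence lie in $H$, and where the last factor lies in $\pi(\alpha,U)$ since $f\circ\ell_{\geq N}=\prod_{k\geq N}(f\circ\ell_k)$ is a loop in $U$ based at $x$. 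Thus $f_{\#}(c_{\infty})\in H\pi(\alpha,U)$, and as $U$ was arbitrary, $f_{\#}(c_{\infty})\in\bigcap_U H\pi(\alpha,U)=H$. Note that this direction uses no countability hypothesis.

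For the converse, assume $H$ is $(\cinfty,c_{\infty})$-closed and $X$ is first countable, and fix $x$ and a path $\alpha$ from $x_0$ to $x$; I must show $\bigcap_U H\pi(\alpha,U)\subseteq H$. Let $g$ lie in this intersection and choose a decreasing countable neighborhood basis $U_1\supseteq U_2\supseteq\cdots$ at $x$ (this is the only place first countability enters). For each $n$, membership $g\in H\pi(\alpha,U_n)$ produces a loop $\delta_n\in\Omega(U_n,x)$ with $H^{\alpha}[\delta_n]=H^{\alpha}g^{\alpha}$, where $g^{\alpha}=[\alpha^{-}]g[\alpha]$; in particular all $[\delta_n]$ occupy one right coset of $H^{\alpha}$, so the difference loops satisfy $[\delta_n\cdot\delta_{n+1}^{-}]\in H^{\alpha}$. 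Since $\delta_n(\ui)\subseteq U_n$ and the $U_n$ shrink to $x$, both $\{\delta_n\}$ and $\{\delta_n\cdot\delta_{n+1}^{-}\}$ are null sequences at $x$, so there is a map $f:(\bbhp,\bpp)\to(X,x_0)$ with $f\circ\iota=\alpha$ and $f\circ\ell_n=\delta_n\cdot\delta_{n+1}^{-}$. Then $f_{\#}(c_n)=[\alpha][\delta_n\delta_{n+1}^{-}][\alpha^{-}]\in H$, so $f_{\#}(\cinfty)\leq H$, and closedness gives $f_{\#}(c_{\infty})=[\alpha]\big[\prod_{n}(\delta_n\delta_{n+1}^{-})\big][\alpha^{-}]\in H$.

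The crux, and what I expect to be the main obstacle, is the telescoping identity $\big[\prod_{n=1}^{\infty}(\delta_n\cdot\delta_{n+1}^{-})\big]=[\delta_1]$ in $\pi_1(X,x)$: the cancellations are infinite and resist a naive finite homotopy. I would prove it first inside the Hawaiian earring. Taking the map $F:(\bbh,b_0)\to(X,x)$ with $F\circ\ell_n=\delta_n$ (available since $\{\delta_n\}$ is null), functoriality reduces the claim to $\big[\prod_{n}(\ell_n\cdot\ell_{n+1}^{-})\big]=[\ell_1]$ in $\pioneh$, and this follows from injectivity of $\psi:\pioneh\to\check{\pi}_1(\bbh,b_0)$: applying $r_m$ (which deletes every $\ell_k$ with $k>m$) collapses the infinite product to the finite word $\ell_1\ell_2^{-}\ell_2\ell_3^{-}\cdots\ell_m^{-}\ell_m$, which telescopes to $\ell_1$, so the two classes agree in every $F_m$ and hence in $\pioneh$. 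Granting this, $f_{\#}(c_{\infty})=[\alpha][\delta_1][\alpha^{-}]\in H$ forces $[\delta_1]\in H^{\alpha}$, whence $H^{\alpha}g^{\alpha}=H^{\alpha}[\delta_1]=H^{\alpha}$ and $g\in H$, completing the argument.
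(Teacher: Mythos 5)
Your proof is correct, and both directions follow the paper's own argument almost verbatim: the forward direction splits $\ell_{\infty}$ at a level $N$ with $f(\bbh_{\geq N})\subseteq U$ so as to place $f_{\#}(c_{\infty})$ in $H\pi(\alpha,U)$ for every $U$, and the converse builds the test map with $f\circ\ell_n=\delta_n\cdot\delta_{n+1}^{-}$ from coset representatives chosen in a shrinking neighborhood basis, exactly as the paper does (the paper phrases both directions contrapositively, which is immaterial). The only genuine divergence is how you justify what you rightly call the crux, the identity $\left[\prod_{n=1}^{\infty}(\delta_n\cdot\delta_{n+1}^{-})\right]=[\delta_1]$. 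The paper regroups the concatenation as $\delta_1\cdot\prod_{n\geq 2}(\delta_{n}^{-}\cdot\delta_{n})$ and invokes the elementary fact that an infinite concatenation of a null sequence of loops, each null-homotopic within its own image, is null-homotopic (the shrinking contractions stack into one continuous homotopy). You instead pull the computation back to the universal instance: a map $F:\bbh\to X$ with $F\circ\ell_n=\delta_n$ reduces everything to $\left[\prod_{n}(\ell_n\cdot\ell_{n+1}^{-})\right]=[\ell_1]$ in $\pioneh$, which you verify projection-by-projection using injectivity of $\psi:\pioneh\to\check{\pi}_{1}(\bbh,b_0)$. Both are valid. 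Your functoriality reduction is clean and makes the infinite cancellation incontestable, but it leans on $\pi_1$-shape injectivity of $\bbh$, a nontrivial cited theorem; the paper's route is self-contained and elementary, and it is the sharper tool to internalize here, since the same stacked-null-homotopies telescoping recurs throughout the paper (e.g., in Proposition \ref{normalsubgroup} and Lemma \ref{transfiniteprep}).
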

\begin{proof}
Suppose $H\leq \pionex$ is not $(\cinfty,c_{\infty})$-closed and recall the characterization of the homotopically Hausdorff property from Remark \ref{putdifferently}. Then there exists a map $f:(\bbhp,\bpp)\to (X,x_0)$ such that $f_{\#}(\cinfty)\leq H$ and $f_{\#}(c_{\infty})\notin H$. Set $x=f(b_0)$ and $\alpha=f\circ \iota$. Choose any $U\in \mathcal{T}_{x}$. By the continuity of $f$, there exists $m\geq 2$ such that the image of $f\circ \ell_{\geq m}$ lies in $U$. Since $f_{\#}(c_n)\in H$ for all $n\geq 1$, we have $f_{\#}(c_{\infty})=f_{\#}(c_1c_2\cdots c_{m-1})[\alpha\cdot(f\circ \ell_{\geq m})\cdot\alpha^{-}]\in H\pi(\alpha,U)$. Therefore, $X$ cannot be homotopically Hausdorff relative to $H$.

For the converse, suppose $X$ is first countable and is not homotopically Hausdorff relative to $H$. Then there exists a path $\alpha$ from $x_0$ to $x$ and element $g\in \left(\bigcap_{U\in \mathcal{T}_x}H\pi(\alpha,U)\right)\backslash H$. Let $U_1\supseteq U_2\supseteq U_3\supseteq \cdots$ be a countable neighborhood base at $\alpha(1)$. For each $n\geq 1$, find a loop $\delta_{n}\in  \Omega(U_n,\alpha(1))$ such that $g\in H[\alpha\cdot \delta_{n}\cdot\alpha^{-}]$. Since $g\notin H$, observe that $[\alpha\cdot \delta_{n}\cdot\alpha^{-}]\notin H$ for each $n\geq 1$. Define a map $f:(\bbhp,\bpp)\to (X,x_0)$ so that $f\circ \iota=\alpha$ and $f\circ \ell_{n}= \delta_n\cdot\delta_{n+1}^{-}$. Notice $f_{\#}(c_n)=[\alpha\cdot \delta_{n}\cdot\alpha^{-}][\alpha\cdot \delta_{n+1}\cdot\alpha^{-}]^{-1}\in Hgg^{-1}H=H$ for all $n\geq 1$. Therefore $f_{\#}(C)\leq H$. On the other hand, since the infinite concatenation $\prod_{n\geq 2}(\delta_{n}^{-}\cdot\delta_{n})$ is null-homotopic,
\[f_{\#}(c_{\infty})=[\alpha\cdot \delta_{1}\cdot\alpha^{-}][\alpha]\left[\prod_{n\geq 2}(\delta_{n}^{-}\cdot\delta_{n})\right][\alpha^{-}]=[\alpha\cdot \delta_{1}\cdot\alpha^{-}]\notin H.\]
We conclude that $H$ is not $(\cinfty,c_{\infty})$-closed.
\end{proof}
\begin{proposition}\label{ctauisdense}
$\cinfty$ is $(\cinfty,c_{\tau})$-dense in $\pionehp$.
\end{proposition}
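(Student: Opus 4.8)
The plan is to apply the density criterion of Remark~\ref{densitysufficientremark}, taking the test space, the target space, and the ambient subgroup all to be $(\bbhp,\bpp)$ and $\cinfty$. Thus it suffices to produce, for every $k\in\pionehp$, a self-map $f:(\bbhp,\bpp)\to(\bbhp,\bpp)$ with $f_{\#}(\cinfty)\leq\cinfty$ and $f_{\#}(c_{\tau})=k$. Since collapsing the attached whisker is a homotopy equivalence, conjugation by $[\iota]$ is an isomorphism $\pioneh\to\pionehp$ carrying $[\ell_m]\mapsto c_m$, $\finfty\mapsto\cinfty$, and $[\ell_{\tau}]\mapsto c_{\tau}$; accordingly I would write $k=[\iota\cdot\lambda\cdot\iota^{-}]$ for a loop $\lambda$ in $\bbh$ based at $b_0$. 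I would then seek a based self-map $g:(\bbh,b_0)\to(\bbh,b_0)$ satisfying $g_{\#}(\finfty)\leq\finfty$ and $g\circ\ell_{\tau}\simeq\lambda$, and define $f$ to equal $g$ on $\bbh$ and to fix the whisker, i.e.\ $f\circ\iota=\iota$. Then $f_{\#}(c_m)=[\iota\cdot(g\circ\ell_m)\cdot\iota^{-}]\in\cinfty$ and $f_{\#}(c_{\tau})=[\iota\cdot(g\circ\ell_{\tau})\cdot\iota^{-}]=[\iota\cdot\lambda\cdot\iota^{-}]=k$, as required.

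To build $g$, I would exploit that $\ell_{\tau}$ traverses each circle $C_m$ exactly once, along the complementary intervals $I_{n}^{k}$ of the Cantor set $\mcc$, and that, read from left to right, these intervals form a countable dense linear order without endpoints, hence one order-isomorphic to $\mathbb{Q}$. First replace $\lambda$ by its reduced representative and record the corresponding reduced word $W=\prod_{i\in L}\ell_{n_i}^{\epsilon_i}$ (syllables written as single letters), where $L$ is a countable linear order. A short continuity argument shows that each generator occurs only finitely often in $W$: a reduced loop cannot make infinitely many excursions into a fixed circle $C_n$, since each reduced excursion sweeps out all of $C_n$, whose diameter is bounded below, so an accumulation of excursions would violate continuity at the limit parameter. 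By the universality of $\mathbb{Q}$ among countable linear orders, I would fix an order-embedding $\theta$ of $L$ into the interval order. For the circle $C_m$ that $\ell_{\tau}$ traverses on the interval $\theta(i)$, define $g|_{C_m}$ to wind once around $C_{n_i}$ with sign $\epsilon_i$, and define $g$ to be constant at $b_0$ on every circle whose interval lies outside the image of $\theta$.

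The verification then splits into three checks. Continuity of $g$ off $b_0$ is clear, and continuity at $b_0$ holds because each generator appears only finitely often in $W$, so each $C_n$ is the target of only finitely many of the assigned loops; hence those loops form a null sequence and $g$ is continuous. Since $g$ sends every circle of $\bbh$ to a single-generator loop, $g_{\#}([\ell_m])\in\finfty$ for all $m$, giving $g_{\#}(\finfty)\leq\finfty$. Finally, $g\circ\ell_{\tau}$ reads off the assigned loops in the left-to-right order of the intervals, and because $\theta$ is order-preserving the occupied intervals spell out exactly $W$; applying each retraction $r_N$ annihilates all but finitely many syllables and recovers the image of $[\lambda]$ in $F_N$, so injectivity of $\psi$ yields $g\circ\ell_{\tau}\simeq\lambda$. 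I expect the heart of the argument to be the second paragraph: reconciling the three competing demands---finitely many generators on each circle (for $g_{\#}(\finfty)\leq\finfty$), a null sequence of image loops (for continuity of $g$), and a transfinite reading reproducing an \emph{arbitrary} class---which is precisely what the finite-occurrence property of $W$ together with the $\mathbb{Q}$-like order of the Cantor intervals makes possible.
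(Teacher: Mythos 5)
Your proof is correct and follows essentially the same route as the paper's: both invoke the density criterion of Remark \ref{densitysufficientremark}, decompose the given loop into its single-circle excursions, order-embed that countable linear order into the Cantor-complement intervals traversed by $\ell_{\tau}$, define the self-map circle-by-circle, and verify continuity together with $f_{\#}(\cinfty)\leq\cinfty$ and $f_{\#}(c_{\tau})=k$. The only minor differences are that you first pass to Eda's reduced representative and make explicit two points the paper leaves implicit (the finite-occurrence property of each generator, and the identification $g\circ\ell_{\tau}\simeq\lambda$ via the retractions $r_N$ and injectivity of $\psi$), whereas the paper works with an arbitrary representative by homotoping each excursion within its circle to $\ell_{n}^{\pm}$ or a constant.
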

\begin{proof}
We verify the property stated in Remark \ref{densitysufficientremark}, which is sufficient for density. If $\mcc$ is the middle third cantor set, let $I_m$ be the unique component of $\ui\backslash \mcc$ on which $\ell_{\tau}$ is defined as the path $\ell_m$. An arbitrary element $h\in \pi_1(\bbhp,\bpp)$ may be represented by a loop of the form $\iota\cdot \alpha\cdot \iota^{-}$ where $\alpha:\ui\to \bbh$ is a loop based at $b_0$. Let $J_k$, $k\in\bbn$ be the components of $[0,1]\backslash\alpha^{-1}(b_0)$, of which we may assume there are infinitely many, and choose a function $\phi :\bbn \to\bbn$ such that the collection $\{J_k \mid k \in\bbn\}$ has the same order type as $\{I_{\phi(k)} \mid k \in\bbn\}$. For each $k$, there is an $n_k$, such that $\alpha$ restricted to $\overline{J_k}$ is homotopic within $C_{n_k}$ (rel. endpoints) to either $\ell_{n_k}$, $\ell_{n_k}^{-}$, or a constant. Accordingly, define $\beta:\ui\to\bbh$ on $\overline{ I_{\phi(k)} }$ as $\ell_{n_k}$, $\ell_{n_k}^{-}$, or constant, respectively, and equal to $b_0$ elsewhere. Then $[\beta]=[\alpha]\in \pi_1(\bbh,b_0)$. Define a map $f:\bbhp\to\bbhp$ so that $f\circ\iota=\iota$ and $f\circ \ell_n \equiv \beta|_{\overline{ I_n}}$ for all $n$. Then $f_{\#}(C)\leq C$ and $f_{\#}(c_{\tau})=[\iota\cdot\beta\cdot\iota^-]=h$.
\end{proof}
\begin{proposition}\label{normalsubgroup}
If a subgroup $H\leq \pionex$ is $(\cinfty,c_{\tau})$-closed, then $H$ is $(\cinfty,c_{\infty})$-closed. The converse holds if $H$ is a normal subgroup of $\pionex$. In particular, the closure operators $cl_{C,c_{\infty}}$ and $cl_{C,c_{\tau}}$ agree on normal subgroups.
\end{proposition}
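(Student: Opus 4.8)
The plan is to prove the two implications separately and then read off the ``in particular'' statement from Corollary \ref{agreeonnormalsubgroupscorollary}.

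\emph{First implication ($(C,c_\tau)$-closed $\Rightarrow$ $(C,c_\infty)$-closed).} By Remark \ref{comparisonpropremark} it suffices to produce a single map $f:(\bbhp,\bpp)\to(\bbhp,\bpp)$ with $f_\#(C)\leq C$ and $f_\#(c_\tau)=c_\infty$; this forces condition (2) of Proposition \ref{comparisonprop} in exactly the required direction. I would take $f$ to be the identity on the whisker $\iota$ and, on $\bbh$, the retraction that collapses every circle except the ``rightmost at each level'' circles $C_{2^n-1}$ — these are the circles traversed by $\ell_\tau$ on the intervals $I_n^{2^{n-1}}$, which increase to $1$ in $\ui$ — sending $C_{2^n-1}$ onto $C_n$ by a degree-one map. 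This $f$ is continuous at $b_0$ because large surviving circles are sent to small circles, and each $f_\#(c_{2^n-1})=c_n$ while all other $f_\#(c_m)=1$, so $f_\#(C)\leq C$. Reading $f\circ\ell_\tau$ from left to right, it traverses $\ell_1,\ell_2,\ell_3,\dots$ in order with null tail, so $(r_n)_\#[f\circ\ell_\tau]=[\ell_1\ell_2\cdots\ell_n]=(r_n)_\#[\ell_\infty]$ for every $n$; injectivity of $\psi$ then gives $[f\circ\ell_\tau]=[\ell_\infty]$, and hence $f_\#(c_\tau)=c_\infty$.

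\emph{Converse for normal $H$.} Let $H\trianglelefteq\pionex$ be $(C,c_\infty)$-closed and let $f:(\bbhp,\bpp)\to(X,x_0)$ satisfy $f_\#(C)\leq H$; write $\alpha=f\circ\iota$, $x=f(b_0)$, $\delta_n=f\circ\ell_n$, and $K=H^{\alpha}\trianglelefteq\pi_1(X,x)$. The goal is $[f\circ\ell_\tau]\in K$ (equivalently $f_\#(c_\tau)\in H$), and we know $[\delta_n]\in K$. Since $(\bbhp,\bpp)$ is well-pointed, $(C,c_\infty)$ is a normal closure pair (Lemma \ref{wellpointedlemma}), so $K$ is again $(C,c_\infty)$-closed (Definition \ref{normaltestpairdef}). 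Feeding $(C,c_\infty)$-closedness a constant whisker, and using the homeomorphisms $\bbh_{\geq m}\cong\bbh$, I obtain the two engines of the argument: (i) an \emph{infinite-product closure} --- any null sequence of loops at $x$ whose classes lie in $K$ has its infinite concatenation in $K$; and (ii) \emph{reordering absorption} from normality --- since $[\beta\gamma]$ and $[\gamma\beta]$ are conjugate, the normal subgroup $K$ is insensitive to reordering and reassociating finitely many blocks. I would also record the continuity fact that near every point of $(\ell_\tau)^{-1}(b_0)=\mcc$ the indices of the nearby circles tend to $\infty$, so the corresponding loops $f\circ(\ell_\tau|_{\cdot})$ in $X$ form null clusters.

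\emph{The crux.} Using the self-similar (right-spine) decomposition $\ell_\tau\simeq\prod_{n\geq1}\big(\lambda_n\cdot\ell_{2^n-1}\big)$, where the $\lambda_n$ are smaller transfinite copies and the concatenation is null, engine (i) reduces $[f\circ\ell_\tau]\in K$ to the statements $[f\circ\lambda_n]\in K$ (the spine letters $\delta_{2^n-1}$ already lie in $K$), while engine (ii) lets me handle the finitely many non-null blocks at each stage freely. The statement to be proved is therefore the general claim that \emph{any loop at $x$ presented as a transfinite concatenation, along a closed parameter set, of loops whose classes lie in $K$ has its class in $K$}, which I would establish by transfinite induction. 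I expect the main obstacle to be exactly the well-foundedness of this induction: because $\ell_\tau$ is self-similar, the naive recursion into subtrees never terminates, so the induction must instead be run on the Cantor--Bendixson rank of the zero set, with the null clusters guaranteed by the continuity fact absorbed into single elements of $K$ via engine (i) to serve as the limit/base cases. That normality is genuinely needed here is consistent with Corollary \ref{differencecellandctau}, which shows $(C,c_\infty)$-closed $\nRightarrow(C,c_\tau)$-closed in general.

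\emph{Agreement on normal subgroups.} Finally, both $(C,c_\infty)$ and $(C,c_\tau)$ are normal closure pairs (Lemma \ref{wellpointedlemma}), and the two implications above show that a normal subgroup is $(C,c_\infty)$-closed if and only if it is $(C,c_\tau)$-closed. Corollary \ref{agreeonnormalsubgroupscorollary} then yields that $cl_{C,c_\infty}$ and $cl_{C,c_\tau}$ agree on normal subgroups.
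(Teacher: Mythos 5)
Your first implication and the closing ``in particular'' step are correct, and the first implication takes a genuinely different route from the paper: the paper proves that $\cinfty$ is $(\cinfty,c_{\tau})$-dense in $\pionehp$ (Proposition \ref{ctauisdense}) and then invokes Proposition \ref{comparisonprop}, whereas you exhibit a single explicit map $f:(\bbhp,\bpp)\to(\bbhp,\bpp)$ with $f_{\#}(\cinfty)\leq \cinfty$ and $f_{\#}(c_{\tau})=c_{\infty}$ and apply Remark \ref{comparisonpropremark}. Your map (collapse all circles except the $C_{2^n-1}$, send $C_{2^n-1}$ onto $C_n$) is continuous and does what you claim; this is more economical than density, although density is what the paper reuses elsewhere (e.g., via Corollary \ref{denseapplicationcor}, including in its proof of the converse). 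The appeal to Lemma \ref{wellpointedlemma} and Corollary \ref{agreeonnormalsubgroupscorollary} for the agreement of the closure operators matches the paper.

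The converse for normal $H$ is where the real content lies, and there your proposal has a genuine gap, exactly at what you call the crux. An induction on the Cantor--Bendixson rank of the zero set is not well founded for the loops that matter: $(\ell_{\tau})^{-1}(b_0)=\mcc$ is a \emph{perfect} set, so its Cantor--Bendixson derivative is itself and it carries no ordinal rank; the same is true of the zero set of each piece $\lambda_n$ in your right-spine decomposition, which is order-isomorphic to the whole. So switching from ``recursion into subtrees'' to ``induction on CB rank'' renames the non-termination rather than removing it. Engine (i) cannot serve as the limit case either: $(\cinfty,c_{\infty})$-closedness absorbs only concatenations of order type $\omega$ (null sequences), while the clusters accumulating at points of a perfect set are themselves densely ordered transfinite concatenations, i.e., instances of the very statement being proved, with no decrease in complexity. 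Moreover, the paper shows that $\omega$-type absorption, however it is iterated, provably cannot reach $c_{\tau}$: by Proposition \ref{countablecutexample} and Corollary \ref{differencecellandctau}, $c_{\tau}\notin cl_{C,c_{\infty}}(C)$, because the subgroup of classes of loops with countable (equivalently, scattered) preimage of $b_0$ is $(\cinfty,c_{\infty})$-closed, contains $\cinfty$, and misses $c_{\tau}$. Hence all of the work must be done by normality, and your engine (ii) --- reordering and reassociating finitely many blocks --- is not a mechanism that can act at limit stages which do not exist.

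For contrast, the paper's argument uses normality globally and is not an induction at all. Given $f:\bbhp\to X$ with $f_{\#}(\cinfty)\leq N$ and a loop $\gamma=\iota\cdot\beta\cdot\iota^{-}$ with $f_{\#}([\gamma])\notin N$, one uses, for each $n$, the free-product splitting of $\pioneh$ as $\pi_1(\bbh_{\leq n-1},b_0)\ast\pi_1(\bbh_{\geq n},b_0)$ to write $\beta$, up to homotopy, as a \emph{finite} alternating word $a_1\cdot b_1\cdot\;\cdots\;\cdot a_m\cdot b_m$ with each $a_j$ constant or in $\{\ell_{1}^{\pm},\dots,\ell_{n-1}^{\pm}\}$ and each $b_j$ a loop in $\bbh_{\geq n}$. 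Since $f_{\#}([\iota\cdot a_j\cdot\iota^{-}])\in N$ and $N$ is normal, all of these letters are absorbed at once, yielding $f_{\#}([\gamma])\in N[\alpha\cdot\delta_n\cdot\alpha^{-}]$ where $\delta_n$ is the $f$-image of a loop in $\bbh_{\geq n}$. Feeding the telescoping null sequence into the map $h:\bbhp\to X$ with $h\circ\iota=\alpha$ and $h\circ\ell_n=\delta_n\cdot\delta_{n+1}^{-}$ gives $h_{\#}(\cinfty)\leq N$, so $(\cinfty,c_{\infty})$-closedness forces $h_{\#}(c_{\infty})=[\alpha\cdot\delta_1\cdot\alpha^{-}]\in N$, contradicting the fact that $[\alpha\cdot\delta_1\cdot\alpha^{-}]$ lies in the coset $Nf_{\#}([\gamma])\neq N$. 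The decisive idea missing from your proposal is to decompose the group element algebraically --- a finite word at every scale $n$ --- rather than decomposing the parameter interval, which is transfinite and self-similar at every scale, and then to treat all scales simultaneously by the telescoping trick.
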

\begin{proof}
Since $\cinfty$ is $(\cinfty,c_{\tau})$-dense, we have $c_{\infty}\in \pi_1(\bbhp,\bpp)=cl_{\cinfty,c_{\tau}}(\cinfty)$ and we may apply Proposition \ref{comparisonprop}. For the partial converse, suppose $N$ is a normal, $(\cinfty,c_{\infty})$-closed subgroup of $\pionex$ and $f:\bbhp\to X$ is a map such that $f_{\#}(\cinfty)\leq N$. Recall Corollary \ref{denseapplicationcor}. To obtain a contradiction, suppose there is a loop $\gamma$ in $\bbhp$ such that $f_{\#}([\gamma])\notin N$. Set $\alpha=f\circ \iota$. We may assume $\gamma=\iota\cdot \beta\cdot \iota^{-}$ where $\beta$ is a reduced loop in $\bbh$. Notice that for each $n\geq 1$, $\beta$ is homotopic to a loop of the form $a_1\cdot b_1\cdot a_2\cdot b_2\cdot \;\cdots\;\cdot a_m\cdot b_m$ where $a_j$ is constant or is in $ \{\ell_{1}^{\pm},\ell_{2}^{\pm},\dots,\ell_{n-1}^{\pm}\}$ and $b_j$ is a (possibly constant) loop with image in $\bbh_{\geq n}$. Therefore
$f_{\#}([\gamma]) = f_{\#}\left(\prod_{j=1}^{m}[\iota\cdot a_j\cdot \iota^{-}][\iota\cdot b_j\cdot \iota^{-}]\right)$ is an element of \[\prod_{j=1}^{m}f_{\#}(\cinfty)f_{\#}([\iota\cdot b_j\cdot \iota^{-}])\leq  \prod_{j=1}^{m}Nf_{\#}([\iota\cdot b_j\cdot \iota^{-}])\]
and since $N$ is normal,
\[f_{\#}([\gamma])\in\prod_{j=1}^{m}Nf_{\#}([\iota\cdot b_j\cdot \iota^{-}])= N[\alpha\cdot \delta_n\cdot\alpha^{-}]\]
where $\delta_{n}=f\circ \prod_{j=1}^{m} b_j$ has image in $\bbh_{\geq n}$. Let $h:\bbhp\to X$ be the map defined by $h\circ \iota=\alpha$ and $h\circ \ell_{n}=\delta_{n}\cdot\delta_{n+1}^{-}$. Notice that \[h_{\#}(c_n)=[\alpha\cdot \delta_{n}\cdot \alpha^{-}][\alpha\cdot \delta_{n+1}^{-}\cdot \alpha^{-}]\in Nf_{\#}([\gamma])f_{\#}([\gamma])^{-1}N=N.\] Since $h_{\#}(\cinfty)\leq N$ and $N$ is $(\cinfty,c_{\infty}$)-closed, we have $h_{\#}(c_{\infty})\in N$. But
\[
h_{\#}(c_{\infty}) = [\alpha]\left[\prod_{n=1}^{\infty}\delta_{n}\cdot\delta_{n+1}^{-}\right][\alpha^{-}]= [\alpha]\left[\delta_{1}\right][\alpha^{-}]\in N,\]
which contradicts the fact that $[\alpha]\left[\delta_{1}\right][\alpha^{-}]\in Nf_{\#}([\gamma])$ and $f_{\#}([\gamma])\notin N$.

The final statement of the proposition follows from Corollary \ref{agreeonnormalsubgroupscorollary}.
\end{proof}
Let $\bbh\bba$ be the Harmonic Archipelago constructed by attaching 2-cells to $\bbh$ along the loops $\ell_{n}\cdot\ell_{n+1}^{-}$ \cite{BS}.
\begin{corollary}\label{homhauschar}
If $X$ is first countable, then the following are equivalent:
\begin{enumerate}
\item $X$ is homotopically Hausdorff,
\item every map $f:\bbh\to X$ such that $f_{\#}(\finfty)=1\leq \pi_1(X,f(b_0))$ induces the trivial homomorphism on $\pi_1$,
\item every map $f:\bbh\bba\to X$ induces the trivial homomorphism on $\pi_1$.
\end{enumerate}
\end{corollary}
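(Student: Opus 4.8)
The plan is to prove the cycle $(1)\Rightarrow(2)\Rightarrow(1)$ together with $(1)\Rightarrow(3)\Rightarrow(2)$, which yields all three equivalences (first countability enters only in the construction used for $(2)\Rightarrow(1)$). For $(1)\Rightarrow(2)$ I would argue directly. Let $g:(\bbh,b_0)\to(X,x)$ satisfy $g_{\#}(\finfty)=1$ and fix an arbitrary $[\gamma]\in\pioneh$. For each $n$, the free product decomposition $\pioneh=\pi_1(\bbh_{\leq n},b_0)\ast\pi_1(\bbh_{\geq n+1},b_0)$ lets me write $[\gamma]=u_1v_1\cdots u_kv_k$ with $u_i\in\pi_1(\bbh_{\leq n},b_0)$ and $v_i\in\pi_1(\bbh_{\geq n+1},b_0)$. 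Since $g_{\#}$ kills $[\ell_1],\dots,[\ell_n]$ it kills every $u_i$, so $g_{\#}([\gamma])=\prod_i g_{\#}(v_i)$ is represented by a loop whose image lies in $g(\bbh_{\geq n+1})$. As $n\to\infty$ these images shrink to $x$, so $g_{\#}([\gamma])$ has arbitrarily small representatives; choosing a path $\beta$ from $x_0$ to $x$ and invoking Remark~\ref{putdifferently} gives $g_{\#}([\gamma])=1$. Hence $g_{\#}$ is trivial.

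For $(2)\Rightarrow(1)$ I would argue by contraposition, reusing the construction from the converse half of Theorem~\ref{homhausrelchar}. If $X$ is not homotopically Hausdorff, first countability supplies a point $x$, a nontrivial $a\in\pi_1(X,x)$, and loops $\delta_n$ lying in a shrinking neighborhood base at $x$ with $[\delta_n]=a$ for all $n$. The assignment $\ell_n\mapsto\delta_n\cdot\delta_{n+1}^{-}$ defines a map $g:(\bbh,b_0)\to(X,x)$, since the defining loops form a null sequence, and telescoping (exactly as in the cited proof) gives $g_{\#}([\ell_n])=aa^{-1}=1$ while $g_{\#}([\ell_{\infty}])=[\delta_1]=a\neq1$. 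Thus $g_{\#}(\finfty)=1$ but $g_{\#}$ is not trivial, contradicting (2).

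The two implications involving $\bbh\bba$ rest on reading its topology correctly. Writing $\bbh\bba=\bbh\cup_{\phi}\bigsqcup_n D_n^2$ as an adjunction space, its universal property says that a function $f:\bbh\bba\to X$ is continuous iff its restriction $g=f\circ j$ to $\bbh$ and its restriction to each disk $D_n^2$ are continuous; no joint ``shrinking'' of the cell-fillings is required. Consequently an extension $f:\bbh\bba\to X$ of a given $g:\bbh\to X$ exists exactly when each $g\circ(\ell_n\cdot\ell_{n+1}^{-})$ is null-homotopic, i.e. when $g_{\#}([\ell_n])$ is independent of $n$. Granting this, $(3)\Rightarrow(2)$ is immediate: if $g_{\#}(\finfty)=1$ then $g_{\#}([\ell_n])=1$ is constant, $g$ extends to some $f:\bbh\bba\to X$, property (3) makes $f_{\#}$ trivial, and $g_{\#}=f_{\#}\circ j_{\#}$ is therefore trivial. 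For $(1)\Rightarrow(3)$, given any $f:\bbh\bba\to X$ the common value $f_{\#}([\ell_n])$ is represented by the shrinking loops $f\circ\ell_n$, so homotopical Hausdorffness forces it to vanish; then $g=f\circ j$ has $g_{\#}(\finfty)=1$ and is trivial by $(1)\Rightarrow(2)$, and since $j_{\#}:\pioneh\to\pi_1(\bbh\bba,b_0)$ is surjective (every loop may be pushed off the open $2$-cells into $\bbh$) we conclude that $f_{\#}$ is trivial.

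I expect the main obstacle to be psychological rather than technical: one naturally fears that extending a map $\bbh\to X$ over the infinitely many $2$-cells of $\bbh\bba$ requires null-homotopies whose images shrink to $f(b_0)$, and such small fillings need not exist when $X$ is not semilocally simply connected. This worry evaporates once the adjunction-space universal property is used, since continuity is checked cell-by-cell. After that, the only non-formal ingredient is the surjectivity of $j_{\#}$, a standard general-position fact that is needed solely for $(1)\Rightarrow(3)$.
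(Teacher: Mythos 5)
Your proof is correct, but it reaches the key equivalence $(1)\Leftrightarrow(2)$ by a genuinely different route than the paper. The paper deduces $(1)\Leftrightarrow(2)$ "from above," as an instance of its general machinery: Theorem~\ref{homhausrelchar} (homotopically Hausdorff rel.\ $H$ $\Leftrightarrow$ $(\cinfty,c_{\infty})$-closed), Proposition~\ref{normalsubgroup} (for normal $H$, $(\cinfty,c_{\infty})$-closed $\Leftrightarrow$ $(\cinfty,c_{\tau})$-closed), Proposition~\ref{ctauisdense} (density of $\cinfty$), and Corollary~\ref{denseapplicationcor}, all applied to $H=1$. Your $(1)\Rightarrow(2)$ is instead a direct, self-contained argument: split $[\gamma]$ in the free product $\pi_1(\bbh_{\leq n},b_0)\ast\pi_1(\bbh_{\geq n+1},b_0)$, kill the finite-rank letters using $g_{\#}(\finfty)=1$, and note the surviving representative lies in $g(\bbh_{\geq n+1})$, which shrinks into any neighborhood of $g(b_0)$; this is essentially the argument hidden inside the paper's proof of Proposition~\ref{normalsubgroup}, specialized to the trivial subgroup, and it lets you bypass the order-type/density work of Proposition~\ref{ctauisdense} entirely. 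What the paper's route buys is reuse and generality (the same machinery treats arbitrary normal subgroups $H$); what yours buys is elementarity for this particular corollary. Your $(2)\Rightarrow(1)$ is the paper's own telescope construction from Theorem~\ref{homhausrelchar}, with first countability entering in the same place. On the archipelago implications you are actually more complete than the paper: its proof of $(2)\Rightarrow(3)$ cites only $\pi_1$-surjectivity of the inclusion $j:\bbh\to\bbh\bba$, which by itself does not suffice, since one must first see that $g=f\circ j$ satisfies $g_{\#}(\finfty)=1$, and $j_{\#}([\ell_n])$ is \emph{not} trivial in $\pi_1(\bbh\bba,b_0)$; your $(1)\Rightarrow(3)$ supplies exactly this step by observing that the common class $f_{\#}j_{\#}([\ell_n])$ has arbitrarily small representatives and hence dies by (1). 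Finally, your adjunction-space reading of ``attaching 2-cells'' for $(3)\Rightarrow(2)$ matches the paper's implicit use of the same fact; the only caveat, applying equally to the paper, is that if one insists on the metric model of $\bbh\bba$ in $\bbr^3$, the cell-by-cell extension need not be continuous at $b_0$, and one must invoke the (standard but nontrivial) homotopy equivalence between the two models.
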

\begin{proof}
(1) $\Leftrightarrow$ (2) follows from Corollary \ref{denseapplicationcor} and Propositions \ref{ctauisdense} and \ref{normalsubgroup}. For (2) $\Rightarrow$ (3), observe that the inclusion $\bbh\to\bbh\bba$ induces a surjection on $\pi_1$. For (3) $\Rightarrow$ (2), notice that $f:\bbh\to X$ extends to $f:\bbh\bba\to X$ if $f_{\#}(F)=1$.
\end{proof}
\begin{example}\label{commutatorexample} (A closure of a commutator subgroup) Set $G=\pioneh$ and let $[G,G]$ denote the commutator subgroup of $G$. Recall that the abelianization $G/[G,G]$ is isomorphic to the first singular homology group $H_1(\bbh)$. Observe that $[G,G]$ is not $(C,c_{\infty})$-closed since infinite products of commutators such as $\left[\prod_{n=1}^{\infty}\left(\ell_{2n-1}\cdot\ell_{2n}\cdot\ell_{2n-1}^{-}\cdot\ell_{2n}^{-}\right)\right]$ are not elements of $[G,G]$; see Lemma 3.6 of \cite{Edasingularonedim16}.

Consider the infinite torus $T=\prod_{n\geq 1}C_n$ and the canonical embedding $m:\bbh\to T$ induced by the retractions $R_n:\bbh\to C_n$. Since each $C_n$ is homotopically Hausdorff, $K=\ker(m_{\#})=\bigcap_{n\geq 1}\ker((R_n)_{\#})$ is $(C,c_{\infty})$-closed. Since $\pi_1(T,x_0)\cong \prod_{n\geq 1}\bbz$ is abelian, $[G,G]\leq K$ and thus $cl_{C,c_{\infty}}([G,G])\leq K$. Suppose $\alpha$ is a loop in $\bbh_{\geq n}$ such that $[\alpha]\in K$. Then we may write $[\alpha]=\prod_{i=1}^{m}\left([\delta_i][\ell_{n}]^{\epsilon_i}\right)$ where the loop $\delta_i$ has image in $\bbh_{\geq n+1}$ and $\sum_{i=1}^{m}\epsilon_i=0$. Since $\alpha$ and $\beta=\prod_{i=1}^{m}\delta_i$ are homologous in $\bbh_{\geq n}$, there is a loop $\gamma$ in $\bbh_{\geq n}$ such that $[\gamma]\in [G,G]$ and $[\alpha]=[\gamma][\beta]$. Note $[\beta]\in K$. Thus given any element $[\alpha]\in K$, we may inductively construct loops $\gamma_n$ in $\bbh_{\geq n}$ and $\beta_n$ in $\bbh_{\geq n+1}$ such that $[\gamma_n]\in [G,G]$, $[\beta_n]\in K$, and $[\alpha]=\left(\prod_{i=1}^{n}[\gamma_i]\right)[\beta_n]$. By composing this sequence of shrinking homotopies, we see that $[\alpha]=\left[\prod_{n\geq 1}\gamma_n\right]$ and thus $[\alpha]$ is an infinite product of elements of $[G,G]$. We conclude that $cl_{C,c_{\infty}}([G,G])= K$. See also Corollary \ref{heuplcorollary2}.

In light of Remark \ref{quotienttopremark} below, this computation sharpens the result in \cite{Corsonhomology} that $K$ is the topological closure of $[G,G]$ in $G$ when $G$ is equipped with its natural quotient topology.
\end{example}
Before presenting our next example, we prove a technical lemma and a corollary.
\begin{lemma}\label{cancellinglemma}
Let $\alpha:(\ui,0)\to (X,x_0)$ be a reduced path in a one-dimensional metric space $X$, $\gamma:\ui\to X$ be a reduced loop based at $\alpha(1)$, and $\eta$ be a reduced representative of $[\alpha\cdot \gamma\cdot \alpha^{-}]$. Then there exist $s,t\in [0,1]$ such that $\eta|_{[0,s]}\equiv \alpha|_{[0,t]}$ and $\alpha([t,1])\subseteq \gamma([0,1])$.
\end{lemma}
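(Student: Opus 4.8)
The plan is to work entirely with reduced representatives, exploiting their uniqueness up to reparametrization in a one-dimensional metric space \cite{EdaSpatial}. First I would record one tool: a \emph{cancellation principle} stating that if $p,q$ are reduced paths with $p(1)=q(0)$ and $q$ does not begin by retracing $p^{-}$ (equivalently, $p^{-}$ and $q$ share no nondegenerate initial segment), then $p\cdot q$ is reduced. This follows from uniqueness, since a nondegenerate null-homotopic subloop of $p\cdot q$ would have to straddle the seam, producing subpaths $P\subseteq p$ and $Q\subseteq q$ with $P\cdot Q\simeq c$; then $P\simeq Q^{-}$, and as both are reduced, $P\equiv Q^{-}$, forcing $q$ to begin by retracing $p^{-}$. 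A consequence is that the reduced representative of a concatenation of two reduced paths is obtained by a \emph{single} maximal seam cancellation: writing $p\equiv p_0\cdot r$ and $q\equiv r^{-}\cdot q_0$ with $r$ maximal, maximality together with the principle gives a reduced path $p_0\cdot q_0$, which is therefore the reduced representative of $p\cdot q$.

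Next I fix $t\in[0,1]$ so that $\eta|_{[0,s]}\equiv\alpha|_{[0,t]}$ is the maximal common initial segment of the reduced loops $\eta$ and $\alpha$, and write $\eta\equiv\alpha|_{[0,t]}\cdot\eta_1$ and $\alpha\equiv\alpha|_{[0,t]}\cdot\beta$ with $\beta:=\alpha|_{[t,1]}$; by maximality, $\eta_1$ and $\beta$ diverge immediately at $y:=\alpha(t)$. This choice already supplies the first half of the statement, so it remains to prove $\alpha([t,1])=\beta([0,1])\subseteq\gamma([0,1])$. Rewriting $\gamma\simeq\alpha^{-}\cdot\eta\cdot\alpha$ and cancelling the common front yields $\gamma\simeq\beta^{-}\cdot\eta_1\cdot\alpha$. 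Since $\eta_1$ does not begin along $\beta$, the cancellation principle shows $\zeta:=\beta^{-}\cdot\eta_1$ is reduced; hence $\gamma$ is the reduced representative of $\zeta\cdot\alpha$, and a single maximal seam cancellation $\sigma$ (the common initial segment of $\zeta^{-}$ and $\alpha$) gives $\gamma\equiv\zeta_0\cdot\alpha_1$ with $\alpha_1=\alpha|_{[b,1]}$ a terminal subarc of $\alpha$. Thus $\gamma([0,1])=\zeta_0([0,1])\cup\alpha([b,1])$, where $\zeta_0$ is the initial part of $\zeta=\beta^{-}\cdot\eta_1$ that survives after $\sigma$ is removed.

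Now $\beta^{-}$ is the front of $\zeta$. In the benign case where $\sigma$ is absorbed within the $\eta_1$-part of $\zeta$, the entire front $\beta^{-}$ survives inside $\zeta_0$, so $\beta([0,1])=\beta^{-}([0,1])\subseteq\zeta_0([0,1])\subseteq\gamma([0,1])$ and the proof is complete.

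The main obstacle is the remaining case, in which $\sigma$ reaches past $\eta_1$ and erases a terminal portion of $\beta^{-}$; this genuinely occurs (precisely when $\alpha$ self-intersects, so that the common segment $t$ is forced to be smaller than one might expect). Here I must show that the erased portion of $\beta^{-}$ is nevertheless recaptured by $\gamma$. The erased subpath was cancelled against a subpath of $\sigma\subseteq\alpha$, so by uniqueness of reduced representatives the two are reverses of one another; hence the erased subarc of $\beta=\alpha|_{[t,1]}$ coincides, \emph{as a subset of} $X$, with a repeated subarc of $\alpha$ occurring inside $\sigma$. The heart of the argument is then to locate this repeated subarc in the region of $\alpha$ retained by $\gamma$ — inside $\alpha|_{[b,1]}$ or inside the retained front $\zeta_0$ — thereby forcing its image into $\gamma([0,1])$. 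This final containment is the delicate, parameter-tracking step, and it is exactly where reducedness of $\alpha$ is essential, since it rules out the degenerate backtracking at the coincident points that would otherwise break the recapture.
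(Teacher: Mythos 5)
Your preparatory tools are sound: the cancellation principle and the ``single maximal seam cancellation'' for products of reduced paths are both valid here (pass to the image of $\alpha\cdot\gamma\cdot\alpha^{-}$, a one-dimensional Peano continuum, and lift to its uniquely arcwise connected generalized universal covering, where reduced paths lift to arcs), the choice of $t$ as the maximal common initial segment of $\eta$ and $\alpha$ is legitimate, and your benign case is correctly dispatched. The problem is that your remaining case is not a side issue to be finished by ``parameter tracking'' --- it is the entire content of the lemma, and your sketch of it does not close. Concretely, in that case one has $\sigma=\alpha|_{[0,b]}\equiv\eta_1^{-}\cdot\beta|_{[0,u]}$, hence $\zeta_0\equiv\left(\beta|_{[u,1]}\right)^{-}$ and $\gamma\equiv\left(\beta|_{[u,1]}\right)^{-}\cdot\alpha|_{[b,1]}$, so what must be proved is $\beta([0,u])\subseteq\beta([u,1])\cup\alpha([b,1])$. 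Your observation that the erased piece $\beta|_{[0,u]}$ is a reparametrized copy of a subarc of $\sigma$ only places its image inside $\alpha([0,b])$, which is exactly the portion of $\alpha$ that $\gamma$ does \emph{not} retain; a single repetition therefore proves nothing, because the repeated copy may itself lie (partly) in the erased region, producing yet another repetition, and so on. One needs an induction together with a termination argument, and your proposal stops precisely where that argument should begin (``the heart of the argument is then to locate\ldots'' describes the goal rather than achieving it).

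This missing induction is exactly what the paper's proof supplies. When $\gamma$ is entirely consumed, the paper isolates four segmentation scenarios (e.g.\ $\alpha=\beta_1\cdot\beta_2$, $\gamma=\beta_2^{\mp}$, or $\alpha=\beta_1\cdot\beta_2\cdot\beta_3$, $\gamma=\beta_3^{-}\cdot\beta_2^{\mp}\cdot\beta_3$), reduces to $\beta_1\cdot\beta_2^{\pm}\cdot\beta_1^{-}$, uses reducedness of $\alpha$ to rule out scenarios (3)--(4) at later stages, and then iterates, peeling off one copy of $\beta_2$ at a time ($\delta_n=\delta_{n+1}\cdot\beta_2$); the iteration terminates because a fixed nonconstant loop $\beta_2$ can occur as a separate consecutive subpath of $\beta_1$ only finitely many times (uniform continuity), and termination is what forces the unretained part of $\alpha$ to be a concatenation of copies of $\beta_2$ (and possibly $\beta_3$), whose images lie in $\gamma([0,1])$. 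A toy example shows your case genuinely requires this: in a wedge of two circles take $\alpha=a\cdot b\cdot b$, $\gamma=b^{-}$, so $\eta=a\cdot b^{-}\cdot a^{-}$, $\alpha|_{[0,t]}=a$, $\beta=b\cdot b$, and $u=1/2$; the needed containment $\beta([0,u])\subseteq\beta([u,1])\cup\alpha([b,1])$ holds only because $\beta$ is periodic with period $b$, whose image equals $\gamma([0,1])$ --- a global periodicity fact that no single seam cancellation detects. So the proposal is a correct setup plus an accurate diagnosis of what remains, but the remaining step, which is the actual substance of the lemma, is missing.
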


\begin{proof} Replacing $X$ with the image of $\alpha\cdot \gamma\cdot \alpha^{-}$, we may assume that $X$ is a Peano continuum. We examine how to obtain $\eta$ from $\alpha\cdot\gamma_n\cdot\alpha^{-}$. This can be done, for example, using the uniquely arcwise connected generalized universal covering space of $X$ \cite{FZ07}, where a path lifts to an arc if and only if it is reduced.

A (maximal) initial segment of $\gamma$ might agree (after reparameterization) with the reverse of a terminal segment of $\alpha$, or a terminal segment of $\gamma$ might agree with a terminal segment of $\alpha$, or both. If, after canceling such segments, $\gamma$ has not been entirely canceled, we have arrived at $\eta$. Otherwise, we are in one of the following segmentation scenarios:
\begin{enumerate}
\item $\alpha=\beta_1\cdot \beta_2$, $\gamma=\beta_2^{-}$;
\item $\alpha=\beta_1\cdot \beta_2$, $\gamma=\beta_2$;
\item $\alpha=\beta_1\cdot \beta_2\cdot \beta_3$, $\gamma=\beta_3^{-}\cdot \beta_2^{-}\cdot\beta_3$;
\item $\alpha=\beta_1\cdot \beta_2\cdot \beta_3$, $\gamma=\beta_3^{-}\cdot \beta_2\cdot \beta_3$.
\end{enumerate}
Canceling with matching subsegments of $\gamma$, we arrive at $\beta_1\cdot \beta_2^\pm\cdot \beta_1^{-}$. If $\beta_1$ is empty, we have arrived at $\eta$; otherwise, we continue. Since $\beta_1\cdot \beta_2$ is a subpath of $\alpha$, it is reduced. Hence, further cancellation in $\beta_1\cdot \beta_2^\pm\cdot \beta_1^{-}$ may occur only at one location. If not all of $\beta_{2}^{\pm}$ cancels in $\beta_1\cdot\beta_{2}^{\pm}\cdot \beta_{1}^{-}$, we have arrived at $\eta$. Otherwise, we consider the possibility of applying the four segmentation scenarios above to $\beta_1\cdot\beta_{2}^{\pm}\cdot \beta_{1}^{-}$. Scenarios (3) and (4) cannot occur since they would introduce an inverse pair within the reduced subpath $\beta_1\cdot \beta_2$ or $\beta_{2}^{-}\cdot\beta_{1}^{-}$. Therefore, we may cancel only according to scenario (1) or (2) above. Doing so, we arrive at $\delta_1\cdot \beta_2^{\pm} \cdot \delta_1^{-}$ where $\beta_1=\delta_1\cdot \beta_2$ is reduced. Unless we arrive at $\eta$, we may repeatedly apply scenario (1) or (2) to reduce $\delta_n\cdot \beta_2^{\pm} \cdot \delta_{n}^{-}$ to $\delta_{n+1}\cdot \beta_2^{\pm} \cdot \delta_{n+1}^{-}$ where $\delta_{n}=\delta_{n+1}\cdot\beta_2$ are reduced subpaths of $\beta_1$. However, this cannot occur infinitely often since $\beta_{2}$ may appear as a separate subpath of $\beta_1$ only finitely many times. When the finite procedure terminates, we arrive at $\eta$.

In any of the above situations, observe that $\eta$ contains an initial segment $\alpha|_{[0,t]}$ such that $\alpha([t,1])\subseteq \gamma([0,1])$.
\end{proof}

\begin{corollary}\label{cancellationsequence}
Let $\alpha:(\ui,0)\to (X,x_0)$ be a reduced path in a one-dimensional metric space $X$, $\gamma_n:\ui\to X$ be a null-sequence of reduced loops based at $\alpha(1)$, and $\eta_n$ be a reduced representative of $[\alpha\cdot \gamma_n\cdot \alpha^{-}]$. Then for every $0<t<1$, there exists an $N$ and $0<s<1$ such that $\eta_{N}|_{[0,s]}\equiv \alpha|_{[0,t]}$.
\end{corollary}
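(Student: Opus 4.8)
The plan is to apply Lemma~\ref{cancellinglemma} to each loop $\gamma_n$ individually and then to argue that the matching initial segments of $\alpha$ that it produces must grow to exhaust all of $\alpha$ as $n\to\infty$. Concretely, for each $n$ the lemma supplies parameters $s_n,t_n\in\ui$ with $\eta_n|_{[0,s_n]}\equiv\alpha|_{[0,t_n]}$ and $\alpha([t_n,1])\subseteq\gamma_n([0,1])$. The whole statement then reduces to controlling the sequence $(t_n)$: once I know that for every $t<1$ some $t_N$ exceeds $t$, I can extract the desired initial segment of $\eta_N$ by reparameterization.

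The key step, which I expect to be the main (though short) obstacle, is the claim that for every $t<1$ there is an $N$ with $t_N>t$. I would argue by contradiction: if for some fixed $t<1$ we had $t_n\le t$ for all $n$, then $[t,1]\subseteq[t_n,1]$ would give $\alpha([t,1])\subseteq\alpha([t_n,1])\subseteq\gamma_n([0,1])$ for every $n$. Since $(\gamma_n)$ is null at $\alpha(1)$, every $U\in\mct_{\alpha(1)}$ contains $\gamma_n([0,1])$ for all large $n$, hence contains the fixed compact set $\alpha([t,1])$. As $X$ is metric, and therefore $T_1$, the intersection of all such $U$ is $\{\alpha(1)\}$, so $\alpha$ must be constant on $[t,1]$. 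But then $\alpha|_{[t,1]}$ is a non-degenerate null-homotopic loop, contradicting the hypothesis that $\alpha$ is reduced. This is precisely where both standing assumptions, the null-sequence condition and the reducedness of $\alpha$, do the real work, and it is the part of the argument that most deserves care.

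Finally, the extraction itself is routine bookkeeping: given $0<t<1$, I would choose $N$ with $t_N>t$ and let $\phi\colon[0,s_N]\to[0,t_N]$ be the increasing homeomorphism witnessing $\eta_N|_{[0,s_N]}\equiv\alpha|_{[0,t_N]}$. Setting $s=\phi^{-1}(t)$ gives $0<s<s_N\le 1$, and restricting the identity $\eta_N|_{[0,s_N]}=\alpha|_{[0,t_N]}\circ\phi$ to $[0,s]$ yields $\eta_N|_{[0,s]}=\alpha|_{[0,t]}\circ\phi|_{[0,s]}$, that is, $\eta_N|_{[0,s]}\equiv\alpha|_{[0,t]}$, which is exactly the conclusion. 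The strict inequality $t_N>t$ (rather than merely $t_N\ge t$) is what guarantees $s<1$, so I would make sure to invoke it in that form.
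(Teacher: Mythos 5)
Your proposal is correct and follows essentially the same route as the paper: apply Lemma \ref{cancellinglemma} to each $\gamma_n$ to obtain $s_n,t_n$, use the null-sequence hypothesis together with reducedness of $\alpha$ to force some $t_N>t$, and then restrict the reparameterization. The paper phrases the middle step via the diameters of $\gamma_n([0,1])$ tending to $0$ rather than your neighborhood-filter argument, but in a metric space these are the same observation, and your write-up simply makes explicit the contradiction with reducedness that the paper leaves implicit.
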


\begin{proof}
By Lemma \ref{cancellinglemma}, for each $n\in\bbn$, we have $s_n,t_n\in [0,1]$ such that $\eta_n|_{[0,s_n]}\equiv \alpha|_{[0,t_n]}$ and $\alpha([t_n,1])\subseteq \gamma_n([0,1])$. Since the diameter of $\gamma_n$ converges to $0$ as $n\to \infty$, for given $0<t<1$, we may find $N$ such that $t<t_N\leq 1$. Since $\alpha|_{[0,t_N]}$ is an initial segment of $\eta_N$,  so is  $\alpha|_{[0,t]}$, i.e. $\eta_{N}|_{[0,s]}\equiv \alpha|_{[0,t]}$ for some $0<s<1$.
\end{proof}
\begin{example}\label{countablecutpointsexample} (Countable cut-points)
For any closed subset $A$ of $X$ and point $x_0\in X$, define $CCP(X,A,x_0)$ to be the subgroup\[\{[\alpha]\in\pionex\mid\alpha^{-1}(A)\text{ is countable or }\alpha\text{ is constant}\}\]of $\pionex$. Note that $CCP(X,\emptyset,x_0)=\pionex$ and $CCP(X,X,x_0)=1$. It is well-known that a closed subspace $B$ of $\bbr$ is countable if and only if $B$ is scattered in the sense that every nonempty subspace of $B$ contains an isolated point. Hence $CCP(X,A,x_0)$ may also be described as the subgroup $\{[\alpha]\in\pionex\mid\alpha^{-1}(A)\text{ is scattered or }\alpha\text{ is constant}\}$.

The special case of $CCP(\bbh,\{b_0\},b_0)\leq \pioneh$ has been studied in a variety of contexts. In \cite{BS}, it is described as the subgroup of ``countable order types." By applying the results in Section 5 of \cite{CC00}, we may also identify $CCP(\bbh,\{b_0\},b_0)$ with the group $Scatter(\aleph_{0})$ of \cite{CC00} and the group $Sc$ of \cite{EK99subgroups}. In both papers, $CCP(\bbh,\{b_0\},b_0)$ is shown to be isomorphic to an uncountable free group.
\end{example}
\begin{proposition}\label{countablecutexample}
If $X$ is a one-dimensional metric space and $A\subseteq X$ is closed, then $CCP(X,A,x_0)$ is $(\cinfty,c_{\infty})$-closed.
\end{proposition}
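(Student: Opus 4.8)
The plan is to pass to the generalized universal covering of $X$, which for a one-dimensional metric space is an $\bbr$-tree, and to read off the required countability from the geometry of geodesics there.

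\textbf{Setup and reductions.} Given a map $f:(\bbhp,\bpp)\to(X,x_0)$ with $f_{\#}(\cinfty)\leq CCP(X,A,x_0)$, I would write $\alpha=f\circ\iota$ and $\gamma_n=f\circ\ell_n$, so that $f_{\#}(c_n)=[\alpha\cdot\gamma_n\cdot\alpha^{-}]\in CCP(X,A,x_0)$ for every $n$ and $f_{\#}(c_{\infty})=[\alpha\cdot(\prod_{n=1}^{\infty}\gamma_n)\cdot\alpha^{-}]$. Since $X$ is one-dimensional, I would replace $\alpha$ and each $\gamma_n$ by their reduced representatives $\overline{\alpha}$ and $\overline{\gamma}_n$ (the latter still forming a null-sequence, as reduction only shrinks images), noting that membership in $CCP(X,A,x_0)$ is detected on reduced representatives and that $\overline{\alpha}^{-1}(A)$, $\overline{\gamma}_n^{-1}(A)$, and the relevant $\eta^{-1}(A)$ below are closed in $\ui$, hence countable if and only if scattered. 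Discarding the degenerate $\overline{\gamma}_n$: if only finitely many survive, then $f_{\#}(c_{\infty})$ is a finite product of elements of $CCP(X,A,x_0)$ and we are done, so I assume infinitely many remain, reindexed. Restricting $X$ to the (one-dimensional, compact) image of $\overline{\alpha}\cdot\beta\cdot\overline{\alpha}^{-}$, where $\beta=\prod_{n=1}^{\infty}\overline{\gamma}_n$, I pass to the uniquely arcwise connected generalized universal covering $p:\tX\to X$, in which a path is reduced precisely when it lifts to an embedded arc. Writing $\widetilde{A}=p^{-1}(A)$, for any reduced path $\sigma$ with lift-arc $\widetilde{\sigma}$ the set $\sigma^{-1}(A)$ is in bijection with $\widetilde{\sigma}(\ui)\cap\widetilde{A}$.

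\textbf{Two countability facts.} First I would show $\overline{\alpha}^{-1}(A)$ is countable. Let $\eta_n$ be the reduced representative of $[\overline{\alpha}\cdot\overline{\gamma}_n\cdot\overline{\alpha}^{-}]$; by hypothesis each $\eta_n^{-1}(A)$ is countable. By Lemma~\ref{cancellinglemma}, $\eta_n$ has an initial segment $\equiv\overline{\alpha}|_{[0,t_n]}$, so $\overline{\alpha}^{-1}(A)\cap[0,t_n]$ is countable; by Corollary~\ref{cancellationsequence}, for each $t<1$ some $\eta_N$ carries $\overline{\alpha}|_{[0,t]}$ as an initial segment, whence $\overline{\alpha}^{-1}(A)\cap[0,t]$ is countable for all $t<1$, and a union along $t\uparrow 1$ gives $\overline{\alpha}^{-1}(A)$ countable. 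Next I would show each $\overline{\gamma}_n^{-1}(A)$ is countable using the tree: lift $\overline{\alpha}$, $\overline{\gamma}_n$, $\overline{\alpha}^{-}$ to consecutive arcs $P=[\tx,v]$, $Q_n=[v,v_n']$, $R_n=[v_n',z_n]$, so that the geodesic $[\tx,z_n]$ projects to $\eta_n$. As $P,Q_n$ share the endpoint $v$ and $Q_n,R_n$ share $v_n'$, the overlaps $Q_n\cap P$ and $Q_n\cap R_n$ are an initial and a terminal subarc of $Q_n$, and the complementary middle subarc (if nonempty) lies on $[\tx,z_n]$. Hence $Q_n\cap\widetilde{A}\subseteq(P\cap\widetilde{A})\cup([\tx,z_n]\cap\widetilde{A})\cup(R_n\cap\widetilde{A})$, each summand countable by the previous step and the hypothesis, so $\overline{\gamma}_n^{-1}(A)$ is countable.

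\textbf{Conclusion and main obstacle.} Let $\eta$ be the reduced representative of $f_{\#}(c_{\infty})$ and $\widetilde{\eta}$ its lift, the geodesic in $\tX$ from $\tx$ to its endpoint. The lift of $\overline{\alpha}\cdot\beta\cdot\overline{\alpha}^{-}$ joins these two points and has image $P\cup\widetilde{\beta}\cup(\text{lift of }\overline{\alpha}^{-})$, where $\widetilde{\beta}=\bigcup_{n}(\text{lift-arc of }\overline{\gamma}_n)$; since in a tree the geodesic between two points is contained in the image of any path joining them, $\widetilde{\eta}(\ui)$ is contained in this countable union of arcs. Intersecting with $\widetilde{A}$ and applying the two countability facts, $\widetilde{\eta}(\ui)\cap\widetilde{A}$ is a countable union of countable sets, so $\eta^{-1}(A)$ is countable and $f_{\#}(c_{\infty})\in CCP(X,A,x_0)$, proving that $CCP(X,A,x_0)$ is $(\cinfty,c_{\infty})$-closed. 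I expect the main obstacle to be precisely this last step: controlling the possibly infinite cancellation in the reduction of the infinite concatenation $\beta$ against $\overline{\alpha}$ so that the reduced representative cannot acquire an uncountable $A$-preimage. The $\bbr$-tree viewpoint is what tames this, since the reduced representative is literally a geodesic forced to lie inside the countably many lift-arcs whose intersections with $\widetilde{A}$ have already been bounded.
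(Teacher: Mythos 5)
Your proof is correct, and its core coincides with the paper's: the same reductions to reduced representatives, and the same use of Lemma \ref{cancellinglemma} and Corollary \ref{cancellationsequence} to show that $\overline{\alpha}^{-1}(A)$ is countable. Where you diverge is in the remaining two steps, and there the paper is more economical. For the countability of each $\overline{\gamma}_n^{-1}(A)$, instead of your median/projection argument in the $\bbr$-tree, the paper simply observes that $\gamma_n$ is the reduced representative of $\alpha^{-}\cdot\eta_n\cdot\alpha$, whose $A$-preimage is countable, and invokes its opening remark that passing to a reduced representative preserves countability of the $A$-preimage. More strikingly, the final step that you flag as the main obstacle --- controlling infinite cancellation when reducing $\overline{\alpha}\cdot\beta\cdot\overline{\alpha}^{-}$ --- is a non-issue in the paper's treatment: membership in $CCP(X,A,x_0)$ only requires the existence of \emph{some} representative whose $A$-preimage is countable, and the loop $\alpha\cdot\gamma\cdot\alpha^{-}$ itself is such a representative, its $A$-preimage being a countable union of countable sets together with at most finitely many limit parameters. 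Your geodesic-containment argument is sound --- in the uniquely arcwise connected generalized universal covering, the lift of the reduced representative is the arc joining the endpoints of the lift of any loop in the same class, hence lies in that lift's image --- and it yields the marginally stronger conclusion that even the \emph{reduced} representative of $f_{\#}(c_{\infty})$ has countable $A$-preimage; but that also follows from the paper's version combined with the preservation-under-reduction observation. What your approach buys is an explicit geometric picture of where the $A$-preimages can sit; what the paper's buys is brevity, staying entirely at the level of paths, with the $\bbr$-tree entering both arguments only through the proof of Lemma \ref{cancellinglemma}.
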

\begin{proof}
First, note that if $\alpha:\ui\to X$ is a path such that $\alpha^{-1}(A)$ is countable, and $\delta$ is the reduced representative of $\alpha$, then $\delta^{-1}(A)$ is also countable. Suppose $f:(\bbhp,\bpp)\to(X,x_0)$ is a map such that $f_{\#}(c_n)\in CCP(X,A,x_0)$ for all $n\in\bbn$. Set $\alpha=f\circ\iota$, $\gamma_n=f\circ \ell_n$, and $\gamma=\prod_{n=1}^{\infty}\gamma_n$. Since $X$ is one-dimensional, we may assume the path $\alpha$ and each loop $\gamma_n$ is either reduced or constant. If all $\gamma_n$ are constant, then $f_{\#}(c_{\infty})=[\alpha\cdot\gamma\cdot\alpha^{-}]=1$. On the other hand, if $n_1<n_2<n_3<\cdots$ is the entire sequence of indices such that $\gamma_{n_k}$ is nonconstant, then by reducing constant subpaths, we have $[\gamma]=\left[\prod_{k=1}^{\infty}\gamma_{n_k}\right]$. Since we seek to show $f_{\#}(c_{\infty})=[\alpha\cdot\gamma\cdot\alpha^{-}]\in CCP(X,A,x_0)$, we may assume that $\gamma_n$ is reduced and nonconstant for every $n$.

If $\alpha$ is constant, then $f_{\#}(c_n)=[\gamma_n]\in CCP(X,A,x_0)$, which implies that $\gamma_{n}^{-1}(A)$ is countable for each $n\in \bbn$. By construction of $\gamma$, it follows that $\gamma^{-1}(A)$ is countable and thus $f_{\#}(c_{\infty})\in CCP(X,A,x_0)$.

Finally, suppose $\alpha$ is not constant. Let $\eta_n$ be the reduced representative of $[\alpha\cdot\gamma_n\cdot\alpha^{-}]$. If $\alpha^{-1}(A)$ is uncountable, then there is a $0<t<1$ such that $\alpha|_{[0,t]}^{-1}(A)$ is uncountable. By Corollary \ref{cancellationsequence}, there is an $n$ such that $\alpha|_{[0,t]}$ is an initial segment of $\eta_n$; a contradiction of the assumption that $[\eta_n]=[\alpha\cdot\gamma_n\cdot\alpha^{-}]=f_{\#}(c_n)\in CCP(X,A,x_0)$. Therefore, $\alpha^{-1}(A)$ must be countable. Since $\eta_{n}^{-1}(A)$ is also countable, we see that the preimage of $A$ under $\alpha^{-}\cdot\eta_{n}\cdot\alpha$ is countable. Since $\gamma_n$ is the reduced representative of $\alpha^{-}\cdot\eta_{n}\cdot\alpha$, $\gamma_{n}^{-1}(A)$ is countable for each $n\in\bbn$. Therefore, the preimage of $A$ under $\alpha\cdot \gamma\cdot \alpha^{-}$ is countable, proving that $f_{\#}(c_{\infty})\in CCP(X,A,x_0)$.
\end{proof}
\begin{corollary}\label{differencecellandctau}
$c_{\tau}\notin cl_{C,c_{\infty}}(C)$.
\end{corollary}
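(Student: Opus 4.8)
The plan is to exhibit a single $(\cinfty,c_{\infty})$-closed subgroup $K\leq \pi_1(\bbhp,\bpp)$ that contains $C$ but omits $c_{\tau}$; since $cl_{C,c_{\infty}}(C)$ is by definition the intersection of all such subgroups, this immediately yields $c_{\tau}\notin cl_{C,c_{\infty}}(C)$. The natural candidate is $K=CCP(\bbhp,\{b_0\},\bpp)$. The first thing to observe is that $\bbhp$ is a one-dimensional metric space and $\{b_0\}$ is closed, so Proposition \ref{countablecutexample} applies directly and guarantees that $K$ is $(\cinfty,c_{\infty})$-closed.

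Next I would verify $C\leq K$. For each generator, the representative $\iota\cdot\ell_n\cdot\iota^{-}$ meets $b_0$ only at the two junction parameters between $\iota$, $\ell_n$, and $\iota^{-}$, so its preimage of $\{b_0\}$ is finite and hence countable; thus $c_n\in K$, and since $K$ is a subgroup, $C=\lb c_n\mid n\in\bbn\rb\leq K$.

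The remaining, and genuinely delicate, point is to show $c_{\tau}\notin K$. By the opening observation in the proof of Proposition \ref{countablecutexample} (together with the uniqueness of reduced representatives in a one-dimensional metric space), a class lies in $K$ precisely when its reduced representative has countable $\{b_0\}$-preimage; hence it suffices to show that the reduced representative of $\iota\cdot\ell_{\tau}\cdot\iota^{-}$ still meets $b_0$ uncountably often. For this I would first argue that $\ell_{\tau}$ is already reduced. Since $\ell_{\tau}^{-1}(b_0)=\mcc$ and each circle $C_m$ is traversed by $\ell_{\tau}$ exactly once, any nondegenerate subloop $\ell_{\tau}|_{[s,t]}$ based at $b_0$ has $s,t\in\mcc$ with $s<t$, so $[s,t]$ fully contains some closed gap interval $\overline{I_n^k}$ (gaps are dense between Cantor points); applying the retraction $r_m$ then shows $\ell_{\tau}|_{[s,t]}$ winds once around the corresponding circle and cannot be null-homotopic. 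The only remaining coincidences $\ell_{\tau}(s)=\ell_{\tau}(t)$ with $s<t$ would force $s,t$ into a single circle traversal, where injectivity forbids $s<t$. One then checks that no cancellation occurs at the two whisker junctions, so $\iota\cdot\ell_{\tau}\cdot\iota^{-}$ is itself reduced; its preimage of $\{b_0\}$ contains a translated copy of $\mcc$ and is therefore uncountable, giving $c_{\tau}\notin K$.

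I expect the main obstacle to be exactly this last reducedness verification: a priori, reducing $\iota\cdot\ell_{\tau}\cdot\iota^{-}$ to its reduced representative could conceivably collapse the uncountable family of $b_0$-visits down to a countable one, and ruling this out is precisely the content of showing that $\ell_{\tau}$ is reduced. Everything else---the closure property of $CCP$ and the membership of the generators---is routine once Proposition \ref{countablecutexample} is in hand.
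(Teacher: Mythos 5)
Your proposal is correct and takes essentially the same route as the paper's own proof: both exhibit $CCP(\bbhp,\{b_0\},\bpp)$ as a $(\cinfty,c_{\infty})$-closed subgroup (via Proposition \ref{countablecutexample}) that contains $\cinfty$ but excludes $c_{\tau}$, using the fact that $\iota\cdot\ell_{\tau}\cdot\iota^{-}$ is reduced with uncountable preimage of $b_0$. The only difference is that you spell out the verification that $\ell_{\tau}$ (and hence $\iota\cdot\ell_{\tau}\cdot\iota^{-}$) is reduced, a step the paper asserts without proof; your argument for it is sound.
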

\begin{proof}
Recall that if $\alpha:\ui\to\bbhp$ is a reduced representative of an element of $CCP(\bbhp,\{b_0\},b_{0}^{+})$, then $\alpha^{-1}(b_0)$ is countable. The loop $\beta=\iota\cdot \ell_{\tau}\cdot\iota^{-}$ is reduced and $\beta^{-1}(b_0)=\{0,1\}\cup K$ where $K$ is a Cantor set in $[1/3,2/3]$. Hence $c_{\tau}\notin CCP(\bbhp,\{b_0\},b_{0}^{+})$. Since $C\leq CCP(\bbhp,\{b_0\},b_{0}^{+})$ and $CCP(\bbhp,\{b_0\},b_{0}^{+})$ is $(C,c_{\infty})$-closed by Proposition~\ref{countablecutexample}, it follows that $c_{\tau}\notin cl_{C,c_{\infty}}(C)$.
\end{proof}
In view of Proposition \ref{comparisonprop}, Corollary \ref{differencecellandctau} illustrates a difference in closure operators: $cl_{C,c_{\infty}}\neq cl_{C,c_{\tau}}$. Specifically, $cl_{C,c_{\infty}}(C)$ is a proper subgroup of $cl_{C,c_{\tau}}(C)$.
\begin{remark}\label{firststepremark}
We caution that, in general, the closure $cl_{T,g}(H)$ cannot be obtained by ``adding limit elements'' in one single step. Rather, as with the sequential closure operator in general topology, one may have to inductively add elements to calculate a given closure.
We illustrate this phenomenon with the example of $cl_{C,c_{\infty}}(C)$.

To simplify the analysis, we replace $cl_{C,c_{\infty}}(C)$ with $cl_{C,c_{\infty}}(F)$. This is justified by Remark~\ref{homotopyinvarianceremark}, using the homotopy equivalence $\bbhp\to \bbh$ that collapses the whisker.

Consider the subgroup $F'$ of $cl_{C,c_{\infty}}(F)$ generated by $F$ and elements $f_{\#}(c_{\infty})$ for all maps $f:(\bbhp,\bpp)\to (\bbh,b_0)$ with $f_{\#}(C)\leq F$.  We will show that $F'$ is a proper subgroup of $cl_{C,c_{\infty}}(F)$.

 First, we claim that $F'= \langle F\cup \{\left[\prod_{i=1}^{\infty}a_i\right]\mid a_i\in\{\ell_{m}^{\pm}\mid m\in\bbn\}\}\rangle$. Clearly, the latter group is contained in $F'$. To verify the converse, let $f:(\bbhp,\bpp)\to (\bbh,b_0)$ be a map with $f_{\#}(C)\leq F$. We may assume that $f(b_0)=b_0$, $\alpha=f\circ \iota$ is a reduced loop in $\bbh$ based at $b_0$, and $\gamma_n=f\circ \ell_n$ is a null-sequence of reduced loops in $\bbh$ based at $b_0$.
Since $f_{\#}(c_n)=[\alpha\cdot \gamma_n\cdot \alpha^-]\in F$, it follows from Corollary~\ref{cancellationsequence} applied to $(X,x_0)=(\bbh,b_0)$, that $\alpha\equiv a_1 \cdot a_2 \cdot a_3\cdot\; \cdots $ with either finitely many or infinitely many  $a_i\in \{\ell_{m}^{\pm}\mid m\in \bbn\}$. Note that $\gamma_n$ is the reduced representative of $\alpha^-\cdot (\alpha\cdot \gamma_n \cdot \alpha^-)\cdot \alpha$. Therefore, if $\alpha$ is a finite concatenation of loops $\ell_m^\pm$, then so is  $\gamma_n$. Hence, in this case, $f_\#(c_\infty)=[\alpha][\prod_{n=1}^\infty \gamma_n][\alpha]^{-1}$ is of the required form. Otherwise, by an application of Lemma~\ref{cancellinglemma}, $\gamma_n\equiv (\cdots \;\cdot a_{i_n+1}^-\cdot a_{i_n}^-)\cdot b_n \cdot (a_{j_n}\cdot a_{j_n+1}\cdot\;\cdots )$ with $[b_n]\in F$, so that $f_\#(c_\infty)=[a_1\cdot a_2\cdot\; \cdots\;\cdot a_{i_1+1}] [\prod_{n=1}^\infty d_n][\cdots\;\cdot a_{3}^{-}\cdot a_2^-\cdot a_1^-]$ with $[d_n]\in F$; which is also of the required form.

Finally, since each $[\ell_{\geq n}]$ lies in $cl_{C,c_{\infty}}(F)$, which is $(C,c_\infty)$-closed, so does the element $\left[\prod_{n=1}^\infty \ell_{\geq n}\right]$. However, $\prod_{n=1}^\infty \ell_{\geq n}$ is a reduced product of order type $\omega\cdot \omega$ and hence does not have the correct order type to represent a finite product of elements in $\pi_1(\bbh,b_0)$ each represented by some finite order type, order type $\omega$, or the reverse order type of $\omega$. That is, $\left[\prod_{n=1}^\infty \ell_{\geq n}\right]\not\in F'$. 
\end{remark}
\subsection{The closure pair $(P,p_{\tau})$ and the transfinite products property}
\begin{definition}\label{tproddef}
We say a space $X$ has {\it transfinite products relative to a subgroup} $H\leq \pionex$ provided that for every pair of maps $a,b:(\bbhp,\bpp)\to (X,x_0)$ such that $a\circ\iota=b\circ\iota$ and $Ha_{\#}(c_n)=Hb_{\#}(c_n)$ for all $n\in\bbn$, we have $Ha_{\#}(g)=Hb_{\#}(g)$ for all $g\in\pionehp$. We say $X$ has {\it transfinite products} if $X$ has transfinite products relative to $H=1$.
\end{definition}

\begin{remark} We briefly justify the terminology. A \textit{transfinite word} over an alphabet $A$ is a function $w:I\to A\cup A^{-1}$ defined on a linearly ordered domain $I$ such that $w^{-1}(s)$ is finite for every $a\in A\cup A^{-1}$ (note that if $A$ is countable, then so is $I$). Given a transfinite word $w:I\to \{\ell_{1}^{\pm 1},\ell_{2}^{\pm 1},\ell_{3}^{\pm 1},\dots\}$, choose components $((a_i,b_i))_{i\in I}$ of the complement $\ui\backslash \mcc$ of the standard middle third cantor set $\mcc$ such that $b_i<a_j$ for all $i<j$ and define a loop $\alpha_{w}:\ui\to \bbh$ by $\alpha_{w}|_{[a_i,b_i]}\equiv w(i)$ and constant at $b_0$ elsewhere. Since $\pioneh\to \check{\pi}_{1}(\bbh,b_0)$ is injective, $[\alpha_w]\in\pioneh$ does not depend on the choice of the components.

Given a map $f:(\bbh,b_0)\to (X,x)$ and a transfinite word $w:I\to \{\ell_{1}^{\pm 1},\ell_{2}^{\pm 1},\ell_{3}^{\pm 1},\dots\}$, we define $w_f=f_{\#}([\alpha_w])\in \pi_1(X,x)$.

Let $(s_n)_{n\in \bbn}$ be a sequence in $\pi_1(X,x)$ such that $s_n=f_{\#}([\ell_n])$ for some map $f:(\bbh,b_0)\to (X,x)$ and let $w:I\to \{s_{1}^{\pm 1},s_{2}^{\pm 1},s_{3}^{\pm 1},\dots\}$ be a transfinite word where $w(i)=s_{\nu(i)}^{\epsilon(i)}$. Put $\overline{w}(i)=\ell_{\nu(i)}^{\epsilon(i)}$. The space $X$ has transfinite products if and only if for any $x\in X$ the definition
\[\prod_{i\in I}s_{\nu(i)}^{\epsilon(i)}:= \overline{w}_{f}\]
does not depend on $f$. This property allows for a well-defined notion of a subgroup transfinitely generated by a null-sequence of elements.
\end{remark}

%
\begin{definition}\label{defptau}
Let $\pinfty\leq \pi_1(\bbhp,\bpp)$ be the free subgroup generated by elements $p_n=[\iota\cdot\ell_{2n-1}\cdot\ell_{2n}^{-}\cdot\iota^{-}]$ for $n\in\bbn$. Consider the maps $f_{odd},f_{even}:\bbh\to \bbh$ satisfying $f_{odd}\circ \ell_n=\ell_{2n-1}$ and $f_{even}\circ \ell_{n}=\ell_{2n}$. Let $p_{\tau}=[\iota\cdot (f_{odd}\circ\ell_{\tau})\cdot(f_{even}\circ\ell_{\tau})^{-}\cdot\iota^{-}]$ (see Figure \ref{ptaufigure}).
\end{definition}

\begin{figure}[H]
\centering \includegraphics[height=0.55in]{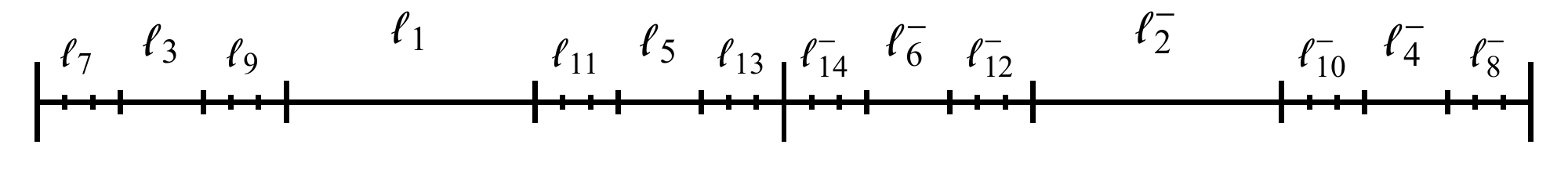}
\caption{\label{ptaufigure}The loop $(f_{odd}\circ\ell_{\tau})\cdot(f_{even}\circ\ell_{\tau})^{-}$}
\end{figure}

\begin{proposition}\label{ptautoctau}
If $H\leq \pionex$ is $(P,p_{\tau})$-closed, then $H$ is $(\cinfty,c_{\tau})$-closed.
\end{proposition}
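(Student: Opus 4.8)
The plan is to reduce the implication to the construction of a single test map and then invoke Remark~\ref{comparisonpropremark}. Both closure pairs are attached to the \emph{same} test space $(\bbhp,\bpp)$, so I would search for a self-map $g\colon(\bbhp,\bpp)\to(\bbhp,\bpp)$ satisfying $g_{\#}(P)\leq\cinfty$ and $g_{\#}(p_{\tau})=c_{\tau}$. Once such a $g$ is produced, Remark~\ref{comparisonpropremark} gives $c_{\tau}=g_{\#}(p_{\tau})\in cl_{P,p_{\tau}}(\cinfty)$, which is condition (1) of Proposition~\ref{comparisonprop} for the pairs $(T,g)=(P,p_{\tau})$ and $(T',g')=(\cinfty,c_{\tau})$; condition (2) of that proposition is then exactly the assertion that every $(P,p_{\tau})$-closed subgroup is $(\cinfty,c_{\tau})$-closed. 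Equivalently, and perhaps more transparently, given a map $f\colon(\bbhp,\bpp)\to(X,x_0)$ with $f_{\#}(\cinfty)\leq H$, one composes to get $f\circ g$, notes $(f\circ g)_{\#}(P)=f_{\#}(g_{\#}(P))\leq f_{\#}(\cinfty)\leq H$, applies $(P,p_{\tau})$-closedness of $H$ to conclude $(f\circ g)_{\#}(p_{\tau})\in H$, and reads off $f_{\#}(c_{\tau})=f_{\#}(g_{\#}(p_{\tau}))\in H$.

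The map I would use is the one that ``halves'' the odd circles back onto the whole earring while crushing the even circles: define $g$ on the loops by $g\circ\ell_{2n-1}\equiv\ell_{n}$ and $g\circ\ell_{2n}=c_{b_0}$ for all $n\in\bbn$, and set $g\circ\iota=\iota$ on the whisker. For continuity I would only need to record that $g\circ\ell_{m}$ is a null-sequence of loops (its $m$-th term is $\ell_{(m+1)/2}$ for odd $m$ and constant for even $m$, and these have diameters tending to $0$), so $g$ restricts to a continuous map on $\bbh$ and extends continuously over the whisker. The verification that $g_{\#}(P)\leq\cinfty$ is then immediate: $g_{\#}(p_{n})=[\iota\cdot(g\circ\ell_{2n-1})\cdot(g\circ\ell_{2n})^{-}\cdot\iota^{-}]=[\iota\cdot\ell_{n}\cdot\iota^{-}]=c_{n}$, so in fact $g_{\#}(P)=\cinfty$.

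The one step requiring care is $g_{\#}(p_{\tau})=c_{\tau}$, and this is the place I expect the (mild) difficulty, since the transfinite loop $\ell_{\tau}$ must be tracked through the two compositions $g\circ f_{odd}$ and $g\circ f_{even}$. The key observation is that the relevant identities hold as honest paths, not merely up to homotopy: from $f_{odd}\circ\ell_{n}\equiv\ell_{2n-1}$ one gets $g\circ f_{odd}\circ\ell_{n}\equiv g\circ\ell_{2n-1}\equiv\ell_{n}$, and from $f_{even}\circ\ell_{n}\equiv\ell_{2n}$ one gets $g\circ f_{even}\circ\ell_{n}\equiv g\circ\ell_{2n}=c_{b_0}$. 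Because $\ell_{\tau}$ is defined segment-by-segment as $\ell_{2^{n-1}+k-1}$ on each $\overline{I_{n}^{k}}$ and constantly $b_0$ on the Cantor set $\mcc$, these per-loop equalities propagate through the transfinite concatenation to give $g\circ f_{odd}\circ\ell_{\tau}\equiv\ell_{\tau}$ and $g\circ f_{even}\circ\ell_{\tau}=c_{b_0}$. Hence
\[
g_{\#}(p_{\tau})=[\,\iota\cdot(g\circ f_{odd}\circ\ell_{\tau})\cdot(g\circ f_{even}\circ\ell_{\tau})^{-}\cdot\iota^{-}\,]=[\,\iota\cdot\ell_{\tau}\cdot\iota^{-}\,]=c_{\tau},
\]
completing the construction and hence the proof.
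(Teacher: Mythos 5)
Your proof is correct and is essentially the paper's own argument: the paper uses exactly the same self-map of $\bbhp$ (identity on the whisker, $\ell_{2n-1}\mapsto\ell_n$, $\ell_{2n}\mapsto$ constant) and concludes via Remark~\ref{comparisonpropremark}. Your additional checks of continuity and of $g_{\#}(p_{\tau})=c_{\tau}$ are just the details the paper leaves implicit.
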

\begin{proof}
Consider the map $f:\bbhp\to\bbhp$ defined so that $f\circ\iota=\iota$, $f\circ\ell_{2n-1}=\ell_n$, and $f\circ \ell_{2n}$ is constant. Since $f_{\#}(\pinfty)\leq \cinfty$ and $f_{\#}(p_{\tau})=c_{\tau}$, we may apply Remark \ref{comparisonpropremark}.
\end{proof}

\begin{proposition}\label{transfiniteproductprop}
$X$ has transfinite products relative to a subgroup $H\leq \pionex$ if and only if $H$ is $(\pinfty,p_{\tau})$-closed.
\end{proposition}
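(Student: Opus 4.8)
The plan is to set up an explicit correspondence between the pairs of maps $a,b$ of Definition~\ref{tproddef} and the single test maps of the $(\pinfty,p_\tau)$-closedness condition, exploiting the odd/even interleaving built into $\pinfty$ and $p_\tau$. The basic computation is: given a map $f:(\bbhp,\bpp)\to(X,x_0)$, set $a=f\circ\overline{f_{odd}}$ and $b=f\circ\overline{f_{even}}$, where $\overline{f_{odd}},\overline{f_{even}}:\bbhp\to\bbhp$ extend $f_{odd},f_{even}$ by fixing the whisker $\iota$. Then $a\circ\iota=f\circ\iota=b\circ\iota$, and since $(\overline{f_{odd}})_\#(c_n)=c_{2n-1}$ and $(\overline{f_{even}})_\#(c_n)=c_{2n}$ one gets $a_\#(c_n)=f_\#(c_{2n-1})$, $b_\#(c_n)=f_\#(c_{2n})$, hence
\[
f_\#(p_n)=f_\#(c_{2n-1})f_\#(c_{2n})^{-1}=a_\#(c_n)b_\#(c_n)^{-1},\qquad f_\#(p_\tau)=a_\#(c_\tau)b_\#(c_\tau)^{-1}.
\]
Conversely, any pair $a,b$ with $a\circ\iota=b\circ\iota$ arises from the interleaving $f$ with $f\circ\iota=a\circ\iota$, $f\circ\ell_{2n-1}=a\circ\ell_n$, $f\circ\ell_{2n}=b\circ\ell_n$. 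In either description $Ha_\#(c_n)=Hb_\#(c_n)$ is equivalent to $f_\#(p_n)\in H$, and $Ha_\#(c_\tau)=Hb_\#(c_\tau)$ is equivalent to $f_\#(p_\tau)\in H$.

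For the forward implication I would assume $X$ has transfinite products relative to $H$ and take $f$ with $f_\#(\pinfty)\leq H$, i.e. $f_\#(p_n)\in H$ for all $n$. Passing to $a=f\circ\overline{f_{odd}}$, $b=f\circ\overline{f_{even}}$, the hypothesis gives $Ha_\#(c_n)=Hb_\#(c_n)$ for every $n$, so transfinite products relative to $H$, applied to the single element $g=c_\tau$, yields $Ha_\#(c_\tau)=Hb_\#(c_\tau)$, that is, $f_\#(p_\tau)\in H$. Hence $H$ is $(\pinfty,p_\tau)$-closed.

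The reverse implication is the substantial direction. Assuming $H$ is $(\pinfty,p_\tau)$-closed, I fix a pair $a,b$ with $a\circ\iota=b\circ\iota$ and $a_\#(c_n)b_\#(c_n)^{-1}\in H$ for all $n$, and fix an arbitrary $g\in\pionehp$; it then suffices to build one map $F:(\bbhp,\bpp)\to(X,x_0)$ with $F_\#(p_n)\in H$ for all $n$ and $F_\#(p_\tau)=a_\#(g)b_\#(g)^{-1}$, for then $(\pinfty,p_\tau)$-closedness forces $a_\#(g)b_\#(g)^{-1}\in H$, i.e. $Ha_\#(g)=Hb_\#(g)$. Writing $g=[\iota\cdot\alpha\cdot\iota^-]$ with $\alpha$ reduced and decomposing $\alpha\simeq\prod_k\ell_{n_k}^{\epsilon_k}$ over the components of $\ui\backslash\alpha^{-1}(b_0)$ (as in the proof of Proposition~\ref{ctauisdense}, using Corollary~\ref{cancellationsequence}), one has $a_\#(g)=\prod_k s_k^{\epsilon_k}$, $b_\#(g)=\prod_k t_k^{\epsilon_k}$ with $s_k=a_\#(c_{n_k})$, $t_k=b_\#(c_{n_k})$, and $s_kt_k^{-1}\in H$. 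I would construct $F$ by order-embedding the pieces of $\alpha$ into the intervals $I_m$ on which $\ell_\tau$ traverses successive circles, so that $F\circ f_{odd}\circ\ell_\tau\simeq a\circ\alpha$ and $F\circ f_{even}\circ\ell_\tau\simeq b\circ\alpha$, giving $F_\#(p_\tau)=a_\#(g)b_\#(g)^{-1}$; here $F_\#(p_\tau)=\bigl(\prod_I\xi_I\bigr)\bigl(\prod_I\eta_I\bigr)^{-1}$ in spatial order with $\xi_{I}=F_\#(c_{2m-1})$, $\eta_{I}=F_\#(c_{2m})$ for $I=I_m$, while $F_\#(p_m)=\xi_{I}\eta_{I}^{-1}$ reads off a single interval.

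The hard part is the backward pieces. A forward piece ($\epsilon_k=1$) is handled at one matched interval by $F\circ\ell_{2m-1}=a\circ\ell_{n_k}$, $F\circ\ell_{2m}=b\circ\ell_{n_k}$, so the paired generators satisfy $\xi_I\eta_I^{-1}=s_kt_k^{-1}\in H$. For a backward piece the naive single-interval assignment forces $\xi_I\eta_I^{-1}=s_k^{-1}t_k=a_\#(c_{n_k})^{-1}b_\#(c_{n_k})$, which need \emph{not} lie in $H$ when $H$ is non-normal; I expect this to be the genuine obstacle (it is exactly what makes transfinite products fail in the non-normal setting). I would resolve it by a telescoping device: spread each backward piece over two consecutive matched intervals $I_{m'}<I_{m''}$ and assign conjugacy-corrected images realizing $(\xi_{m'},\eta_{m'})=(t_k^{-1},t_k^{-1})$ and $(\xi_{m''},\eta_{m''})=(h_k^{-1},1)$, where $h_k=s_kt_k^{-1}\in H$. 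Each interval's pair then lies in $H$ (the first is trivial, the second is $h_k^{-1}$), while the two together contribute $\xi_{m'}\xi_{m''}=t_k^{-1}h_k^{-1}=s_k^{-1}$ to the odd product and $\eta_{m'}\eta_{m''}=t_k^{-1}$ to the even product, exactly reproducing the backward factors $s_k^{-1}$ and $t_k^{-1}$. Since the $I_m$ are densely ordered, the enlarged linear order (one slot per forward piece, two per backward piece, all unused intervals sent to the constant) still embeds order-preservingly, and the intervening unused intervals contribute trivially. The remaining bookkeeping is that $F$ is continuous: because $\alpha$ is reduced, each circle of $\bbh$ occurs among the $n_k$ only finitely often, so $n_k\to\infty$ cofinally and the assigned loops, with images in $a(C_{n_k})\cup b(C_{n_k})$, form a null-sequence. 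With $F$ in hand, $(\pinfty,p_\tau)$-closedness delivers $F_\#(p_\tau)=a_\#(g)b_\#(g)^{-1}\in H$, completing the proof.
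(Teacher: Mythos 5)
Your proof is correct, and its skeleton is the paper's: the forward direction is the same un-interleaving of $f$ into the pair $a,b$, and for the harder direction both you and the paper manufacture a single test map from a pair $(a,b)$ and an element $g$ by distributing $a$-images and $b$-images over the odd/even circles along an order-embedded copy of the pieces of $g$, as in the proof of Proposition \ref{ctauisdense}. The genuine difference is your telescoping device for the backward pieces, and it is not cosmetic. The paper composes with the reduction map $k$ (so $k_{\#}(c_n)\in\{1,c_{m_n}^{\epsilon_n}\}$) and asserts $(a\circ k)_{\#}(c_n)(b\circ k)_{\#}(c_n)^{-1}=a_{\#}(c_{m_n}^{\epsilon_n})b_{\#}(c_{m_n}^{\epsilon_n})^{-1}\in H$; but for $\epsilon_n=-1$ this element equals $b_{\#}(c_{m_n})^{-1}\bigl(a_{\#}(c_{m_n})b_{\#}(c_{m_n})^{-1}\bigr)^{-1}b_{\#}(c_{m_n})$, merely a conjugate of an element of $H$, and the hypothesis $Ha_{\#}(c_n)=Hb_{\#}(c_n)$ gives nothing more when $H$ is not normal. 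Your assignment $(\xi_{m'},\eta_{m'})=(t_k^{-1},t_k^{-1})$, $(\xi_{m''},\eta_{m''})=(h_k^{-1},1)$ keeps every $F_{\#}(p_m)$ in $H$ exactly (the algebra checks: $F_{\#}(p_{m'})=1$, $F_{\#}(p_{m''})=h_k^{-1}$, with combined contributions $t_k^{-1}h_k^{-1}=s_k^{-1}$ and $t_k^{-1}$), so it covers precisely the case that the paper's own argument passes over; on this point your proof is more careful than the published one.

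Two details need tightening to make the argument airtight. First, the identity $F_{\#}(p_\tau)=\bigl(\prod_I\xi_I\bigr)\bigl(\prod_I\eta_I\bigr)^{-1}$ cannot be taken literally: transfinite products of homotopy classes are not well defined a priori (that is the very property being characterized). What your construction actually yields, and what must be proved, are the homotopies $F\circ f_{odd}\circ\ell_\tau\simeq a\circ\alpha$ and $F\circ f_{even}\circ\ell_\tau\simeq b\circ\alpha$ rel endpoints; these follow by performing all local cancellations $(b\circ\ell_{n_k})^{-}\cdot(b\circ\ell_{n_k})\cdot(a\circ\ell_{n_k})^{-}\simeq(a\circ\ell_{n_k})^{-}$ simultaneously, which is legitimate because the sets $a(C_{n_k})\cup b(C_{n_k})$ form a null family, and then invoking the re-indexing invariance $[\beta]=[\alpha]$ in $\pioneh$ via shape injectivity, exactly as in Proposition \ref{ctauisdense}. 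Second, the null-sequence property of your assigned loops comes from continuity of $\alpha$ (a loop in $\bbh$ can fully traverse each circle only finitely often) combined with reducedness (each piece is a full traversal), not from reducedness alone. Neither point is a gap --- both are arguments the paper uses elsewhere --- but they are the load-bearing steps behind your computation of $F_{\#}(p_\tau)$.
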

\begin{proof}
Suppose $X$ does not have transfinite products relative to a subgroup $H\leq \pionex$. Then there are maps $a,b:(\bbhp,\bpp)\to (X,x)$ with $a\circ \iota=b\circ \iota$ and $a_{\#}(c_n)b_{\#}(c_n)^{-1}\in H$ for all $n\in\bbn$ but with $a_{\#}(g)b_{\#}(g)^{-1}\notin H$ for some $g\in\pi_1(\bbh,b_0)$. Using the proof of Proposition \ref{ctauisdense}, there is a map $k:\bbhp\to \bbhp$ such that $k\circ \iota=\iota$, $k_{\#}(c_n)$ is either the identity or $c_{m_n}^{\epsilon_n}$ for some $m_n\geq 1$, $\epsilon_n\in\{\pm 1\}$ and $k_{\#}(c_{\tau})=g$. Now $a\circ k$ and $b\circ k$ are maps $(\bbhp,\bpp)\to (X,x)$ such that either $(a\circ k)_{\#}(c_n)(b\circ k)_{\#}(c_n)^{-1}=1$ or $(a\circ k)_{\#}(c_n)(b\circ k)_{\#}(c_n)^{-1}=a_{\#}(c_{m_n}^{\epsilon_n})b_{\#}(c_{m_n}^{\epsilon_n})^{-1}\in  H$. Additionally, $(a\circ k)_{\#}(c_{\tau})(b\circ k)_{\#}(c_{\tau})^{-1}=a_{\#}(g)b_{\#}(g)^{-1}\notin H$. Define a map $f:\bbhp\to X$ so that $f\circ \iota=a\circ k\circ \iota$, $f\circ \ell_{2n-1}=a\circ k\circ \ell_n$, and $f\circ \ell_{2n}=b\circ k\circ \ell_n$. Note $f_{\#}(p_n)=(a\circ k)_{\#}(c_n)(b\circ k)_{\#}(c_n)^{-1}\in H$ for each $n\in\bbn$ and $f_{\#}(p_{\tau})=(a\circ k)_{\#}(c_{\tau})(b\circ k)_{\#}(c_{\tau})^{-1}\notin H$. Thus $H$ is not $(\pinfty,p_{\tau})$-closed.

For the converse, suppose $H$ is not $(\pinfty,p_{\tau})$-closed. Then there is a map $f:\bbhp\to X$ such that $f_{\#}(\pinfty)\leq H$ and $f_{\#}(p_{\tau})\notin H$. Define $a,b:\bbhp\to X$ such that $a\circ \iota=b\circ \iota=f\circ \iota$ and $a\circ \ell_{n}=f\circ \ell_{2n-1}$ and $b\circ \ell_{n}=f\circ \ell_{2n}$.
Then $a_{\#}(c_n)b_{\#}(c_n)^{-1}=f_{\#}(p_n)\in H$ and $a_{\#}(c_{\tau})b_{\#}(c_{\tau})^{-1}=f_{\#}(p_{\tau})\notin H$, which shows that $X$ does not have transfinite products relative to $H$.
\end{proof}
\begin{proposition}\label{abelianfactorprop}
Suppose $N\leq \pionex$ is a subgroup containing the commutator subgroup of $\pionex$. Then the following are equivalent:
\begin{enumerate}
\item $N$ is $(C,c_{\infty})$-closed,
\item $N$ is $(C,c_{\tau})$-closed,
\item $N$ is $(P,p_{\tau})$-closed.
\end{enumerate}
\end{proposition}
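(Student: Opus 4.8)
The plan is to reduce all three conditions to a single transfinite computation carried out in the abelian quotient $Q:=\pionex/N$. Since $N$ contains $[\pionex,\pionex]$ it is normal and $Q$ is abelian, so the quotient $q:\pionex\to Q$ has the decisive feature that $w\in N$ if and only if $q(w)$ is trivial. Two of the implications are already in hand: Proposition~\ref{ptautoctau} gives (3)$\Rightarrow$(2), while Proposition~\ref{normalsubgroup}, applied to the normal subgroup $N$, gives (1)$\Leftrightarrow$(2). Hence it suffices to prove (2)$\Rightarrow$(3).

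Assume $N$ is $(\cinfty,\ctau)$-closed and let $f:(\bbhp,\bpp)\to(X,x_0)$ satisfy $f_\#(\pinfty)\le N$. Put $\alpha=f\circ\iota$, $a_m=f\circ\ell_{2m-1}$ and $b_m=f\circ\ell_{2m}$, so that $(a_m),(b_m)$ are null at $x:=\alpha(1)$ and the hypothesis reads $f_\#(p_m)=[\alpha\cdot a_m\cdot b_m^{-}\cdot\alpha^{-}]\in N$ for all $m$. Writing $A=\prod_m a_m$ and $B=\prod_m b_m$ for the transfinite concatenations indexed along $\mcc$ exactly as in $\ell_{\tau}$, the goal is $f_\#(\ptau)=[\alpha\cdot A\cdot B^{-}\cdot\alpha^{-}]\in N$. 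The natural move is to record the pairs by a test map: define $g:(\bbhp,\bpp)\to(X,x_0)$ with $g\circ\iota=\alpha$ and $g\circ\ell_m=a_m\cdot b_m^{-}$, which is continuous because $(a_m b_m^{-})$ is null. Then $g_\#(c_m)=f_\#(p_m)\in N$, so $g_\#(\cinfty)\le N$, and $(\cinfty,\ctau)$-closedness yields $g_\#(\ctau)=[\alpha\cdot G\cdot\alpha^{-}]\in N$, where $G=\prod_m(a_m b_m^{-})$ is the interleaved transfinite product.

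What remains is to compare $[\alpha\cdot G\cdot\alpha^{-}]$ with $[\alpha\cdot A\cdot B^{-}\cdot\alpha^{-}]$ modulo $N$. Let $\bar N$ be the image of $N$ under the Hurewicz map $\pionex\to H_1(X)$; recalling that on Hurewicz images conjugation is trivial and path reversal becomes negation, the goal is $h(A)-h(B)\in\bar N$, and since $h(G)\in\bar N$ by the previous step it is equivalent to place the discrepancy $h(A)-h(B)-h(G)$ in $\bar N$. On every finite stage this discrepancy vanishes: after applying a retraction $r_{2n}:\bbh\to\bbh_{\le 2n}$ and abelianizing, the representing elements $\lambda_A,\lambda_B,\lambda_G\in\pioneh$ (with $f_\#(\lambda_A)=[A]$, etc.) satisfy $h(\lambda_G)=h(\lambda_A)-h(\lambda_B)$ in the free abelian group $H_1(\bbh_{\le 2n})$, simply because in a free group a shuffle of $\prod_i(\ell_{2i-1}\ell_{2i}^{-})$ into $(\prod_i\ell_{2i-1})(\prod_i\ell_{2i})^{-}$ differs only by a product of commutators of the generators.

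The heart of the matter — and the only place where the hypothesis $[\pionex,\pionex]\le N$ is truly spent — is that this finite-stage identity does not survive the passage to the limit by pure formalism: the element $\lambda_G\cdot(\lambda_A\lambda_B^{-})^{-1}$ of $\pioneh$ need not be a finite product of commutators (its projections to the free groups $F_n$ are nontrivial commutator words whose commutator length is unbounded in $n$), so $h(A)-h(B)$ and $h(G)$ may differ by a nonzero phantom class lying in the kernel of $H_1(\bbh)\to\varprojlim_n H_1(\bbh_{\le n})$. The approach I would take to annihilate this phantom is to exhibit the discrepancy as a further $(\cinfty,\ctau)$-closure datum: because $N$ is normal, every commutator $[u,\,a_m b_m^{-}]$ with $a_m b_m^{-}\in N$ again lies in $N$, so I would attempt to organize the finite-stage shuffle commutators into a null sequence of correction loops, producing a test map $h:(\bbhp,\bpp)\to(X,x_0)$ with $h_\#(\cinfty)\le N$ and $h_\#(\ctau)$ equal to $[\alpha\cdot A\cdot B^{-}\cdot\alpha^{-}]\cdot[\alpha\cdot G\cdot\alpha^{-}]^{-1}$ — an equality to be checked one free group $F_n$ at a time and then lifted to $\pioneh$ via the injectivity of $\psi:\pioneh\to\check{\pi}_1(\bbh,b_0)$. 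Closedness applied to $h$ would then place the discrepancy in $N$, and together with $g_\#(\ctau)\in N$ this delivers $f_\#(\ptau)\in N$. I expect the genuinely delicate step to be exactly this construction of the correction loops: one must arrange the transfinite rearrangement so that the correction loops simultaneously shrink to $x$ and remain inside $N$, and it is here that the dense Cantor ordering underlying $\ell_{\tau}$ — rather than the ordinary sequential concatenation behind $c_{\infty}$ — does the essential work.
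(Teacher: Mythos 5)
Your reductions are fine: (3)$\Rightarrow$(2) via Proposition \ref{ptautoctau}, and (1)$\Leftrightarrow$(2) via Proposition \ref{normalsubgroup} since $N$ is normal, so everything hinges on (2)$\Rightarrow$(3). Your first move there is also sound: the map $g$ with $g\circ\ell_m=a_m\cdot b_m^{-}$ is continuous, has $g_{\#}(\cinfty)\leq N$, and closedness gives $[\alpha\cdot G\cdot\alpha^{-}]\in N$ for the interleaved transfinite product $G$. But the proof stops exactly at the crux. You correctly diagnose that $[\alpha\cdot G^{-}\cdot A\cdot B^{-}\cdot\alpha^{-}]$ is only a \emph{transfinite} product of commutators --- its projections to the free groups $F_n$ have unbounded commutator length, so it need not lie in $[\pionex,\pionex]\leq N$ (this is the same phenomenon as in Example \ref{commutatorexample}) --- and then you merely announce that you ``would attempt'' to package the discrepancy as $h_{\#}(c_{\tau})$ for some third test map $h$ with $h_{\#}(\cinfty)\leq N$. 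That test map is never constructed, no candidate correction loops are written down, and no argument is given that the shuffle conjugators can be made into a null sequence whose partial products land in $N$. Since this is precisely where, as you say, the hypothesis $[\pionex,\pionex]\leq N$ ``is truly spent,'' the proposal has a genuine gap: the heart of the implication is a sketch of a hoped-for construction, not a proof.

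For contrast, the paper's proof of the nontrivial implication (it proves (1)$\Rightarrow$(3) directly, using only the weaker $(\cinfty,c_{\infty})$-closedness) never compares two transfinite products at once, and so never meets the phantom. Assuming $g=f_{\#}(p_{\tau})\notin N$, it works one finite stage at a time: writing the loop underlying $p_{\tau}$ relative to the splitting $\pioneh=\pi_1(\bbh_{\leq 2n},b_0)\ast\pi_1(\bbh_{\geq 2n+1},b_0)$, abelianness of $\pionex/N$ lets one rearrange the \emph{finitely many} letters from $\bbh_{\leq 2n}$ so that they collapse into the cosets $f_{\#}(p_k)N=N$, leaving $gN=f_{\#}([\iota\cdot\delta_n\cdot\iota^{-}])N$ where $\delta_n$ is a single tail loop in $\bbh_{\geq 2n+1}$. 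Every commutator used here is a finite product, hence honestly in $N$. The passage to the limit is then handled not by an infinite shuffle but by the telescoping test map $h\circ\ell_n=f\circ(\delta_n\cdot\delta_{n+1}^{-})$: one checks $h_{\#}(\cinfty)\leq N$ from the constancy of the cosets $f_{\#}([\iota\cdot\delta_n\cdot\iota^{-}])N=gN$, while $h_{\#}(c_{\infty})=f_{\#}([\iota\cdot\delta_1\cdot\iota^{-}])$ lies in $gN$, contradicting $(\cinfty,c_{\infty})$-closedness. If you want to repair your argument, this finite-stage-plus-telescope structure is the missing idea: it replaces your single global comparison of $G$ with $A\cdot B^{-}$ by a sequence of finite rearrangements whose errors are absorbed into a null sequence, which is exactly what the closure hypothesis is equipped to handle.
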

\begin{proof}
(3) $\Rightarrow$ (2) and (2) $\Rightarrow$ (1) follow from Propositions \ref{ptautoctau} and \ref{normalsubgroup} respectively. To complete the proof, we show (1) $\Rightarrow$ (3). Suppose $N$ is $(C,c_{\infty})$-closed. Let $f:(\bbhp,\bpp)\to (X,x_0)$ be a map such that $f_{\#}(P)\leq N$. Recall the definition of $p_{\tau}$ and set $g=f_{\#}(p_{\tau})$. To obtain a contradiction, suppose $g\notin N$. Since $N$ contains the commutator subgroup, $N$ is a normal subgroup of $\pionex$ whose factor group $\pionex/N$ is abelian. Thus, for each $n\geq 1$ there is a loop $\delta_n$ in $\bbh_{\geq 2n+1}$ such that the coset $gN$ factors as:
\begin{eqnarray*}
gN &=& \left(\prod_{k=1}^{n}\left(f_{\#}(c_{2k-1})N\right)\left(f_{\#}(c_{2k}^{-1})N\right)\right)f_{\#}([\iota\cdot\delta_n\cdot\iota^{-}])N\\
&=&  \left(\prod_{k=1}^{n}f_{\#}(p_k)\right)Nf_{\#}([\iota\cdot\delta_n\cdot\iota^{-}])N\\
&=& f_{\#}([\iota\cdot\delta_n\cdot\iota^{-}])N
\end{eqnarray*}
Thus $f_{\#}([\iota\cdot\delta_n\cdot\iota^{-}])\notin N$ for any $n\geq 1$. We proceed as in Proposition \ref{normalsubgroup}. Define a map $h:\bbhp\to X$ so that $h\circ \iota=f\circ \iota$ and $h\circ \ell_{n}=f\circ(\delta_{n}\cdot\delta_{n+1}^{-})$. We have $h_{\#}(C)\leq N$ and $h_{\#}(c_{\infty})=f_{\#}([\iota\cdot\delta_1\cdot\iota^{-}])\notin N$; a contradiction of the assumption that $N$ is $(C,c_{\infty})$-closed.
\end{proof}
We complete this section by proving that $p_{\tau}\notin cl_{C,c_{\tau}}(P)$. Combining this fact with Proposition \ref{comparisonprop}, we confirm that the closure operators $cl_{C,c_{\tau}}$ and $cl_{P,p_{\tau}}$ are not equal.

\begin{lemma}\label{difference1}
Consider the map $f:\bbh\to \bbh$ satisfying $f\circ \ell_n=\ell_{2n-1}\cdot\ell_{2n}^{-}$. Then
\begin{enumerate}
\item $f_{\#}:\pioneh\to\pioneh$ is injective,
\item a based loop $\alpha$ in $\bbh$ is reduced if and only if $f\circ \alpha$ is reduced in $\bbh$,
\item a sequence $\alpha_k$ of based reduced loops in $\bbh$ is null if and only if $f\circ \alpha_k$ is null,
\item If $\alpha$ is a based reduced loop, and there is an increasing sequence $0<t_1<t_2<t_3<\cdots$ converging to $1$ such that $\alpha(t_k)=b_0$ and $[\alpha|_{[0,t_k]}]\in f_{\#}(\pioneh)$ for each $k\geq 1$, then $[\alpha]\in f_{\#}(\pioneh)$.
\end{enumerate}
\end{lemma}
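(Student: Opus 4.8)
The plan is to prove all four parts through reduced representatives, which exist and are unique up to reparameterization in the one-dimensional space $\bbh$ \cite{EdaSpatial}, recorded as their sequence of \emph{syllables}: on each component of the complement of $\alpha^{-1}(b_0)$ a reduced loop $\alpha$ traverses a single circle once, contributing a letter $\ell_n^{\pm 1}$. The map $f$ acts on syllables by replacing $\ell_n$ with the block $\ell_{2n-1}\ell_{2n}^{-1}$ and $\ell_n^{-1}$ with $\ell_{2n}\ell_{2n-1}^{-1}$; in either block the first letter is a positive generator and the second a negative one, which is what will prevent new cancellation. I will also use the exponent-sum homomorphisms $e_j:\pioneh\to\bbz$ induced by the retractions $R_j:\bbh\to C_j$ (via $\pi_1(C_j)\cong\bbz$). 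Because $R_{2n-1}\circ f$ and $R_{2n}\circ f$ both factor through $R_n$, with degrees $+1$ and $-1$ respectively, every element of $f_{\#}(\pioneh)$ satisfies the \emph{coupling identity} $e_{2n-1}=-e_{2n}$ for every $n$.

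For part (1) I would pass to the shape group, using that $\psi=((r_N)_{\#})_{N}:\pioneh\to\check{\pi}_1(\bbh,b_0)$ is injective. Let $g_M:\bbh_{\leq M}\to\bbh_{\leq 2M}$ be the restriction of $f$, so that $(g_M)_{\#}(\ell_n)=\ell_{2n-1}\ell_{2n}^{-1}$. The maps $r_{2M}\circ f$ and $g_M\circ r_M$ agree on every circle of $\bbh$ and hence are equal, giving $(r_{2M})_{\#}\circ f_{\#}=(g_M)_{\#}\circ (r_M)_{\#}$; and $(g_M)_{\#}$ is injective because $\{\ell_{2n-1}\ell_{2n}^{-1}\}_{n\leq M}$ is Nielsen-reduced in $F_{2M}$. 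Thus if $f_{\#}([\beta])=1$ then $(r_M)_{\#}([\beta])=1$ for every $M$, and injectivity of $\psi$ forces $[\beta]=1$.

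For part (2), one direction is immediate: a nonconstant null subloop $\alpha|_{[s,t]}$ yields the nonconstant null subloop $f\circ(\alpha|_{[s,t]})$ of $f\circ\alpha$, so $f\circ\alpha$ reduced forces $\alpha$ reduced. For the converse, assume $\alpha$ reduced and suppose $(f\circ\alpha)|_{[s,t]}$ is a nonconstant null subloop with common value $y$. If $y\neq b_0$ then $f^{-1}(y)$ is a single point, so $\alpha(s)=\alpha(t)$ and $\alpha|_{[s,t]}$ is a nonconstant subloop of $\alpha$ with trivial $f_{\#}$-image, hence null by (1), contradicting reducedness. If $y=b_0$, I would enlarge $[s,t]$ to $[\hat s,\hat t]$ with $\hat s,\hat t$ the nearest points of $\alpha^{-1}(b_0)$; then each of $[\hat s,s]$ and $[t,\hat t]$ is at most a half-block, so $f_{\#}([\alpha|_{[\hat s,\hat t]}])$ is the product of at most one positive and one negative letter. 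The coupling identity forces this image element to be trivial or a full block $f_{\#}(\ell_n^{\pm 1})$; by (1), $\alpha|_{[\hat s,\hat t]}$ is then constant or a single syllable, each impossible for a reduced subloop whose interior contains the two distinct points $s<t$ where $f\circ\alpha$ returns to $b_0$. Part (3) is then bookkeeping built on (2): for reduced loops, being null means the syllables eventually use only circles $C_m$ with $m\geq N$ for every $N$; since the syllables of $f\circ\alpha_k$ (reduced by (2)) use exactly the circles $C_{2m-1},C_{2m}$ for the circles $C_m$ used by $\alpha_k$, we get $\alpha_k$ null $\iff f\circ\alpha_k$ null.

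For part (4), the strategy is a limit construction. For each $k$, $[\alpha|_{[0,t_k]}]\in f_{\#}(\pioneh)$ gives a reduced loop $\gamma_k$ with $f\circ\gamma_k\equiv\alpha|_{[0,t_k]}$ (by (2) and uniqueness of reduced representatives). Since a complete image word ends in a negative letter whereas a mid-block truncation ends in a positive one, the initial segment $\alpha|_{[0,t_k]}$ of $\alpha|_{[0,t_{k+1}]}$ ends at a block boundary, and by (1) this makes $\gamma_k$ an initial segment of $\gamma_{k+1}$. I then let $\gamma$ be the loop whose syllable sequence is the increasing union of those of the $\gamma_k$. This $\gamma$ is a genuine reduced loop in $\bbh$: it is reduced because each adjacency already occurs inside some $\gamma_k$, and it uses each circle $C_n$ only finitely often because $\alpha$ uses $C_{2n-1}$ finitely often. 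Finally, since $t_k\to 1$ with $\alpha(t_k)=b_0$, the syllables of $\alpha$ are exactly the union of those of the loops $\alpha|_{[0,t_k]}=f\circ\gamma_k$, whence $f\circ\gamma\equiv\alpha$ and $[\alpha]=f_{\#}([\gamma])\in f_{\#}(\pioneh)$. The two steps I expect to be the main obstacles are the case analysis in the converse of (2)—in particular handling the returns of $f\circ\alpha$ to $b_0$ at block-midpoints via the coupling identity—and making the limit loop in (4) rigorous, namely verifying its continuity and that passing to the limit commutes with $f$.
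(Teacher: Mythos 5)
Your proposal is correct, and while it shares the paper's skeleton (reduced representatives and their syllables, shape injectivity of $\psi$, and a nested-lift limit for (4)), the working parts of (1) and especially (2) are genuinely different. In (1) you use the commuting square $r_{2M}\circ f=g_M\circ r_M$ together with the fact that the words $\ell_{2n-1}\ell_{2n}^{-1}$ freely generate a subgroup of $F_{2M}$, whereas the paper writes a nontrivial class in the free-product form $g_1h_1\cdots g_kh_k$ coming from $\pi_1(\bbh_{\leq n},b_0)\ast\pi_1(\bbh_{\geq n+1},b_0)$; the content is the same, but your formulation is more systematic. The substantive divergence is (2): the paper argues by word combinatorics, using the splitting $\pi_1(C_{2n},b_0)\ast\pi_1(\bigcup_{m\neq 2n}C_m,b_0)$ to extract a subloop of the form $\ell_{2n}\cdot\beta\cdot\ell_{2n}^{-}$ with $\beta$ null-homotopic and then contradicting (1); you instead introduce the exponent-sum coupling identity $e_{2n-1}=-e_{2n}$, valid on all of $f_{\#}(\pioneh)$ because $e_{2n-1}\circ f_{\#}=e_n$ and $e_{2n}\circ f_{\#}=-e_n$, enlarge the offending subloop to $[\hat{s},\hat{t}]$ with endpoints in $\alpha^{-1}(b_0)$, and let the identity force $f_{\#}([\alpha|_{[\hat{s},\hat{t}]}])$ to be trivial or a full block. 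Your invariant-based mechanism avoids the paper's delicate cancellation analysis, at the cost of introducing the auxiliary homomorphisms; the paper stays entirely inside the free-product structure. In (4), the nesting of the $\gamma_k$ that you obtain from the letter-sign argument plus (1) is obtained in the paper more directly from uniqueness of lifts (two lifts of the reduced path $\alpha|_{[0,t_k]}$ agree on the dense set where $f$ is injective, hence everywhere), and the limit loop is parameterized concretely ($\eta|_{[0,t_k]}=\eta_k$, $\eta(1)=b_0$), so that $f\circ\eta=\alpha$ holds exactly rather than after an appeal to uniqueness of reduced representatives. Three details to tighten in a write-up: applying (1) to subloops based at points other than $b_0$ (your case $y\neq b_0$, and loops based at circle midpoints) needs a routine path-conjugation change of basepoint; a reduced loop need not have a last syllable (syllables can accumulate at the right endpoint), so your claim that a complete image word ``ends in a negative letter'' should be stated as ``never ends in a positive letter'' --- which is exactly what your mid-block contradiction uses, since the truncation's positive final letter transfers to $f\circ\gamma_k$ under reparameterization; and the assembly of $\gamma$ from a union of syllable sequences is most cleanly replaced by the nested parameterization just described.
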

\begin{proof}
(1) Let $\alpha$ be a loop in $\bbh$ such that $[\alpha]\neq 1$ in $\pioneh$. Then there is an $n\geq 1$, $g_1,g_2,\dots,g_k\in \pi_1(\bbh_{\geq n+1},b_0)$, and $h_1,h_2,\dots,h_k\in \pi_1(\bbh_{\leq n},b_0)=F_n$ such that $[\alpha]=g_1h_1g_2h_2\cdots g_kh_k$ and $1\neq h_1h_2\cdots h_k\in F_n$. Thus \[f_{\#}([\alpha])=f_{\#}(g_1)f_{\#}(h_1)f_{\#}(g_2)f_{\#}(h_2)\cdots f_{\#}(g_k)f_{\#}(h_k)\] where $f_{\#}(g_i)\in \pi_1(\bbh_{\geq 2n+1},b_0)$ and $f_{\#}(h_i)\in \pi_1(\bbh_{\leq 2n},b_0)=F_{2n}$. But the restriction of $f$ to $\bbh_{\leq n}$ induces an injection $F_n\to F_{2n}$ on $\pi_1$. Thus \[f_{\#}(h_1)f_{\#}(h_2)\cdots f_{\#}(h_k)=f_{\#}(h_1h_2\cdots h_k)\neq 1\] in $F_{2n}$. It follows that $f_{\#}([\alpha])\neq 1$.\\
(2) Clearly, if $\alpha$ is not reduced, then $f\circ \alpha$ is not reduced. For the converse, let $\alpha$ be reduced. Then $A=\alpha^{-1}(b_0)$ is nowhere dense and if $(a,b)$ is a component of $\ui\backslash A$, then $\alpha|_{[a,b]}$ is a loop of the form $\ell_{n}$ or $\ell_{n}^{-}$. Note that $f\circ \alpha|_{[a,b]}$ must be a loop of the form $\ell_{2n-1}\cdot\ell_{2n}^{-}$ or $\ell_{2n}\cdot\ell_{2n-1}^{-}$. To obtain a contradiction, suppose $f\circ \alpha$ is not reduced. Then there are $0\leq s<t\leq 1$ such that $f\circ \alpha|_{[s,t]}$ is a null-homotopic loop in $\bbh$. Given the definition of $f$ and the fact that $\alpha$ is reduced, we may assume $f\circ \alpha|_{[s,t]}$ is based at $b_0$.

If $\alpha|_{[s,t]}$ is a loop, then $[\alpha|_{[s,t]}]\neq 1$ since $\alpha$ is a reduced. However, $[f\circ\alpha|_{[s,t]}]=1$, which contradicts (1).

If $\alpha(s)\neq b_0$, then by definition of $f$, we have $s=\frac{a+b}{2}$ for a component $(a,b)$ of $\ui\backslash A$. The path $f\circ \alpha|_{[a,b]}$ is either of the form $\ell_{2n-1}\cdot\ell_{2n}^{-}$ or $\ell_{2n}\cdot\ell_{2n-1}^{-}$. First, suppose $f\circ \alpha|_{[a,b]}\equiv\ell_{2n-1}\cdot\ell_{2n}^{-}$.  Since $[f\circ\alpha|_{[s,t]}]=[\ell_{2n}^{-}][f\circ\alpha|_{[b,t]}]=1$, we must have a positive, equal number of appearances of $\ell_{2n}$ and $\ell_{2n}^{-}$ as subloops of $f\circ\alpha|_{[s,t]}$. Note that $\ell_{2n}$ and $\ell_{2n}^{-}$ cannot occur as consecutive subloops of $f\circ\alpha|_{[s,t]}$ since $\alpha$ is reduced. Since $\pioneh$ may be written as the free product $\pi_1(C_{2n},b_0)\ast\pi_1(\bigcup_{m\neq 2n}C_{m},b_0)$ and $f\circ\alpha|_{[s,t]}$ reduces completely in $\bbh$, it must have a subloop of the form $\ell_{2n}\cdot \beta\cdot \ell_{2n}^{-}$ or $\ell_{2n}^{-}\cdot \beta\cdot \ell_{2n}$ where $\beta$ is a nonconstant, null-homotopic loop in $\bigcup_{m\neq 2n}C_{m}$. By definition of $f$, there are $s< s'<t'< t$ such that $\alpha(s')=\alpha(t')=b_0$ and $f\circ \alpha|_{[s',t']}\equiv \beta$. But now $\alpha|_{[s',t']}$ is a reduced loop such that $f_{\#}([\alpha|_{[s',t']}])=1$; a contradiction of (1). On the other hand, if $f\circ \alpha|_{[a,b]}\equiv\ell_{2n}\cdot\ell_{2n-1}^{-}$, we may apply a similar argument using the identification $\pioneh=\pi_1(C_{2n-1},b_0)\ast\pi_1(\bigcup_{m\neq 2n-1}C_{m},b_0)$.

If $\alpha(t)\neq b_0$, we may apply the argument from the previous paragraph.\\
(3) By continuity, $f\circ \alpha_k$ is null whenever $\alpha_k$ is null. If $\alpha_k$ is a sequence of reduced loops which is not null, then there is an $n$ such that infinitely many of the loops $\alpha_k$ traverse at least one of the circles $C_1,C_2,\dots,C_n$. Consequently, infinitely many of the loops $f\circ \alpha_k$ traverse at least one of the circles $C_1,C_2,\dots,C_{2n}$. Thus $f\circ \alpha_k$ is not null.\\
(4) Since $\alpha$ is reduced, $\alpha|_{[0,t_k]}$ is reduced for each $k$. Moreover, since $[\alpha|_{[0,t_k]}]\in f_{\#}(\pioneh)$, by (2) there is a unique, reduced loop $\eta_k:[0,t_k]\to \bbh$ such that $f\circ \eta_k=\alpha|_{[0,t_k]}$. By uniqueness, $\eta_{k+1}|_{[0,t_k]}=\eta_k$. Define $\eta:\ui\to X$ such that $\eta(s)=\eta_{k}(s)$ for $0\leq s\leq t_k$ and $\eta(1)=b_0$. By (3), $\eta$ is a loop with $f\circ \eta\equiv\alpha$, so that $[\alpha]\in f_{\#}(\pioneh)$.
\end{proof}
%
%
\begin{lemma}\label{closedimagelemmma}
If $f:\bbh\to\bbh$ is the map defined in Lemma \ref{difference1}, then $H=f_{\#}(\pioneh)$ is $(C,c_{\tau})$-closed.
\end{lemma}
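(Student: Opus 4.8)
The plan is to combine the density reduction of Corollary~\ref{denseapplicationcor} with the syntactic ``block word'' description of $f_{\#}(\pioneh)$ supplied by Lemma~\ref{difference1}, and to reduce the general situation to the case of a constant whisker. Since $\cinfty$ is $(\cinfty,\ctau)$-dense by Proposition~\ref{ctauisdense}, Corollary~\ref{denseapplicationcor} shows it suffices to prove that \emph{every} based map $k:(\bbhp,\bpp)\to(\bbh,b_0)$ with $k_{\#}(\cinfty)\leq H$ satisfies $k_{\#}(\pionehp)\leq H$. Write $\alpha=k\circ\iota$, a path from $b_0$ to $x:=k(b_0)$, and $\gamma_n=k\circ\ell_n$, a null sequence of loops at $x$; since $\bbh$ is one-dimensional we may assume $\alpha$ and each $\gamma_n$ is reduced or constant. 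Every element of $\pionehp$ has the form $[\iota\cdot\beta\cdot\iota^{-}]$ for a loop $\beta$ based at $b_0$, so the goal becomes $[\alpha\cdot(k\circ\beta)\cdot\alpha^{-}]\in H$ for every reduced loop $\beta$, given the hypothesis $[\alpha\cdot\gamma_n\cdot\alpha^{-}]\in H$ for all $n$.

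There are two easy regimes. If $x\neq b_0$, then $x$ is a manifold point of $\bbh$ and has a neighborhood that is simply connected in $\bbh$; the same holds if only finitely many $\gamma_n$ are nonconstant. In either case all but finitely many $\gamma_n$ are null-homotopic, every high-index run of $\beta$ maps into that neighborhood, and so $k\circ\beta$ is homotopic rel endpoints to a finite product of the finitely many nonconstant $\gamma_m^{\pm}$. Hence $[\alpha\cdot(k\circ\beta)\cdot\alpha^{-}]$ is a finite product of the elements $[\alpha\cdot\gamma_m\cdot\alpha^{-}]^{\pm1}\in H$, and we are done.

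The heart of the argument is the case $x=b_0$ with $\alpha$ constant. Here $[\gamma_n]\in H$ for every $n$, so by Lemma~\ref{difference1}(1),(2) the reduced representative of $\gamma_n$ equals $f\circ\eta_n$ for a unique reduced loop $\eta_n$, which is null by Lemma~\ref{difference1}(3). I would then construct an explicit $f$-preimage of $k\circ\beta$ by substitution: in the transfinite-concatenation description of $k\circ\beta$, replace each factor $\gamma_m$ by $\eta_m$ to obtain a loop $\zeta$ with $f\circ\zeta\equiv k\circ\beta$, whence $[k\circ\beta]=f_{\#}([\zeta])\in H$. The only thing to check is continuity of $\zeta$ at its accumulation points, and this is exactly where the definition $f\circ\ell_j=\ell_{2j-1}\cdot\ell_{2j}^{-}$ is used: since $f(C_j)\subseteq C_{2j-1}\cup C_{2j}$, a loop $\gamma_m$ supported on circles of index $\geq 2N$ forces $\eta_m$ to be supported on circles of index $\geq N$, so $\eta_m$ shrinks precisely when $\gamma_m$ does. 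This diameter control makes $\zeta$ continuous and simultaneously identifies it as a genuine preimage.

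The remaining case, $x=b_0$ with $\alpha$ nonconstant, I expect to be the main obstacle, because $H$ is \emph{not} normal and so $\alpha$ cannot simply be conjugated away; the plan is to show that $\alpha$ must already lie in $H$, thereby reducing to the constant-whisker case. Applying Corollary~\ref{cancellationsequence} to the null sequence $\gamma_n$ and the reduced loop $\alpha$, arbitrarily long initial segments of $\alpha$ occur as initial segments of the reduced representatives of $\alpha\cdot\gamma_n\cdot\alpha^{-}$, each of which, being in $H$, is an $f$-image and hence reads as a concatenation of blocks $\ell_{2j-1}\ell_{2j}^{-}$ or $\ell_{2j}\ell_{2j-1}^{-}$. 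Inspecting $\alpha$ at its $b_0$-crossings (and using Lemma~\ref{cancellinglemma} to control where cancellation can occur) forces $\alpha$ to be a block word, except possibly for a single unpaired first-half letter $\ell_{2j_0-1}^{\pm}$ or $\ell_{2j_0}^{\pm}$ as its final letter. Such a dangling half-block is then ruled out: for large $n$ the loop $\gamma_n$ is supported on circles of index $>2j_0$, so the reduced representative of $\alpha\cdot\gamma_n\cdot\alpha^{-}$ would contain an unpaired half-block and thus fail to be an $f$-image (again by Lemma~\ref{difference1}(1),(2)), contradicting $[\alpha\cdot\gamma_n\cdot\alpha^{-}]\in H$. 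Therefore $\alpha$ is a block word, $[\alpha]\in H$, and conjugation gives $[\gamma_n]=[\alpha]^{-1}[\alpha\cdot\gamma_n\cdot\alpha^{-}][\alpha]\in H$ for every $n$; the construction of the third paragraph then applies to each $\beta$, yielding $[\alpha\cdot(k\circ\beta)\cdot\alpha^{-}]\in H$ and completing the proof. The delicate point throughout is this classification of $\alpha$, which is where the failure of normality is absorbed and where the full strength of Lemma~\ref{difference1} is needed.
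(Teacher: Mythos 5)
Your proposal is correct, and its overall skeleton coincides with the paper's proof: reduce to the case $k(b_0)=b_0$ (the paper does this via local contractibility of $\bbh\setminus\{b_0\}$, you via an explicit finite-generation argument), isolate the crux as proving $[\alpha]\in H$ for the whisker image $\alpha=k\circ\iota$, and then finish by pulling the null sequence $\gamma_n$ back through $f$ to a null sequence of reduced loops and factoring $k$ through $f$ --- your ``substitution'' loop $\zeta$ is exactly the paper's map $h$ with $f\circ h=k$, applied loop by loop. The genuine difference is how the crux is handled. The paper argues geometrically and splits into two cases: if $\alpha$ has a last letter, it conjugates by one deep $\gamma_N$, pulls the reduced loop $\alpha\cdot\gamma_N\cdot\alpha^{-}$ back through $f$, and shows the preimage must pass through $b_0$ at the junction parameter $1/3$ (the block-midpoint argument); if the letters of $\alpha$ accumulate at $1$, it produces $[\alpha|_{[0,t_k]}]\in H$ for $t_k\to 1$ and assembles these with Lemma~\ref{difference1}(4). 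You instead derive a combinatorial normal form: Corollary~\ref{cancellationsequence} makes every initial segment of $\alpha$ at a $b_0$-crossing an initial segment of a block word, the sign pattern of blocks (a positive letter followed by its adjacent-index negative partner) forces the block pairing to be unique, and the trailing anomaly is excluded by conjugating with a deep $\gamma_n$ --- which is the paper's midpoint argument in word-combinatorial clothing. Your route unifies the paper's two cases and avoids Lemma~\ref{difference1}(4), since the preimage of a full block word is built directly with the diameter control behind Lemma~\ref{difference1}(3); its cost is that the sentence ``inspecting $\alpha$ at its $b_0$-crossings forces $\alpha$ to be a block word except for one dangling letter'' conceals the real work and is slightly imprecise: because proper initial segments never see the final letter, the a priori normal form is a block word followed by at most \emph{two} unpaired letters (a dangling positive half-block and then an arbitrary final letter), and one must also rule out letters accumulating immediately after an unpaired positive letter. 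Both anomalies are killed by exactly your exclusion argument (in the reduced word $\alpha\cdot\gamma_n\cdot\alpha^{-}$, for $\gamma_n$ supported on sufficiently high-index circles, some positive letter would be followed by a non-partner or some negative letter preceded by a non-partner, so it could not be an $f$-image), so the proof is completable as proposed --- but these details are the precise counterpart of the paper's Cases I and II rather than a shortcut around them.
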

\begin{proof}
Let $g:(\bbhp,b_{0}^{+})\to (\bbh,b_0)$ be a map such that $g_{\#}(C)\leq H$. Since $\bbh$ is locally contractible at all points of $\bbh\backslash \{b_0\}$, we may focus on the case when $g(b_0)=b_0$. We may also assume that $\alpha=g\circ \iota$ and $\gamma_n=g\circ \ell_n$, $n\in\bbn$ are reduced (or constant) loops satisfying $[\alpha\cdot\gamma_n\cdot\alpha^{-}]\in H$ and that each $\gamma_n$ is nonconstant. We first prove that $[\alpha]\in H$. This is clear if $\alpha$ is constant. Suppose $\alpha$ is not constant.

Case I: Suppose there is a $0<t<1$ such that $\alpha|_{[t,1]}\equiv \ell_{m}^{\pm}$ for some $m\in\bbn$. Find $N$ such that $\gamma_N$ has image in $\bbh_{\geq m+2}$. Then $\alpha\cdot\gamma_N\cdot\alpha^{-}$ is already reduced. Thus, by (2) of Lemma \ref{difference1}, there is a (unique) reduced loop $\beta$ such that $f\circ \beta = \alpha \cdot \gamma_N \cdot \alpha^-$. It suffices to show $1/3\in \beta^{-1}(b_0)$ since this would imply $[\alpha]= f_{\#}([\beta|_{[0,1/3]}])\in H$. Suppose $1/3\notin \beta^{-1}(b_0)$. Since $f\circ\beta(1/3)=b_0$, it follows from the definition of $f$ that there is a component $(r,s)$ of $\ui\backslash \beta^{-1}(b_0)$ such that $1/3=\frac{r+s}{2}$ and $f\circ\beta|_{[r,s]}\equiv \ell_{2k}\cdot\ell_{2k-1}^{-}$ or $f\circ\beta|_{[r,s]}\equiv \ell_{2k-1}\cdot\ell_{2k}^{-}$. But this is impossible since $f\circ\beta|_{[r,1/3]}$ has image in $C_m$ and $f\circ\beta|_{[1/3,s]}$ has image in $\bbh_{\geq m+2}$.

Case II: Suppose there is an increasing sequence $0<s_1<s_2<s_3<\cdots$ converging to $1$ such that $\alpha|_{[s_k,1]}$ is a non-trivial, reduced loop. Let $A_k=\{t\in (s_k,s_{k+1})\mid\alpha(t)=b_0\}$; we may assume that $|A_k|\geq 1$ for each $k$. We show that there exists $t_k\in [s_k,s_{k+1}]$ such that $[\alpha|_{[0,t_k]}]\in H$. By (4) of Lemma \ref{difference1}, this is enough to show that $[\alpha]\in H$. Fix $k$ and find an $m$ such that $\ell_{m}^{\pm}$ appears in $\alpha|_{[s_{k+1},s_{k+2}]}$. Find an $N$ such that $\gamma_{N}$ has image in $\bbh_{\geq m+1}$. Let $\delta$ be the reduced representative of $\alpha\cdot\gamma_{N}\cdot\alpha^{-}$. By Lemma \ref{cancellinglemma}, there is a $0<q<1$ such that $\delta|_{[0,q]}\equiv \alpha|_{[0,s_{k+1}]}$. Additionally, there is a unique $0<p<q$ such that $\delta|_{[p,q]}\equiv \alpha|_{[s_{k},s_{k+1}]}$. Since $[\delta]\in H$, by (2) of Lemma \ref{difference1}, there is a reduced loop $\beta$ such that $f\circ\beta=\delta$. If $\beta(q)=b_0$, set $t_k=s_{k+1}$; it is clear that $[\alpha|_{[0,t_k]}]=[f\circ\beta|_{[0,q]}]\in H$. If $\beta(q)\neq b_0$, then $q=\frac{r+s}{2}$ for some component $(r,s)$ of $\ui\backslash \beta^{-1}(b_0)$. Since $|A_k|\geq 1$, we have $p<r<q$. Take $t_{k}$ to be the unique value such that $\delta|_{[0,r]}\equiv\alpha|_{[0,t_k]}$. Since $\beta(r)=b_0$, we have $[\alpha|_{[0,t_k]}]=[f\circ\beta|_{[0,r]}]\in H$.

Cases I and II together prove that $[\alpha]\in H$. Since we also have $[\alpha\cdot\gamma_n\cdot\alpha^{-}]\in H$ for each $n\in \bbn$, we have $[\gamma_n]\in H$ for each $n\in\bbn$. By (2) of Lemma \ref{difference1}, $\alpha=f\circ \beta$ and $\gamma_n=f\circ \zeta_n$ for reduced loops $\beta$ and $\zeta_n$. Since $\gamma_n$ is a null-sequence, $\zeta_n$ is a null-sequence by (3) of Lemma \ref{difference1}. Define $h:\bbhp\to\bbh$ by $h\circ \iota=\beta$ and $h\circ \ell_n=\zeta_n$. Since $f\circ h=g$, it follows that $g_{\#}(c_{\tau})=f_{\#}(h_{\#}(c_{\tau}))\in H$, completing the proof.
\end{proof}
\begin{theorem}\label{differencetheorem}
$p_{\tau}\notin cl_{C,c_{\tau}}(P)$.
\end{theorem}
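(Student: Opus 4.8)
The plan is to read the claim through Proposition~\ref{comparisonprop}. Taking $(T,g)=(C,c_\tau)$ and $(T',g')=(P,p_\tau)$, the assertion $p_\tau\in cl_{C,c_\tau}(P)$ is precisely condition~(1) of that proposition, which is equivalent to its condition~(2): \emph{every} $(C,c_\tau)$-closed subgroup (in the fundamental group of any space) is $(P,p_\tau)$-closed. Hence to prove $p_\tau\notin cl_{C,c_\tau}(P)$ it suffices to exhibit a single $(C,c_\tau)$-closed subgroup that fails to be $(P,p_\tau)$-closed. I would take the map $f:\bbh\to\bbh$ with $f\circ\ell_n=\ell_{2n-1}\cdot\ell_{2n}^{-}$ from Lemma~\ref{difference1} and put $H=f_{\#}(\pioneh)\leq\pi_1(\bbh,b_0)$. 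Lemma~\ref{closedimagelemmma} already asserts that $H$ is $(C,c_\tau)$-closed, so the entire argument reduces to showing that $H$ is not $(P,p_\tau)$-closed.

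To witness the failure of $(P,p_\tau)$-closedness I would use the whisker-collapsing map $h:(\bbhp,\bpp)\to(\bbh,b_0)$. Since $h\circ\iota$ is constant, we get $h_{\#}(p_n)=[\ell_{2n-1}\cdot\ell_{2n}^{-}]=f_{\#}([\ell_n])\in H$, so $h_{\#}(P)\leq H$, whereas $h_{\#}(p_\tau)=[(f_{odd}\circ\ell_\tau)\cdot(f_{even}\circ\ell_\tau)^{-}]$. Thus the whole theorem comes down to the single claim
\[\sigma:=(f_{odd}\circ\ell_\tau)\cdot(f_{even}\circ\ell_\tau)^{-}\notin f_{\#}(\pioneh),\]
and this is the heart of the matter and the main obstacle.

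To prove the claim I would argue by contradiction: assuming $[\sigma]=f_{\#}([\zeta])$, take $\zeta$ reduced. By Lemma~\ref{difference1}(2) the loop $f\circ\zeta$ is then reduced, and since reduced representatives in the one-dimensional space $\bbh$ are unique up to reparametrization, we obtain $\sigma\equiv f\circ\zeta$ once $\sigma$ itself is checked to be reduced. Reducedness of $\sigma$ should follow from the fact that its two halves traverse disjoint families of circles (odd versus even), each half being individually reduced: any cancelling subloop crossing the midpoint, after being projected onto the odd (resp.\ even) circles, would force both of its halves to be degenerate. The contradiction will then come from a purely local invariant measured at the unique positive occurrence of $\ell_1$ in $\sigma$.

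The decisive point is the local structure of the $b_0$-preimage at that occurrence. In $\sigma$ the circle $C_1$ is traversed exactly once, positively, on the (rescaled) middle interval $I_1^1$, whose right endpoint is a point of $\mcc=\ell_\tau^{-1}(b_0)$ that is a right-limit of $\mcc$; hence this right endpoint is a right-accumulation point of $\sigma^{-1}(b_0)$. On the other hand, in $f\circ\zeta$ a positive syllable $\ell_1$ can only arise from a positive $\ell_1$-syllable of the reduced loop $\zeta$ via the block $f(\ell_1)=\ell_1\cdot\ell_2^{-}$, so its right endpoint is the midpoint of a component of $\zeta^{-1}(b_0)$, flanked by an arc of $C_1$ on the left and an arc of $C_2$ on the right; consequently it is an \emph{isolated} point of $(f\circ\zeta)^{-1}(b_0)$. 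Because the property of being a right-accumulation point of the $b_0$-preimage is preserved by the orientation-preserving reparametrizing homeomorphism realizing $\sigma\equiv f\circ\zeta$, this is a contradiction. This establishes $[\sigma]\notin f_{\#}(\pioneh)$, so $H$ is not $(P,p_\tau)$-closed, and Proposition~\ref{comparisonprop} then yields $p_\tau\notin cl_{C,c_\tau}(P)$.
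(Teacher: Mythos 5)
Your proposal is correct, and it shares the paper's skeleton --- the same witness subgroup $f_{\#}(\pioneh)$ built from Lemma~\ref{difference1}, certified $(C,c_{\tau})$-closed by Lemma~\ref{closedimagelemmma} --- but it diverges from the paper in how the reduction is organized and, more substantially, in how the decisive non-membership is proved. The paper forms $f^{+}:\bbhp\to\bbhp$, sets $K=f^{+}_{\#}(\pionehp)$, proves the exact equality $cl_{C,c_{\tau}}(P)=K$ (the inclusion $K\leq cl_{C,c_{\tau}}(P)$, obtained from density of $C$ and Proposition~\ref{continuity}, is extra information not needed for the theorem), and then settles $p_{\tau}\notin K$ with a terse factorization claim: the only nontrivial elements of $K$ that factor as $[\iota][\alpha][\beta][\iota^{-}]$ with $\alpha$ in the odd circles and $\beta$ in the even circles are the $p_{n}$. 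You instead route through Proposition~\ref{comparisonprop} --- logically equivalent to the paper's one-sided inclusion, repackaged via the whisker-collapsing map $h$, whose computations $h_{\#}(p_n)=f_{\#}([\ell_n])$ and $h_{\#}(p_{\tau})=[\sigma]$ are correct --- and then prove $[\sigma]\notin f_{\#}(\pioneh)$ by a local invariant: after verifying $\sigma$ is reduced (your odd/even projection argument for subloops crossing the midpoint is sound) and invoking uniqueness of reduced representatives in one-dimensional spaces to get $\sigma\equiv f\circ\zeta$, you observe that the right endpoint of the unique positive $C_1$-traversal is a right-accumulation point of $\sigma^{-1}(b_0)$ (a right limit point of the rescaled Cantor set), whereas in $f\circ\zeta$ every positive $C_1$-traversal ends at the isolated midpoint of a block $\ell_1\cdot\ell_2^{-}$ coming from a positive $\ell_1$-syllable of the reduced loop $\zeta$; since isolation is preserved by the reparametrizing homeomorphism, this is a contradiction. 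This is a genuinely different justification of the step both you and the paper identify as the heart of the matter: the paper's argument rests on an order-type claim about how elements of $K$ can split into an odd part followed by an even part (left largely to the reader), while yours needs only uniqueness of reduced paths plus an elementary point-set distinction, and is fully detailed. Both are valid; yours makes explicit precisely what the paper compresses into one sentence.
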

\begin{proof}
Let $f:\bbh\to\bbh$ be the map defined in Lemma \ref{difference1} and $f^+:(\bbhp,b_{0}^{+})\to(\bbhp,b_{0}^{+})$ be the map where $f\circ\iota=\iota$ and $f^{+}|_{\bbh}=f$. Let $K=f^{+}_{\#}(\pi_1(\bbhp,b_{0}^{+}))$. We show that $cl_{C,c_{\tau}}(P)=K$ and $p_{\tau}\notin K$.

Note that $P=f^{+}_{\#}(C)$. Since $C$ is $(C,c_{\tau})$-dense, we have $K=f^{+}_{\#}(cl_{C,c_{\tau}}(C))\leq cl_{C,c_{\tau}}(f^{+}_{\#}(C))=cl_{C,c_{\tau}}(P)$. If $h:\bbhp\to \bbh$ is the homotopy equivalence collapsing the attached whisker to $b_0$, then $h_{\#}(K)=f_{\#}(\pioneh)$. By Lemma \ref{closedimagelemmma}, $f_{\#}(\pioneh)$ is $(C,c_{\tau})$-closed. Applying Remark \ref{homotopyinvarianceremark}, we see that $K$ is $(C,c_{\tau})$-closed. Finally, since $P=[\iota]f_{\#}(C)[\iota^{-}]\leq K$ and $K$ is $(C,c_{\tau})$-closed, we have $cl_{C,c_{\tau}}(P)\leq K$. This proves $cl_{C,c_{\tau}}(P)= K$.

The only non-trivial elements of $K$ that may be factored as a product $[\iota][\alpha][\beta][\iota^{-}]$ with $\alpha$ having image in $\bigcup_{n\text{ odd}}C_n$ and $\beta$ having image in $\bigcup_{n\text{ even}}C_n$ are the elements $p_{n}\in P$. Since $p_{\tau}$ has such a factorization yet $p_{\tau}\notin P$, we conclude that $p_{\tau}\notin K$.
\end{proof}
\section{A dyadic arc space as a test space}\label{section4}

A pair $(n,j)$ of integers is \textit{dyadic unital} if $n\geq 1$ and $1\leq j\leq 2^{n-1}$ or equivalently if $\frac{2j-1}{2^n}\in (0,1)$. Let $\scrd$ denote the set of dyadic unital pairs. For each dyadic unital pair $(n,j)$, let \[\bbd(n,j)=\left\{(x,y)\in \bbr^2\Big|\left(x-\frac{2j-1}{2^n}\right)^2+y^2=\left(\frac{1}{2^n}\right)^2\text{, }x\geq 0\right\}\]denote the upper semicircle of radius $\frac{1}{2^n}$ centered at $\left(\frac{2j-1}{2^n},0\right)$. The \textit{base arc} is the interval $B=[0,1]\times \{0\}$. We consider the union $\bbd=B\cup \bigcup_{(n,j)\in \scrd} \bbd(n,j)$ topologized as a subspace of $\bbr^2$ and with basepoint $d_0=(0,0)$ (see Figure \ref{dyadicspacefig}). We call $\bbd(n)=\bigcup_{j=1}^{2^{n-1}} \bbd(n,j)$ the $n$\textit{-th level} of $\bbd$.
\begin{figure}[H]
\centering \includegraphics[height=1.7in]{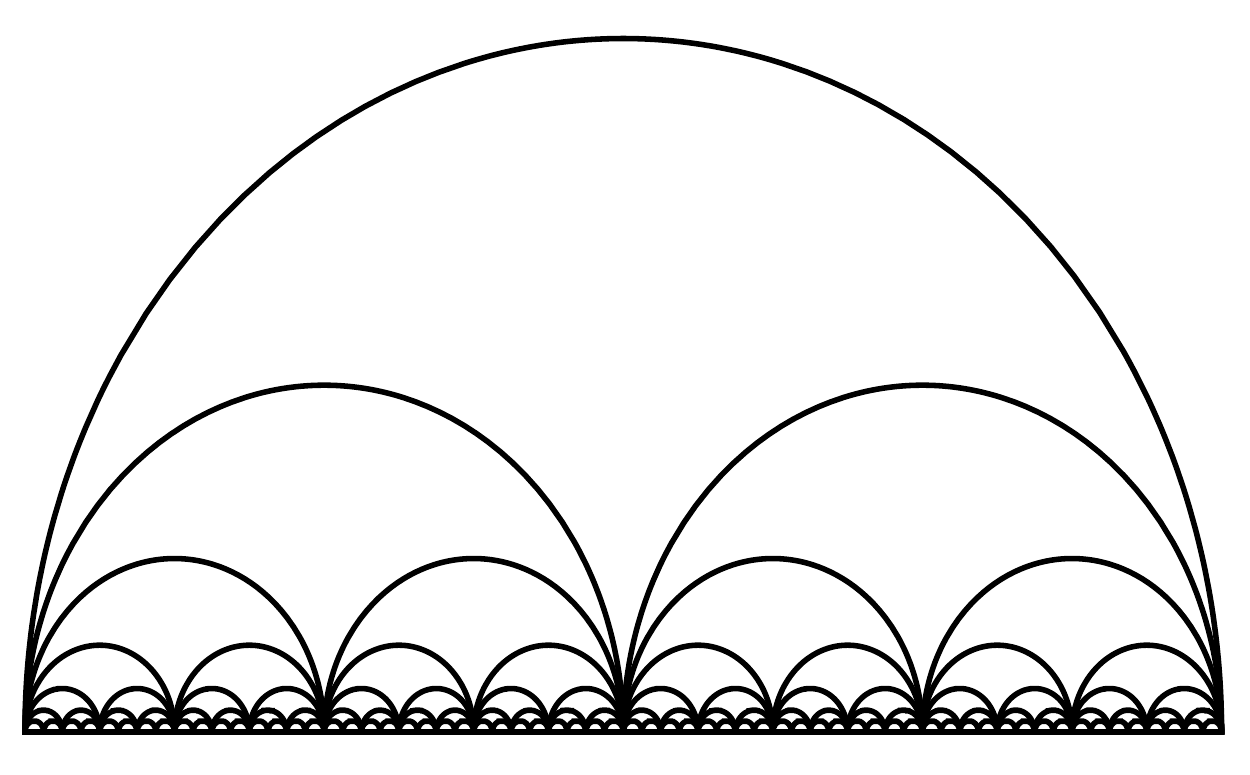}
\caption{\label{dyadicspacefig}The space $\bbd$}
\end{figure}
For each dyadic unital pair $(n,j)$, let $\ell_{n,j}:[0,1]\to \bbd(n,j)$ be the arc $\ds\ell_{n,j}(t)=\left(\frac{t+j-1}{2^{n-1}}, \frac{1}{2^{n-1}}\sqrt{t-t^2}\right)$ from $\left(\frac{j-1}{2^{n-1}},0\right)$ to $\left(\frac{j}{2^{n-1}},0\right)$ (see Figures \ref{arcfig1} and \ref{arcfig2}). We say a path in $\bbd$ is \textit{standard} if it is of the form $\ell_{n,j}$ or $(\ell_{n,j})^{-}$. To simplify notation, we define:
\begin{enumerate}
\item $\lambda_{n,m}=\prod_{j=1}^{m}\ell_{n,j}$ to be the concatenation of standard paths on the $n$-th level from $d_0$ to $\left(\frac{m}{2^{n-1}},0\right)$. We allow $\lambda_{n,0}$ to denote the constant path at $d_0$.
\item $\lambda_n=\lambda_{n,2^{n-1}}=\prod_{j=1}^{2^{n-1}}\ell_{n,j}$ to be the path from $(0,0)$ to $(1,0)$ on the $n$-th level.
\item $\lambda_{\infty}(t)=(t,0)$ to be the unit speed path on the base arc.
\end{enumerate}
\begin{figure}[H]
\centering \includegraphics[height=1.7in]{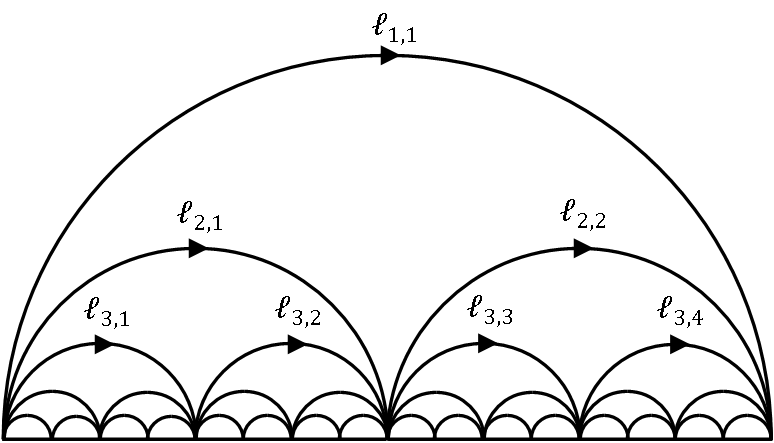}
\caption{\label{arcfig1}The canonical arcs $\ell_{n,j}:\ui\to \bbd$, $1\leq n\leq 3$}
\end{figure}
\begin{figure}[H]
\centering \includegraphics[height=1.5in]{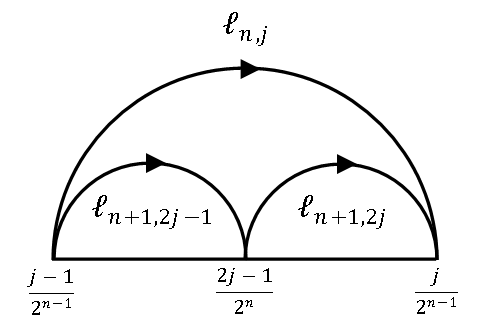}
\caption{\label{arcfig2}The canonical arc $\ell_{n,j}:\ui\to \bbd$}
\end{figure}
As with the Hawaiian earring, the fundamental group of $\bbd$ can be understood as a subgroup of an inverse limit of free groups. Consider the finite graph $E_n=B\cup \bigcup_{m=1}^{n}\bbd(m)$ whose fundamental group $\pi_1(E_n,d_0)=F_{2^{n}-1}$ is free on $2^{n}-1$ generators. For $n'>n$ the retractions $E_{n'}\to E_n$, which collapse a point $(s,t)\in\bigcup_{n<m\leq n'}\bbd(m)$ to the corresponding point $(s,0)$ on the base arc, induce an inverse sequence on $\pi_1$ whose limit $\check{\pi}_1(\bbd,d_0)=\varprojlim_{n} F_{2^{n}-1}$ is the first shape homotopy group. The retractions $r_n:\bbd\to E_n$, which collapse $\bigcup_{m>n}\bbd(m)$ onto the base arc by vertical projection, induce a canonical homomorphism $\psi:\pioned\to \check{\pi}_1(\bbd,d_0)$. Since $\bbd$ is a one-dimensional planar Peano continuum, $\psi$ is injective. Thus two loops $\alpha,\beta\in\Omega(\bbd,d_0)$ are homotopic if and only if for every $n\in\bbn$ the projections of $\alpha$ and $\beta$ are homotopic in $E_n$.
\subsection{The subgroup $\bbs\leq \pioned$ and the homotopically path Hausdorff property}
\begin{definition}\label{hompathhausdef} (Homotopically path Hausdorff relative to a subgroup) We call $X$ \textit{homotopically path Hausdorff relative to} $H$ if for every pair of paths $\alpha,\beta\in P(X,x_0)$ such that $\alpha(1)=\beta(1)$ and $[\alpha\cdot\beta^{-}]\notin H$, there is an integer $n\geq 1$ and a sequence of open sets $U_1,U_2,\dots,U_{2^{n-1}}$ with $\alpha\left(\left[\frac{j-1}{2^{n-1}},\frac{j}{2^{n-1}}\right]\right)\subseteq U_j$, such that if $\gamma:[0,1]\to X$ is another path satisfying $\gamma\left(\left[\frac{j-1}{2^{n-1}},\frac{j}{2^{n-1}}\right]\right)\subseteq U_j$ for $1\leq j\leq 2^{n-1}$ and $\gamma\left(\frac{j}{2^{n-1}}\right)=\alpha\left(\frac{j}{2^{n-1}}\right)$ for $0\leq j\leq 2^{n-1}$, then $[\gamma\cdot\beta^{-}]\notin H$.

We call $X$ \textit{homotopically path Hausdorff} if it is homotopically path Hausdorff relative to the trivial subgroup $H=1$.
\end{definition}
\begin{remark}
The original definition of the homotopically path Hausdorff property given in \cite{FRVZ11} does not use dyadic rationals, however, it is equivalent to the definition used here, which is more convenient for our purposes.
\end{remark}
\begin{definition}\label{defsubgroups}
For $n\in\bbn$, let $s_{n}=[\lambda_{n}\cdot (\lambda_{n+1})^{-}]$. Let $\bbs$ be the subgroup of $\pioned$ freely generated by $\{s_n\mid n\in\bbn\}$ and let $\dinf=[\lambda_1\cdot\lambda_{\infty}^{-}]$.
\end{definition}

Although $(\bbd,d_0)$ is not well-pointed, we use the self-similarity of $\bbd$ to show that $(\bbs,\dinf)$ is a normal closure pair.

\begin{proposition}\label{conjugatesforS}
$(\bbs,\dinf)$ is a normal closure pair for $(\bbd,d_0)$.
\end{proposition}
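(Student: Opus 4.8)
\emph{Plan.} By Definition \ref{normaltestpairdef}, I must show that for any $(X,x_0)$ and any $H\leq\pionex$, the subgroup $H$ is $(\bbs,\dinf)$-closed if and only if $H^{\alpha}$ is $(\bbs,\dinf)$-closed in $\pi_1(X,\alpha(1))$ for every $\alpha\in P(X,x_0)$. The reverse implication is immediate upon taking $\alpha=c_{x_0}$, since $H^{c_{x_0}}=H$. So the content lies in the forward direction. Here the mechanism of Lemma \ref{wellpointedlemma} is unavailable: $(\bbd,d_0)$ is \emph{not} well-pointed, so there is no homotopy extension property with which to drag the basepoint of a test map along $\alpha^{-}$. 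The plan is to replace that maneuver by an explicit construction that exploits the self-similarity of $\bbd$.

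Assume $H$ is $(\bbs,\dinf)$-closed, fix $\alpha\in P(X,x_0)$ with $\alpha(1)=x$, and let $f:(\bbd,d_0)\to(X,x)$ satisfy $f_{\#}(\bbs)\leq H^{\alpha}$. I would build a based map $g:(\bbd,d_0)\to(X,x_0)$ as follows. Write $\bbd=\bbd(1,1)\cup\bbd_L\cup\bbd_R$, where $\bbd_L$ (resp.\ $\bbd_R$) is the union of $[0,\tfrac12]\times\{0\}$ (resp.\ $[\tfrac12,1]\times\{0\}$) with all semicircles $\bbd(n,j)$ lying over it; then $\bbd_L$ and $\bbd_R$ are closed subspaces of $\bbd$, each a copy of $\bbd$ under the scaling homeomorphism $e:\bbd\to\bbd_R$ (and its left analogue), and $\bbd(1,1)$ is the first-level semicircle, meeting $\bbd_L\cup\bbd_R$ only at $(0,0)$ and $(1,0)$. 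Define $g$ on $\bbd_R$ by $g|_{\bbd_R}=f\circ e^{-1}$ (so $g((\tfrac12,0))=f(d_0)=x$); define $g$ on $\bbd_L$ by $g(p)=\alpha(2\pi_x(p))$, where $\pi_x$ is the first-coordinate projection; and define $g$ on $\bbd(1,1)$ by $g\circ\ell_{1,1}=\alpha\cdot(f\circ\lambda_1)$. These three continuous pieces agree at the pairwise-intersection points $(0,0),(\tfrac12,0),(1,0)$, so by the pasting lemma $g$ is a map $(\bbd,d_0)\to(X,x_0)$.

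The main obstacle is continuity at $d_0$, where infinitely many levels accumulate. Prepending a \emph{fixed} copy of $\alpha$ to each level $\lambda_n$ — the naive realization of conjugation by $\alpha$ — would destroy continuity there, which is precisely the failure of well-pointedness. The construction sidesteps this: on $\bbd_L$ the map factors through the projection $\bbd_L\to[0,\tfrac12]$, hence is continuous at $d_0$, and it \emph{collapses} the entire left copy onto the single path $\alpha$ rather than inserting $\alpha$ level by level. Because the first coordinate increases monotonically along each level path, the portion $\lambda_n^L$ of $\lambda_n$ lying in $\bbd_L$ satisfies $g\circ\lambda_n^L\equiv\alpha$ for every $n\geq2$, while the portion $\lambda_n^R$ in $\bbd_R$ satisfies $e^{-1}\circ\lambda_n^R\equiv\lambda_{n-1}$, so $g\circ\lambda_n^R\equiv f\circ\lambda_{n-1}$. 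Thus $g\circ\lambda_n\equiv\alpha\cdot(f\circ\lambda_{n-1})$ for $n\geq2$, while $g\circ\lambda_1=\alpha\cdot(f\circ\lambda_1)$ and $g\circ\lambda_{\infty}\equiv\alpha\cdot(f\circ\lambda_{\infty})$.

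This index shift is what makes the argument close up. On generators one finds $g_{\#}(s_1)=1$ and, for $n\geq2$, $g_{\#}(s_n)=[\alpha]f_{\#}(s_{n-1})[\alpha^{-}]$; since each $f_{\#}(s_{n-1})\in H^{\alpha}=[\alpha^{-}]H[\alpha]$ by hypothesis, every $g_{\#}(s_n)\in H$, whence $g_{\#}(\bbs)\leq H$. Similarly $g_{\#}(\dinf)=[(g\circ\lambda_1)\cdot(g\circ\lambda_{\infty})^{-}]=[\alpha]f_{\#}(\dinf)[\alpha^{-}]$. As $H$ is $(\bbs,\dinf)$-closed, $g_{\#}(\dinf)\in H$, and therefore $f_{\#}(\dinf)\in[\alpha^{-}]H[\alpha]=H^{\alpha}$, proving $H^{\alpha}$ is $(\bbs,\dinf)$-closed. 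I expect the only delicate point to be the verification of continuity of $g$ at $d_0$ (via the closedness of the three pieces and the projection description of $g|_{\bbd_L}$); once the index shift is in place, the remaining computation is a routine telescoping.
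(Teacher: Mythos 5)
Your proof is correct and is essentially the paper's own argument with the formalism unwound: the paper builds exactly your map $g$ as a composite $k\circ r$, where $r:\bbd\to\bbd^+$ retracts $\bbd$ onto a whiskered copy of $\bbd$ realized inside $\bbd$ (the base arc together with the semicircles over $[1/2,1]$, with $[0,1/2]\times\{0\}$ as the whisker) and $k:\bbd^+\to X$ sends the whisker along $\alpha$ and the rescaled copy via $f$ — the same left-collapse/right-rescale decomposition and the same index shift $g_{\#}(s_1)=1$, $g_{\#}(s_n)=[\alpha]f_{\#}(s_{n-1})[\alpha^{-}]$ that you exploit. Since the two arguments coincide in substance, differing only in that the paper routes the conclusion through Remark \ref{comparisonpropremark} and Proposition \ref{continuity} rather than a direct computation, nothing further is needed.
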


\begin{proof}
Let $\bbd^+$ be the well-pointed space constructed in Remark \ref{addawhisker} with accompanying normal closure pair $(S',\dinf ')=([\iota]i_{\#}(\bbs)[\iota^{-}],[\iota]i_{\#}(\dinf)[\iota^{-}])$ where $i:\bbd\to\bbd^+$ and $\iota:\ui\to \bbd^+$ are the inclusion maps. Identify $\bbd^+=B\cup \bigcup_{n=2}^{\infty}\bigcup_{j=2^{n-2}+1}^{2^{n-1}}\bbd(n,j)$ and define a canonical retraction $r:(\bbd,d_0)\to (\bbd^+,d_0)$ collapsing $\bbd\backslash \bbd^+$ vertically onto the arc $([0,1/2]\times\{0\})\cup \bbd(2,2)$. Since $r_{\#}(\bbs)=S'$ and $r_{\#}(\dinf)=\dinf '$, we have $\dinf '\in cl_{S,\dinf}(S')$ by Remark \ref{comparisonpropremark}. To prove the proposition, suppose $H\leq \pionex$ is $(\bbs,\dinf)$-closed, $\alpha\in P(X,x_0)$ is a path, and $f:(\bbd,d_0)\to (X,\alpha(1))$ is a map such that $f_{\#}(\bbs)\leq H^{\alpha}$. It suffices to show that $f_{\#}(\dinf)\in H^{\alpha}$. The map $f$ and the path $\alpha$ uniquely induce a map $k:(\bbd^+,d_0)\to (X,x_0)$ satisfying $k\circ \iota=\alpha$ and $k|_{\bbd}=f$. Since $k_{\#}(\bbs ')\leq H$, we have
\[[\alpha]f_{\#}(\dinf)[\alpha^{-}]=k_{\#}(\dinf ')\in k_{\#}(cl_{\bbs,\dinf}(\bbs '))\leq cl_{\bbs,\dinf}(k_{\#}(\bbs ')))\leq cl_{\bbs,\dinf}(H)= H.\]Thus $f_{\#}(\dinf)\in H^{\alpha}$.
\end{proof}

\begin{theorem}\label{hompathhausdchar}
If $X$ is homotopically path Hausdorff relative to $H$, then $H$ is $(\bbs,\dinf)$-closed. The converse holds if the path space $P(X)$ is first countable; for instance, if $X$ is metrizable.
\end{theorem}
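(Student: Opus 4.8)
The plan is to prove both directions by translating between the geometric ``tube'' condition in Definition~\ref{hompathhausdef} and the test-map condition defining $(\bbs,\dinf)$-closedness, using the self-similar structure of $\bbd$ that encodes dyadic subdivisions. The key observation is that a map $f:(\bbd,d_0)\to(X,x_0)$ records, on its base arc $B$, a path $\beta=f\circ\lambda_{\infty}$, and on each level $\bbd(n)$ a competing path $\alpha_n=f\circ\lambda_n$ that differs from $\beta$ only in how it traverses the $j$-th dyadic subinterval. The generator $s_n=[\lambda_n\cdot(\lambda_{n+1})^{-}]$ compares consecutive levels, while $\dinf=[\lambda_1\cdot\lambda_{\infty}^{-}]$ compares the first level to the base arc. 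Thus $f_{\#}(s_n)\in H$ for all $n$ says precisely that all consecutive level-comparisons lie in $H$, and $f_{\#}(\dinf)\in H$ is the conclusion we want; the failure of $(\bbs,\dinf)$-closedness will correspond exactly to producing a sequence of ever-finer dyadic approximations $\gamma$ to $\alpha=f\circ\lambda_1$ through the tubes, all violating the coset condition.

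For the forward direction (homotopically path Hausdorff rel.\ $H$ $\Rightarrow$ $(\bbs,\dinf)$-closed), I would argue contrapositively. Suppose $H$ is not $(\bbs,\dinf)$-closed, so there is a map $f:(\bbd,d_0)\to(X,x_0)$ with $f_{\#}(\bbs)\leq H$ but $f_{\#}(\dinf)\notin H$. Set $\alpha=f\circ\lambda_1$ and $\beta=f\circ\lambda_{\infty}$; then $\alpha(1)=\beta(1)=f(1,0)$ and $[\alpha\cdot\beta^{-}]=f_{\#}(\dinf)\notin H$. Now for the given integer $n$ and any proposed open cover $U_1,\dots,U_{2^{n-1}}$ of the level-one subdivision of $\alpha$, I would use continuity of $f$ to find, at a sufficiently deep level $N>n$, the path $\gamma=f\circ\lambda_N$, whose image on each dyadic subinterval is squeezed by continuity into the prescribed $U_j$ and which agrees with $\alpha$ at all dyadic breakpoints $j/2^{n-1}$. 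The telescoping identity $[\lambda_1\cdot\lambda_N^{-}]=\prod_{k=1}^{N-1}[\lambda_k\cdot\lambda_{k+1}^{-}]=\prod_{k=1}^{N-1}s_k\in\bbs$ then gives $[\alpha\cdot\gamma^{-}]=f_{\#}([\lambda_1\cdot\lambda_N^{-}])\in H$, so $[\gamma\cdot\beta^{-}]\in H[\alpha\cdot\beta^{-}]$, whence $[\gamma\cdot\beta^{-}]\notin H$; this exhibits $\gamma$ in every candidate tube, contradicting the homotopically path Hausdorff condition.

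For the converse (first countability $\Rightarrow$ the test-map condition gives homotopically path Hausdorff rel.\ $H$), I would again argue contrapositively and use the first countability of $P(X)$ to run the construction in reverse. Assuming $X$ is not homotopically path Hausdorff rel.\ $H$, there are paths $\alpha,\beta$ with $[\alpha\cdot\beta^{-}]\notin H$ such that for \emph{every} dyadic level $n$ and cover of $\alpha$, some $\gamma$ slips through the tubes with $[\gamma\cdot\beta^{-}]\notin H$. Using a countable neighborhood basis in $P(X)$ at $\alpha$, I would extract a sequence of such intruder paths $\gamma^{(n)}$ converging to $\alpha$ through finer and finer dyadic tubes, matching $\alpha$ at the relevant breakpoints. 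The main obstacle, and the step deserving the most care, is assembling these $\gamma^{(n)}$ into a single continuous map $f:(\bbd,d_0)\to(X,x_0)$: one sets $f|_B=\beta$ and defines $f$ on level $n$ so that $f\circ\lambda_n$ reparametrizes an appropriate concatenation built from $\gamma^{(n)}$, arranged so that consecutive levels differ only by paths supported over shrinking subintervals (forcing $f_{\#}(s_n)\in H$ from the coset equalities $H[\gamma^{(n)}\cdot\beta^{-}]=H[\gamma^{(n+1)}\cdot\beta^{-}]$) while continuity at the base arc follows from $\gamma^{(n)}\to\alpha$ uniformly. Verifying that this piecewise definition is genuinely continuous across all levels and at the accumulation onto $B$ is exactly where first countability (equivalently, uniform convergence in $P(X)$ for metric $X$) is indispensable, and this gluing is the heart of the proof.
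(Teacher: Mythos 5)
Your overall strategy coincides with the paper's---use the level paths $f\circ\lambda_n$ of $\bbd$, the telescoping identity $[\lambda_1\cdot\lambda_N^{-}]=\prod_{k=1}^{N-1}s_k$, and first countability of $P(X)$ to assemble intruder paths into a test map---but in both directions you have interchanged the roles of the two paths, and this breaks the argument as written. In the forward direction you put the tubes around $\alpha=f\circ\lambda_1$ and claim that the intruder $\gamma=f\circ\lambda_N$ is ``squeezed by continuity into the prescribed $U_j$'' and ``agrees with $\alpha$ at all dyadic breakpoints.'' Neither claim is true: in $\bbd$ the level paths $\lambda_N$ converge uniformly to the base-arc path $\lambda_{\infty}$, not to $\lambda_1$, so if the $U_j$ are chosen tightly around the image of $f\circ\lambda_1$, no $f\circ\lambda_N$ need enter the tube at all; and $\lambda_N(j/2^{n-1})=(j/2^{n-1},0)$ lies on the base arc while $\lambda_1(j/2^{n-1})$ is an interior point of the big semicircle, so $f\circ\lambda_N$ and $f\circ\lambda_1$ need not agree at a single breakpoint. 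The pair must be taken the other way around, as in the paper: tubes around $\alpha=f\circ\lambda_{\infty}$ and comparison path $\beta=f\circ\lambda_1$ (note $[\alpha\cdot\beta^{-}]=f_{\#}(\dinf)^{-1}\notin H$ since $H$ is a subgroup); then $f\circ\lambda_N\to\alpha$ in $P(X)$, the breakpoint matching is automatic because $\lambda_N$ and $\lambda_{\infty}$ agree at level-$n$ dyadic points for $N\geq n$, and the telescoping product gives $[\gamma\cdot\beta^{-}]\in H$, contradicting Definition \ref{hompathhausdef}.

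The same confusion recurs in the converse, compounded by a sign error. Negating Definition \ref{hompathhausdef} produces intruders with $[\gamma\cdot\beta^{-}]\in H$, not $\notin H$ as you wrote; indeed your own subsequent step---deducing $f_{\#}(s_n)\in H$ from $H[\gamma^{(n)}\cdot\beta^{-}]=H[\gamma^{(n+1)}\cdot\beta^{-}]$---only comes for free when both cosets are the trivial coset $H$. More seriously, you set $f|_B=\beta$ on the base arc. Since your intruders $\gamma^{(n)}$ converge to $\alpha$ and the levels of $\bbd$ accumulate on $B$, continuity of $f$ forces $f\circ\lambda_{\infty}=\lim_n f\circ\lambda_n=\alpha$; with $f|_B=\beta$ the map is simply not continuous (your own continuity sentence invokes $\gamma^{(n)}\to\alpha$), and even if it were, you would obtain $f_{\#}(\dinf)=[\gamma^{(1)}\cdot\beta^{-}]\in H$, the opposite of what is needed. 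The correct assembly, as in the paper, takes $\gamma^{(n)}$ for a nested countable basis of tube-neighborhoods of $\alpha$ in $P(X)$ (repeating intruders between basis indices so that every level is defined), sets $f\circ\lambda_n=\gamma^{(n)}$ and $f\circ\lambda_{\infty}=\alpha$, and then concludes $f_{\#}(s_n)\in H$ while $f_{\#}(\dinf)=[\gamma^{(1)}\cdot\alpha^{-}]=[\gamma^{(1)}\cdot\beta^{-}][\alpha\cdot\beta^{-}]^{-1}\notin H$.
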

\begin{proof}
If $H$ is not $(\bbs,\dinf)$-closed, then there is a map $f:(\bbd,d_0)\to (X,x_0)$ such that $f_{\#}(\bbs)\leq H$ and $f_{\#}(\dinf)\notin H$. Set $\alpha=f\circ \lambda_{\infty}$, $\beta=f\circ \lambda_1$, and $\alpha_n=f\circ \lambda_n$ for $n\geq 2$. Note that $\alpha_n\to \alpha$ in $P(X)$ and that  $[\alpha\cdot \beta^{-}]\notin H$. Pick any $n\in\bbn$ and sequence of neighborhoods $U_1,U_2,\dots,U_{2^{n-1}}$ such that $\alpha\left(\left[\frac{j-1}{2^{n-1}},\frac{j}{2^{n-1}}\right]\right)\subseteq U_{j}$ for each $j$. Choose $N\in\bbn$ so that $\alpha_N\in \bigcap_{j=1}^{2^{n-1}}\langle \left[\frac{j-1}{2^{n-1}},\frac{j}{2^{n-1}}\right],U_j\rangle $. In particular,
\begin{itemize}
\item[]$\alpha_N\left(\left[\frac{j-1}{2^{n-1}},\frac{j}{2^{n-1}}\right]\right)\subseteq U_{j}$ for $1\leq j\leq 2^{n-1}$,
\item[]$\alpha_N\left(\frac{j}{2^{n-1}}\right)=\alpha\left(\frac{j-1}{2^{n-1}}\right)$ for $0\leq j\leq 2^{n-1}$.
\end{itemize}
Put $\gamma=\alpha_N$. Since $[\alpha_n\cdot\alpha_{n+1}^{-}]=f_{\#}(s_n)\in H$ for each $n\geq 1$, we have $[\gamma\cdot \beta^{-}]=\left(\prod_{n=1}^{N-1}[\alpha_n\cdot \alpha_{n+1}^{-}]\right)^{-1}\in H$. Thus $X$ cannot be homotopically path Hausdorff relative to $H$.

For the converse, suppose $P(X)$ is first countable. If $X$ is not homotopically path Hausdorff relative to $H$, then there are paths $\alpha,\beta:\ui \to X$, $\alpha(0)=\beta(0)=x_0$ and $\alpha(1)=\beta(1)$ with the property that for any integer $n\geq 1$ and sequence of open sets $U_1,U_2,\dots,U_{2^{n-1}}$ with $\alpha\left(\left[\frac{j-1}{2^{n-1}},\frac{j}{2^{n-1}}\right]\right)\subseteq U_j$, there is a path $\gamma:\ui\to X$ satisfying
\begin{enumerate}
\item $\gamma\left(\left[\frac{j-1}{2^{n-1}},\frac{j}{2^{n-1}}\right]\right)\subseteq U_j$ for $1\leq j\leq 2^{n-1}$,
\item $\gamma\left(\frac{j}{2^{n-1}}\right)=\alpha\left(\frac{j}{2^{n-1}}\right)$ for $0\leq j\leq 2^{n-1}$,
\item and $[\gamma\cdot\beta^{-}]\in H$.
\end{enumerate}
Take a countable, nested neighborhood base $\mathcal{U}_1\supset \mathcal{U}_2\supset\mathcal{U}_3\supset\cdots$ at $\alpha$. We may assume each neighborhood is of the form \[\mathcal{U}_{p}=\bigcap_{j=1}^{2^{n(p)-1}}\left\langle \left[\frac{j-1}{2^{n(p)-1}},\frac{j}{2^{n(p)-1}}\right],U_{n(p),j}\right\rangle\]for some increasing sequence $1= n(1)<n(2)<n(3)<\cdots$ of natural numbers. By assumption, there is a path $\alpha_{n(p)}:\ui\to X$ satisfying
\begin{enumerate}
\item $\alpha_{n(p)}\left(\left[\frac{j-1}{2^{n(p)-1}},\frac{j}{2^{n(p)-1}}\right]\right)\subseteq U_{n(p),j}$ for $1\leq j\leq 2^{n(p)-1}$,
\item $\alpha_{n(p)}\left(\frac{j}{2^{n(p)-1}}\right)=\alpha\left(\frac{j}{2^{n(p)-1}}\right)$ for $0\leq j\leq 2^{n(p)-1}$,
\item $[\alpha_{n(p)}\cdot\beta^{-}]\in H$.
\end{enumerate}
If $n(p)<n<n(p+1)$, set $\alpha_n=\alpha_{n(p+1)}$. We have $\alpha_n\to\alpha$ in $P(X)$ and for every $n\in\bbn$, $\alpha_n\left(\frac{j}{2^{n-1}}\right)=\alpha\left(\frac{j}{2^{n-1}}\right)$ for $0\leq j\leq 2^{n-1}$. Thus we obtain a unique map $f:(\bbd,d_0)\to (X,x_0)$ such that $f\circ \lambda_{n}=\alpha_{n}$ and $f\circ \lambda_{\infty}=\alpha$. For any given $n\in\bbn$, we have $\alpha_n=\alpha_{n(p)}$ for some $p\in\bbn$ and thus $[\alpha_{n}\cdot\beta^{-}]=[\alpha_{n(p)}\cdot\beta^{-}]\in H$. It follows that $f_{\#}(s_n) = [\alpha_n\cdot \alpha_{n+1}^{-}] = [\alpha_{n}\cdot\beta^{-}][\alpha_{n+1}\cdot\beta^{-}]^{-1}\in H$ for each $n\in\bbn$ and therefore $f_{\#}(\bbs)\leq  H$. Moreover, $f_{\#}(\dinf)=[\alpha_{1}\cdot \alpha^{-}]=[\alpha_{1}\cdot\beta^{-}][\alpha\cdot\beta^{-}]^{-1}\notin H$ since $[\alpha\cdot\beta^{-}]\notin H$.
\end{proof}
\begin{remark}\label{quotienttopremark}
The fundamental group $\pionex$ inherits a natural topology when it is viewed as the quotient space of $\Omega(X,x_0)$. Equipped with this topology, the fundamental group may fail to be a topological group, however, it is a quasitopological group in the sense that inversion is continuous and multiplication is continuous in each variable; for more on this topology see \cite{BFqtop}. In a quasitopological group $G$, the topological closure $\overline{H}$ of a subgroup $H\leq G$ is still a subgroup of $G$ \cite{AT}. Additionally, for a locally path-connected space $X$, the property of being homotopically path Hausdorff relative to $H$ is equivalent to $H$ being closed in $\pionex$ \cite[Lemma 9]{BFqtop}. Combining these facts with Theorem \ref{hompathhausdchar}, it follows that if $X$ is a locally path-connected metric space, then the closure operator $cl_{S,d_{\infty}}$ agrees with the topological closure in $\pionex$.
\end{remark}
\subsection{The subgroup $\bbf\leq \pioned$ and the unique path lifting property}

\begin{definition}\label{defsubgroupD}
Let $\bbf\leq \pioned$ be the subgroup consisting of homotopy classes of finite concatenations $\prod_{k=1}^{m}\ell_{n_k,j_k}^{\epsilon_k}$, $\epsilon_k\in \{\pm  \}$ of standard paths. Recall that $d_{\infty}=[\lambda_{1}\cdot \lambda_{\infty}^{-}]$.
\end{definition}

\begin{lemma}\label{fchar}
$\bbf$ is generated by the homotopy classes of all well-defined loops of the form $\lambda_{n,m}\cdot \lambda_{n',m'}^{-}$.
\end{lemma}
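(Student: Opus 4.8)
The plan is to prove the two inclusions separately, writing $G$ for the subgroup of $\pioned$ generated by the homotopy classes of the well-defined staircase loops $\lambda_{n,m}\cdot\lambda_{n',m'}^{-}$ (those with $\frac{m}{2^{n-1}}=\frac{m'}{2^{n'-1}}$, so that the concatenation is actually a loop at $d_0$). The inclusion $G\leq \bbf$ is immediate: each $\lambda_{n,m}\cdot\lambda_{n',m'}^{-}$ is by definition the finite concatenation $\ell_{n,1}\cdot\;\cdots\;\cdot\ell_{n,m}\cdot\ell_{n',m'}^{-}\cdot\;\cdots\;\cdot\ell_{n',1}^{-}$ of standard paths, so its class lies in $\bbf$, and $\bbf$ is a subgroup. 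The substance is the reverse inclusion $\bbf\leq G$.

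For $\bbf\leq G$ I would argue by telescoping along canonical radial paths. For each dyadic endpoint $v\in[0,1]$, fix once and for all a canonical path $\kappa_v$ from $d_0$ to $v$ of the form $\lambda_{n,m}$ (where $v=\frac{m}{2^{n-1}}$; e.g. take the minimal such level $n$), with the convention that $\kappa_{d_0}=c_{d_0}$. The first thing to record is the elementary observation that if $\lambda_{n,m}$ is \emph{any} $\lambda$-path ending at $v$, then $[\lambda_{n,m}\cdot\kappa_v^{-}]\in G$, since $\lambda_{n,m}\cdot\kappa_v^{-}$ is exactly a well-defined staircase loop. Now let $[\omega]\in\bbf$ be represented by a loop $\omega=\sigma_1\cdot\sigma_2\cdot\;\cdots\;\cdot\sigma_m$ with each $\sigma_k=\ell_{n_k,j_k}^{\epsilon_k}$ a standard path, and let $v_0=d_0,v_1,\dots,v_m=d_0$ be the successive endpoints, so that $\sigma_k$ runs from $v_{k-1}$ to $v_k$. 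Inserting the null-homotopic pairs $\kappa_{v_k}^{-}\cdot\kappa_{v_k}$ between consecutive arcs and using $\kappa_{v_0}=\kappa_{v_m}=c_{d_0}$ gives
\[
[\omega]=\prod_{k=1}^{m}\bigl[\kappa_{v_{k-1}}\cdot\sigma_k\cdot\kappa_{v_k}^{-}\bigr],
\]
so it suffices to prove that each factor lies in $G$.

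The core step is therefore to show that a single ``arc comparison'' $\kappa_{v_{k-1}}\cdot\sigma_k\cdot\kappa_{v_k}^{-}$ lies in $G$. Consider first $\epsilon_k=+1$, so that $\sigma_k=\ell_{n_k,j_k}$ runs from $v_{k-1}=\frac{j_k-1}{2^{n_k-1}}$ to $v_k=\frac{j_k}{2^{n_k-1}}$. The key identity is $\lambda_{n_k,j_k-1}\cdot\ell_{n_k,j_k}\equiv\lambda_{n_k,j_k}$ as concatenations of standard paths. Splitting through $d_0$ via the level-$n_k$ paths, I would write
\[
\kappa_{v_{k-1}}\cdot\sigma_k\cdot\kappa_{v_k}^{-}\;\simeq\;\bigl(\kappa_{v_{k-1}}\cdot\lambda_{n_k,j_k-1}^{-}\bigr)\cdot\bigl(\lambda_{n_k,j_k}\cdot\kappa_{v_k}^{-}\bigr),
\]
where both factors are comparisons of two $\lambda$-paths to a common vertex ($v_{k-1}$ and $v_k$ respectively) and hence lie in $G$ by the observation above. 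The case $\epsilon_k=-1$ follows by taking the inverse of the previous case (interchanging the roles of $v_{k-1}$ and $v_k$) and using that $G$ is a subgroup. This completes the reduction, and together with $G\leq\bbf$ it gives $\bbf=G$.

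I expect the only real obstacle to be the bookkeeping in this last step: the canonical path $\kappa_{v_{k-1}}$ is in general chosen at a different dyadic level than the arc $\sigma_k$, so one cannot directly absorb $\sigma_k$ into it; inserting the intermediate level-$n_k$ path $\lambda_{n_k,j_k-1}$ (equivalently $\lambda_{n_k,j_k}$) is precisely what realizes each telescoped factor honestly as a product of two staircase generators. Everything else is the standard ``the fundamental group of a graph is generated by edge loops relative to a choice of canonical (tree) paths'' argument, transported into $\bbd$.
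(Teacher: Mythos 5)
Your proof is correct and is essentially the paper's argument: both hinge on the identity $\ell_{n,j}\simeq\lambda_{n,j-1}^{-}\cdot\lambda_{n,j}$ and then regroup the resulting $\lambda$-paths into well-defined loops $\lambda_{n,m}\cdot\lambda_{n',m'}^{-}$. The only difference is bookkeeping: you telescope against fixed canonical paths $\kappa_v$, whereas the paper substitutes the identity into the interior letters of the word and pairs adjacent $\lambda$-terms directly (using that the first and last letters are already $\lambda_{n,1}^{\pm}$), which makes your extra layer of canonical paths harmless but unnecessary.
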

\begin{proof}
Let $a=\prod_{i=1}^{K}[\ell_{n_i,j_i}]^{\epsilon_i}$, $\epsilon_i\in \{\pm 1\}$ be a non-trivial element of $\bbf$. Note that $K\geq 3$, $\epsilon_1=1=-\epsilon_K$, and $j_1=1=j_K$. For any dyadic unital pair $(n,j)$, we have $\ell_{n,j}\simeq \lambda_{n,j-1}^{-}\cdot\lambda_{n,j}$, $\lambda_{n,1}=\ell_{n,1}$, and $\lambda_{n,0}$ is constant. Thus $a$ may be written as the product \[[\ell_{n_1,1}]\left(\prod_{i=2}^{K-1}[\ell_{n_i,j_i}]^{\epsilon_i}\right)[\ell_{n_K,1}^{-}]=
[\lambda_{n_1,1}]\left(\prod_{i=2}^{K-1}[\lambda_{n_i,j_i-1}^{-}\cdot\lambda_{n_i,j_i}]^{\epsilon_i}\right)[\lambda_{n_K,1}^{-}]\]
the latter of which is a product of elements of the form $[\lambda_{n,m}\cdot\lambda_{n',m'}^{-}]$.
\end{proof}

\begin{remark}\label{sclosedisdclosed}
Observe that $\bbs\leq \bbf$. Therefore a subgroup $H\leq \pionex$ is $(\bbf,\dinf)$-closed whenever $H$ is $(\bbs,\dinf)$-closed.
\end{remark}

The proof of the following proposition is nearly identical to that of Proposition \ref{conjugatesforS}, so we omit it.

\begin{proposition}\label{conjugatesforF}
$(\bbf,\dinf)$ is a normal closure pair for $(\bbd,d_0)$.
\end{proposition}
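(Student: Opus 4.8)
The plan is to follow the proof of Proposition \ref{conjugatesforS} essentially verbatim, with the larger group $\bbf$ in place of $\bbs$. First I would invoke Remark \ref{addawhisker} to form the well-pointed space $\bbd^+$ together with the accompanying normal closure pair
\[(\bbf ',\dinf ')=\left([\iota]i_{\#}(\bbf)[\iota^{-}],\;[\iota]i_{\#}(\dinf)[\iota^{-}]\right),\]
where $i:\bbd\to\bbd^+$ and $\iota:\ui\to\bbd^+$ are the inclusions. I would reuse the same self-similar identification $\bbd^+=B\cup\bigcup_{n=2}^{\infty}\bigcup_{j=2^{n-2}+1}^{2^{n-1}}\bbd(n,j)$ and the same vertical retraction $r:(\bbd,d_0)\to(\bbd^+,d_0)$ that collapses $\bbd\backslash\bbd^+$ onto $([0,1/2]\times\{0\})\cup\bbd(2,2)$.

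The one genuinely new point is to verify that $r_{\#}(\bbf)=\bbf '$; the identity $r_{\#}(\dinf)=\dinf '$ is obtained exactly as before. By Lemma \ref{fchar}, $\bbf$ is generated by the classes $[\lambda_{n,m}\cdot\lambda_{n',m'}^{-}]$ of well-defined loops, so it suffices to track these under $r$. A level path $\lambda_{n,m}$ either stays in the left half (endpoint $\leq 1/2$) and is collapsed to a subarc of the base, or reaches into the right half and is carried, by the scale-by-$\tfrac12$ self-similarity, to $\iota$ followed by the $i$-image of a level path of $\bbd$ (the right-half level-$n$ semicircles being exactly the $i$-images of the level-$(n-1)$ semicircles). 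Consequently a generator both of whose endpoints lie in $[0,1/2]$ maps to the trivial element, while one reaching into the right half maps to an element $[\iota]i_{\#}([\mu\cdot\mu'^{-}])[\iota^{-}]\in\bbf '$ for level paths $\mu,\mu'$ of $\bbd$ with matching endpoints; the mixed case cannot occur since the two endpoints must agree. This gives $r_{\#}(\bbf)\leq\bbf '$. Conversely, each generating loop $[\iota]i_{\#}([\lambda_{a,b}\cdot\lambda_{a',b'}^{-}])[\iota^{-}]$ of $\bbf '$ is realized as $r_{\#}$ of a well-defined level-loop of $\bbd$ reaching into the right half (raising the level index by one and shifting $j$ by $2^{n-2}$), so $\bbf '\leq r_{\#}(\bbf)$. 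With $r_{\#}(\bbf)=\bbf '$ and $r_{\#}(\dinf)=\dinf '$ in hand, Remark \ref{comparisonpropremark} yields $\dinf '\in cl_{\bbf,\dinf}(\bbf ')$.

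The normality argument then proceeds identically. The backward implication in Definition \ref{normaltestpairdef} is immediate upon taking $\alpha=c_{x_0}$, so I only need the forward one: assume $H\leq\pionex$ is $(\bbf,\dinf)$-closed, fix $\alpha\in P(X,x_0)$, and let $f:(\bbd,d_0)\to(X,\alpha(1))$ satisfy $f_{\#}(\bbf)\leq H^{\alpha}$. The pair $(f,\alpha)$ induces a map $k:(\bbd^+,d_0)\to(X,x_0)$ with $k\circ\iota=\alpha$ and $k|_{\bbd}=f$, so that $k_{\#}(\bbf ')=[\alpha]f_{\#}(\bbf)[\alpha^{-}]\leq H$. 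Using Proposition \ref{continuity}, monotonicity of the closure operator, and $\dinf '\in cl_{\bbf,\dinf}(\bbf ')$,
\[[\alpha]f_{\#}(\dinf)[\alpha^{-}]=k_{\#}(\dinf ')\in k_{\#}(cl_{\bbf,\dinf}(\bbf '))\leq cl_{\bbf,\dinf}(k_{\#}(\bbf '))\leq cl_{\bbf,\dinf}(H)=H,\]
whence $f_{\#}(\dinf)\in H^{\alpha}$ and $H^{\alpha}$ is $(\bbf,\dinf)$-closed.

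The main obstacle is precisely the verification $r_{\#}(\bbf)=\bbf '$. In the $\bbs$-case one checks only the single family $s_n=[\lambda_n\cdot\lambda_{n+1}^{-}]$ and observes that $s_1$ collapses while the remaining generators shift. Here one must instead confirm that the self-similar bookkeeping behaves correctly for \emph{every} generator $[\lambda_{n,m}\cdot\lambda_{n',m'}^{-}]$ of the larger group $\bbf$, including the degenerate left-half loops that collapse to the trivial element. Once this is established, the closure-operator chase above is word-for-word that of Proposition \ref{conjugatesforS}, which is why the authors elect to omit it.
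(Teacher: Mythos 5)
Your proof is correct and is exactly the argument the paper intends: the paper omits the proof, remarking only that it is nearly identical to that of Proposition \ref{conjugatesforS}, and your write-up is precisely that adaptation, with the one genuinely new ingredient (the generator-by-generator check, via Lemma \ref{fchar}, of how the retraction $r$ acts on $\bbf$) carried out correctly. Note that Remark \ref{comparisonpropremark} only requires the inclusion $r_{\#}(\bbf)\leq\bbf'$ together with $r_{\#}(\dinf)=\dinf'$, so the converse inclusion $\bbf'\leq r_{\#}(\bbf)$ you verify is harmless but not needed.
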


Consider the subset $G=\bigcup_{n\geq 1}\bbd(n)\subseteq \bbd$ topologized as the direct limit of the inclusions $$\bbd(1)\to \bbd(1)\cup\bbd(2)\to \bbd(1)\cup\bbd(2)\cup \bbd(3)\to \cdots$$ of finite graphs. With this weak CW-topology, which is finer than the subspace topology of $\bbd$, $G$ becomes a graph whose vertex set $V$ is indexed by the set of dyadic rationals in $\ui$. The unique edge between vertices $\left(\frac{j-1}{2^{n-1}},0\right)$ and $\left(\frac{j}{2^{n-1}},0\right)$ is the semicircle $\bbd(n,j)$. Note the continuous inclusion $i:G\to \bbd$ is not an embedding but does induce a monomorphism $i_{\#}:\pi_1(G,d_0)\to \pioned$. By construction, $\bbf$ is precisely the image of $i_{\#}$. A maximal tree $T\subseteq G$ is obtained by removing all edges $\bbd(n,k)$ where $k$ is even (see Figure \ref{treefig}). According to classical graph theory, the edges of $G$ which are not also edges of $T$ are in bijective correspondence with generators of the free group $\pi_1(G,d_0)$. Thus generators of $\pi_1(G,d_0)$ are in bijective correspondence with the set of edges $\{\bbd(n+1,2j)\mid (n,j)\in \scrd\}$. It follows that $\bbf\cong\pi_1(G,d_0)$ is isomorphic to the free group $F(\scrd)$ on the countably infinite set $\scrd$ of dyadic unital pairs.

\begin{figure}[H]
\centering \includegraphics[height=1.5in]{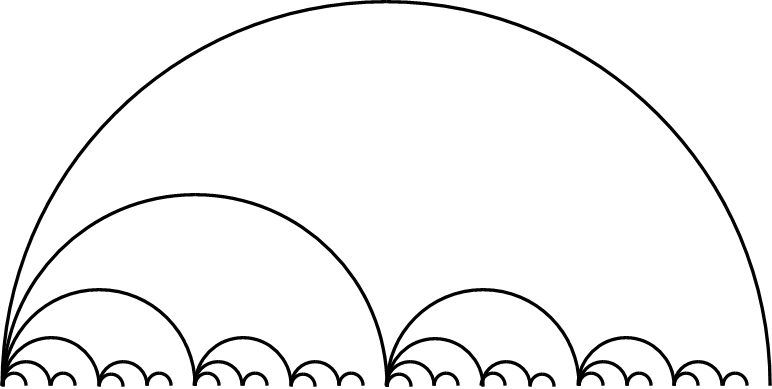}
\caption{\label{treefig}The maximal tree $T\subseteq G$}
\end{figure}

To identify explicit free generators of $\bbf$, we define, for each $t\in\ui$, a path $\delta_t:\ui\to \bbd$ from $(0,0)$ to $(t,0)$. For $t=1$, we set $\delta_1=\ell_{1,1}$. If $t<1$, recall that $t$ has a binary expansion $0.a_1a_2a_3\cdots=\sum_{n=1}^{\infty}\frac{a_n}{2^n}$, $a_n\in \{0,1\}$ such that $(a_n)$ has a cofinal subsequence of $0$s. Note that $t$ is a dyadic rational if and only if the sequence $(a_n)$ is eventually constant at $0$. We define $\delta_t$ to be the infinite concatenation $\prod_{n=1}^{\infty}\eta_n$ of a sequence of paths $\eta_n$ constructed as follows: If $a_1=0$, let $\eta_1$ be the constant path at $(0,0)$ and if $a_1=1$, let $\eta_1=\ell_{2,1}$. Inductively, suppose the paths $\eta_1,\eta_2,\dots,\eta_{n-1}$ have been defined so that the concatenation $\prod_{k=1}^{n-1}\eta_{k}$ is a path in $T\cap\bigcup_{j=1}^{n}\bbd(j)$ from $d_0$ to $\left(\sum_{k=1}^{n-1}\frac{a_k}{2^k},0\right)$. Write $\sum_{k=1}^{n-1}\frac{a_k}{2^k}=\frac{j-1}{2^{n-1}}$ for $j\in \{1,2,\dots,2^{n-1}\}$. If $a_{n}=0$, let $\eta_{n}$ be the constant path at $\left(\frac{j-1}{2^{n-1}},0\right)$ and if $a_{n}=1$, set $\eta_{n}=\ell_{n+1,2j-1}$. By construction, $\eta_n$ has image in $T\cap\bbd(n+1)$ and $\eta_n(1)=\left(\sum_{k=1}^{n}\frac{a_k}{2^k},0\right)$. It follows that the sequence of paths $\eta_n$ is null at $(t,0)$ so that the infinite concatenation $\delta_t=\prod_{n=1}^{\infty}\eta_n$ is well-defined.

Note that $\delta_0$ is the constant path at $d_0$ and if $t\in (0,1)$ is a dyadic rational, then there is an $N$ such that $\eta_n=c_{(t,0)}$ is the constant path at $(t,0)$ for all $n\geq N$. In this case, $\delta_t$ is a reparameterization of the arc in $T$ from $d_0$ to $(t,0)$. We conclude that for each dyadic unital pair $(n,j)$, there is a corresponding free generator $d_{n,j}$ of $\pi_1(G,d_0)$ (see Figure \ref{generators}) defined as the homotopy class of the loop $\left(\delta_{\frac{2j-1}{2^n}}\right)\cdot \left(\ell_{n+1,2j}\right)\cdot \left(\delta_{\frac{j}{2^{n-1}}}\right)^{-}$.
\begin{figure}[H]
\centering \includegraphics[height=1.4in]{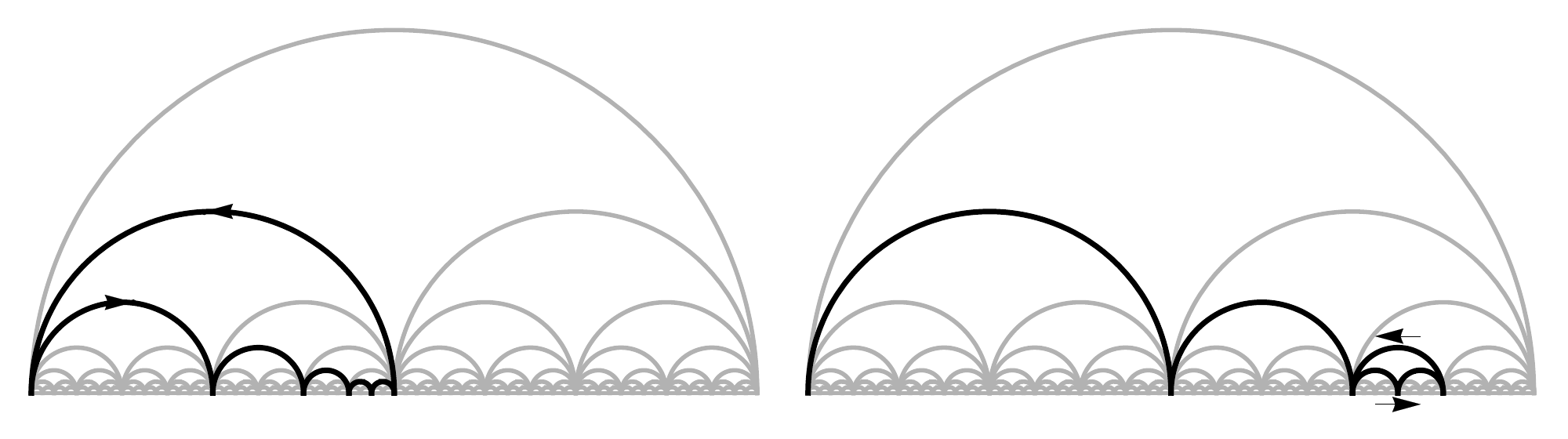}
\caption{\label{generators}Representatives of the free generators $d_{5,8}$ (left) and $d_{4,7}$ (right).}
\end{figure}
\begin{lemma}\label{fistransitive}
$\bbf$ is $(\bbf,\dinf)$-dense in $\pioned$.
\end{lemma}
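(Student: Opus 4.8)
The plan is to verify the sufficient condition for density isolated in Remark \ref{densitysufficientremark}: for every $k\in\pioned$ I will construct a based map $f:(\bbd,d_0)\to(\bbd,d_0)$ with $f_{\#}(\bbf)\le\bbf$ and $f_{\#}(\dinf)=k$. The guiding observation is that $\dinf$ is a telescoping limit of elements of $\bbf$. Since $[\lambda_1\cdot\lambda_n^{-}]=s_1 s_2\cdots s_{n-1}\in\bbs\le\bbf$ and $\lambda_n\to\lambda_{\infty}$ in $P(\bbd)$, the class $\dinf=[\lambda_1\cdot\lambda_{\infty}^{-}]$ is a limit of these graph loops; consequently $f_{\#}(\dinf)$ is the transfinite product of the $f_{\#}(s_i)\in\bbf$, and realizing an arbitrary $k$ amounts to realizing such a transfinite product by a single continuous self-map.

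Before constructing $f$ I would record why the two naive attempts fail, since this pinpoints the difficulty. One cannot simply place a representative of $k$ on the top arc $\bbd(1,1)$ and collapse everything else, because $\ell_{1,1}$ is itself a standard path, so this would force $f_{\#}(\bbf)\le\langle k\rangle$ and destroy $f_{\#}(\bbf)\le\bbf$ whenever $k\notin\bbf$. On the other hand, one cannot comb each semicircle back to $d_0$ and insert a graph loop, because such a comb has large diameter and the level paths $f\circ\lambda_n$ then fail to converge to $f\circ\lambda_{\infty}$, breaking continuity. Thus the class $k$ must be genuinely distributed across \emph{all} levels, with each $f|_{\bbd(n,j)}$ remaining a finite concatenation of standard paths that stays spatially near the corresponding subarc of $\lambda_{\infty}$.

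Concretely I would represent $k$ by a reduced loop $\eta$ (available since $\bbd$ is a one-dimensional metric space) and build $f$ by recursion on the self-similar decomposition $\bbd=\bbd(1,1)\cup\bbd_{L}\cup\bbd_{R}$ into the big arc together with the two half-scale copies of $\bbd$ lying over $[0,\tfrac12]$ and $[\tfrac12,1]$. This decomposition is compatible with $\dinf$ through the identity $\dinf=s_1\cdot\dinf^{L}\cdot(\dinf^{R})^{\,\lambda_{\infty}^{L}}$, where $\dinf^{L},\dinf^{R}$ are the copies of $d_{\infty}$ for the two subspaces (with top arcs $\ell_{2,1},\ell_{2,2}$) and $s_1\in\bbf$. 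At each dyadic node I would assign the graph loop dictated by the next letters of $\eta$, routing them through the maximal tree by means of the paths $\delta_t$ and the free generators $d_{n,j}$, and pass the remainder of $\eta$ to the two sub-copies. Because the sub-copies have diameters tending to $0$, the assembled $f$ is automatically continuous and $f\circ\lambda_n\to\eta=f\circ\lambda_{\infty}$; one then reads off $f_{\#}(\dinf)=[f\circ\lambda_1\cdot(f\circ\lambda_{\infty})^{-}]=k$.

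The main obstacle is the verification that $f_{\#}(\bbf)\le\bbf$, i.e. that $f$ carries every finite standard loop to a finite standard loop even though it is not a graph map (no graph map could realize $k\notin\bbf$, since it would fix the base arc up to homotopy). By Lemma \ref{fchar} and the identification $\bbf\cong F(\scrd)$ it suffices to check this on the free generators $d_{n,j}$, each supported on finitely many semicircles; applying $f$ and cancelling the tree-path detours telescopically at the shared dyadic vertices, the non-standard pieces introduced by the routing cancel in pairs, leaving a reduced finite edge-path. Making this cancellation rigorous is exactly where the one-dimensional reduction theory enters, through Lemma \ref{cancellinglemma} and Corollary \ref{cancellationsequence}, in close parallel with the Hawaiian earring analysis of Proposition \ref{ctauisdense}; I expect this cancellation bookkeeping, together with the simultaneous control of spatial convergence for continuity, to be the technical heart of the argument. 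Once $f_{\#}(\bbf)\le\bbf$ and $f_{\#}(\dinf)=k$ are established, Remark \ref{densitysufficientremark} yields that $\bbf$ is $(\bbf,\dinf)$-dense in $\pioned$.
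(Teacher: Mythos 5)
Your outline is, in essence, the paper's own: verify the criterion of Remark \ref{densitysufficientremark} by constructing, for each class in $\pioned$, a self-map $f$ of $\bbd$ that carries the base arc to a representative loop and fills the semicircles with edge-path approximations, then check $f_{\#}(\bbf)\leq \bbf$ and $f_{\#}(\dinf)=k^{\pm 1}$; your diagnosis of why the two naive constructions fail is also correct. But the proposal stops short of a proof at exactly the places where the content of the lemma lies, and one of its stated specifications is unachievable. If $f\circ \lambda_{\infty}=\eta$, then continuity forces the restriction $f|_{\bbd(n,j)}$ to have endpoints $\eta\left(\frac{j-1}{2^{n-1}}\right)$ and $\eta\left(\frac{j}{2^{n-1}}\right)$, and these are arbitrary points of $\bbd$: they may lie on semicircles, or at dyadic-irrational points of the base arc. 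Such points are not endpoints of any finite concatenation of standard paths, so your requirement that each $f|_{\bbd(n,j)}$ ``remain a finite concatenation of standard paths'' cannot be met, and your recursive splitting of $\eta$ into ``next letters'' plus two remainders is undefined at such splitting points (note also that a reduced loop need not decompose into letters at all: it can spend entire parameter intervals on the base arc). The paper's proof is organized precisely around this obstruction: near an endpoint where $\alpha$ has left the base arc, $f$ follows $\alpha$ itself until $\alpha$ returns to $B$ (the $u,v$ device, exploiting that $\alpha$ can re-enter $B$ only at dyadic-rational points); in between, $f$ uses the paths $\beta_n^{a,b}$, whose tree-path ends $T_{n,i}^{-1}\circ \delta_{T_{n,i}(a,0)}$ are genuinely wild when the endpoint is dyadic-irrational; and $f_{\#}(d_{n,j})\in\bbf$ is deduced because the wild tails of consecutive $\beta$-segments coincide exactly, via the self-similarity identities for the paths $\delta_t$, and hence cancel. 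Your phrase ``the non-standard pieces introduced by the routing cancel in pairs'' names this phenomenon, but the cancellation is not generic bookkeeping: it holds only if the routing is chosen coherently across scales, which is a design feature of the construction that must be specified, and it is not delivered by Lemma \ref{cancellinglemma} or Corollary \ref{cancellationsequence} --- those concern reduced representatives in one-dimensional spaces and play no role in (and would not substitute for) the exact path identities needed here.

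Second, your continuity argument is not right as stated: shrinking of the domain pieces $\bbd_{n,j}$ is irrelevant by itself; what must be shown is that the image diameters $diam(f(\bbd(n,j)))$ tend to $0$ uniformly. In the paper this comes from the bound $diam(\beta_n^{a,b})\leq |a-b|+\frac{3}{2^n}$ together with uniform continuity of $\alpha$, and that bound is available only because each detour is confined to the scaled copy of $\bbd$ under a single level-$n$ arc --- again something the construction must build in. A smaller slip: with $f\circ\lambda_{\infty}=\eta$ and $[\eta]=k$, one gets $f_{\#}(\dinf)=[f\circ\lambda_1]\,k^{-1}$, so the natural output is $k^{-1}$ rather than $k$ (harmless for density, but it indicates the top-level factorization $\dinf=s_1\cdot\dinf^{L}\cdot(\dinf^{R})^{\lambda_{\infty}^{L}}$ was never actually threaded through the argument). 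In short: right strategy and a correct identification of the difficulties, but the construction and the two key verifications --- which are what the lemma's proof consists of --- are missing.
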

\begin{proof}
To verify the sufficient condition in Remark \ref{densitysufficientremark}, we show that for every loop $\alpha:([0,1],0)\rightarrow (\bbd,d_0)$, there is a continuous map $f:(\bbd,d_0)\rightarrow (\bbd,d_0)$ such that $f_{\#}(\dinf)=[\alpha]$ and $f_{\#}(\bbf)\leq \bbf$. To begin, we identify the domain of $\alpha$ with $B=[0,1]\times\{0\}\subseteq \bbd$ and define $f(t)=\alpha(t)$ for $t\in [0,1]$ and $f(t)=d_0$ for $t\in \bbd(1,1)$.

For each $a,b\in [0,1]$ with $a\leq b$ and each $n\in \bbn$, we define a path $\beta_n^{a,b}$ in $\bigcup_{k=1}^\infty \bigcup_{j=1}^{2^{k-1}} \bbd(k,j)\subseteq \bbd$ from $a$ to $b$ as follows:
     Let $i,j\in \bbn$ with $\frac{i-1}{2^{n-1}}\leq a< \frac{i}{2^{n-1}}$ and $\frac{j-1}{2^{n-1}}\leq b< \frac{j}{2^{n-1}}$. Using the transformation $T_{n,k}(x,y)=(2^{n-1}x-k+1,2^{n-1}y)$ from the homeomorphic copy of $\bbd$ under $\bbd(n,k)$ to $\bbd$, we define \[\beta_n^{a,b}=(T_{n,i}^{-1}\circ \delta_{T_{n,i}(a,0)})^-\cdot \ell_{n,i}\cdot \ell_{n,i+1}\cdot\;\cdots\;\cdot \ell_{n,j-1}\cdot T_{n,j}^{-1}\circ \delta_{T_{n,j}(b,0)}.\] We also define  $\beta_n^{b,a}=\left(\beta_n^{a,b}\right)^-$.

Let $\mathcal I$ be the set of components of $\alpha^{-1}(\bbd\setminus [0,1])$. We now define $f$ on each $\bbd(n,j)$ with $n\geqslant 2$ and $1\leq j\leq 2^{n-1}$.
If $\frac{j-1}{2^{n-1}}\in I$ for some interval  $I\in \mathcal{I}$ with endpoints $c<d$, then put $u=\min\{d,\frac{j}{2^{n-1}}\}$; otherwise put $u=\frac{j-1}{2^{n-1}}$.
Likewise,
if $\frac{j}{2^{n-1}}\in J$ for some interval $J\in \mathcal{I}$ with endpoints $s<t$, then put $v=\max\{s,\frac{j-1}{2^{n-1}}\}$; otherwise put $v=\frac{j}{2^{n-1}}$. Now, let $(x,y)\in \bbd(n,j)$. We define $f(x,y)=\alpha(x)$ if $x\in [\frac{j-1}{2^{n-1}}, u]\cup [v, \frac{j}{2^{n-1}}]$. We define $f(x,y)=\beta_n^{\alpha(u),\alpha(v)}(x)$ if $x\in[u,v]$.

Clearly, $f:\bbd\rightarrow \bbd$ is well-defined. Continuity of $f$ follows from the uniform continuity of $\alpha$ and the fact that $diam(\beta_n^{a,b})\leq |a-b| + \frac{3}{2^n}$. Also, by construction, we have $f_\#(\dinf)=[\alpha]$.

   It remains to show that $f_\#(\bbf)\leq \bbf$. To this end, let $[p]\in \bbf$ for some finite edge path $p=\ell_{n_1,j_1}^{\epsilon_1}\cdot\ell_{n_2,j_2}^{\epsilon_2}\cdot\;\cdots\;\cdot\ell_{n_K,j_K}^{\epsilon_K}$ in $G$, with $\epsilon_k\in\{+,-\}$. In order to show that $f\circ p$ is homotopic to a finite edge path in $G$, let $u_k$ and $v_k$ be defined as above for $\bbd(n_k,j_k)$. Then each $f\circ \ell_{n_k,j_k}$ is homotopic to $\alpha|_{[\frac{j_k-1}{2^{n_k-1}}, u_k]}\cdot \beta_{n_k}^{\alpha(u_k),\alpha(v_k)}\cdot \alpha|_{[v_k, \frac{j_k}{2^{n_k-1}}]}$. Observe that if $f\circ \ell_{n_k,j_k}^{\epsilon_k}$ terminates in a nondegenerate $\alpha$-segment, then $f\circ \ell_{n_{k+1},j_{k+1}}^{\epsilon_{k+1}}$ either begins with or equals a nondegenerate $\alpha$-segment. Further, every maximal concatenation of contiguous $\alpha$-segments of $f\circ p$ forms a path within one and the same $\bbd(n,j)$, starting and ending in $[0,1]$. Hence, each such subpath of $f\circ p$ can be homotoped to either $\ell_{n,j}^{\pm}$ or a constant. As for the $\beta$-segments, they might be ``wild'' at either end.
  However, if $u_k> \frac{j_k-1}{2^{n_k-1}}$, then the corresponding endpoint of the $\beta$-segment is a dyadic rational in $[0,1]$, making it ``tame'' at that end. The same is true at the other end if $v_k< \frac{j_k}{2^{n_k-1}}$. Therefore, we only need to consider points $b$ that are the endpoints of two consecutive $\beta$-segments such that $b$ is a dyadic irrational and $b=\alpha(t)\in [0,1]$ with $t=\frac{s-1}{2^{n-1}}$ for some $s$. In this case, however, the two $\beta$-segments meeting in $b$ cancel over their ``wild ends'', as can be seen from the following formulas:
  $T^{-1}_{n,j}\circ \delta_{T_{n,j}(b,0)}=T^{-1}_{n+1,2j-1} \circ \delta_{T_{n+1,2j-1}(b,0)}$ if $\frac{j-1}{2^{n-1}}=\frac{2j-2}{2^n}< b< \frac{2j-1}{2^n}$, and $T^{-1}_{n,j}\circ \delta_{T_{n,j}(b,0)}=\ell_{n+1,2j-1}\cdot \left(T^{-1}_{n+1,2j} \circ \delta_{T_{n+1,2j}(b,0)}\right)$ if $\frac{2j-1}{2^n}< b< \frac{2j}{2^n}=\frac{j}{2^{n-1}}$.
\end{proof}
We use the closure pair $(\bbf,\dinf)$ to characterize the subgroups $H\leq \pionex$ for which $p_H:\tXh\to X$ has the unique path lifting property. Recall the construction of $\tXh$ from Section 1.
\begin{lemma}\label{deltafinite}
Let $(t,0)\in B$, $\epsilon>0$, and $V=\bbd\cap E$ where $E\subseteq \bbr^2$ is the open disk of radius $\epsilon$ centered at $(t,0)$. If $|s-t|<\epsilon$, then $\bbf[\delta_s]\in B(\bbf[\delta_t],V)$.
\end{lemma}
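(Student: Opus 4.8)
The plan is to produce, for each $s$ with $|s-t|<\epsilon$, an explicit path $\mu$ from $(t,0)$ to $(s,0)$ whose image lies in $V$ and for which $[\delta_t\cdot\mu\cdot\delta_s^{-}]\in\bbf$. Since $\bbf$ is a subgroup, this is equivalent to $\delta_t\cdot\mu\sim\delta_s$ in the sense defining $\tXh$ (the endpoints agree at $(s,0)$, and the difference loop lies in $\bbf$), so it yields exactly $\bbf[\delta_s]=\bbf[\delta_t\cdot\mu]\in B(\bbf[\delta_t],V)$ because $\mu$ has image in $V$. Thus the entire task reduces to the clever construction of $\mu$.

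First I would split each of $\delta_t=\prod_{n\geq 1}\eta_n$ and $\delta_s$ at a level $N$ to be chosen: write $\delta_t=\delta_t^{(N)}\cdot\tau_t$, where $\delta_t^{(N)}=\prod_{n=1}^{N-1}\eta_n$ is a \emph{finite} concatenation of standard (tree) arcs ending at the dyadic rational $t_N=\frac{i-1}{2^{N-1}}$ with $t_N\leq t<t_N+2^{-(N-1)}$, and $\tau_t=\prod_{n\geq N}\eta_n$ is the infinite tail from $t_N$ to $(t,0)$; likewise $\delta_s=\delta_s^{(N)}\cdot\tau_s$ with truncation $s_N$. The key estimate is that each $\eta_n$ with $n\geq N$ is either constant or a semicircle of radius $\le 2^{-(N+1)}$ lying over the interval $[t_N,t_N+2^{-(N-1)})$, so the whole image of $\tau_t$ stays within distance $3/2^{N}$ of $(t,0)$, and similarly for $\tau_s$ about $(s,0)$. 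Because $|s-t|<\epsilon$, the segment $[\min(s,t),\max(s,t)]$ is a compact subset of the open interval $(t-\epsilon,t+\epsilon)$; this lets me fix a single $N$ so large that $\tau_t$, $\tau_s$, both truncation points $t_N,s_N$, and the base interval between them thickened by height $2^{-N}$ all lie inside $V$.

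With this $N$ fixed, I set $\sigma$ to be the finite concatenation of $N$-th level standard arcs $\ell_{N,i}\cdot\ell_{N,i+1}\cdot\;\cdots\;\cdot\ell_{N,i'-1}$ running along the $N$-th level from $t_N$ to $s_N$ (reversed if $s_N<t_N$, constant if $t_N=s_N$); by the choice of $N$ its image lies in $V$. I then define $\mu=\tau_t^{-}\cdot\sigma\cdot\tau_s$, which runs from $(t,0)$ to $(s,0)$ with image in $V$. Since $\tau_t\cdot\tau_t^{-}$ and $\tau_s\cdot\tau_s^{-}$ are null-homotopic rel endpoints, the tails cancel and $\delta_t\cdot\mu\cdot\delta_s^{-}\simeq\delta_t^{(N)}\cdot\sigma\cdot(\delta_s^{(N)})^{-}$. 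The right-hand side is a finite concatenation of standard paths, hence represents an element of $\bbf$ by Definition~\ref{defsubgroupD}, giving $[\delta_t\cdot\mu\cdot\delta_s^{-}]\in\bbf$ and completing the argument.

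The step I expect to be the main obstacle is precisely the choice of the connecting path $\sigma$, where the two competing requirements ``lands in $\bbf$'' and ``stays in $V$'' must be met simultaneously. The naive alternatives both fail: joining $t_N$ to $s_N$ along the base arc $\lambda_{\infty}$ introduces a wild element essentially conjugate to $\dinf$, which is \emph{not} in $\bbf$, whereas joining them through the maximal tree $T$ forces the path up toward $d_0$ and out of $V$. Bridging by a chain of $N$-th level semicircles resolves both issues at once, since each $\ell_{N,j}$ is a genuine edge of $G$ and is small and localized enough to remain in $V$ for large $N$. The only genuinely quantitative work is the uniform estimate confining the two infinite tails and the finite level-$N$ bridge to $V$ at a common $N$, which is exactly where the compactness of $[\min(s,t),\max(s,t)]$ inside $(t-\epsilon,t+\epsilon)$ is used.
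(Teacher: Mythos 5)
Your proposal is correct and rests on exactly the same ingredients as the paper's proof: truncate $\delta_t$ and $\delta_s$ at a level $N$ deep enough that the tails stay in $V$, bridge the dyadic truncation points $t_N,s_N$ by a chain of level-$N$ standard arcs that also stays in $V$ (using compactness of $[\min(s,t),\max(s,t)]$ in $(t-\epsilon,t+\epsilon)$), and note that the resulting difference loop is a finite concatenation of standard paths, hence lies in $\bbf$. The paper merely organizes this in two stages---first $B(\bbf[\delta_u],V)=B(\bbf[\delta_v],V)$ for all dyadic rationals $u,v$ in $I$, then attaching each dyadic irrational to its truncation point via its tail---whereas you splice the two connecting paths into the single path $\mu=\tau_t^{-}\cdot\sigma\cdot\tau_s$; the only detail to add is the degenerate case $s=1$ (or $t=1$), where $\delta_1=\ell_{1,1}$ is not defined by a binary expansion, but your construction goes through verbatim with the corresponding tail taken constant.
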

\begin{proof}
Set $I=\{s\in\ui \mid |s-t|<\epsilon\}$ so that $I\times \{0\}=V\cap B$. First, suppose $u,v\in I$ are dyadic rationals. In this case, $\delta_u$ and $\delta_v$ are homotopic to a finite concatenation of standard paths. Additionally, there is an arc $\gamma:\ui\to V$ which is a finite concatenation of standard paths (with image in $V$) from $(u,0)$ to $(v,0)$. Since the loop $\delta_u\cdot \gamma\cdot (\delta_{v})^{-}$ is a finite concatenation of standard paths, we have $[\delta_u\cdot \gamma\cdot \delta_{v}^{-}]\in \bbf$. Thus $\bbf[\delta_u]\in B(\bbf[\delta_v],V)$. It follows that $B(\bbf[\delta_u],V)=B(\bbf[\delta_v],V)$ for all dyadic rationals $u,v\in I$. It now suffices to show that for each dyadic irrational $s\in I$, there is a dyadic rational $u\in I$ such that $\bbf[\delta_s]\in B(\bbf[\delta_u],V)$.

If $s\in I$ is not a dyadic rational, then $\delta_s$ is an infinite concatenation $\prod_{n=1}^{\infty}\eta_{n}$ from $d_0$ to $(s,0)\in V$ such that $\eta_n$ is either a standard path or a constant path. Find $N>1$ such that $\eta_n$ has image in $V$ for each $n\geq N$ and let $\gamma$ be the path $\prod_{n=N}^{\infty}\eta_{n}$. Note that $\gamma(0)=(u,0)$ with $u\in I$ a dyadic rational and that $\delta_u$ is a reparameterization of $\prod_{n=1}^{N-1}\eta_{n}$. Since $\left[\delta_s\cdot \gamma^{-}\cdot \delta_{u}^{-}\right]=\left[\left(\prod_{n=1}^{N-1}\eta_{n}\right)\cdot \delta_{u}^{-}\right]=1\in \bbf$ where $\gamma^{-}$ has image in $V$, we have $\bbf[\delta_s]\in B(\bbf[\delta_u],V)$.
\end{proof}
%
%


%
\begin{theorem}\label{uplchar}
If $p_H:\tXh\to X$ has the unique path lifting property, then $H\leq \pionex$ is $(\bbf,\dinf)$-closed. The converse holds if the path space $P(X)$ is first countable; for instance, if $X$ is metrizable.
\end{theorem}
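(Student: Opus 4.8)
For the forward implication, suppose $\ph$ has unique path lifting and let $f:(\bbd,d_0)\to(X,x_0)$ satisfy $f_\#(\bbf)\leq H$; write $\alpha=f\circ\linf$, so that $f_\#(\dinf)=[(f\circ\lambda_1)\cdot\alpha^{-}]$. I would define $\wt\gamma:\ui\to\tXh$ by $\wt\gamma(t)=H[f\circ\delta_t]$. Because $\delta_t$ runs from $d_0$ to $(t,0)$ and $\delta_0$ is constant, $\wt\gamma$ is a lift of $\alpha$ with $\wt\gamma(0)=\txh$, and its continuity is exactly what Lemma~\ref{deltafinite} provides: given an open $U\ni\alpha(t)$, choose $\epsilon$ with $f(\bbd\cap E_\epsilon)\subseteq U$ (where $E_\epsilon$ is the open $\epsilon$-disk about $(t,0)$); then for $|s-t|<\epsilon$ Lemma~\ref{deltafinite} gives $\bbf[\delta_s]\in B(\bbf[\delta_t],\bbd\cap E_\epsilon)$, and pushing this forward by $f_\#$ while using $f_\#(\bbf)\leq H$ yields $H[f\circ\delta_s]\in B(H[f\circ\delta_t],U)$. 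Thus $\wt\gamma$ is a continuous lift of $\alpha$ from $\txh$, so unique path lifting forces $\wt\gamma=\wt{\alpha}_{\scrs}$. Evaluating at $t=1$, where $\delta_1=\ell_{1,1}=\lambda_1$, gives $H[f\circ\lambda_1]=H[\alpha]$, i.e. $f_\#(\dinf)\in H$; hence $H$ is $(\bbf,\dinf)$-closed.

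For the converse I would argue contrapositively, assuming $P(X)$ first countable and that $\ph$ fails to have unique path lifting. By the remarks preceding Lemma~\ref{gencovtopologylemma}, some $\alpha\in\pxxo$ then admits a lift $\wt\gamma\neq\wt{\alpha}_{\scrs}$ starting at $\txh$; writing $\wt\gamma(t)=H[\mu_t]$ with $\mu_0=c_{x_0}$ and restricting to $[0,t^{*}]$ for a $t^{*}$ at which the two lifts disagree (then reparametrizing), I may assume the disagreement is at the endpoint: $[\mu_1\cdot\alpha^{-}]\notin H$. Exactly as in Theorem~\ref{hompathhausdchar}, it suffices to produce paths $\alpha_n\to\alpha$ in $P(X)$ with $\alpha_n(\tfrac{j}{2^{n-1}})=\alpha(\tfrac{j}{2^{n-1}})$ for all dyadic indices, which assemble into the unique map $f:(\bbd,d_0)\to(X,x_0)$ with $f\circ\lambda_n=\alpha_n$ and $f\circ\linf=\alpha$; the new requirement is the \emph{lift-realization} identity
\[H\big[\alpha_n|_{[0,\,j/2^{n-1}]}\big]=H[\mu_{j/2^{n-1}}]\qquad(0\leq j\leq 2^{n-1}).\]

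I would build $\alpha_n$ as a concatenation $\prod_j\epsilon_{n,j}$ of paths from $\alpha(\tfrac{j-1}{2^{n-1}})$ to $\alpha(\tfrac{j}{2^{n-1}})$ solving $H[\mu_{(j-1)/2^{n-1}}\cdot\epsilon_{n,j}]=H[\mu_{j/2^{n-1}}]$ (always possible, e.g. by $\epsilon_{n,j}=\mu_{(j-1)/2^{n-1}}^{-}\cdot\mu_{j/2^{n-1}}$); for large $n$ the continuity of $\wt\gamma$ lets me instead take $\epsilon_{n,j}$ with image in a prescribed small neighborhood $W_{n,j}$, since $\wt\gamma(\tfrac{j}{2^{n-1}})\in B(\wt\gamma(\tfrac{j-1}{2^{n-1}}),W_{n,j})$ forces such a correction to lie in $W_{n,j}$. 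Telescoping from $\mu_0=c_{x_0}$ gives the lift-realization identity, and $\alpha_n\to\alpha$ once the small corrections are fitted into a nested neighborhood base of $\alpha$ in $P(X)$. Then Lemma~\ref{fchar} reduces $f_\#(\bbf)\leq H$ to checking $f_\#([\lambda_{n,m}\cdot\lambda_{n',m'}^{-}])\in H$ whenever $q=\tfrac{m}{2^{n-1}}=\tfrac{m'}{2^{n'-1}}$; here $f_\#([\lambda_{n,m}\cdot\lambda_{n',m'}^{-}])=[\alpha_n|_{[0,q]}\cdot(\alpha_{n'}|_{[0,q]})^{-}]$, which lies in $H$ because both partial paths represent $H[\mu_q]$ in $\tXh$. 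Finally $f_\#(\dinf)=[\alpha_1\cdot\alpha^{-}]$ with $H[\alpha_1]=H[\mu_1]\neq H[\alpha]$, so $f_\#(\dinf)\notin H$ and $H$ is not $(\bbf,\dinf)$-closed.

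The main obstacle is the coordinated construction of the approximating paths $\alpha_n$. They must at once converge to $\alpha$ in $P(X)$—which pins the local corrections $\epsilon_{n,j}$ inside prescribed, shrinking path-space neighborhoods—and realize the globally consistent lift values $H[\mu_q]$ at \emph{every} dyadic point and \emph{every} level; it is precisely this across-levels consistency (rather than the consecutive-level agreement that suffices in Theorem~\ref{hompathhausdchar}) that upgrades $f_\#(\bbs)\leq H$ to $f_\#(\bbf)\leq H$, and it is available only because the genuine lift $\wt\gamma$ supplies a single coherent class $H[\mu_q]$ at each $q$. Matching the modulus of continuity of $\wt\gamma$ to the resolutions of a neighborhood base of $\alpha$ is where first countability of $P(X)$ (automatic for metrizable $X$) enters; the remaining points—continuity of $f$ along the base arc and the interpolation between the chosen resolutions—are routine and handled exactly as in Theorem~\ref{hompathhausdchar}.
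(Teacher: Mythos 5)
Your forward direction is the paper's argument run contrapositively and is correct: the competing lift $t\mapsto H[f\circ\delta_t]$, its continuity via Lemma \ref{deltafinite} pushed forward by $f_{\#}$ using $f_{\#}(\bbf)\leq H$, and comparison with the standard lift at $t=1$ (where $\delta_1=\lambda_1$). Your preliminary reduction in the converse -- that failure of unique path lifting yields a path $\alpha\in P(X,x_0)$ with a lift $\wt{\gamma}\neq\wt{\alpha}_{\mathscr{S}}$ starting at $\txh$ and disagreeing at the endpoint -- is also legitimate, and your overall plan for the converse (approximating paths $\alpha_n$ assembled into $f:\bbd\to X$, the ``lift-realization'' identity $H[\alpha_n|_{[0,j/2^{n-1}]}]=H[\mu_{j/2^{n-1}}]$, and Lemma \ref{fchar} to upgrade to $f_{\#}(\bbf)\leq H$) is exactly the paper's.

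However, the one step you assert without proof is precisely the technical heart of the converse. You claim that ``for large $n$ the continuity of $\wt{\gamma}$'' gives $\wt{\gamma}(\tfrac{j}{2^{n-1}})\in B(\wt{\gamma}(\tfrac{j-1}{2^{n-1}}),U_{n,j})$, and later speak of matching ``the modulus of continuity of $\wt{\gamma}$'' to the neighborhood base. This is not a valid justification: $\tXh$ carries the whisker topology, which need not be metrizable, so there is no modulus of continuity to invoke; pointwise continuity at $\tfrac{j-1}{2^{n-1}}$ only controls an interval whose length depends both on the point and on the target neighborhood, and these vary with $n$ and $j$, so no single ``largeness'' of $n$ extracted from continuity alone covers all $j$ simultaneously. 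The statement you need is in fact true for \emph{every} $n$, but it requires an argument, and the paper supplies it by compactness: subdivide $[\tfrac{j-1}{2^{n-1}},\tfrac{j}{2^{n-1}}]$ into finitely many subintervals $[s_{i-1},s_i]$ with $\wt{\gamma}([s_{i-1},s_i])\subseteq B(H[\mu_{s_{i-1}}],U_{n,j})$, extract paths $\zeta_i$ in $U_{n,j}$ with $H[\mu_{s_{i-1}}\cdot\zeta_i]=H[\mu_{s_i}]$, and take $\epsilon_{n,j}=\zeta_1\cdot\;\cdots\;\cdot\zeta_k$; the telescoping product $\prod_i[\mu_{s_{i-1}}\cdot\zeta_i\cdot\mu_{s_i}^{-}]\in H$ then yields your lift-realization identity. (Equivalently, one can chain basic neighborhoods using the whisker property that $H[\beta']\in B(H[\beta],U)$ implies $B(H[\beta'],U)=B(H[\beta],U)$, so that by connectedness all the sets $B(\wt{\gamma}(t),U_{n,j})$, $t$ in the dyadic interval, coincide.) With that step inserted, your proof coincides with the paper's; note also that first countability of $P(X)$ enters only to furnish the countable nested base $\mathcal{U}_p$ consumed level by level, not to provide any uniform continuity of the lift.
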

\begin{proof}
Suppose that $H$ is not $(\bbf,\dinf)$-closed. Then there is a map $f:(\bbd,d_0)\to (X,x_0)$ such that $f_{\#}(\bbf)\leq H$ and $f_{\#}(\dinf)\notin H$. We show the path $\gamma(t)=f(t,0)$ does not have a unique lift with respect to $p_H:\tXh\to X$. Take $\widetilde{\gamma}_{\mathscr{S}}:\ui\to \tXh$ to be the standard lift of $\gamma$. We define a second lift $\beta:\ui\to \tXh$ by setting $\beta(t)=H\left[f\circ\delta_t\right]$. Observe that $p_H\circ \beta=\gamma$, $\beta(0)=\txh=\widetilde{\gamma}_{\mathscr{S}}(0)$. Moreover, since $f_{\#}(\dinf)=\left[f\circ \left(\lambda_{1}\cdot \lambda_{\infty}^{-}\right)\right]\notin H$, we have $\beta(1)=H[f\circ \lambda_1]\neq H[f\circ \lambda_{\infty}]=H[\gamma]=\widetilde{\gamma}_{\mathscr{S}}(1)$. Therefore, it suffices to verify the continuity of $\beta$.

Suppose $B(H\left[f\circ\delta_t\right],U)$ is an open neighborhood of $\beta(t)=H\left[f\circ\delta_t\right]$ in $\tXh$ where $U$ is an open neighborhood of $f\circ\delta_t(1)=f(t,0)$ in $X$. Since $f$ is continuous, there is an $\epsilon>0$ such that if $E\subseteq \bbr^2$ is the open disk of radius $\epsilon$ centered at $(t,0)$, then $V=\bbd\cap E\subseteq f^{-1}(U)$. If $|s-t|<\epsilon$, then $\bbf[\delta_s]\in B(\bbf[\delta_t],V)$ by Lemma \ref{deltafinite}. Thus $[\delta_s\cdot \zeta^{-}\cdot \delta_{t}^{-}]\in \bbf$ for some path $\zeta$ in $V$. Applying the homomorphism $f_{\#}$, we see that
$[(f\circ\delta_s)\cdot (f\circ\zeta^{-})\cdot (f\circ\delta_{t})^{-}]=f_{\#}([\delta_s\cdot \zeta^{-}\cdot \delta_{t}^{-}])\in f_{\#}(\bbf)\leq H$ where $f\circ \zeta^{-}$ is a path in $f(V)\subseteq U$. It follows that $\beta(s)=H\left[f\circ\delta_s\right]\in B(H\left[f\circ\delta_t\right],U)$.

For the converse, we assume that $P(X)$ is first countable and that $p_H:\tXh\to X$ does not have the unique path lifting property. Then there are paths $\alpha\in P(X,x_0)$ and $\beta\in P(\tXh,\txh)$ such that $p_H\circ \beta=\alpha$ and $\beta(1)\neq \widetilde{\alpha}_{\mathscr{S}}(1)$. Note that $\beta(t)=H[\beta_t]$ for some path $\beta_t:\ui\to X$ from $x_0$ to $\alpha(t)$. Thus $H[\beta_1]\neq H[\alpha]$. Since $H[\beta_0]=\txh$, we may assume that $\beta_0=c_{x_0}$. We extend $\alpha$ to a map $f:\bbd\to X$ such that $f(u,0)=\alpha(u)$ on the base arc. Consider a countable neighborhood base $\mathcal{U}_1\supset \mathcal{U}_2\supset\mathcal{U}_3\supset\cdots$ at $\alpha$ in $P(X)$. We may assume that $\mathcal{U}_p$ is of the form \[\mathcal{U}_{p}=\bigcap_{j=1}^{2^{n(p)-1}}\left\langle \left[\frac{j-1}{2^{n(p)-1}},\frac{j}{2^{n(p)-1}}\right],U_{n(p),j}\right\rangle\] where $1= n(1)<n(2)<n(3)<\cdots $ is an increasing sequence of natural numbers.

To define $f$ on the $n(p)$-th level, we take the approach of \cite[Theorem 2.9]{FRVZ11}. Suppose $n\in \bbn$ is such that $n=n(p)$ for some $p$. Since $\beta_t(1)=\alpha(t)$ for each $t\in \ui$, we have $\beta(t)=H[\beta_t]\in B(H[\beta_t],U_{n,j})$ for each $t\in \left[\frac{j-1}{2^{n-1}},\frac{j}{2^{n-1}}\right]$. Therefore, there is a subdivision $\frac{j-1}{2^{n-1}}=s_0<s_1<\cdots <s_k=\frac{j}{2^{n-1}}$ such that $\beta([s_{i-1},s_i])\subseteq B(H[\beta_{s_{i-1}}],U_{n,j})$ for each $i=1,2,\dots,k$. In particular, there is a path $\zeta_i:\ui\to U_{n,j}$ from $\alpha(s_{i-1})$ to $\alpha(s_i)$ such that $H[\beta_{s_{i-1}}\cdot \zeta_i]=H[\beta_{s_i}]$.

Note that the concatenation $\alpha_{n,j}=\zeta_1\cdot \zeta_2\cdot\;\cdots \;\cdot\zeta_k$ is a path in $U_{n,j}$ from $\alpha\left(\frac{j-1}{2^{n-1}}\right)$ to $\alpha\left(\frac{j}{2^{n-1}}\right)$. Since $[\beta_{s_{i-1}}\cdot \zeta_i\cdot \beta_{s_i}^{-}]\in H$, we have
\begin{equation}
\left[\beta_{\frac{j-1}{2^{n-1}}}\cdot \alpha_{n,j}\cdot\beta_{\frac{j}{2^{n-1}}}^{-}\right]=\prod_{i=1}^{k}[\beta_{s_{i-1}}\cdot \zeta_i\cdot \beta_{s_i}^{-}]\in H\nonumber
\end{equation} for each $j=1,2,\dots,2^{n-1}$. Set $\alpha_n=\prod_{j=1}^{2^{n-1}}\alpha_{n,j}$.

If $n\in \bbn$ is such that $n(p)<n<n(p+1)$, put $\alpha_n=\alpha_{n(p+1)}$. By construction, the sequence $\alpha_n$ converges to $\alpha$ and satisfies $\alpha_n\left(\frac{j}{2^{n-1}}\right)=\alpha\left(\frac{j}{2^{n-1}}\right)$ for all $n\in\bbn$, $0\leq j\leq 2^{n-1}$. Thus we obtain a unique map $f:\bbd\to X$ such that $f\circ \lambda_{n}=\alpha_n$ and $f\circ \lambda_{\infty}=\alpha$. Observe that $f\circ \ell_{n,j}=\alpha_{n,j}$ whenever $n=n(p)$ for some $p\in \bbn$ and $1\leq j\leq 2^{n-1}$.

Finally, we check that $f_{\#}(\bbf)\leq H$ and $f_{\#}(\dinf)\notin H$. First, we claim that $H\left[f\circ \lambda_{n,m}\right]=H\left[\beta_{\frac{m}{2^{n-1}}}\right]$ for each dyadic unital pair $(n,m)$. Since \[f\circ \lambda_{n,m}=f\circ \left(\prod_{j=1}^{m}\ell_{n,j}\right)= f\circ \left(\prod_{j=1}^{m 2^{n(p)-n}}\ell_{n(p),j}\right)=f\circ \lambda_{n(p),m 2^{n(p)-n}}\] for some $n(p)\geq n$ and $p\geq 1$, we may assume that $n=n(p)$. In this case, $f\circ \lambda_{n,m}=\prod_{j=1}^{m}\alpha_{n,j}$. We have \begin{eqnarray*}
\left[\left(f\circ \lambda_{n,m}\right)\cdot \left(\beta_{\frac{m}{2^{n-1}}}\right)^{-}\right] &=&  \left[\beta_0\cdot (f\circ\lambda_{n,m})\cdot \left(\beta_{\frac{m}{2^{n-1}}}\right)^{-}\right]\\
&=&\left[\beta_0\right]\left(\prod_{j=1}^{m}\left[\alpha_{n,j}\right]\right) \left[\left(\beta_{\frac{m}{2^{n-1}}}\right)^{-}\right]\\
&=& \prod_{j=1}^{m}\left[\beta_{\frac{j-1}{2^{n-1}}}\cdot \alpha_{n,j}\cdot\left(\beta_{\frac{j}{2^{n-1}}}\right)^{-}\right] \in H
\end{eqnarray*}
showing that $H\left[f\circ \lambda_{n,m}\right]=H\left[\beta_{\frac{m}{2^{n-1}}}\right]$ as desired.

Whenever $\lambda_{m,n}\cdot\lambda_{m',n'}^{-}$ is a well-defined loop, we have $\frac{m}{2^{n-1}}=\frac{m'}{2^{n'-1}}$ and thus \[H\left[f\circ \lambda_{n,m}\right]=H\left[\beta_{\frac{m}{2^{n-1}}}\right]=H\left[\beta_{\frac{m'}{2^{n'-1}}}\right]=H\left[f\circ \lambda_{n',m'}\right]\]
This proves $f_{\#}([\lambda_{m,n}\cdot\lambda_{m',n'}^{-}])\in H$ whenever the loop is defined. Since the elements $[\lambda_{m,n}\cdot\lambda_{m',n'}^{-}]$ generate $\bbf$ by Lemma \ref{fchar}, we have $f_{\#}(\bbf)\leq H$. Finally, since $H[f\circ \lambda_1]=H[\beta_1]\neq H[\alpha] $, we have
$f_{\#}(\dinf)=f_{\#}([\lambda_{1}\cdot \lambda_{\infty}^{-}])=[(f\circ \lambda_{1})\cdot \alpha^{-}]\notin H$.
\end{proof}
If $X$ is a one-dimensional metric space, the following theorem of Eda implies that every homomorphism $\pioned\to\pionex$ is induced by a continuous map up to a change of basepoint. In this case, the unique path lifting property for $H\leq\pionex$ depends only on group theoretic conditions. If $\gamma:\ui\to X$ is a path from $x_0$ to $x_1$, let $\phi_{\gamma}:\pionex\to\pi_1(X,x_1)$ be the conjugation isomorphism $\phi_{\gamma}([\alpha])=[\gamma^{-}\cdot\alpha\cdot \gamma]$.
\begin{theorem}\label{edainducedthm}
\cite{Edaonedim} Let $Y$ be a one-dimensional Peano continuum, $X$ be a one-dimensional metric space, $x_0\in X$, and $y_0\in Y$. For each homomorphism $h:\pi_1(Y,y_0)\to\pionex$, there exists a continuous map $f:Y\to X$ and a path $\gamma:\ui\to X$ from $x_0$ to $f(y_0)$ such that $\phi_{\gamma}\circ h= f_{\#}$.
\end{theorem}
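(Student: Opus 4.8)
\subsection*{Proof proposal}

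The plan is to build $f$ out of Eda's reduced-word calculus for one-dimensional spaces, using the two structural facts available for such spaces: every path in a one-dimensional metric space is homotopic rel endpoints, within its image, to an essentially unique reduced path \cite{EdaSpatial}, and the canonical map to the first shape group is injective \cite{EK98,FZ05}. Shape injectivity lets us verify equalities of homotopy classes one finite approximation at a time, while the reduced-path machinery turns the abstract data of $h$ into genuine paths in $X$. I would first dispose of the basepoint issue: it suffices to produce a continuous $f\colon Y\to X$ together with a path $\gamma$ from $x_0$ to $f(y_0)$ with $\phi_{\gamma}\circ h=f_{\#}$, so I may freely replace $h$ by $\phi_{\delta}\circ h$ for convenient $\delta$ and work up to conjugation throughout.

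The construction proceeds by choosing, for a null family of reduced loops in $Y$ whose classes transfinitely generate $\pioney$, reduced representatives in $X$ of their $h$-images, and then assembling these into a single map. Concretely, since $Y$ is a one-dimensional Peano continuum, its generalized universal covering is an $\bbr$-tree on which $\pioney$ acts freely \cite{FZ013caley}; using uniqueness of reduced paths I would fix a tree-like system of reduced loops $\{\rho_i\}$ at $y_0$ indexing the ``letters'' that occur in reduced representatives of elements of $\pioney$. For each $i$, let $\sigma_i$ be the reduced representative of $h([\rho_i])$ in $X$. The map $f$ is then defined so that $f\circ\rho_i$ represents $\sigma_i$ on the arc of $Y$ carrying $\rho_i$, and so that an arbitrary point of $Y$, lying on a transfinite reduced concatenation of the $\rho_i$, is sent to the corresponding transfinite concatenation of the $\sigma_i$; the basepoint path $\gamma$ is recorded by the image of the chosen spanning arc.

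The main obstacle---and the genuine content of the theorem---is \emph{automatic continuity}: there is no reason for an abstract homomorphism $h$ to carry a null sequence of loops in $Y$ to a null sequence in $X$, yet this is exactly what is needed for the assembled $f$ to be continuous and for the transfinite concatenations defining $f$ to be well defined. This is where one-dimensionality of \emph{both} $Y$ and $X$ is indispensable: by a Higman--Eda cancellation argument for reduced infinitary words (the noncommutative slenderness phenomenon underlying the transfinite-products property of Definition \ref{tproddef}), one shows that if $\rho_{i_1},\rho_{i_2},\dots$ is null in $Y$, then the reduced representatives $\sigma_{i_1},\sigma_{i_2},\dots$ of their $h$-images must be null in $X$; otherwise some letter of $X$ would appear in the $h$-images of infinitely many disjoint $\rho_{i_k}$, contradicting the reduced-word structure of $h$ on the infinite product $\prod_{k}[\rho_{i_k}]$ together with injectivity of the shape map for $X$. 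Establishing this shrinking property, and more generally that $h$ respects arbitrary transfinite products, is the hard technical step, carried out via Eda's structure theory for the fundamental groups of one-dimensional spaces \cite{Edaonedim}.

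Granting continuity, it remains to verify $\phi_{\gamma}\circ h=f_{\#}$. Both sides are homomorphisms out of $\pioney$, and by shape injectivity for $X$ it suffices to check that they agree after projection to each finite graph approximation of $X$. On each finite stage the comparison is purely combinatorial: $f$ was defined so that $f\circ\rho_i$ represents $h([\rho_i])$, and since reduced representatives are unique and a shape projection sees only finitely many letters, the two homomorphisms send every reduced (transfinite) word in the classes $[\rho_i]$ to the same finite-stage image. Hence they agree on all of $\pioney$, which completes the proof.
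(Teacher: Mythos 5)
The paper does not actually prove this theorem: it is quoted verbatim from Eda's work \cite{Edaonedim} and used as a black box (in Corollary \ref{onedlifing}), so there is no in-paper argument to compare yours against. Judged on its own merits, your proposal has two genuine gaps. First, the construction of $f$ presupposes a Hawaiian-earring-like structure on $Y$: a fixed ``tree-like system'' of reduced loops $\{\rho_i\}$ whose classes transfinitely generate $\pi_1(Y,y_0)$, together with the assertion that an arbitrary point of $Y$ ``lies on a transfinite reduced concatenation of the $\rho_i$.'' For a general one-dimensional Peano continuum this is false: the wild set of $Y$ (e.g.\ for the Sierpi{\'n}ski carpet or Menger curve) is uncountable, the fundamental group is not transfinitely generated by any null family of loops, and a point of $Y$ simply does not come with a word decomposition. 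A map $f\colon Y\to X$ must be defined on the points of $Y$ and continuity must be checked against the topology of $Y$, not merely against null sequences of based loops; your recipe never produces such a pointwise definition outside the special case where $Y$ is (homotopy equivalent to) a wedge-of-circles-type space.

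Second, the step you yourself identify as ``the genuine content of the theorem'' --- that an abstract homomorphism $h$ carries null families to null families and respects transfinite products --- is not proved but deferred to ``Eda's structure theory for the fundamental groups of one-dimensional spaces \cite{Edaonedim},'' i.e.\ to the very paper whose theorem you are trying to prove. This makes the proposal circular as a self-contained argument: everything that is easy (basepoint bookkeeping, uniqueness of reduced representatives, finite-stage comparison via shape injectivity) is spelled out, while the automatic-continuity phenomenon that constitutes the theorem is assumed. The one-sentence cancellation sketch offered in its place (``some letter of $X$ would appear in the $h$-images of infinitely many disjoint $\rho_{i_k}$'') is not an argument one can check; controlling how an arbitrary homomorphism interacts with infinitary reduced words is precisely the delicate part of Eda's proof, and nothing in your outline substitutes for it.
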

\begin{corollary}\label{onedlifing}
Suppose $X$ is a one-dimensional metric space and $H\leq \pionex$ is a subgroup. Then $p_H:\tXh\to X$ has the unique path lifting property if and only if every homomorphism $h:\pioned\to \pionex$ satisfying $h(\bbf)\leq H$ has image in $H$.
\end{corollary}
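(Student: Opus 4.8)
The plan is to funnel the unique path lifting property through the machinery already developed and then use Eda's realization theorem to translate between continuous maps and abstract homomorphisms. Since $X$ is metric, $P(X)$ is first countable, so Theorem \ref{uplchar} applies in both directions: $p_H$ has the unique path lifting property if and only if $H$ is $(\bbf,\dinf)$-closed. Next, Lemma \ref{fistransitive} tells us $\bbf$ is $(\bbf,\dinf)$-dense in $\pioned$, so Corollary \ref{denseapplicationcor} recasts $(\bbf,\dinf)$-closedness as the condition that \emph{every based map} $f:(\bbd,d_0)\to(X,x_0)$ with $f_{\#}(\bbf)\leq H$ satisfies $f_{\#}(\pioned)\leq H$. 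Thus the whole corollary reduces to showing that this based-map condition is equivalent to the corresponding statement quantified over all homomorphisms $h:\pioned\to\pionex$.

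One implication is immediate and needs none of Eda's theorem. Suppose every homomorphism $h$ with $h(\bbf)\leq H$ has image in $H$. Any based map $f:(\bbd,d_0)\to(X,x_0)$ induces a homomorphism $f_{\#}:\pioned\to\pionex$ with $f_{\#}(\bbf)\leq H$, so the hypothesis directly gives $f_{\#}(\pioned)\leq H$. By the reduction above, $H$ is $(\bbf,\dinf)$-closed, hence $p_H$ has the unique path lifting property.

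For the reverse implication, assume $p_H$ has the unique path lifting property, equivalently that $H$ is $(\bbf,\dinf)$-closed, and let $h:\pioned\to\pionex$ be an arbitrary homomorphism with $h(\bbf)\leq H$; the goal is $h(\pioned)\leq H$. Because $\bbd$ is a one-dimensional Peano continuum, Theorem \ref{edainducedthm} supplies a continuous map $f:\bbd\to X$ and a path $\gamma$ from $x_0$ to $f(d_0)$ such that $\phi_{\gamma}\circ h=f_{\#}$. The difficulty is that $f$ need not preserve the basepoint, so it cannot be fed directly into the based-map formulation at $x_0$. This is where normality of the closure pair is essential: from $h(\bbf)\leq H$ we obtain $f_{\#}(\bbf)=\phi_{\gamma}(h(\bbf))\leq H^{\gamma}$, and since $H$ is $(\bbf,\dinf)$-closed, Proposition \ref{conjugatesforF} guarantees that the path-conjugate $H^{\gamma}$ is $(\bbf,\dinf)$-closed in $\pi_1(X,f(d_0))$. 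Applying Corollary \ref{denseapplicationcor} to the based map $f:(\bbd,d_0)\to(X,f(d_0))$ then yields $f_{\#}(\pioned)\leq H^{\gamma}$, that is $\phi_{\gamma}(h(\pioned))\leq\phi_{\gamma}(H)$, and applying $\phi_{\gamma}^{-1}$ gives $h(\pioned)\leq H$.

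The main obstacle is exactly this basepoint bookkeeping in the reverse implication: Eda's theorem realizes a homomorphism only up to an inner change of basepoint along $\gamma$, so the argument hinges on transporting the $(\bbf,\dinf)$-closedness of $H$ to $H^{\gamma}$, which is precisely what the normality of $(\bbf,\dinf)$ in Proposition \ref{conjugatesforF} provides. Once that is in hand, everything else is a routine assembly of Theorem \ref{uplchar}, Lemma \ref{fistransitive}, and Corollary \ref{denseapplicationcor}.
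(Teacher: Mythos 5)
Your proposal is correct and follows essentially the same route as the paper's own proof: both directions reduce to $(\bbf,\dinf)$-closedness via Theorem \ref{uplchar}, and the nontrivial direction combines Eda's realization theorem (Theorem \ref{edainducedthm}) with the normality of the closure pair (Proposition \ref{conjugatesforF}) and the density of $\bbf$ (Lemma \ref{fistransitive} with Corollary \ref{denseapplicationcor}) to handle the basepoint change along $\gamma$. The only cosmetic difference is that you route the easy implication explicitly through Corollary \ref{denseapplicationcor}, where the paper simply cites Theorem \ref{uplchar}.
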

\begin{proof}
One direction follows immediately from Theorem \ref{uplchar}. If $p_H:\tXh\to X$ has the unique lifting property, then $H$ is $(\bbf,\dinf)$-closed. Let $h:\pioned\to \pionex$ be any homomorphism satisfying $h(\bbf)\leq H$. Since $\bbd$ is a one-dimensional Peano continuum, by Theorem \ref{edainducedthm}, there is a map $f:\bbd\to X$ and a path $\gamma:\ui\to X$ from $x_0$ to $f(d_0)$ such that $\phi_{\gamma}\circ h= f_{\#}$. By Proposition \ref{conjugatesforF}, the conjugate subgroup $H^{\gamma}=[\gamma^{-}]H[\gamma]$ is $(\bbf,\dinf)$-closed. Since $f_{\#}(\bbf)=\phi_{\gamma}( h(\bbf))\leq  \phi_{\gamma}(H)=H^{\gamma}$ and $\bbf$ is $(\bbf,\dinf)$-dense in $\pioned$, we have $f_{\#}(\pioned)\leq H^{\gamma}$. It follows that $h(\pioned)=\phi_{\gamma^{-}}(f_{\#}(\pioned))\leq \phi_{\gamma^{-}}(H^{\gamma})=H$.
\end{proof}
\section{Intermediate Generalized Coverings}\label{section5}
We begin our discussion of intermediate generalized coverings by contrasting it with the situation for traditional covering maps. Call a subgroup $H\leq \pionex$ a \textit{(generalized) covering subgroup} if there is a (generalized) covering map $p:(\widehat{X},\widehat{x})\to (X,x_0)$ such that $p_{\#}(\pi_1(\widehat{X},\widehat{x}))=H$. Recalling Theorem \ref{coveringtheorem}, it is a classical result of covering space theory that if $X$ is locally path connected then a subgroup $H\leq \pionex$ is a covering subgroup if and only if for every point $x\in X$, there is an open neighborhood $U_{x}\in\mct_{x}$ such that the normal subgroup $N=\lb \pi(x,U_x)\mid x\in X\rb\trianglelefteq\pionex$ is contained in $H$. In particular, $N$ itself is a covering subgroup and so is every subgroup $K$ with $N\leq K\leq \pionex$. Therefore the collection of covering subgroups of $\pionex$ is upward closed in the subgroup lattice of $\pionex$.

It also follows from the previous paragraph that the collection of covering subgroups is closed under finite intersection and that the core of a covering subgroup $H$ is a covering subgroup of $\pionex$. Despite these special cases, covering subgroups are not closed under arbitrary intersection. For example, $$\bigcap \{N\trianglelefteq\pioneh\mid N\text{ is a covering subgroup}\}=1$$ but $\bbh$ does not admit a universal covering space.

The situation for generalized coverings is quite different. The collection of generalized covering subgroups is closed under arbitrary intersection \cite{Brazcat} but is not upward closed in the subgroup lattice of $\pionex$. For example, $1\leq\pioneh$ is a generalized covering subgroup since $\bbh$ admits a generalized universal covering, while the free subgroup $F=\lb [\ell_n]\mid n\in \bbn\rb\leq\pioneh$ is not a generalized covering subgroup \cite{FZ07}. On the other hand, the core of a generalized covering subgroup is always a generalized covering subgroup, because it equals the intersection of conjugate generalized covering subgroups. In Theorem \ref{largeextensionthm} below, we identify a condition sufficient to conclude that if $N$ is a normal, generalized covering subgroup and $N\leq H$, then $H$ is also a generalized covering subgroup.
\begin{example}
Recall the $(\cinfty,c_{\infty})$-closed subgroup $CCP(\bbd,B,d_0)\leq\pioned$ constructed in Example \ref{countablecutpointsexample}. We claim that $CCP(\bbd,B,d_0)$ is not $(\bbf,\dinf)$-closed. Since $\bbf\leq CCP(\bbd,B,d_0)$, it suffices to check that $\dinf\notin CCP(\bbd,B,d_0)$. If $\dinf=[\lambda_1\cdot\lambda_{\infty}^{-}]\in CCP(\bbd,B,d_0)$, then there is a loop $\beta$ which is homotopic to the reduced loop $\alpha=\lambda_1\cdot\lambda_{\infty}^{-}$ such that $\beta^{-1}(B)$ is countable. However the reduced loop $\alpha$ must be obtained by contracting subpaths of $\beta$ within its own image. Thus $\alpha^{-1}(B)$ must be countable; a contradiction.
\end{example}
\begin{example}\label{normalsubgroupexample}
The construction of a normal subgroup $N\trianglelefteq\pioned$ with the same properties as the subgroup in the previous example is a bit more involved. However this is done in \cite{VZ13} using a "triangle-space" $T$, which is homotopy equivalent to $\bbd$. We refer to this paper for detailed proofs. Call a path $\alpha:\ui\to\bbd$ \textit{generic} if there exists a countable, closed set $A\subset \ui$ such that the set of components of $\ui\backslash A$ may be written as a disjoint union $C_0\cup C_{1}^{+}\cup C_{1}^{-}$ where
\begin{enumerate}
\item if $(a,b)\in C_0$, then $\alpha([a,b])\cap B$ is closed and nowhere dense in $B$,
\item there is bijection $\theta:C_{1}^{+}\to C_{1}^{-}$ such that if $\theta((a,b))=(c,d)$, then $\alpha|_{[a,b]}\equiv\left(\alpha|_{[c,d]}\right)^{-}$.
\end{enumerate}
The subgroup $N=\{[\alpha]\in\pioned\mid\alpha\text{ is generic}\}$ is a normal, $(\cinfty,c_{\infty})$-closed subgroup of $\pioned$ and thus $(\cinfty,c_{\tau})$-closed by Proposition \ref{normalsubgroup}. While $D\leq N$, the arguments in \cite{VZ13} show that $\dinf\notin N$. Therefore, $N$ is not $(\bbf,\dinf)$-closed.
\end{example}
If $H,K\leq G$ are subgroups, let $K^H= \bigcup_{h\in H}h^{-1}Kh$. For instance, recall that $\pi(x,U)=\left\langle\pi(\alpha,U)^{\pionex}\right\rangle=\langle \pi(\beta,U)\mid\beta(1)=x\rangle$ is the normal closure of $\pi(\alpha,U)$ in $\pionex$.
\begin{proposition}\label{largeextension}
Suppose $X$ is a metric space and $N\leq H\leq \pionex$ where $N$ is a normal subgroup of $\pionex$ and $p_N:\tX_N\to X$ has the unique path lifting property. If, for every path $\alpha:\ui\to X$ with $\alpha(0)=x_0$, there is a $U\in\mct_{\alpha(1)}$ such that $\pi(\alpha,U)^{H}\cap H\subseteq N$, then $p_H:\tXh\to X$ has the unique path lifting property.
\end{proposition}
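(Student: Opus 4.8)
The plan is to argue by contradiction directly against the unique path lifting property, using the generalized covering $p_N$ as a reference space into which we lift. As in the proof of Theorem~\ref{uplchar}, if $p_H$ fails to have the unique path lifting property, then there exist a path $\gamma\in\pxxo$ and a lift $\beta\in P(\tX_H,\txh)$ of $\gamma$ (so $p_H\circ\beta=\gamma$ and $\beta(0)=\txh$) with $\beta(1)\neq\wt{\gamma}^{H}_{\scrs}(1)$, where $\wt{\gamma}^{H}_{\scrs}$ is the standard lift into $\tX_H$. Write $\beta(t)=H[\beta_t]$ for paths $\beta_t$ from $x_0$ to $\gamma(t)$, and let $q:\tX_N\to\tX_H$ be the natural map $q(N[\sigma])=H[\sigma]$, which is well defined since $N\leq H$ and satisfies $p_H\circ q=p_N$. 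The goal is to produce a continuous $\wt{\beta}:\ui\to\tX_N$ with $\wt{\beta}(0)=\tx_N$ and $q\circ\wt{\beta}=\beta$. Because $p_N\circ\wt{\beta}=p_H\circ q\circ\wt{\beta}=\gamma$, such a $\wt{\beta}$ is a lift of $\gamma$ into $\tX_N$ starting at $\tx_N$; unique path lifting for $p_N$ then forces $\wt{\beta}=\wt{\gamma}^{N}_{\scrs}$, whence $\beta=q\circ\wt{\beta}=\wt{\gamma}^{H}_{\scrs}$, contradicting $\beta(1)\neq\wt{\gamma}^{H}_{\scrs}(1)$.

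To construct $\wt\beta$ I would run a connectedness argument on the set $A$ of $t\in\ui$ for which $\beta|_{[0,t]}$ admits such a lift; by unique path lifting for $p_N$ any such lift is unique, so the lifts on these initial segments are mutually consistent. Clearly $0\in A$, and $A$ is an initial subinterval. Closedness is automatic: on $[0,t^\ast)$ any lift $\wt\beta$ satisfies $p_N\circ\wt\beta=\gamma$ and $\wt\beta(0)=\tx_N$, so by unique path lifting for $p_N$ it agrees with $\wt{\gamma}^{N}_{\scrs}|_{[0,t^\ast)}$, which extends continuously to $t^\ast$; hence $t^\ast\in A$. The content is entirely in the openness step. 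Given $t\in A$, write $\wt\beta(t)=N[\beta_t]$ and apply the hypothesis to the path $\beta_t$ to obtain $U\in\mct_{\gamma(t)}$ with $\pi(\beta_t,U)^{H}\cap H\subseteq N$. Using continuity of $\beta$ and $\gamma$, choose $\varepsilon>0$ so that $\beta(s)\in B(H[\beta_t],U)$ for $t\leq s<t+\varepsilon$; then $\beta(s)=H[\beta_t\cdot\mu_s]$ for a path $\mu_s$ in $U$ from $\gamma(t)$ to $\gamma(s)$, and I would set $\wt\beta(s)=N[\beta_t\cdot\mu_s]$.

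The crux—and the main obstacle—is that this value must be independent of the choice of the connecting path $\mu_s$, and this is precisely what the local hypothesis provides. If $\mu_s,\mu_s'$ are paths in $U$ from $\gamma(t)$ to $\gamma(s)$ with $H[\beta_t\cdot\mu_s]=\beta(s)=H[\beta_t\cdot\mu_s']$, then $[\beta_t\cdot(\mu_s\cdot(\mu_s')^{-})\cdot\beta_t^{-}]\in H$; since $\mu_s\cdot(\mu_s')^{-}$ is a loop in $U$ based at $\gamma(t)$, this element lies in $\pi(\beta_t,U)\cap H\subseteq\pi(\beta_t,U)^{H}\cap H\subseteq N$, so $N[\beta_t\cdot\mu_s]=N[\beta_t\cdot\mu_s']$. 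This single computation is the only place the assumption $\pi(\beta_t,U)^{H}\cap H\subseteq N$ is used, and without it the candidate value $\wt\beta(s)$ would genuinely depend on $\mu_s$ (note that $[\beta_t\cdot\mu_s\cdot(\beta_t\cdot\mu_s')^{-}]$ is a $\beta_t$-conjugate of a \emph{small} loop, which is why the hypothesis applies—whereas a naive comparison of $\beta_s$ with $\beta_t$ at nearby parameters would only produce a $\beta_t$-conjugate of a large loop, to which the hypothesis says nothing). Continuity of $\wt\beta$ on $[t,t+\varepsilon)$ then follows from well-definedness: at $s_0$ and a basic neighborhood $B(N[\beta_t\cdot\mu_{s_0}],U')$ with $U'\subseteq U$, continuity of $\beta$ yields $\beta(s')=H[\beta_t\cdot\mu_{s_0}\cdot\tau']$ with $\tau'$ a path in $U'$, so $\mu_{s'}:=\mu_{s_0}\cdot\tau'$ is an admissible representative and $\wt\beta(s')=N[\beta_t\cdot\mu_{s_0}\cdot\tau']\in B(N[\beta_t\cdot\mu_{s_0}],U')$. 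This extends the lift past $t$, proving $A$ open, so $A=\ui$ and the contradiction is reached. Finally, since every representative $\mu_s,\tau'$ is supplied directly by membership in the basic neighborhoods $B(\,\cdot\,,U)$, no local path connectivity of $X$ is required.
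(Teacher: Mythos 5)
Your overall strategy is genuinely different from the paper's: the paper never leaves the test-map framework (it starts with $f:(\bbd,d_0)\to (X,x_0)$ satisfying $f_{\#}(\bbf)\leq H$, covers the base arc by finitely many sets on which the hypothesis applies, and uses $(\bbf,\dinf)$-closedness of the conjugates $N^{\alpha_j}$ on the subdivided copies $\bbd_{m,j}$ together with normality of $N$ to force $f_{\#}([\lambda_m\cdot\lambda_{\infty}^{-}])\in N$), whereas you run a direct lifting argument through the intermediate projection $q:\tX_N\to\tXh$. Your openness step, including the well-definedness computation for $N[\beta_t\cdot\mu_s]$, is correct and uses the hypothesis exactly where it should. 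The gap is the closedness step.

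``Closedness is automatic'' is false. Unique path lifting for $p_N$ does force the lift on $[0,t^{\ast})$ to be the standard lift into $\tX_N$, and that standard lift extends continuously over $t^{\ast}$; but membership $t^{\ast}\in A$ requires in addition that the extension still lifts $\beta$ at the endpoint, i.e. that $q(\wt{\gamma}_{\scrs}(t^{\ast}))=\beta(t^{\ast})$, equivalently $H[\gamma_{t^{\ast}}]=H[\beta_{t^{\ast}}]$ where $\gamma_{t^{\ast}}(s)=\gamma(st^{\ast})$. Since $\tXh$ carries the whisker topology and is generally not Hausdorff, two continuous maps into $\tXh$ can agree on $[0,t^{\ast})$ and differ at $t^{\ast}$: lifts through $p_H$ can split at a single point, and this splitting is precisely a failure of the homotopically Hausdorff property relative to $H$, which is not among your hypotheses and does not follow from unique path lifting for $p_N$ alone. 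Concretely, take $X=\bbh$, $N=1$ (so $p_N$ has unique path lifting), $H=\finfty$, and $\gamma=\ell_{\infty}$: setting $\beta(t)=\finfty[\gamma_t]$ for $t<1$ and $\beta(1)=\finfty[c_{b_0}]$ defines a continuous lift of $\gamma$ (continuity at $1$ holds because, given $U$ with $\bbh_{\geq m}\subseteq U$, the tail $\nu_t=\gamma|_{[\frac{m-1}{m},t]}$ lies in $U$ and $[\gamma_t\cdot\nu_t^{-}]=[\ell_1\cdots\ell_{m-1}]\in\finfty$), which agrees with the standard lift on $[0,1)$ but not at $1$, since $[\ell_{\infty}]\notin\finfty$. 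Here your set $A$ equals $[0,1)$, which is not closed, yet your closedness argument invokes only unique path lifting for $p_N$ and so would wrongly conclude $1\in A$. (In this example the proposition's hypothesis fails --- $\pi(c_{b_0},U)\cap \finfty$ contains $[\ell_n]$ for large $n$ --- so this is not a counterexample to the statement; it shows instead that the hypothesis must also enter the closedness step.) The gap is repairable, but it takes real work: writing $g=[\beta_{t^{\ast}}\cdot\gamma_{t^{\ast}}^{-}]$, continuity gives for every small $U$ a loop $\epsilon_U$ in $U$ at $\gamma(t^{\ast})$ with $h_U:=[\beta_{t^{\ast}}\cdot\epsilon_U\cdot\gamma_{t^{\ast}}^{-}]\in H$; applying the hypothesis to the path $\beta_{t^{\ast}}$ yields $U_1$ with $\pi(\beta_{t^{\ast}},U_1)^{H}\cap H\subseteq N$, whence $h_Uh_V^{-1}=[\beta_{t^{\ast}}\cdot\epsilon_U\cdot\epsilon_V^{-}\cdot\beta_{t^{\ast}}^{-}]\in N$ for $V\subseteq U\subseteq U_1$, so the right coset $Nh_U$ stabilizes at some $Nh$ with $h\in H$; then $hg^{-1}\in\bigcap_{U}N\pi(\beta_{t^{\ast}},U)$, and this intersection equals $N$ because unique path lifting for $p_N$ implies $X$ is homotopically Hausdorff relative to $N$ for metric $X$ (Theorem \ref{uplchar}, Propositions \ref{transfinitepathproductcomparisonprop} and \ref{normalsubgroup}, Theorem \ref{homhausrelchar}, and Remark \ref{putdifferently}); hence $g\in Nh\subseteq H$, which is what closedness needs. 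Without some argument of this kind, the proof as written does not go through.
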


\begin{proof}
Let $f:(\bbd,d_0)\to (X,x_0)$ be such that $f_{\#}(\bbf)\leq H$. By Theorem \ref{uplchar}, it suffices to show that there is an $m\geq 1$ such that $f_{\#}([\lambda_{m}\cdot\lambda_{\infty}^{-}])\in H$. Identify $\ui$ with the base arc $B$. For each $(n,j)\in \scrd$, let $\bbd_{n,j}$ be the homeomorphic copy of $\bbd$ bounded by the arc $\bbd(n,j)$ and the interval $\left[\frac{j-1}{2^{n-1}},\frac{j}{2^{n-1}}\right]\subseteq B$. Recall the canonical homeomorphism $T_{n,j}:\bbd_{n,j}\to \bbd$ from Lemma \ref{fistransitive}.

For each $t\in B$ there is an open neighborhood $U_t$ of $f(t)$ such that $\pi(f\circ\delta_t,U_t)^{H}\cap H\subseteq N$. Let $V_t$ be an open ball centered at $t$ in $\bbd$ such that $f(V_t)\subseteq U_t$. Take $0\leq t_1<t_2<\cdots<t_k\leq 1$ such that the sets $V_{t_1},V_{t_2},\dots,V_{t_k}$ cover $B$. There exists an $m\geq 1$ such that for each $j=1,2,\dots,2^{m-1}$, we have $\bbd_{m,j}\subseteq V_{t_i}$ for some $i$. For the moment, fix such $j$ and $i$. Put $s=\frac{j-1}{2^{m-1}}$, $\alpha_j=f\circ\delta_{s}$, and let $f_j:\bbd_{m,j}\to X$ denote the restriction of $f$. By Lemma \ref{deltafinite}, there is a path $\epsilon:\ui\to V_{t_i}$ from $t_i$ to $s$ and an element $L\in\bbf$ such that $[\delta_{s}]=L^{-1}[\delta_{t_i}\cdot \epsilon]$. Consider a free generator $d_{n,k}=[\delta_{q_1}\cdot\ell_{n+1,2k}\cdot\delta_{q_2}^{-}]$ of $\bbf$ where $q_1=\frac{2k-1}{2^n}$ and $q_2=\frac{k}{2^{n-1}}$. We have $[\delta_s](T_{m,j}^{-1})_{\#}(d_{n,k})[\delta_{s}^{-}]\in D$ since $\delta_s\cdot T_{m,j}^{-1}\circ(\delta_{q_1}\cdot\ell_{n+1,2k}\cdot\delta_{q_2}^{-})\cdot \delta_{s}^{-}$ is a finite concatenation of standard paths. Also,\[[\delta_s](T_{m,j}^{-1})_{\#}(d_{n,k})[\delta_{s}^{-}]=
L^{-1}[\delta_{t_i}][\epsilon](T_{m,j}^{-1})_{\#}(d_{n,k})[\epsilon^{-}][\delta_{t_i}^{-}]L\]is an element of $ L^{-1}\pi(\delta_{t_i},V_{t_i})L\subseteq \pi(\delta_{t_i},V_{t_i})^{\bbf}$. Applying the homomorphism $f_{\#}$ and recalling that $f_{\#}(D)\leq H$, we have \[[\alpha_j](f_j\circ T_{m,j}^{-1})_{\#}(d_{n,k})[\alpha_{j}^{-}]\in \pi(f\circ \delta_{t_i},U_{t_i})^{H}\cap H\subseteq N.\]
Thus $f_j\circ T_{m,j}^{-1}:\bbd\to X$ is a map satisfying $(f_j\circ T_{m,j}^{-1})_{\#}(\bbf)\leq N^{\alpha_j}$ for $1\leq j\leq 2^{m-1}$. Since $N$ is $(\bbf,\dinf)$-closed by assumption, $N^{\alpha_j}$ is $(\bbf,\dinf)$-closed by Proposition \ref{conjugatesforF}. Thus, for each $j$, we have $(f_j\circ T_{m,j}^{-1})_{\#}(\dinf)\in N^{\alpha_j}$. Let $\beta_j$ be the restriction of $\lambda_{\infty}$ to $\left[\frac{j-1}{2^{n-1}},\frac{j}{2^{n-1}}\right]\subseteq B$. Then $f_{\#}([\delta_s][\ell_{m,j}\cdot \beta_{j}^{-}][\delta_{s}^{-}])\in N$ for each $s=\frac{j-1}{2^{m-1}}$. Since $N$ is a normal subgroup and $[\lambda_{m}\cdot\lambda_{\infty}^{-}]$ factors as a product of conjugates of the elements $[\delta_s][\ell_{m,j}\cdot \beta_{j}^{-}][\delta_{s}^{-}]$, we see that $f_{\#}([\lambda_{m}\cdot\lambda_{\infty}^{-}])\in N\leq H$.
\end{proof}

Since $\pi(\alpha,U)^{H}\subseteq \pi(x,U)$ for any path $\alpha$ with $\alpha(1)=x$ and any subgroup $H\leq \pionex$, we obtain the following theorem.

\begin{theorem}\label{largeextensionthm}
Suppose $X$ is a metric space and $N\leq H\leq \pionex$ where $N$ is a normal subgroup of $\pionex$ and $p_N:\tX_N\to X$ has the unique path lifting property. If, for every point $x\in X$, there is a $U\in\mct_{x}$ such that $\pi(x,U)\cap H\leq N$, then $p_H:\tXh\to X$ has the unique path lifting property.
\end{theorem}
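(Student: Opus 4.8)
The plan is to deduce Theorem \ref{largeextensionthm} directly from Proposition \ref{largeextension}; all the genuine work—the inductive construction of a map on the dyadic arc space $\bbd$ and the appeal to the unique path lifting characterization of Theorem \ref{uplchar}—has already been carried out there. What remains is purely a matter of translating the pointwise hypothesis of the theorem into the path-indexed hypothesis required by the proposition.

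First I would use the theorem's hypothesis to fix, for each point $x\in X$, an open neighborhood $U_x\in\mct_x$ satisfying $\pi(x,U_x)\cap H\leq N$. Now let $\alpha:\ui\to X$ be an arbitrary path with $\alpha(0)=x_0$, set $x=\alpha(1)$, and take $U=U_x$. The proposition demands that $\pi(\alpha,U)^{H}\cap H\subseteq N$, so it suffices to establish the inclusion $\pi(\alpha,U)^H\subseteq \pi(x,U)$; intersecting with $H$ then yields $\pi(\alpha,U)^H\cap H\subseteq \pi(x,U_x)\cap H\leq N$, exactly as needed.

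The inclusion $\pi(\alpha,U)^H\subseteq \pi(x,U)$ is the one recorded immediately before the theorem, and it is the only point requiring verification. It rests on two facts recalled in the preliminaries: that $\pi(x,U)$ is a normal subgroup of $\pionex$ containing $\pi(\alpha,U)$ whenever $\alpha(1)=x$, and that $H\leq\pionex$. Indeed, for any $h\in H$ we have $h^{-1}\pi(\alpha,U)h\subseteq h^{-1}\pi(x,U)h=\pi(x,U)$ by normality, and taking the union over $h\in H$ gives $\pi(\alpha,U)^H=\bigcup_{h\in H}h^{-1}\pi(\alpha,U)h\subseteq\pi(x,U)$.

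With the proposition's hypothesis thereby verified for every path $\alpha$ emanating from $x_0$, Proposition \ref{largeextension} immediately yields that $p_H:\tXh\to X$ has the unique path lifting property, completing the proof. I do not anticipate any real obstacle here: the substance lies entirely in Proposition \ref{largeextension}, and the reduction above is a short exercise in the normal-closure bookkeeping for the groups $\pi(x,U)$, the one subtlety being to invoke the normality of $\pi(x,U)$ in $\pionex$ so that conjugation by elements of $H$ preserves it.
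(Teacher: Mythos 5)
Your proof is correct and takes essentially the same route as the paper: the paper likewise obtains Theorem \ref{largeextensionthm} as an immediate consequence of Proposition \ref{largeextension}, citing exactly the inclusion $\pi(\alpha,U)^{H}\subseteq \pi(x,U)$ in the sentence preceding the theorem. Your justification of that inclusion via the normality of $\pi(x,U)$ in $\pionex$ is precisely the intended bookkeeping, so there is nothing to add.
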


\begin{remark}
The condition given in Theorem \ref{largeextensionthm} may be interpreted as follows: if a normal $(\bbf,\dinf)$-closed subgroup $N\leq \pionex$ is enlarged to a subgroup $H\leq \pionex$ by adding only homotopy classes of loops which are ``large" in the sense that they do not factor as products of conjugates of arbitrarily small loops based at some fixed point, then $H$ must also be $(\bbf,\dinf)$-closed.
\end{remark}

\begin{example}\label{subgroupsexample}
Since $\bbd$ is a 1-dimensional Peano continuum it admits a generalized universal covering \cite{FZ07}, which makes the trivial subgroup $1\leq \pioned$ a $(\bbf,\dinf)$-closed subgroup. The subgroup $\bbs\leq \pioned$ is not $(\bbs,\dinf)$-closed since it does not contain $\dinf$. However, $1\leq \bbs$ and no non-trivial element of $\bbs$ has a representative of the form $\prod_{i=1}^{n}\alpha_{i}\cdot\delta_{i}\cdot\alpha_{i}^{-}$ with loops $\delta_i$ based at the same $x\in X$ and $diam(\delta_i)<1$. Thus Theorem \ref{largeextensionthm} implies that $\bbs$ is $(\bbf,\dinf)$-closed. We conclude that $p_{\bbs}:\wt{\bbd}_{\bbs}\to\bbd$ is a generalized covering map, which is not a covering map since $\bbs$ does not satisfy the necessary condition given in Theorem \ref{coveringtheorem}.
\end{example}
\section{Generalized universal coverings}\label{section6}
Consider $\bbd$ as the subspace $\bbd\times \{0\}\subseteq \bbr^3$. The dyadic unital pairs $(n,j)$ are in bijective correspondence with the open disks $e_{n,j}\subseteq \bbr^2\times\{0\}$ which are the bounded components of $(\bbr^2\backslash \bbd)\times \{0\}$ in $\bbr^2\times \{0\}$. We construct the space $\bbd\bba\subseteq \bbr^3$ by continuously raising a point in each disk $e_{n,j}$ up to unit height while leaving $\bbd$ unchanged (see Figure \ref{archipelagofig}). Note that $\bbd\bba$ is homotopy equivalent to the relative CW-complex in the weak topology obtained by attaching a 2-cell to $\bbd$ using the loop $\ell_{n,j}\cdot (\ell_{n+1,2j})^{-}\cdot (\ell_{n+1,2j-1})^{-}$ as an attaching map for each dyadic unital pair $(n,j)$. Thus $\pi_{1}(\bbd\bba,d_0)\cong \pioned/N$ where $N$ is the normal closure of $\bbf$ in $\pioned$.
\begin{figure}[H]
\centering \includegraphics[height=1.7in]{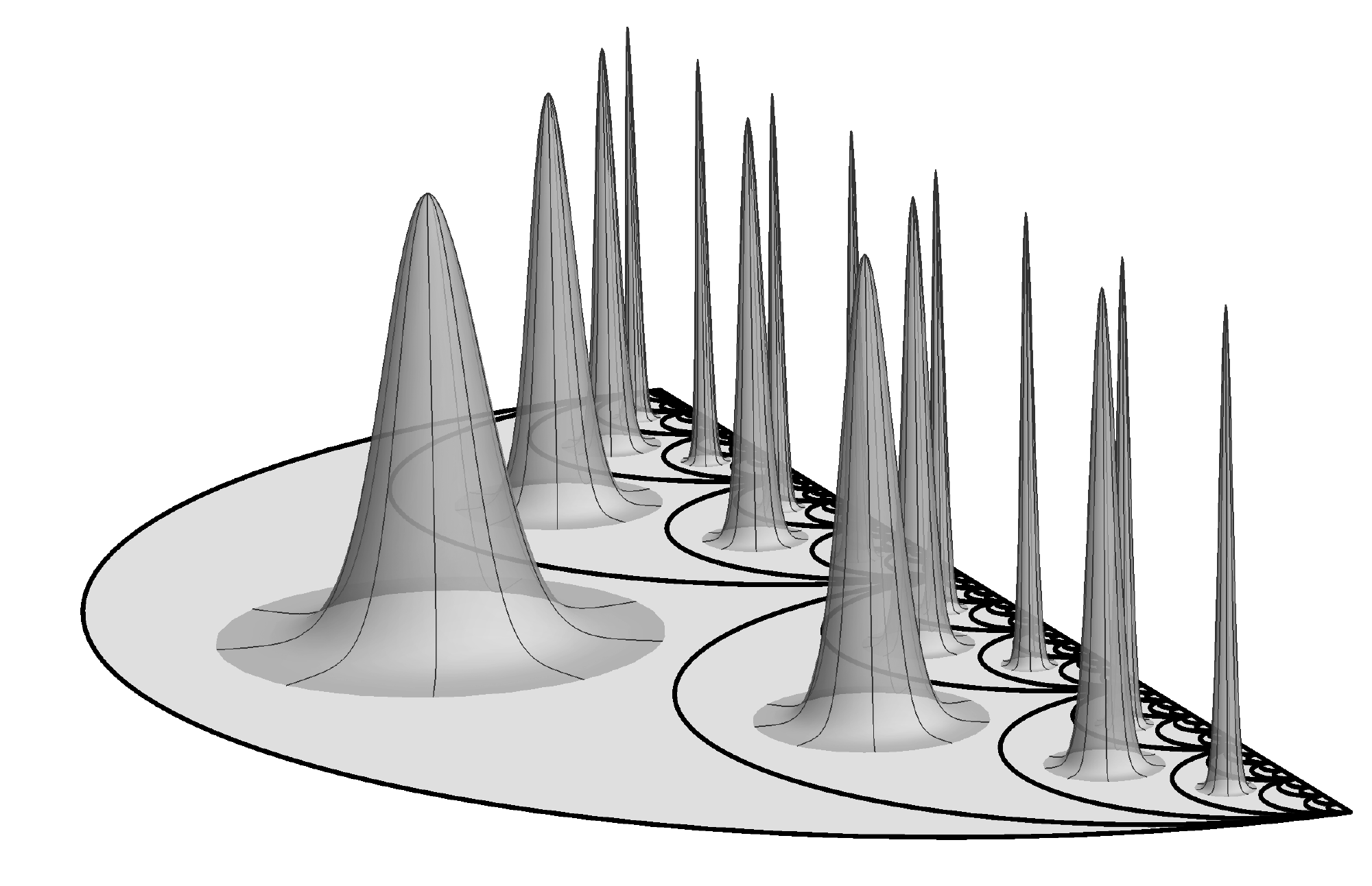}
\caption{\label{archipelagofig}The space $\bbd\bba$}
\end{figure}
\begin{theorem}\label{mainresultcomplete}
The following are equivalent for any path-connected metric space $X$.
\begin{enumerate}
\item $X$ admits a generalized universal covering,
\item every map $f:\bbd\to X$ such that $f_{\#}(\bbf)=1$ induces the trivial homomorphism on $\pi_1$,
\item every map $g:\bbd\bba\to X$ induces the trivial homomorphism on $\pi_1$.
\end{enumerate}
\end{theorem}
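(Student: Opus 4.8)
The plan is to prove the three conditions equivalent by establishing $(1)\Leftrightarrow(2)$ through the closure-operator machinery of Section~\ref{Testmapsection}, and then $(2)\Leftrightarrow(3)$ by comparing $\bbd$ with its archipelago. For $(1)\Leftrightarrow(2)$ I would argue as follows. By Lemma~\ref{gencovtopologylemma} applied to $H=1$, the space $X$ admits a generalized universal covering if and only if $p_{1}:\tX\to X$ has the unique path lifting property. Since $X$ is metric, the path space $P(X)$ is first countable, so Theorem~\ref{uplchar} shows this holds if and only if the trivial subgroup $1\leq\pionex$ is $(\bbf,\dinf)$-closed. I then unpack this condition in two steps to match $(2)$. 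First, $\bbf$ is $(\bbf,\dinf)$-dense in $\pioned$ by Lemma~\ref{fistransitive}, so Corollary~\ref{denseapplicationcor} upgrades the conclusion of $(\bbf,\dinf)$-closedness from $\dinf$ to all of $\pioned$: being $(\bbf,\dinf)$-closed becomes the statement that every $f:(\bbd,d_0)\to(X,x_0)$ with $f_\#(\bbf)=1$ satisfies $f_\#(\pioned)=1$. Second, $(\bbf,\dinf)$ is a normal closure pair (Proposition~\ref{conjugatesforF}), so $1\leq\pionex$ is $(\bbf,\dinf)$-closed precisely when $1\leq\pi_1(X,\alpha(1))$ is $(\bbf,\dinf)$-closed for every $\alpha\in P(X,x_0)$; since $X$ is path connected the endpoints $\alpha(1)$ range over all of $X$. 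Removing the basepoint restriction in this way yields exactly statement $(2)$.

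For $(2)\Rightarrow(3)$ I would use the algebra recorded just before the theorem: $\pi_1(\bbd\bba,d_0)\cong\pioned/N$ with $N$ the normal closure of $\bbf$, the isomorphism induced by the inclusion $\iota:\bbd\to\bbd\bba$. In particular $\iota_\#$ is surjective and $\bbf\leq\ker\iota_\#$. Given any $g:\bbd\bba\to X$, the composite $f=g\circ\iota$ satisfies $f_\#(\bbf)=g_\#(\iota_\#(\bbf))=1$, so $(2)$ gives $f_\#(\pioned)=1$; surjectivity of $\iota_\#$ then forces $g_\#(\pi_1(\bbd\bba,d_0))=g_\#(\iota_\#(\pioned))=f_\#(\pioned)=1$, which is $(3)$.

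For $(3)\Rightarrow(2)$, given $f:\bbd\to X$ with $f_\#(\bbf)=1$, the goal is to manufacture a map $g:\bbd\bba\to X$ with $g\circ\iota\simeq f$; then $f_\#=g_\#\circ\iota_\#=0$ by $(3)$. Each $2$-cell of $\bbd\bba$ is attached along a loop $\ell_{n,j}\cdot(\ell_{n+1,2j})^-\cdot(\ell_{n+1,2j-1})^-$ representing an element of $\bbf$, so $f$ sends every attaching loop to a null-homotopic loop and therefore extends across each individual cell. The main obstacle is assembling these cell-wise extensions into one continuous map, since the cells shrink and accumulate onto the base arc while their peaks sit at unit height: continuity must be verified at every point of $\bbd$ that is a limit of cells. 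I would handle this by reparametrizing each filling so that all of its ``spread'' is concentrated near the peak, so that near the base of each cone the map stays within a small neighborhood of the value of $f$ on the boundary loop; continuity at points of $\bbd$ then follows from the uniform continuity of $f$ together with the shrinking diameters of the cells (only the low parts of nearby cones lie in a small ambient ball, the peaks being at height one).

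More cleanly, and to avoid the point-set subtleties of the accumulating peaks, I would instead pass to the homotopy-equivalent relative CW complex $Y$ obtained by attaching genuine $2$-cells to $\bbd$ in the weak topology, as noted before the theorem. In $Y$ the cell-wise extension of $f$ is automatically continuous, yielding $g':Y\to X$ with $g'|_\bbd=f$; composing with a homotopy equivalence $\bbd\bba\to Y$ that is the identity on $\bbd$ gives $g:\bbd\bba\to X$ with $g\circ\iota=f$, as required. This continuity/assembly step---choosing the null-homotopies compatibly across infinitely many accumulating cells---is the one genuinely delicate point; the remaining implications are formal consequences of the closure framework of Section~\ref{Testmapsection} and the surjectivity of $\iota_\#$.
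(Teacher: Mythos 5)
Your skeleton is the same as the paper's: for (1) $\Leftrightarrow$ (2) the paper's proof is precisely the combination of Lemma \ref{gencovtopologylemma}, Theorem \ref{uplchar}, Lemma \ref{fistransitive}, and Corollary \ref{denseapplicationcor} that you spell out, and your extra use of Proposition \ref{conjugatesforF} to pass from based maps $(\bbd,d_0)\to(X,x_0)$ to arbitrary maps $\bbd\to X$ correctly fills in a point the paper leaves implicit. Your (2) $\Rightarrow$ (3), via surjectivity of $\iota_{\#}$ and $\bbf\leq\ker\iota_{\#}$, is also exactly what the paper's one-line argument amounts to. For (3) $\Rightarrow$ (2) the paper simply declares the extension fact (``$f:\bbd\to X$ extends over $\bbd\bba$ if and only if $f_{\#}(\bbf)=1$'') to be evident; you attempt to prove it, and that is where the one genuine problem in your write-up lies.

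The problem is your designated ``cleaner'' route. There is no off-the-shelf homotopy equivalence $\bbd\bba\to Y$ (with $Y$ the relative CW-complex in the weak topology) that is the identity on $\bbd$, and the natural candidate---sending each tent of $\bbd\bba$ homeomorphically onto the corresponding closed $2$-cell of $Y$---is \emph{discontinuous}: if $q_k$ are points on pairwise distinct tents with heights tending to $0$ and horizontal coordinates tending to a point $p$ of the base arc, then $q_k\to p$ in $\bbd\bba$, but their images lie one per open cell of $Y$ and hence form a closed discrete subset in the weak topology, so they cannot converge to $p$. The same argument shows that \emph{any} continuous cell-preserving map $\bbd\bba\to Y$ fixing $\bbd$ must already collapse the low portions of all but finitely many accumulating tents into the base arc---that is, it must perform exactly the collar-collapsing reparametrization of your first argument. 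So the CW detour does not avoid the point-set subtlety; it hides it inside an unproved claim (the paper's remark only asserts \emph{some} homotopy equivalence $\bbd\bba\simeq Y$, not one fixing $\bbd$). Two correct ways to finish: (i) your first argument is sound and should be promoted to the official one---choose each filling so that all of its spread occurs above height $1/2$, so the low half of each tent maps into the $f$-image of its boundary circle; continuity at the base arc then follows from uniform continuity of $f$ together with the shrinking of the boundary circles, the finitely many large tents meeting a given point being handled by continuity of their individual fillings; or (ii) keep $Y$ but drop the requirement that the comparison map fix $\bbd$: since triviality of an induced homomorphism is preserved under composition with the isomorphism induced by \emph{any} homotopy equivalence, statement (3) implies that every map $Y\to X$ induces the trivial homomorphism on $\pi_1$; applying this to the cell-by-cell extension $g':Y\to X$ of $f$ (which is automatically continuous in the weak topology) gives $f_{\#}=g'_{\#}\circ(\iota_Y)_{\#}=1$, using only the equivalence $\bbd\bba\simeq Y$ that the paper asserts.
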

\begin{proof}
(1) $\Leftrightarrow$ (2) is a direct combination of Theorem \ref{uplchar}, Lemma \ref{fistransitive}, and Corollary \ref{denseapplicationcor}. (2) $\Leftrightarrow$ (3) is evident from the fact that a map $f:\bbd\to X$ extends to a map $g:\bbd\bba\to X$ if and only if $f_{\#}(\bbf)=1$.
\end{proof}
\begin{definition}
Let $Cov(X)$ denote the set of all open covers of $X$, inversely directed by refinement. For $\scru\in Cov(X)$, the \textit{Spanier group of $X$ relative to $\scru$} is the subgroup $\pi(\scru,x_0)=\lb \pi(x,U)\mid x\in U\in \scru\rb$. The \textit{Spanier group }of $X$ is the intersection $\pi^{s}(X,x_0)=\bigcap_{\scru\in Cov(X)}\pi(\scru,x_0)$.
\end{definition}
If $X$ is locally path connected, then $X$ is semilocally simply connected if and only if there is an open cover $\scru$ of $X$ such that $\pi(\scru,x_0)=1$. It is shown in \cite{FZ07} that if $\pi^{s}(X,x_0)=1$, then $X$ admits a generalized universal covering. Indeed, for every path-connected space $X$ and every open cover $\scru$ of $X$, $\pi(\scru,x_0)$ is an $(S,d_{\infty})$-closed subgroup of $\pi_1(X,x_0)$ (the straightforward proof is similar to that of Proposition \ref{homomorphicallyhausdorffprop} below). Therefore, the intersection $\pi^s(X,x_0)$ is $(S,d_{\infty})$-closed. We also note that for locally path-connected $X$, $\pi^s(X,x_0)$ equals the kernel of the canonical homomorphism $\pi(X,x_0)\rightarrow \check{\pi}_1(X,x_0)$ \cite{BF13}.
\begin{definition}\label{slenderdef}
Let $G$ be a group.
\begin{enumerate}
\item We call $G$ \textit{noncommutatively slender} (or \textit{n-slender} for short) if for every homomorphism $h:\pioneh\to G$, there is an $N$ such that $h([\alpha])=1$ for all $\alpha\in \Omega(\bbh_{\geq N},b_0)$; equivalently, $G$ is n-slender if and only if for every Peano continuum $X$ and homomorphism $h:\pi_1(X,x_0)\to G$, there exists an open cover $\scru$ of $X$ such that $h(\pi(\scru,x_0))=1$ \cite{EdaAlgTopofPeano}.
\item We call $G$ {\it residually n-slender} if for every $g\in G\backslash \{1\}$, there is an n-slender group $K$ and a homomorphism $k:G\to K$ such that $k(g)\neq 1$.
\item We call $G$ {\it homomorphically Hausdorff relative to a space $X$} if for every homomorphism $h:\pionex\to G$, we have $\bigcap_{\scru\in Cov(X)}h(\pi(\scru,x_0))=1$.
\end{enumerate}
\end{definition}
Observe that if $X$ is path connected, locally path connected and first countable, and if $\pionex$ is n-slender, then $X$ is semilocally simply connected so that $X$ admits a classical universal covering.

Every free group is n-slender \cite{Edafreesigmaproducts} and certainly every n-slender group is residually n-slender. If $G$ is residually n-slender, then $G$ is homomorphically Hausdorff relative to every Peano continuum \cite{EdaFischer}. Therefore, if $X$ is a Peano continuum and $\pionex$ is residually n-slender, then we have $\pi^{s}(X,x_0)=\bigcap_{\scru\in Cov(X)}id_{\#}(\pi(\scru,x_0))=1$, which implies that $X$ admits a generalized universal covering. Using the test space $\bbd$, we extend this result to all metric spaces.
\begin{proposition}\label{homomorphicallyhausdorffprop}
If $X$ is metrizable and $\pionex$ is homomorphically Hausdorff relative to $\bbd$, then $X$ is homotopically path-Hausdorff.
\end{proposition}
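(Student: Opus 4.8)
The plan is to reduce the assertion to a closure property and then realize $\dinf$ inside Spanier groups of $\bbd$. First I would invoke Theorem~\ref{hompathhausdchar}: since $X$ is metrizable, the path space $P(X)$ is first countable, so $X$ is homotopically path Hausdorff (relative to $H=1$) if and only if the trivial subgroup $1\leq\pionex$ is $(\bbs,\dinf)$-closed. Thus it suffices to show that for every map $f:(\bbd,d_0)\to(X,x_0)$ with $f_{\#}(\bbs)=1$ we have $f_{\#}(\dinf)=1$. Here the hypothesis enters through the induced homomorphism $h=f_{\#}:\pioned\to\pionex$: homomorphic Hausdorffness of $\pionex$ relative to $\bbd$ gives
\[\bigcap_{\scru\in Cov(\bbd)}f_{\#}(\pi(\scru,d_0))=1,\]
so it is enough to prove that $f_{\#}(\dinf)\in f_{\#}(\pi(\scru,d_0))$ for every open cover $\scru$ of $\bbd$.

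Next I would record a purely algebraic identity inside $\pioned$ that moves $\dinf$ down to an arbitrary level modulo $\bbs$. Using $s_n=[\lambda_n\cdot\lambda_{n+1}^{-}]$, the telescoping product gives $s_1 s_2\cdots s_{m-1}=[\lambda_1\cdot\lambda_m^{-}]$, whence
\[\dinf=[\lambda_1\cdot\lambda_\infty^{-}]=(s_1 s_2\cdots s_{m-1})\cdot[\lambda_m\cdot\lambda_\infty^{-}]\]
for every $m\geq 1$. Since $f_{\#}(\bbs)=1$, applying $f_{\#}$ yields $f_{\#}(\dinf)=f_{\#}([\lambda_m\cdot\lambda_\infty^{-}])$ for all $m$. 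Consequently the problem is reduced to the following statement, which lives entirely in $\bbd$ and requires nothing about $f$: for every $\scru\in Cov(\bbd)$ there is an $m$ with $[\lambda_m\cdot\lambda_\infty^{-}]\in\pi(\scru,d_0)$.

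The heart of the argument is a fan (telescoping) decomposition of $[\lambda_m\cdot\lambda_\infty^{-}]$ into small conjugated loops. Writing $\mu_j$ for the base-arc segment from $x_j=\left(\tfrac{j-1}{2^{m-1}},0\right)$ to $\left(\tfrac{j}{2^{m-1}},0\right)$ and $p_{j-1}=\mu_1\cdot\mu_2\cdots\mu_{j-1}$ for the base-arc path from $d_0$ to $x_j$, one checks by induction that
\[[\lambda_m\cdot\lambda_\infty^{-}]=\prod_{j=1}^{2^{m-1}}\left[\,p_{j-1}\cdot(\ell_{m,j}\cdot\mu_j^{-})\cdot p_{j-1}^{-}\,\right].\]
Each factor is based at $d_0$ via the whisker $p_{j-1}$, with a loop $\ell_{m,j}\cdot\mu_j^{-}$ whose image is the lens $\bbd(m,j)\cup\mu_j([0,1])$, contained in a rectangle of width $\tfrac{1}{2^{m-1}}$ and height $\tfrac{1}{2^{m}}$ and hence of diameter at most $\tfrac{1}{2^{m-2}}\to 0$. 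Since $\bbd$ is compact metric, the cover $\scru$ has a Lebesgue number $\eta>0$; choosing $m$ with $\tfrac{1}{2^{m-2}}<\eta$ forces each lens into some $U_j\in\scru$ containing $x_j$. Then the $j$-th factor lies in $\pi(p_{j-1},U_j)\leq\pi(x_j,U_j)\leq\pi(\scru,d_0)$, so the whole product lies in $\pi(\scru,d_0)$. Combining with the previous paragraph gives $f_{\#}(\dinf)\in f_{\#}(\pi(\scru,d_0))$ for every $\scru$, and intersecting over all covers yields $f_{\#}(\dinf)=1$, as required.

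I expect the main obstacle to be the careful verification of the fan-decomposition identity (the telescoping cancellation of the base-arc whiskers $p_{j-1}$), together with keeping track of the endpoints of the lenses so that the small loops genuinely have basepoint $x_j\in U_j$; the remaining ingredients (the Lebesgue-number estimate and the first countability of $P(X)$) are routine. I would emphasize that, because the homomorphic-Hausdorff hypothesis is taken relative to $\bbd$, the relevant Spanier groups $\pi(\scru,d_0)$ sit in $\pioned$ and the geometric claim is intrinsic to $\bbd$; no uniform continuity of $f$ is needed, which keeps the estimate clean.
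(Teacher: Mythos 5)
Your proof is correct and follows essentially the same route as the paper's: reduce via Theorem~\ref{hompathhausdchar} (metrizability giving first countability of $P(X)$) to showing the trivial subgroup is $(\bbs,\dinf)$-closed, use the telescoping relation in $\bbs$ to get $f_{\#}(\dinf)=f_{\#}([\lambda_m\cdot\lambda_{\infty}^{-}])$, place $[\lambda_m\cdot\lambda_{\infty}^{-}]$ in $\pi(\scru,d_0)$ for $m$ large, and intersect over all covers using the homomorphically Hausdorff hypothesis. The only difference is that you verify the key membership $[\lambda_m\cdot\lambda_{\infty}^{-}]\in\pi(\scru,d_0)$ in detail (the fan decomposition into conjugated lens loops plus a Lebesgue-number estimate), a step the paper simply asserts without proof.
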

\begin{proof}
Suppose $\pionex$ is homomorphically Hausdorff relative to $\bbd$. By Theorem \ref{mainresultcomplete}, we may check that the trivial subgroup is $(S,\dinf)$-closed. Let $f:(\bbd,d_0)\to (X,x_0)$ be a based map such that $f_{\#}(S)=1$. Fix an open cover $\scru\in Cov(\bbd)$. There is an $n\geq 1$ such that $[\lambda_n\cdot\lambda_{\infty}^{-}]\in \pi(\scru,d_0)$. Since $[\lambda_1\cdot\lambda_{n}^{-}]\in S$, we have $f_{\#}(\dinf)=f_{\#}([\lambda_1\cdot\lambda_{n}^{-}])f_{\#}([\lambda_n\cdot\lambda_{\infty}^{-}])= f_{\#}([\lambda_n\cdot\lambda_{\infty}^{-}])\in f_{\#}(\pi(\scru,d_0))$. Thus $f_{\#}(\dinf)\in f_{\#}(\pi(\scru,d_0))$ for every $\scru\in Cov(\bbd)$. By assumption, $\bigcap_{\scru\in Cov(\bbd)}f_{\#}(\pi(\scru,x_0))=1$; therefore $f_{\#}(\dinf)=1$.
\end{proof}
\begin{corollary}\label{nslendercor}
If $X$ is a metric space and $\pionex$ is residually n-slender, then $X$ admits a generalized universal covering.
\end{corollary}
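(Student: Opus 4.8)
The plan is to reduce the corollary to the already-established Proposition~\ref{homomorphicallyhausdorffprop} by verifying its hypothesis, and then to translate its conclusion into the existence of a generalized universal covering through the closure-lattice relations recorded earlier. The bulk of the analytic work has already been carried out in Proposition~\ref{homomorphicallyhausdorffprop}, so the proof amounts to supplying one external input and assembling results proved above.

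First I would verify that $\pionex$ is homomorphically Hausdorff relative to $\bbd$. By the Eda--Fischer theorem quoted after Definition~\ref{slenderdef}, every residually n-slender group is homomorphically Hausdorff relative to every Peano continuum \cite{EdaFischer}. Since $\bbd$ is a one-dimensional planar Peano continuum, it follows that $\pionex$ is homomorphically Hausdorff relative to $\bbd$. As $X$ is metrizable, Proposition~\ref{homomorphicallyhausdorffprop} now applies and yields that $X$ is homotopically path-Hausdorff.

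Next I would convert homotopical path-Hausdorffness into the covering conclusion. By Theorem~\ref{hompathhausdchar}, $X$ being homotopically path-Hausdorff (relative to the trivial subgroup) is equivalent to the trivial subgroup $1\leq\pionex$ being $(\bbs,\dinf)$-closed. Because $\bbs\leq\bbf$, Remark~\ref{sclosedisdclosed} upgrades this to the statement that $1$ is $(\bbf,\dinf)$-closed. Since $\bbf$ is $(\bbf,\dinf)$-dense in $\pioned$ by Lemma~\ref{fistransitive}, Corollary~\ref{denseapplicationcor} shows that this is precisely condition (2) of Theorem~\ref{mainresultcomplete}, namely that every map $f\colon\bbd\to X$ with $f_{\#}(\bbf)=1$ induces the trivial homomorphism on $\pi_1$. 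Theorem~\ref{mainresultcomplete} then delivers that $X$ admits a generalized universal covering. Alternatively, one may argue that $1$ being $(\bbf,\dinf)$-closed forces $p_1\colon\tX\to X$ to have the unique path lifting property by the converse direction of Theorem~\ref{uplchar}, using that $P(X)$ is first countable, and then invoke Lemma~\ref{gencovtopologylemma} together with the simple-connectivity of $\tX$.

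The only genuinely non-formal ingredients are the cited Eda--Fischer result and the observation that $\bbd$ is a Peano continuum, so I expect no serious obstacle in the argument itself. The one subtlety worth flagging is the passage of the previous paragraph: the conclusion of Proposition~\ref{homomorphicallyhausdorffprop}, though phrased as the weaker-sounding ``homotopically path-Hausdorff,'' in fact corresponds to $(\bbs,\dinf)$-closedness, which is the \emph{stronger} of the two closure conditions in play and therefore implies the $(\bbf,\dinf)$-closedness that governs unique path lifting. Keeping straight which of the two closure conditions is stronger is the main place where a careless reading could go wrong.
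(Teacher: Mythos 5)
Your proposal is correct and follows essentially the same route the paper intends for this corollary (which it leaves as an immediate consequence of the preceding discussion): the Eda--Fischer result supplies homomorphic Hausdorffness of $\pionex$ relative to the Peano continuum $\bbd$, Proposition~\ref{homomorphicallyhausdorffprop} then gives the homotopically path-Hausdorff property, and the chain Theorem~\ref{hompathhausdchar} $\Rightarrow$ Remark~\ref{sclosedisdclosed} $\Rightarrow$ (via Lemma~\ref{fistransitive} and Corollary~\ref{denseapplicationcor}) Theorem~\ref{mainresultcomplete}/Theorem~\ref{uplchar} produces the generalized universal covering. Your closing remark about which closure condition is stronger is also accurate: $(\bbs,\dinf)$-closedness implies $(\bbf,\dinf)$-closedness since $\bbs\leq\bbf$, exactly as recorded in Remark~\ref{sclosedisdclosed}.
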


\begin{definition}\label{oneuvzerodef}
A space $X$ is $1$-$UV_0$ at $x\in X$ if for every neighborhood $U$ of $x$ there is an open set $V$ in $X$ with $x\in V\subseteq U$ and such that for every map $f:D^2\to X$ with $f(S^1)\subseteq V$, there is a map $g:D^2\to U$ with $f|_{S^1}=g|_{S^1}$. We say that $X$ is $1$-$UV_0$ if $X$ is $1$-$UV_0$ at every point $x\in X$.
\end{definition}
\begin{remark}
Unlike many of the properties considered in this paper, the $1$-$UV_0$ property is not an invariant of homotopy type. Indeed, the cone $C\bbh=\bbh\times[0,1]/\bbh\times\{1\}$ over the Hawaiian earring is homotopy equivalent to the one-point space but is not $1$-$UV_0$.
\end{remark}

The authors of \cite{CMRZZ08} show that $X$ is homotopically Hausdorff at $x\in X$ whenever $X$ is $1$-$UV_0$ at $x$. We improve upon this result in Theorem \ref{uvzeroimpliesupl} below. Note the resemblance of the following characterization to Corollary \ref{homhauschar}.

\begin{proposition}\label{uv0char}
Identify $\bbh$ with a subspace of the unit disk $D^2$ so that the outermost circle $C_1\subseteq\bbh$ is identified with $S^1$. For any space $X$, (1) $\Rightarrow$ (2) $\Leftrightarrow$ (3) holds for the following properties. If $X$ is first countable and locally path connected, then all three are equivalent.
\begin{enumerate}
\item $X$ is $1$-$UV_0$ at $x\in X$,
\item every map $f:(\bbh,b_0)\to (X,x)$ such that $f_{\#}(\finfty)=1$ extends to a map $g:D^2\to X$,
\item for every map $f:(\bbh\bba,b_0)\to (X,x)$, $f|_{\bbh}:\bbh\to X$ extends to a map $g:D^2\to X$.
\end{enumerate}
\end{proposition}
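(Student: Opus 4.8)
The plan is to prove the implications in the order $(2)\Leftrightarrow(3)$, then $(1)\Rightarrow(2)$, and finally $(2)\Rightarrow(1)$ under the extra hypotheses, mirroring Corollary \ref{homhauschar} and Theorem \ref{mainresultcomplete} with ``induces the trivial homomorphism'' replaced by ``extends over $D^2$.'' The geometric backbone is the decomposition of the disk $D^2$ bounded by $C_1=S^1$ as $\bbh\cup\bigcup_{n\geq 1}A_n$, where $A_n$ is the open lune bounded by $C_n$ and $C_{n+1}$; each closed lune $\overline{A_n}$ is a disk whose boundary traverses $\ell_n\cdot\ell_{n+1}^{-}$, so filling $\overline{A_n}$ amounts to bounding $f\circ(\ell_n\cdot\ell_{n+1}^{-})$. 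I would record at the outset the metric fact that the subdisk $D_n\subseteq D^2$ bounded by $C_n$ contains $b_0$ and has diameter $2/n$, hence $D_n\subseteq B(b_0,2/n)$; this is what converts continuity at $b_0$ into control of the images of filling disks.

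For $(3)\Rightarrow(2)$: given $f:(\bbh,b_0)\to(X,x)$ with $f_{\#}(\finfty)=1$, each $f\circ(\ell_n\cdot\ell_{n+1}^{-})$ is null-homotopic, so $f$ extends over every $2$-cell of $\bbh\bba$ and, by the weak topology, yields $\bar f:\bbh\bba\to X$ with $\bar f|_{\bbh}=f$; then $(3)$ applied to $\bar f$ extends $f$ over $D^2$. For $(2)\Rightarrow(3)$: given $f:\bbh\bba\to X$, the classes $f_{\#}[\ell_n]$ all coincide with some $c$ because the cells identify them, and since $\ell_n\to b_0$ the loops $f\circ\ell_n$ form a null sequence; define $h:\bbh\to X$ by $h\circ\ell_n=(f\circ\ell_n)\cdot(f\circ\ell_{n+1})^{-}$, so that $h_{\#}(\finfty)=1$. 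By $(2)$, $h$ extends over $D^2$ and hence kills every loop of $\bbh$; in particular $1=h_{\#}[\ell_\infty]=[h\circ\ell_\infty]$, and a telescoping (transfinite cancellation) computation using $f\circ\ell_n\to x$ gives $[h\circ\ell_\infty]=[f\circ\ell_1]=c$. Thus $c=1$, so $(f|_{\bbh})_{\#}(\finfty)=1$, and a second application of $(2)$ extends $f|_{\bbh}$ over $D^2$. I expect the telescoping identity $[h\circ\ell_\infty]=[f\circ\ell_1]$ to be the one spot needing care here.

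For $(1)\Rightarrow(2)$ I would construct $g:D^2\to X$ by setting $g|_{\bbh}=f$ and filling each lune. Iterating the $1$-$UV_0$ property at $x$ produces a decreasing sequence of neighborhoods $U_0\supseteq U_1\supseteq\cdots$ in which $U_{k+1}$ serves as the ``$V$'' for $U_k$, so null-homotopic loops in $U_{k+1}$ bound disks in $U_k$; continuity of $f$ at $b_0$ then places $f(C_m)\subseteq U_k$ for $m\geq M_k$. For a lune $\overline{A_m}$ with $M_k\leq m<M_{k+1}$, the boundary loop $f\circ(\ell_m\cdot\ell_{m+1}^{-})$ lies in $U_{k+1}$ and is null-homotopic (as $f_{\#}[\ell_m]=f_{\#}[\ell_{m+1}]=1$), so $1$-$UV_0$ supplies a filling $g|_{\overline{A_m}}:\overline{A_m}\to U_k$, while the finitely many outer lunes are filled by arbitrary null-homotopies. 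The map $g$ is automatically continuous on $\bbh$ and on each closed lune, and these pieces agree along the shared circles. The hard part will be continuity at $b_0$: a basic neighborhood $B(b_0,\varepsilon)\cap D^2$ swallows every lune of diameter less than $\varepsilon$ entirely, so continuity at $b_0$ reduces to showing that for each neighborhood $W$ of $x$ one has $g(\overline{A_m})\subseteq W$ for all large $m$ — that the filling images shrink to $x$. This is precisely what the recursive choice of the $U_k$ (with $k\to\infty$ along the lunes) is designed to guarantee, with $f\circ\ell_m\to x$ handling the cusps of the finitely many exceptional lunes; I view arranging this uniform shrinking as the main obstacle of the argument.

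Finally, for $(2)\Rightarrow(1)$ with $X$ first countable and locally path connected, I would argue by contradiction. If $X$ fails to be $1$-$UV_0$ at $x$, fix a witnessing neighborhood $U$ and a nested path-connected basis $V_1\supseteq V_2\supseteq\cdots$ at $x$ inside $U$, and for each $k$ choose a null-homotopic loop $\gamma_k\subseteq V_k$, based at $x$ after conjugating by a small path in $V_k$ (using local path connectedness), that bounds no disk in $U$. Since $\gamma_k\to b_0$, these assemble into a map $F:\bbh\to X$ with $F\circ\ell_k=\gamma_k$ and $F_{\#}(\finfty)=1$; by $(2)$, $F$ extends to $g:D^2\to X$. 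Continuity of $g$ at $b_0$ yields $\varepsilon>0$ with $g(B(b_0,\varepsilon)\cap D^2)\subseteq U$, and for $N>2/\varepsilon$ the inclusion $D_N\subseteq B(b_0,\varepsilon)$ makes $g|_{D_N}:D_N\to U$ a disk in $U$ with boundary $g|_{C_N}=\gamma_N$ — contradicting that $\gamma_N$ bounds no disk in $U$. This closes the loop and, together with $(2)\Leftrightarrow(3)$, gives the full equivalence.
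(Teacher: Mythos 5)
Your handling of $(3)\Rightarrow(2)$ and of $(2)\Rightarrow(1)$ coincides with the paper's proof. Your $(2)\Rightarrow(3)$ is correct but takes a genuinely different route: the paper extends the same auxiliary map (its $g$, your $h$, with $h\circ\ell_n=f\circ(\ell_n\cdot\ell_{n+1}^{-})$) over $D^2$ and then \emph{harvests} from that extension a sequence of null-homotopies of the loops $f\circ(\ell_n\cdot\ell_{n+1}^{-})$ --- restrictions to the shrinking subdisks bounded by the circles $C_n$ --- whose images automatically shrink by continuity of the extension at $b_0$; these are used to fill the lunes and extend $f|_{\bbh}$ directly. You instead argue algebraically: the extension kills $[\ell_\infty]$, the telescoping identity $[h\circ\ell_\infty]=[f\circ\ell_1]$ (justified by the same cancellation argument the paper uses in Theorem \ref{homhausrelchar}) forces $f_{\#}[\ell_n]=1$ for all $n$, and a second application of $(2)$ to $f|_{\bbh}$ finishes. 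Both arguments are sound; yours trades the paper's geometric gluing for one extra invocation of hypothesis $(2)$, while the paper's version has the advantage of producing the extension explicitly.

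The genuine gap is in $(1)\Rightarrow(2)$. Continuity of your $g$ at $b_0$ requires that for \emph{every} neighborhood $W$ of $x$ the filled images $g(\overline{A_m})$ lie in $W$ for all large $m$, but your construction only delivers $g(\overline{A_m})\subseteq U_k$ for $M_k\leq m<M_{k+1}$, and nothing in the $1$-$UV_0$ property forces the recursively chosen $U_k$ to shrink toward $x$: the definition asserts only the existence of \emph{some} open $V$ with $x\in V\subseteq U$, so blind iteration may well return $U_k=U_0$ for every $k$ (e.g.\ whenever $U_0$ itself witnesses the property for $U_0$), in which case the fillings are merely all contained in $U_0$ and $g$ need not be continuous at $b_0$. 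So the step ``this is precisely what the recursive choice of the $U_k$ is designed to guarantee'' fails as stated, even for metric $X$. The repair is to interleave the recursion with a countable neighborhood basis at $x$: since any smaller $V'\subseteq V$ also witnesses $1$-$UV_0$ for $U$, one may demand $U_{k+1}\subseteq V_k\cap B_k$ where $B_k$ runs through a basis. But this repair needs $x$ to have a countable neighborhood basis, whereas the proposition asserts $(1)\Rightarrow(2)$ for an \emph{arbitrary} space; your argument therefore proves the implication only under first countability at $x$. The paper sidesteps writing this construction by citing the proof of Lemma 4.1 of \cite{CMRZZ08} (which is this same lune-filling argument, carried out there in a metric setting where a shrinking basis is available); you should either do the same or state explicitly the countability hypothesis your construction actually uses.
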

\begin{proof}
The proof of (1) $\Rightarrow$ (3) is identical to the proof of Lemma 4.1 in \cite{CMRZZ08}. (3) $\Rightarrow$ (2) is obvious since every map $f:\bbh\to X$ satisfying $f_{\#}(\finfty)=1$ extends to a map on the harmonic archipelago. (2) $\Rightarrow$ (3) Given a map $f:(\bbh\bba,b_0)\to (X,x)$, define $g:\bbh\to X$ by $g\circ \ell_n=f\circ (\ell_{n}\cdot \ell_{n+1}^{-})$. Then $g_{\#}(F)=1$ and so by assumption, $g$ extends to a map on $D^2$. In particular, we have null homotopies $h_n:D^2\to X$ such that $h_n|_{S^1}\equiv f\circ (\ell_{n}\cdot \ell_{n+1}^{-})$ and every neighborhood of $x$ contains all but finitely many of the images $h_n(D^2)$. We can now define a continuous extension $f':D^2\to X$ of $f|_{\bbh}$ by filling in the component of $D^2\backslash \bbh$ between $C_n$ and $C_{n+1}$ using $h_n$.

Finally, to prove (2) $\Rightarrow$ (1) we suppose $X$ is first countable and locally path connected. Suppose $X$ is not $1$-$UV_0$ at $x.$ Then there is an open neighborhood $U$ of $x$, a countable basis of path-connected neighborhoods $\cdots\subseteq U_3\subseteq U_2\subseteq U_1=U$ at $x$, and loops $\gamma_n:S^1\to U_n$ which are inessential in $X$ but which are essential in $U$. Since $U_n$ is path connected, we may assume $\gamma_n$ is based at $x$. Define a map $f:(\bbh,b_0)\to (X,x)$ by $f\circ \ell_n=\gamma_n$ and note that $f_{\#}(\finfty)=1\leq \pi_1(X,x)$. However, if $f$ extended to a map $g:D^2\to X$, there would be an $m$ such that $\gamma_m$ is inessential in $U$. Thus no such $g$ can exist.
\end{proof}
\begin{theorem}\label{uvzeroimpliesupl}
If $X$ is metrizable and $1$-$UV_0$, then $X$ admits a generalized universal covering.
\end{theorem}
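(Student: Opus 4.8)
The plan is to reduce the statement to the equivalent unique-path-lifting criterion for the trivial subgroup via the test space $\bbd$. By Theorem \ref{mainresultcomplete}, $X$ admits a generalized universal covering if and only if every map $f:(\bbd,d_0)\to (X,x_0)$ with $f_{\#}(\bbf)=1$ induces the trivial homomorphism on $\pi_1$; equivalently, since $\bbf$ is $(\bbf,\dinf)$-dense in $\pioned$ (Lemma \ref{fistransitive}), it suffices to show $f_{\#}(\dinf)=1$. So the whole problem becomes: given the $1$-$UV_0$ hypothesis, show that such an $f$ kills $\dinf=[\lambda_1\cdot\lambda_\infty^-]$. First I would fix such an $f$ and set $x=f(d_0)$, recalling that the $1$-$UV_0$ property is a \emph{local} hypothesis that I will apply along the image of the base arc.

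The key idea is to exploit the self-similar structure of $\bbd$: each subspace $\bbd_{n,j}$ bounded by $\bbd(n,j)$ and the interval $[\tfrac{j-1}{2^{n-1}},\tfrac{j}{2^{n-1}}]\subseteq B$ is a scaled copy of $\bbd$, and the condition $f_{\#}(\bbf)=1$ says that every loop $\ell_{n,j}\cdot(\ell_{n+1,2j})^-\cdot(\ell_{n+1,2j-1})^-$ bounds in $X$ (these generate the relations of $\bbd\bba$). This is exactly the combinatorial data needed to fill in $2$-cells. The plan is to use the $1$-$UV_0$ property at each point $x=f(t,0)$ on the base arc to choose, for a given neighborhood $U$ of $x$, a smaller $V$ so that the small null-homotopic loops coming from deep levels of $\bbd$ bound disks mapping into $U$. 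Using compactness of the base arc $B$ together with uniform continuity of $f$, I would find a single level $m$ such that for each $j$ the restriction $f|_{\bbd(m,j)}$, after conjugating by $f\circ\delta_{(j-1)/2^{m-1}}$, lands in a controlled neighborhood whose loop is killed; concretely, I would produce null-homotopies showing $f_{\#}([\delta_s\cdot(\ell_{m,j}\cdot\beta_j^-)\cdot\delta_s^-])=1$ where $\beta_j$ is the corresponding base-arc segment. Since $\dinf=[\lambda_m\cdot\lambda_\infty^-]$ (using that $[\lambda_1\cdot\lambda_m^-]\in\bbs\leq\bbf$ is killed) factors as a product of such conjugates, I would conclude $f_{\#}(\dinf)=1$.

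Rather than build the filling disks by hand, I expect the cleanest route is to invoke Proposition \ref{uv0char}: the $1$-$UV_0$ property at $x$ is equivalent to every map $(\bbh,b_0)\to(X,x)$ killing $\finfty$ extending over $D^2$. The strategy is then to manufacture, for each point along the base arc, a Hawaiian-earring map out of the tower of small loops $f\circ(\ell_{n,j}\cdot(\ell_{n+1,2j})^-\cdot(\ell_{n+1,2j-1})^-)$ (which form a null-sequence of null-homotopic loops by continuity of $f$ and the hypothesis $f_{\#}(\bbf)=1$), obtain a disk filling, and read off that the base-arc behavior is null-homotopic at the appropriate level. In other words, the generalized-covering existence for $X$ would follow by combining the disk-fillings furnished by $1$-$UV_0$ with the dyadic filtration of $\bbd$.

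The main obstacle I anticipate is the \emph{patching} step: the $1$-$UV_0$ property only guarantees fillings for loops already lying in a small set $V$, but the loops arising from $f$ on deep levels of $\bbd$ need to be conjugated back to a common basepoint along the base arc, and one must ensure the resulting fillings can be assembled coherently across all the dyadic intervals at a single finite level $m$. This requires a uniform choice of $V$ over the compact arc $B$ (handled by compactness and uniform continuity), and care that the "wild ends" of the conjugating paths $\delta_s$ cancel appropriately — the same delicate cancellation phenomenon already encountered in Lemma \ref{fistransitive} and Proposition \ref{largeextension}. Once the uniform level $m$ is secured and the conjugated loops are seen to bound in the prescribed neighborhoods, the factorization of $\dinf$ into conjugates delivers the result with no further difficulty.
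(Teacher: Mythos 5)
Your reduction is the same as the paper's: by Theorem \ref{uplchar} (equivalently, Theorem \ref{mainresultcomplete} via Lemma \ref{fistransitive} and Corollary \ref{denseapplicationcor}) everything comes down to showing that $f_{\#}(\bbf)=1$ forces $f_{\#}(\dinf)=1$, and you correctly identify the loops to which the $1$-$UV_0$ hypothesis can legitimately be applied: the cell boundaries $f\circ(\ell_{n,j}\cdot\ell_{n+1,2j}^{-}\cdot\ell_{n+1,2j-1}^{-})$, which really are null-homotopic because their conjugates by the tree paths $\delta_s$ (finite concatenations of standard paths, since $s$ is dyadic) lie in $\bbf$. The gap is at the step that carries all the weight. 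You assert that, once a fine enough level $m$ is chosen, the classes $f_{\#}([\delta_s\cdot(\ell_{m,j}\cdot\beta_j^{-})\cdot\delta_s^{-}])$ (with $\beta_j$ a base-arc segment) die because the relevant loops ``land in a controlled neighborhood'' and ``are seen to bound in the prescribed neighborhoods.'' These loops are not finite concatenations of standard paths; under the self-similarity $T_{m,j}$ each one is a rescaled copy of $\dinf$ itself, so their triviality is exactly the statement being proved. No smallness makes them bound: $1$-$UV_0$ only \emph{re}-fills, inside a prescribed $U$, loops that are \emph{already} null-homotopic in $X$; it does not say that small loops are null-homotopic --- that would be semilocal simple connectivity. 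One-dimensional spaces such as $\bbh$ are $1$-$UV_0$ (as the paper notes), yet carry arbitrarily small essential loops. So, as written, your central step is circular.

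The missing idea is the limit/gluing argument that converts small fillings of cell boundaries into the required null-homotopies: (i) produce fillings $h_{n,j}:D^2\to X$ of the cell boundaries with $s_n=\max_j diam(h_{n,j}(D^2))\to 0$, and (ii) paste this infinite family together into a continuous extension of $f$ over the half-disk $\{(x,y):(x-1/2)^2+y^2\leq 1/4,\ y\geq 0\}$, where the shrinking diameters are precisely what gives continuity at points of the base arc; simple connectivity of the half-disk then kills all of $f_{\#}(\pioned)$, in particular $f_{\#}(\dinf)$. This is how the paper concludes. For (i) the paper argues by contradiction: if some $\epsilon>0$ and cells $(n_k,j_k)$ with $n_k\to\infty$ admitted only fillings of diameter $>\epsilon$, those cells would shrink to a point of the base arc, and conjugating by small arcs one gets a map $f':\bbh\to X$ with $(f')_{\#}(\finfty)=1$ that cannot extend over $D^2$, contradicting Proposition \ref{uv0char} --- essentially your third paragraph's ingredients, deployed in reverse. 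Alternatively, your compactness idea can yield (i) directly: for each $\epsilon$ take $U_t$ of radius $\epsilon/3$ about $f(t,0)$, the $1$-$UV_0$ sets $V_t\subseteq U_t$, and a Lebesgue number for the open cover $\{f^{-1}(V_t)\}_t\cup\{\bbd\setminus B\}$ of $\bbd$; deep cell boundaries then map into some $V_t$ and, being null-homotopic, refill inside $U_t$. That would give a direct, non-contradiction variant of the paper's proof. But without step (ii), which you compress into ``I would produce null-homotopies'' and later ``no further difficulty,'' there is no proof, and the justification you do offer for that step is invalid. (Incidentally, the worry about ``wild ends'' of the $\delta_s$ is a red herring: $s$ is a dyadic rational, so $\delta_s$ is tame, and for the gluing one only needs free fillings, where the conjugating paths are discarded.)
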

\begin{proof}
Fix a metric generating the topology of $X$. By Theorem \ref{uplchar}, it suffices to show that the trivial subgroup of $\pionex$ is $(\bbf,\dinf)$-closed. Suppose $X$ is $1$-$UV_0$ and $f:(\bbd,d_0)\to (X,x)$ is a map such that $f_{\#}(\bbf)=1$. If $\beta_{n,j}:S^1\to \bbd$ is the loop defined as $\ell_{n,j}\cdot\ell_{n+1,2j}^{-}\cdot\ell_{n+1,2j-1}^{-}$, then $f\circ \beta_{n,j}$ is inessential in $X$. Let $E_{n,j}$ be the set of extensions $h:D^2\to X$ of $f\circ\beta_{n,j}$. Suppose there exists $h_{n,j}\in E_{n,j}$ such that $s_n=\max\{diam(h_{n,j}(D^2))\mid j=1,2,\dots,2^{n-1}\}\to 0$. In this case, $f$ extends to a map on $\{(x,y)\in\bbr^2\mid (x-1/2)^2+y^2\leq \frac{1}{4},y\geq 0\}$ showing that $f$ is null-homotopic; consequently $f_{\#}(\dinf)=1$. We show the other case cannot occur. Suppose there exists $\epsilon>0$ and $(n_k,j_k)\in \scrd$ where $n_1<n_2<n_3<\cdots$ such that every extension of $f\circ\beta_{n_k,j_k}$ to $D^2$ has diameter $>\epsilon$. Replacing $(n_k,j_k)$ with a subsequence if necessary, we may assume that $\beta_{n_k,j_k}$ converges uniformly to the constant loop at some point $(t,0)\in B$. Pick arcs $\alpha_{k}$ from $(t,0)$ to $\left(\frac{j_k-1}{2^{n_k-1}},0\right)$ in $B$ and observe that $diam(\alpha_k(\ui))\to 0$. Define a map $f':\bbh\to X$ by $f'\circ\ell_{k}=f\circ(\alpha_{k}\cdot  \beta_{n_k,j_k}\cdot\alpha_{k}^{-})$. It is clear that $(f')_{\#}(\finfty)=1$ yet $f'$ cannot extend to a map on $D^2$ as in Proposition \ref{uv0char}; a contradiction.
\end{proof}
\begin{example}
The Peano continua $A$ and $B$ in \cite{CMRZZ08} (also appearing in \cite{FRVZ11}) are sometimes referred to as the ``sombrero spaces." In \cite{CMRZZ08}, both spaces are shown to either have the $1$-$UV_0$ property or a stronger property. Theorem \ref{uvzeroimpliesupl} implies that both spaces admit a generalized universal covering space. Thus $B$ is an example of a Peano continuum which is not homotopically path Hausdorff \cite[Prop. 3.4]{FRVZ11} but which admits a generalized universal covering space.
\end{example}
\section{Transfinite path products}\label{section7}

The homotopically Hausdorff and transfinite product properties describe local wildness at points. Consequently, the corresponding test space $\bbhp$ has a single wild point. On the other hand, the homotopically path-Hausdorff and unique path-lifting properties describe local wildness of paths, forcing the corresponding test space $\bbd$ to have a continuum of wild points. In this section, we introduce an intermediate property lying between these point-local and path-local properties. This new property is characterized by a closure pair  $(W,w_{\infty})$ such that the wild points of the test space $\bbb$ form a Cantor set. The primary benefit of this intermediate property is that it allows us to break down technical arguments and prove the most challenging partial converses that appear in the results diagram of Section \ref{resultssection}. Specifically, \ref{transfinitepathproductcomparisonprop}, \ref{tpdtheorem}, and \ref{discretewildsettheorem} below contribute to the diagram.  
\begin{definition}\label{tppdef}
A space $X$ has {\it transfinite path products relative to }$H\leq \pionex$ if for every closed set $A\subseteq \ui$ containing $\{0,1\}$ and paths $p,q:(\ui,0)\to (X,x_0)$ such that $p|_A=q|_A$ and $[p|_{[0,b]}\cdot q|_{[a,b]}^{-}\cdot p|_{[0,a]}^{-}]\in H$ for every component $(a,b)$ of $\ui\backslash A$, we have $[p\cdot q^{-}]\in H$. A space $X$ has {\it transfinite path products} if $X$ has transfinite path products relative to the trivial subgroup $H=1$.
\end{definition}

\begin{remark}\label{nowheredensermk}
Suppose that $A\subseteq \ui$ is closed and paths $p,q$ satisfy the hypotheses of Definition \ref{tppdef}. If $B$ is the boundary of $A$ in $\ui$, then $B$ is nowhere dense and closed in $\ui$, $p|_{B}=q|_{B}$, and if $(c,d)$ is any component of $\ui\backslash B$, then we still have $[p|_{[0,c]}\cdot q|_{[c,d]}^{-}\cdot p|_{[0,c]}^{-}]\in H$. Therefore, to verify that a space $X$ has transfinite path products relative to $H$, it suffices to check the statement of the definition for nowhere dense, closed subsets $A\subseteq \ui$.
\end{remark}

Let $\mcc\subseteq \ui$ be the standard middle third cantor set and $\mathcal{I}$ be the set of components of $\ui\backslash \mcc$ with the natural ordering inherited from $\ui$ (which is order isomorphic to $\bbq$). For each interval $I=\left(\frac{j-1}{3^{n-1}},\frac{j}{3^{n-1}}\right)\in\mathcal{I}$, let $\bbb_I$ be the upper-half of the circle of radius $\frac{1}{2(3^{n-1})}$ centered at $\left(\frac{2j-1}{2(3^{n-1})},0\right)$. Let $\bbb$ be the union of these semicircles and the base arc $B=[0,1]\times\{0\}$ (see Figure \ref{spacewfig}).

\begin{figure}[H]
\centering \includegraphics[height=0.9in]{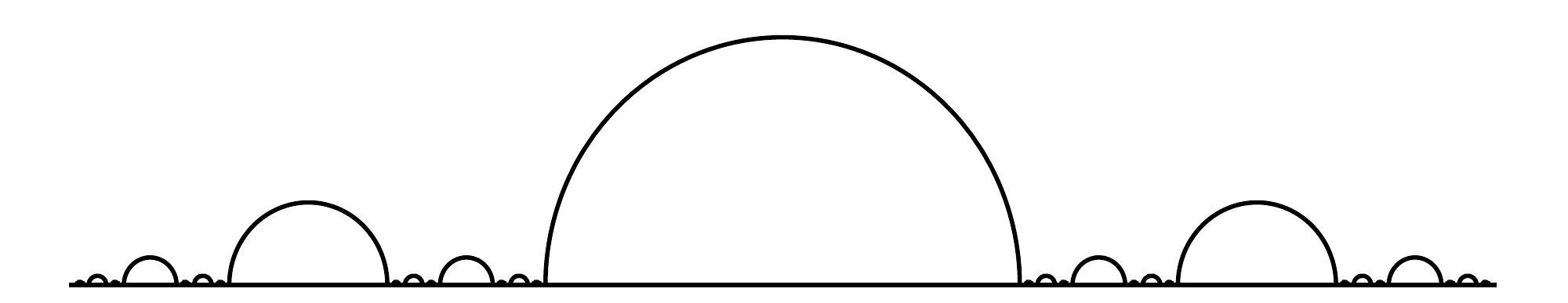}
\caption{\label{spacewfig}The space $\bbb$}
\end{figure}
Let $\linf:\ui\to\bbb$, $\linf(t)=(t,0)$ be the arc along the base arc and $\uinf:\ui\to\bbb$ be the arc along the upper portion of $\bbb$, that is $\uinf(t)=(t,0)$ for $t\in \mcc$ and if $I=(a,b)\in\mathcal{I}$, then $\uinf|_{[a,b]}$ is the arc along $\bbb_{I}$ from $(a,0)$ to $(b,0)$.

\begin{definition}\label{defW}
Let $\bbk\leq \pi_1(\bbb,d_0)$ be the subgroup generated by the set $\{[\uinf|_{[0,b]}\cdot \linf|_{[a,b]}^{-}\cdot \uinf|_{[0,a]}^{-}]\mid (a,b)\in \mathcal{I}\}$ and let $w_{\infty}=[\uinf\cdot \linf^{-}]$.
\end{definition}

We may identify $\bbb^+$ as a subspace of $\bbb$ and define a retraction $r:\bbb\to \bbb^+$ such that $r_{\#}(\bbk)\leq \bbk^+$ and $r_{\#}(w_{\infty})=w_{\infty}^{+}$. Thus the proof of Proposition \ref{conjugatesforS} may be modified for $\bbb$.
\begin{proposition}\label{conjugatesforK}
$(\bbk,w_{\infty})$ is a normal closure pair for $(\bbb,d_0)$.
\end{proposition}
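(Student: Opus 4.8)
The plan is to mimic the proof of Proposition~\ref{conjugatesforS} line for line, reducing normality of $(\bbk,w_{\infty})$ to the well-pointed case. Since $\bbb$ is not well-pointed at $d_0$, Lemma~\ref{wellpointedlemma} does not apply directly. Instead I would attach a whisker as in Remark~\ref{addawhisker}, forming the well-pointed space $\bbb^+=(\bbb,d_0)\vee(\ui,1)$ with inclusions $i:\bbb\to\bbb^+$ and $\iota:\ui\to\bbb^+$ and accompanying closure pair $(\bbk^+,w_{\infty}^+)=([\iota]i_{\#}(\bbk)[\iota^{-}],[\iota]i_{\#}(w_{\infty})[\iota^{-}])$, which is normal by Lemma~\ref{wellpointedlemma}. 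The whole proof then hinges on producing a based retraction $r:(\bbb,d_0)\to(\bbb^+,d_0)$ that realizes $\bbb^+$ as a subspace of $\bbb$ and satisfies $r_{\#}(\bbk)\leq\bbk^+$ and $r_{\#}(w_{\infty})=w_{\infty}^+$.

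To build $r$ I would exploit the self-similarity of the middle-third Cantor set: because $\mcc\cap[2/3,1]=\tfrac{2}{3}+\tfrac{1}{3}\mcc$, the part $\bbb'$ of $\bbb$ lying over $[2/3,1]$ is a scaled homeomorphic copy of $\bbb$ with basepoint $(2/3,0)$, and the base-arc segment $\linf|_{[0,2/3]}$ meets $\bbb'$ only at $(2/3,0)$. I would identify $\bbb^+$ with the subspace $\linf([0,2/3])\cup\bbb'$, so that $\linf|_{[0,2/3]}$ plays the role of the whisker (with free end $d_0$) and $\bbb'$ the role of the attached copy of $\bbb$. Define $r$ to be the identity on $\bbb'$ and the vertical projection $(x,y)\mapsto(x,0)$ on everything over $[0,2/3]$, collapsing the large first-gap semicircle $\bbb_{(1/3,2/3)}$ and all the smaller ones (which accumulate at the Cantor points) onto the base arc. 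A pleasant feature, absent in the dyadic model of Proposition~\ref{conjugatesforS}, is that the large semicircle sits entirely over $[1/3,2/3]\subseteq[0,2/3]$, disjoint in base interval from $\bbb'$, so no auxiliary target arc is needed; continuity of $r$ at $x=2/3$ and at the accumulation points follows at once from the projection formula.

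With $r$ in hand I would check the two induced-map identities. Since $r$ collapses the left half onto the contractible arc $\linf([0,2/3])$, one has $r\circ\uinf|_{[0,2/3]}\equiv\linf|_{[0,2/3]}$ and $r\circ\linf|_{[0,2/3]}=\linf|_{[0,2/3]}$, while $r$ is the identity over $[2/3,1]$; a direct computation then gives $r_{\#}(w_{\infty})=[\linf|_{[0,2/3]}]\,i_{\#}(w_{\infty})\,[\linf|_{[0,2/3]}]^{-}=w_{\infty}^+$. For the generators of $\bbk$, every gap of $\mcc$ is contained either in $[0,2/3]$ (the gaps inside $[0,1/3]$ together with the first gap) or in $[2/3,1]$. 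A generator indexed by a gap $(a,b)\subseteq[0,2/3]$ is a loop supported over $[0,2/3]$, hence $r_{\#}$ sends it into $\pi_1$ of the contractible whisker, i.e.\ to $1\in\bbk^+$; a generator indexed by a gap $(a,b)\subseteq[2/3,1]$ is sent by $r_{\#}$ to the whisker-conjugate of the corresponding generator of $i_{\#}(\bbk)$, hence into $\bbk^+$. Thus $r_{\#}(\bbk)\leq\bbk^+$, and Remark~\ref{comparisonpropremark} delivers $w_{\infty}^+\in cl_{\bbk,w_{\infty}}(\bbk^+)$.

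The final step repeats the argument of Proposition~\ref{conjugatesforS}. Given $H\leq\pionex$ that is $(\bbk,w_{\infty})$-closed, a path $\alpha\in P(X,x_0)$, and a map $f:(\bbb,d_0)\to(X,\alpha(1))$ with $f_{\#}(\bbk)\leq H^{\alpha}$, the pair $(f,\alpha)$ induces a map $k:(\bbb^+,d_0)\to(X,x_0)$ with $k\circ\iota=\alpha$ and $k\circ i=f$, so that $k_{\#}(\bbk^+)=[\alpha]f_{\#}(\bbk)[\alpha^{-}]\leq H$. Then by Proposition~\ref{continuity},
\[[\alpha]f_{\#}(w_{\infty})[\alpha^{-}]=k_{\#}(w_{\infty}^+)\in k_{\#}(cl_{\bbk,w_{\infty}}(\bbk^+))\leq cl_{\bbk,w_{\infty}}(k_{\#}(\bbk^+))\leq cl_{\bbk,w_{\infty}}(H)=H,\]
whence $f_{\#}(w_{\infty})\in H^{\alpha}$ and $H^{\alpha}$ is $(\bbk,w_{\infty})$-closed; the reverse implication in Definition~\ref{normaltestpairdef} is trivial by taking $\alpha=c_{x_0}$. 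I expect the only real work to be in the middle steps—choosing the self-similar identification so that $r_{\#}(\bbk)\leq\bbk^+$ and $r_{\#}(w_{\infty})=w_{\infty}^+$ hold simultaneously, and confirming continuity of the collapse at the Cantor points where $\bbb$ is not locally simply connected—after which everything is formal.
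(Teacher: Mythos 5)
Your proof is correct and takes exactly the approach the paper does: the paper's proof of this proposition is a one-line reduction stating that one may identify $\bbb^+$ as a subspace of $\bbb$ and define a retraction $r:\bbb\to \bbb^+$ with $r_{\#}(\bbk)\leq \bbk^+$ and $r_{\#}(w_{\infty})=w_{\infty}^{+}$, after which the argument of Proposition~\ref{conjugatesforS} carries over verbatim. Your explicit construction (the self-similar copy of $\bbb$ over $[2/3,1]$ as the attached copy, $\linf|_{[0,2/3]}$ as the whisker, the vertical-projection retraction, and the case analysis on generators of $\bbk$) correctly supplies the details that the paper leaves to the reader.
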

\begin{proposition}\label{transfinitepathprodchar}
$X$ has transfinite path products relative to $H\leq \pionex$ if and only if $H$ is $(\bbk,w_{\infty})$-closed.
\end{proposition}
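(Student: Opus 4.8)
The plan is to prove both implications by translating between based maps $f\colon(\bbb,d_0)\to(X,x_0)$ and the path data $(A,p,q)$ appearing in Definition~\ref{tppdef}, using the dictionary $p=f\circ\uinf$ and $q=f\circ\linf$. The generators of $\bbk$ are designed so that $f_{\#}$ of the generator indexed by a component $(a,b)$ of $\ui\setminus\mcc$ equals $[p|_{[0,b]}\cdot q|_{[a,b]}^{-}\cdot p|_{[0,a]}^{-}]$, while $f_{\#}(w_{\infty})=[p\cdot q^{-}]$; this correspondence is the heart of both directions.

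For the forward implication, I would assume $X$ has transfinite path products relative to $H$ and take any $f\colon(\bbb,d_0)\to(X,x_0)$ with $f_{\#}(\bbk)\le H$. Setting $A=\mcc$, $p=f\circ\uinf$, and $q=f\circ\linf$, I note that $p|_{\mcc}=q|_{\mcc}$ because $\uinf$ and $\linf$ agree on $\mcc$. For each component $(a,b)\in\mathcal{I}$ the element $[p|_{[0,b]}\cdot q|_{[a,b]}^{-}\cdot p|_{[0,a]}^{-}]$ is the image under $f_{\#}$ of the corresponding generator of $\bbk$, hence lies in $H$. The hypotheses of Definition~\ref{tppdef} are thus met for this $A$, $p$, $q$, so $[p\cdot q^{-}]=f_{\#}(w_{\infty})\in H$, proving that $H$ is $(\bbk,w_{\infty})$-closed.

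For the converse, I would assume $H$ is $(\bbk,w_{\infty})$-closed and let $A$, $p$, $q$ satisfy the hypotheses of Definition~\ref{tppdef}; by Remark~\ref{nowheredensermk} I may assume $A$ is nowhere dense. Since any countable linear order embeds into $\mathcal{I}$ (which is order isomorphic to $\bbq$), I fix an order-preserving injection from the components of $\ui\setminus A$ into $\mathcal{I}$ and use it to build a monotone continuous surjection $g\colon\ui\to\ui$ with $g(0)=0$, $g(1)=1$, and $g(\mcc)=A$, mapping each component of $\ui\setminus\mcc$ in the range of the injection homeomorphically onto the matching component of $\ui\setminus A$ and collapsing the remaining components to points of $A$. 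I then define $f\colon\bbb\to X$ by $f\circ\linf=q\circ g$ on the base arc and, on the semicircle over a component $I\in\mathcal{I}$, by tracing $p\circ g|_{I}$; the two definitions agree on $\mcc$ since there $g$ lands in $A$ and $p|_A=q|_A$.

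With $f$ so defined, each generator of $\bbk$ indexed by $I$ maps under $f_{\#}$ either to $[p|_{[0,b]}\cdot q|_{[a,b]}^{-}\cdot p|_{[0,a]}^{-}]\in H$ (when $g$ sends $I$ onto the component $(a,b)$ of $\ui\setminus A$) or to the trivial element (when $g$ is constant on $I$), so $f_{\#}(\bbk)\le H$. Because $g$ is a monotone surjection fixing the endpoints, $p\circ g\simeq p$ and $q\circ g\simeq q$ rel endpoints, whence $f_{\#}(w_{\infty})=[(p\circ g)\cdot(q\circ g)^{-}]=[p\cdot q^{-}]$; $(\bbk,w_{\infty})$-closedness then gives $[p\cdot q^{-}]\in H$, establishing transfinite path products relative to $H$. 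The main obstacle is the construction of $g$ and the verification that the resulting $f$ is continuous across $\mcc$: as the semicircles shrink toward a point $(t_0,0)$ with $t_0\in\mcc$, the nearby components $(a,b)$ of $\ui\setminus A$ cluster at $g(t_0)\in A$, so uniform continuity of $p$ forces $\operatorname{diam} p([a,b])\to0$, which together with the geometric smallness of the semicircles yields continuity; this is the step demanding the most care. I would also emphasize that, unlike the homotopically path Hausdorff characterization in Theorem~\ref{hompathhausdchar}, no first countability hypothesis is needed here, precisely because the data $(A,p,q)$ is used to build $f$ directly rather than through a limiting argument.
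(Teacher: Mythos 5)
Your proof is correct and follows essentially the same route as the paper: the forward direction via the dictionary $A=\mcc$, $p=f\circ\uinf$, $q=f\circ\linf$ (which the paper dismisses as ``straightforward''), and the converse by building $f:\bbb\to X$ from $(A,p,q)$ using exactly the paper's non-decreasing continuous surjection $h:\ui\to\ui$ carrying $\mcc$ onto $A$ and mapping components of $\ui\setminus\mcc$ homeomorphically onto components of $\ui\setminus A$ or collapsing them to points. Your added details---the explicit order-embedding construction of $g$, the continuity check across $\mcc$, and the reparameterization homotopies $p\circ g\simeq p$, $q\circ g\simeq q$ rel endpoints---are all correct elaborations of steps the paper states tersely.
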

\begin{proof}
One direction is straightforward. Suppose $H$ is $(\bbk,w_{\infty})$-closed, $\{0,1\}\subseteq A\subseteq \ui$ where $A$ is closed and $p,q:(\ui,0)\to (X,x_0)$ are paths such that $p|_A=q|_A$ and $[p|_{[0,d]}\cdot q|_{[c,d]}^{-}\cdot p|_{[0,c]}^{-}\in H$ for every component $(c,d)$ of $\ui\backslash A$. By Remark \ref{nowheredensermk}, we may assume $A$ is nowhere dense in $\ui$. Find a non-decreasing, continuous, surjection $h:\ui\to \ui$ mapping the middle third cantor set $\mcc$ onto $A$ and such that every component of $\ui\backslash \mcc$ is either mapped homeomorphically onto some component of $\ui\backslash A$ or mapped to a point. Define $f:\bbb\to X$ so that $f\circ \uinf=p\circ h$ and $f\circ \linf=q\circ h$. Since $p\circ h$ and $p\circ h$ agree on $\mcc$, $f$ is well-defined. Continuity at each point of $\mcc\times\{0\}$ follows directly from the continuity of $p$ and $q$. Fix $(a,b)\in\mathcal{I}$ and let $k=[\uinf|_{[0,a]}\cdot \uinf|_{[a,b]}\cdot \linf|_{[a,b]}^{-}\cdot \uinf|_{[0,a]}^{-}]$ be the corresponding generator of $\bbk$. If $h$ maps $(a,b)$ to a point, then $f_{\#}(k)=1$. If $h$ maps $(a,b)$ onto a component $(c,d)$ of $\ui\backslash A$, then $f_{\#}(k)=[p|_{[0,c]}\cdot p|_{[c,d]}\cdot q|_{[c,d]}^{-}\cdot p|_{[0,c]}^{-}]\in H$. This proves $f_{\#}(\bbk)\leq H$ which allows us to conclude that $[p\circ q^{-}]=f_{\#}(w_{\infty})\in H$.
\end{proof}
\begin{proposition}\label{transfinitepathproductcomparisonprop}
Let $H\leq\pionex$ be a subgroup.
\begin{enumerate}
\item If $H$ is $(\bbf,\dinf)$-closed, then $H$ is $(\bbk,w_{\infty})$-closed.
\item If $H$ is $(\bbk,w_{\infty})$-closed, then $H$ is $(\cinfty,c_{\tau}$)-closed.
\item If $H$ is normal and $(\bbk,w_{\infty})$-closed, then $H$ is $(\pinfty,p_{\tau})$-closed.
\end{enumerate}
\end{proposition}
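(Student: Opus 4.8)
The plan is to verify the $(\pinfty,p_\tau)$-closedness of $H$ directly. Let $f:(\bbhp,\bpp)\to(X,x_0)$ be a map with $f_{\#}(\pinfty)\leq H$; I must show $f_{\#}(p_\tau)\in H$. Write $\alpha=f\circ\iota$ for the image of the whisker, set $x=f(b_0)=\alpha(1)$ and $\sigma_n=f\circ\ell_n\in\Omega(X,x)$. Then $f_{\#}(p_n)=[\alpha\cdot\sigma_{2n-1}\cdot\sigma_{2n}^{-}\cdot\alpha^{-}]\in H$ for every $n$. Putting $u=f\circ(f_{odd}\circ\ell_\tau)$ and $v=f\circ(f_{even}\circ\ell_\tau)$, we have $f_{\#}(p_\tau)=[\alpha\cdot u\cdot v^{-}\cdot\alpha^{-}]$, which lies in $H$ if and only if $[u\cdot v^{-}]\in H^{\alpha}$. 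Thus it suffices to place $[u\cdot v^{-}]$ inside $H^{\alpha}\leq\pi_1(X,x)$.

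The idea is to route the odd/even comparison inside $p_\tau$ through the upper/lower comparison inside $w_\infty$, exploiting that the components $\mathcal{I}$ of $\ui\setminus\mcc$ index both the semicircles of $\bbb$ and the loops of $\ell_\tau$. Concretely, I would define a map $\phi:\bbb\to\bbh$ sending $\mcc\times\{0\}$ to $b_0$ and, on the component $I\in\mathcal{I}$ for which $\ell_\tau\equiv\ell_m$, sending the semicircle $\bbb_I$ (the image of $\uinf|_I$) to $\ell_{2m-1}$ and the base segment (the image of $\linf|_I$) to $\ell_{2m}$. Continuity of $\phi$ is immediate from the null-sequence structure of $\bbh$: smaller components carry larger indices $m$, and $C_{2m-1},C_{2m}$ shrink to $b_0$. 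Composing with $f$ then yields a map $g=f\circ\phi:(\bbb,d_0)\to(X,x)$ with $g\circ\uinf=u$ and $g\circ\linf=v$, so that $g_{\#}(w_\infty)=[u\cdot v^{-}]$.

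It then remains to check that $g_{\#}(\bbk)\leq H^{\alpha}$. The generator of $\bbk$ indexed by the component $(a,b)=I$ of index $m$ is sent by $g$ to $[u_a\cdot\sigma_{2m-1}\cdot\sigma_{2m}^{-}\cdot u_a^{-}]$, where $u_a=u|_{[0,a]}$ is the initial loop of $u$ up to the Cantor point $a$. Since $[\sigma_{2m-1}\cdot\sigma_{2m}^{-}]=[\alpha^{-}]f_{\#}(p_m)[\alpha]\in H^{\alpha}$ and $H^{\alpha}\trianglelefteq\pi_1(X,x)$ (because $H$ is normal), the conjugating loop $u_a$ is absorbed, giving $g_{\#}(\bbk)\leq H^{\alpha}$. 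As $(\bbk,w_\infty)$ is a normal closure pair (Proposition \ref{conjugatesforK}), $H^{\alpha}$ is itself $(\bbk,w_\infty)$-closed, so $g_{\#}(w_\infty)=[u\cdot v^{-}]\in H^{\alpha}$, which is exactly $f_{\#}(p_\tau)\in H$.

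The step I expect to require the most care is the reduction in the previous paragraph, where normality is indispensable: the generators of $\bbk$ are sent not to the elements $[\sigma_{2m-1}\cdot\sigma_{2m}^{-}]$ themselves but to their conjugates by the growing initial loops $u_a$, and only the normality of $H^{\alpha}$ lets these conjugates be absorbed --- precisely the mechanism used in the partial converses of Propositions \ref{normalsubgroup} and \ref{abelianfactorprop}. The remaining technical point, the well-definedness and continuity of $\phi$ at the points of $\mcc\times\{0\}$, should follow routinely from matching the Cantor indexing of $\bbb$ with that of $\ell_\tau$ together with the null-sequence property of the loops $\ell_n$.
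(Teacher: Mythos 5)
Your proposal addresses only item (3) of the proposition; items (1) and (2) are never mentioned, so as a proof of the stated result it has a genuine gap of omission. For item (3) itself, your argument is correct and is essentially the paper's own proof: the paper likewise builds a map $g:(\bbb,d_0)\to (X,f(b_0))$ whose restrictions to $\uinf$ and $\linf$ are the $f$-images of $f_{odd}\circ\ell_{\tau}$ and $f_{even}\circ\ell_{\tau}$ (it defines $g$ directly rather than factoring through an intermediate map $\phi:\bbb\to\bbh$, a purely cosmetic difference), notes that each generator of $\bbk$ maps to a conjugate, by the initial segment $u|_{[0,a]}$, of $[\alpha^{-}]f_{\#}(p_m)[\alpha]\in H^{\alpha}$, absorbs that conjugation using normality of $H^{\alpha}$ in $\pi_1(X,f(b_0))$, and then invokes the fact that $(\bbk,w_{\infty})$ is a normal closure pair (Proposition \ref{conjugatesforK}) to conclude $g_{\#}(w_{\infty})\in H^{\alpha}$, hence $f_{\#}(p_{\tau})\in H$. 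Your identification of normality as the indispensable mechanism is exactly right.

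The missing items are not formalities. Item (2) does admit a short proof: define $f:(\bbb,d_0)\to(\bbhp,\bpp)$ sending both $\uinf|_{[0,2/3]}$ and $\linf|_{[0,2/3]}$ along the whisker $\iota$, sending $\uinf|_{[2/3,1]}$ to $\ell_{\tau}$, and collapsing $\linf|_{[2/3,1]}$ to $b_0$; then $f_{\#}(\bbk)\leq \cinfty$ and $f_{\#}(w_{\infty})=c_{\tau}$, so Remark \ref{comparisonpropremark} applies. Item (1), however, is the technically heaviest part of the proposition in the paper: one must realize $\bbb$ as a homeomorphic copy $\bbb'\subseteq\bbd$ (via a homeomorphism $s:\bbb\to\bbb'$) lying over a disjoint family of dyadic intervals with dense union, construct a retraction $r:\bbd\to\bbb'$ by vertical projection, and verify through a case analysis on the free generators $d_{n,j}$ of $\bbf$ that $r_{\#}(\bbf)\leq s_{\#}(\bbk)$ while $r_{\#}(\dinf)=s_{\#}(w_{\infty})$; Remark \ref{comparisonpropremark} then yields the implication. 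Nothing in your proposal engages with this, and the mechanism you use for (3) --- which exploits the specific odd/even comparison structure of $p_{\tau}$ --- does not adapt to it, since for (1) the test map lives on $\bbd$ rather than $\bbhp$ and no normality hypothesis is available.
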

\begin{proof}
(1) We view $\bbb$ as a specific retract of $\bbd$ and apply Remark \ref{comparisonpropremark}. For a dyadic unital pair $(n,j)$, let $I_{n,j}=\left(\frac{j-1}{2^{n-1}},\frac{j}{2^{n-1}}\right)$. Consider the following recursively defined subset $A\subseteq \scrd$ of dyadic unital pairs: $A_3=\{(3,2)\}$ and \[A_{n+2}=\{(n+2,4j-2)\mid\text{for all }m<n\text{ and }(m,i)\in A_m, I_{n+2,4j-2}\cap I_{m,i}=\emptyset\}.\] Set $A=A_3\cup A_5\cup A_7\cup \cdots$. The intervals $\{I_{n,j}\mid(n,j)\in A\}$ are disjoint and have dense union in $[0,1]$. Consequently, the subspace $\bbb '=B\cup \bigcup_{(n,j)\in A}\bbd(n,j)\subseteq \bbd$ is a homeomorphic copy of $\bbb$ (see Figure \ref{embedding}). Let $\uinf '(t)=(t,v(t))$ be the arc along $\bigcup_{(n,j)\in A}\bbd(n,j)$ from $d_0$ to $(1,0)$. Let $s:\bbb\to \bbb '$ be a homeomorphism such that $s\circ \linf \equiv \linf $ and $s\circ \uinf\equiv\uinf '$. Define a retraction $r:\bbd\to \bbb '$ by downward vertical projection: Given $(t,y)\in \bbd$ if $y\geq v(t)$, let $r(t,y)=\uinf '(t)$ and if $0\leq y<v(t)$, let $r(t,y)=(t,0)$.
By construction, $r|_{\bbb '}=id_{\bbb '}$. Notice that $r_{\#}(\dinf)=[\uinf ' \cdot (\linf ')^{-}]=s_{\#}(w_{\infty})$. Therefore, it suffices to show $r_{\#}(\bbf)\leq s_{\#}(\bbk)$. Recall that the free generator $d_{n,j}$ of $\bbf$ is the homotopy class of the loop $L_{n,j}=\left(\delta_{\frac{2j-1}{2^n}}\right)\cdot \left(\ell_{n+1,2j}\right)\cdot \left(\delta_{\frac{j}{2^{n-1}}}\right)^{-}$.

For the moment, fix a dyadic rational $t\in [0,1]$. Given the construction of the path $\delta_t$, it is clear that part of the image of $\delta_t$ lies strictly below the image of $\uinf '$ if and only if $t\in U=\bigcup_{(n,j)\in A}I_{n,j}$. If $t\in I_{n,j}=(a,b)$, then $\delta_{t}\equiv \delta_{a}\cdot \zeta$ where $\zeta|_{(0,1]}$ lies strictly below the arc $\ell_{n,j}$. So, if $t\in U$, then $r\circ \delta_{t}\equiv \uinf '|_{[0,a]}\cdot \linf|_{[a,t]}$. On the other hand, if $t\notin U$, then $\delta_t$ has image either on or above the image of $\uinf '$ and $r\circ \delta_{t}\equiv \uinf '|_{[0,t]}$.

Now fix a dyadic unital pair $(n,j)$. We claim that $r_{\#}(d_{n,j})\in s_{\#}(\bbk)$. There are three cases to consider:

Case I: Suppose $\ell_{n+1,2j}$ has image on or above the image of $\uinf '$. Then both $t=\frac{2j-1}{2^n}\notin U$ and $t'=\frac{j}{2^{n-1}}\notin U$. It follows that
\begin{eqnarray*}
r\circ L_{n,j} &=& (r\circ \delta_t)\cdot (r\circ\ell_{n+1,2j})\cdot (r\circ \delta_{t'})^{-}\\
& \equiv&  \uinf '|_{[0,t]}\cdot \uinf '|_{[t,t']}\cdot \uinf '|_{[0,t']}^{-}
\end{eqnarray*}
is null-homotopic. Thus $r_{\#}(d_{n,j})=1$.

Case II: Suppose $\ell_{n+1,2j}$ has image strictly under the arc $\ell_{m,k}$ where $(m,k)\in A$ and $\ell_{n+1,2j}(1)\neq \ell_{m,k}(1)$. Then both $t=\frac{2j-1}{2^n}$ and $t'=\frac{j}{2^{n-1}}$ lie in $I_{m,k}=(a,b)$. It follows that
\begin{eqnarray*}
r\circ L_{n,j} &=& (r\circ \delta_t)\cdot (r\circ\ell_{n+1,2j})\cdot (r\circ \delta_{t'})^{-} \\
&\equiv& (\uinf '|_{[0,a]}\cdot \linf|_{[a,t]})\cdot(\linf|_{[t,t']})\cdot (\uinf '|_{[0,a]}\cdot \linf|_{[a,t']})^{-}
\end{eqnarray*}
is null-homotopic. Thus $r_{\#}(d_{n,j})=1$.

Case III: Suppose $\ell_{n+1,2j}$ has image strictly under the arc $\ell_{m,k}$ where $(m,k)\in A$ and $\ell_{n+1,2j}(1)= \ell_{m,k}(1)$. Then $t=\frac{2j-1}{2^n}\in I_{m,k}=(a,b)$ and $b=\frac{j}{2^{n-1}}\notin (a,b)$. Let $(c,d)\in \mathcal{I}$ such that $s([c,d])=[a,b]$. It follows that
\begin{eqnarray*}
r\circ L_{n,j} &=& (r\circ \delta_t)\cdot (r\circ\ell_{n+1,2j})\cdot (r\circ \delta_{b})^{-} \\
&\equiv& (\uinf '|_{[0,a]}\cdot \linf|_{[a,t]})\cdot(\linf|_{[t,b]})\cdot (\uinf '|_{[0,b]})^{-}\\
&\equiv& s\circ (\uinf|_{[0,c]}\cdot \linf|_{[c,d]}\cdot \uinf|_{[0,d]}^{-})
\end{eqnarray*}
where $[\uinf|_{[0,c]}\cdot \linf|_{[c,d]}\cdot \uinf|_{[0,d]}^{-}]$ is the inverse of a generator of $\bbk$. Thus $r_{\#}(d_{n,j})\in s_{\#}(\bbk)$.

(2) Define a map $f:(\bbb,d_0)\to (\bbhp,\bpp)$ so that $f\circ \uinf|_{[0,2/3]}\equiv f\circ \linf|_{[0,2/3]}\equiv\iota$, $f\circ \uinf|_{[2/3,1]}\equiv\ell_{\tau}$ and $f\circ \linf|_{[2/3,1]}$ is constant at $b_0$. Since $f_{\#}(\bbk)\leq \cinfty$ and $f_{\#}(w_{\infty})=c_{\tau}$, we may apply Remark \ref{comparisonpropremark}.

(3) Suppose $H$ is a $(\bbk,w_{\infty})$-closed, normal subgroup of $\pionex$ and $f:(\bbhp,b_{0}^{+})\to (X,x_0)$ is a map such that $f_{\#}(\pinfty)\leq H$. Let $\alpha=f\circ \iota$ and recall that $H^{\alpha}$ is $(\bbk,w_{\infty})$-closed. Define $g:(\bbb,d_0)\to (X,f(b_0))$ so that $g(t,0)=f(b_0)$ if $t\in\mcc$, $g\circ \linf=f\circ f_{odd}\circ \ell_{\tau}$, and $g\circ \uinf=f\circ f_{even}\circ \ell_{\tau}$. We have $g_{\#}([\uinf|_{[a,b]}\cdot \linf|_{[a,b]}^{-}])\in H^{\alpha}$ for each $(a,b)\in \mathcal{I}$. Since $H^{\alpha}$ is normal, $g_{\#}(\bbk)\leq H^{\alpha}$. By assumption, we now have $g_{\#}(w_{\infty})\in H^{\alpha}$. Thus $f_{\#}(p_{\tau})=[\alpha]g_{\#}(w_{\infty})[\alpha^{-}]\in H$.
\end{proof}
\begin{figure}[H]
\centering \includegraphics[height=1.7in]{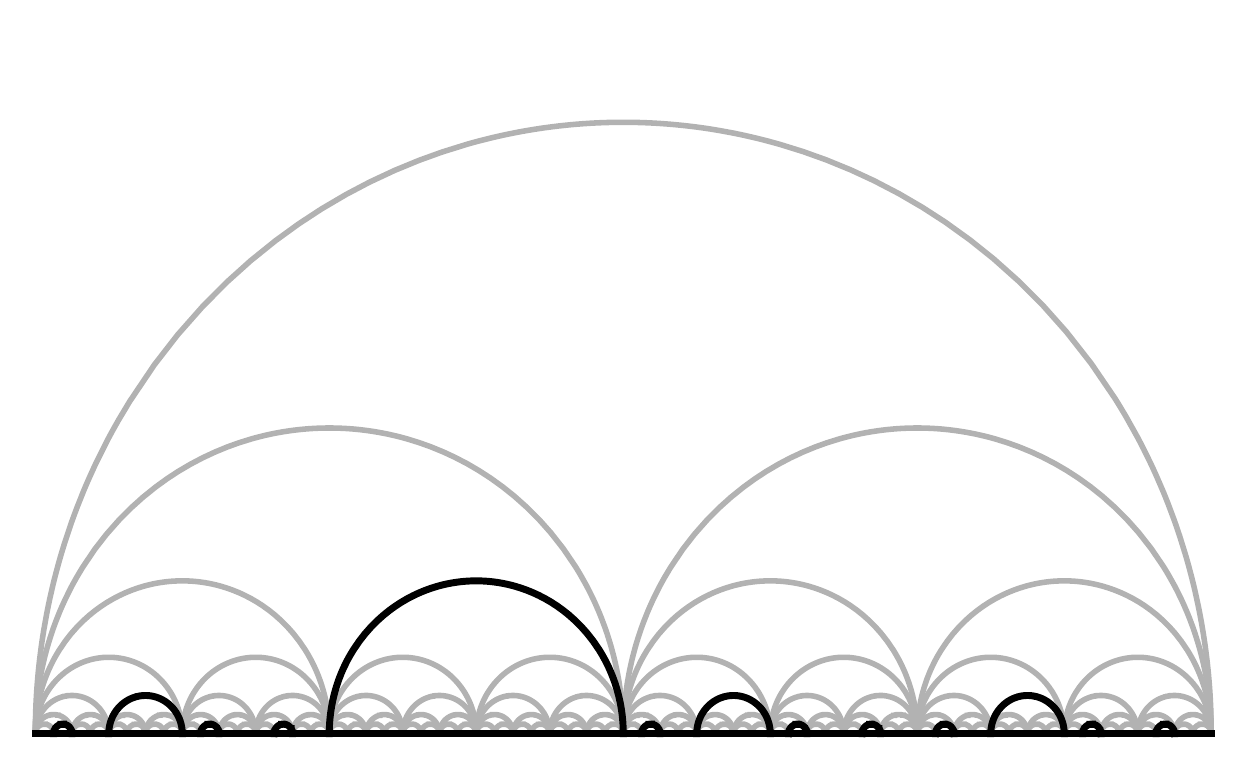}
\caption{\label{embedding}The subspace $\bbb '$ of $\bbd$.}
\end{figure}
We prove a partial converse to (1) of Proposition \ref{transfinitepathproductcomparisonprop}. To do so we require two technical lemmas. Let $\wild(X)\subseteq X$ denote the ``wild" subspace of points at which $X$ is not semilocally simply-connected.
\begin{proposition}\label{wildpullbackprop}
If $X$ is any space, $Y$ is locally path connected and $f:Y\to X$ is continuous, then $f^{-1}(\wild(X))$ is closed in $Y$. In particular, if $X$ is locally path connected, then $\wild(X)$ is closed in $X$.
\end{proposition}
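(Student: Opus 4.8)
The plan is to prove the statement by showing that the complement $f^{-1}(X\setminus\wild(X))$ is open in $Y$. By definition, $\wild(X)$ is the set of points at which $X$ fails to be semilocally simply connected, so $X\setminus\wild(X)$ consists of those $x\in X$ admitting an open neighborhood $U\in\mct_x$ with the property that every loop in $U$ based at $x$ is null-homotopic in $X$.

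First I would fix a point $y\in Y$ with $x:=f(y)\notin\wild(X)$ and choose an open neighborhood $U\in\mct_x$ witnessing semilocal simple connectivity at $x$, so that every loop in $U$ based at $x$ is null-homotopic in $X$. Since $f$ is continuous, $f^{-1}(U)$ is an open neighborhood of $y$ in $Y$; because $Y$ is locally path connected, I may then select an open, path-connected neighborhood $W$ of $y$ with $W\subseteq f^{-1}(U)$, so that $f(W)\subseteq U$.

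The key step is to check that this same $U$ witnesses semilocal simple connectivity of $X$ at $f(y')$ for \emph{every} $y'\in W$. Given such a $y'$, set $x':=f(y')\in U$ and let $\gamma$ be an arbitrary loop in $U$ based at $x'$. Since $W$ is path connected, there is a path in $W$ from $y$ to $y'$; composing it with $f$ yields a path $\sigma$ in $f(W)\subseteq U$ from $x$ to $x'$. Then $\sigma\cdot\gamma\cdot\sigma^{-}$ is a loop in $U$ based at $x$, hence null-homotopic in $X$ by the choice of $U$, and conjugating back shows that $\gamma=\sigma^{-}\cdot(\sigma\cdot\gamma\cdot\sigma^{-})\cdot\sigma$ is null-homotopic in $X$ as well. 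Thus $x'\notin\wild(X)$, which gives $W\subseteq f^{-1}(X\setminus\wild(X))$ and proves that this set is open. The in-particular statement then follows at once by applying the result to $Y=X$, which is locally path connected, and $f=\mathrm{id}_X$, yielding that $\wild(X)=\mathrm{id}_X^{-1}(\wild(X))$ is closed.

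I expect no serious obstacle in executing this argument; the only point that requires care is that $X$ is \emph{not} assumed to be locally path connected, so the path component of $x$ inside $U$ need not be open in $X$, and the usual ``spreading'' of semilocal simple connectivity along paths cannot be carried out directly in $X$. The hypotheses are arranged precisely to sidestep this difficulty: local path connectivity is imposed on the domain $Y$ rather than on $X$, so the path-connected neighborhood is produced upstairs in $Y$, and its image under $f$—while possibly failing to be open—is still enough to connect the relevant basepoints by paths lying inside the witness neighborhood $U$.
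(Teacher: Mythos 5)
Your proof is correct and takes essentially the same approach as the paper: the paper also pulls back a witness neighborhood $U$ of $f(y)$, uses local path connectivity of $Y$ to find a path-connected neighborhood $V$ of $y$ with $f(V)\subseteq U$, and spreads semilocal simple connectivity along image paths (phrased there via the path component of $f(y)$ in $U$, versus your explicit conjugation argument, which is the same mechanism). No gaps; your handling of the subtlety that $X$ itself need not be locally path connected matches the paper's reasoning exactly.
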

\begin{proof}
Pick $y\notin f^{-1}(\wild(X))$. There is an open neighborhood $U$ of $f(y)$ such the inclusion $U\to X$ induces the trivial homomorphism $\pi_1(U,f(y))\to \pi_1(X,f(y))$. If $C$ is the path component of $f(y)$ in $U$, then the inclusion $C\to X$ also induces the trivial homomorphism $\pi_1(C,f(y))\to \pi_1(X,f(y))$. Find a path-connected neighborhood $V$ of $y$ such that $f(V)\subseteq U$. If $v\in V$, then $f(v)\in C$. Therefore the inclusion $U\to X$ induces the trivial homomorphism $\pi_1(U,f(v))\to \pi_1(X,f(v))$. Thus $V\cap f^{-1}(\wild(X))=\emptyset$.
\end{proof}
\begin{lemma}\label{transfiniteprep}
Let $H\leq\pionex$ be a subgroup.
\begin{enumerate}
\item If $H$ is $(\cinfty,c_{\infty})$-closed, $\alpha\in P(X,x_0)$, and $f:\bbd\to X$ is a map such that $f(B)=\alpha(1)$ and $f_{\#}(S)\leq H^{\alpha}$, then $f_{\#}(\dinf)\in H^{\alpha}$.
\item If $\alpha\in P(X,x_0)$ and $f:(\bbd,d_0)\to (X,\alpha(1))$ is a map such that $f(B)\subseteq X\backslash\wild(X)$ and $f_{\#}(S)\leq H^{\alpha}$, then $f_{\#}(\dinf)\in H^{\alpha}$.
\end{enumerate}
\end{lemma}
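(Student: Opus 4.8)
The plan is to handle the two parts separately. Part (1) follows the telescoping technique already used in Theorem~\ref{homhausrelchar} and Proposition~\ref{normalsubgroup}, while part (2) reduces, via a \emph{finite} telescoping, to a purely geometric claim settled by semilocal simple connectivity and compactness.

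For part (1), I would first record the consequences of $f(B)=\alpha(1)$: the loops $\mu_n=f\circ\lambda_n$ are all based at $\alpha(1)$, the path $f\circ\lambda_\infty$ is constant at $\alpha(1)$, and since the levels $\bbd(n)$ collapse uniformly onto $B$ as $n\to\infty$, the set $f(\bbd(n))$ shrinks to $\alpha(1)$, so $(\mu_n)$ is null at $\alpha(1)$ and hence so is $(\mu_n\cdot\mu_{n+1}^{-})$. Moreover $f_{\#}(s_n)=[\mu_n\cdot\mu_{n+1}^{-}]\in H^\alpha$ and $f_{\#}(\dinf)=[\mu_1]$. I would then build a map $h:(\bbhp,\bpp)\to(X,x_0)$ with $h\circ\iota=\alpha$ and $h\circ\ell_n=\mu_n\cdot\mu_{n+1}^{-}$, which is continuous because these loops form a null-sequence. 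Exactly as in Proposition~\ref{normalsubgroup}, one gets $h_{\#}(c_n)=[\alpha]f_{\#}(s_n)[\alpha^{-}]\in H$, so $h_{\#}(\cinfty)\le H$, while the telescoping identity $[\prod_{n\ge1}\mu_n\cdot\mu_{n+1}^{-}]=[\mu_1]$ yields $h_{\#}(c_\infty)=[\alpha][\mu_1][\alpha^{-}]=[\alpha]f_{\#}(\dinf)[\alpha^{-}]$. Since $H$ is $(\cinfty,c_\infty)$-closed, $h_{\#}(c_\infty)\in H$, whence $f_{\#}(\dinf)\in[\alpha^{-}]H[\alpha]=H^\alpha$.

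For part (2), where the hypothesis is instead $f(B)\subseteq X\setminus\wild(X)$, I would first replace the infinite telescoping by a finite one. For any $m$ we have $f_{\#}(\dinf)=[f\circ\lambda_1\cdot(f\circ\lambda_\infty)^{-}]=[f\circ\lambda_1\cdot(f\circ\lambda_m)^{-}]\cdot[f\circ\lambda_m\cdot(f\circ\lambda_\infty)^{-}]$, and since $[\lambda_1\cdot\lambda_m^{-}]=\prod_{n=1}^{m-1}s_n\in\bbs$, the first factor lies in $f_{\#}(\bbs)\le H^\alpha$. Thus it suffices to produce a \emph{single} $m$ with $f\circ\lambda_m\simeq f\circ\lambda_\infty$ rel endpoints in $X$, for then the second factor is trivial and $f_{\#}(\dinf)\in H^\alpha$. (Note no normality of $H$ is needed here.)

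The geometric claim is the main obstacle. For each $b\in B$, semilocal simple connectivity of $X$ at $f(b)$ provides an open $W_b\ni f(b)$ with $\pi_1(W_b,f(b))\to\pi_1(X,f(b))$ trivial; using the local path connectivity of $\bbd$ I would choose a \emph{path-connected} open $O_b\ni b$ with $f(O_b)\subseteq W_b$. A Lebesgue-number argument on the compact arc $B$ (cover $B$ by finitely many half-balls $B_\bbd(b_i,r_{b_i}/2)\subseteq O_{b_i}$ and set $\rho=\min_i r_{b_i}/2$) yields $\rho>0$ such that every subset of $\bbd$ of diameter $<\rho$ meeting $B$ lies in some $O_{b_i}$. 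Since the cells $\bbd(n,j)\cup\bigl([\tfrac{j-1}{2^{n-1}},\tfrac{j}{2^{n-1}}]\times\{0\}\bigr)$ have diameter $\tfrac{1}{2^{n-1}}\to0$ uniformly in $j$ and each meets $B$, there is an $m$ for which, for every $j$, this cell sits inside some $O_{b_i}$. For that $m$ and each $j$ the loop $\ell_{m,j}\cdot\bigl(\lambda_\infty|_{[\frac{j-1}{2^{m-1}},\frac{j}{2^{m-1}}]}\bigr)^{-}$ lies in one $O_{b_i}$; conjugating its $f$-image along a path in $O_{b_i}$ from $b_i$ (available by path connectivity) produces a loop in $W_{b_i}$ based at $f(b_i)$, which is null in $X$, so the original loop is too. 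Hence $f\circ\ell_{m,j}\simeq f\circ\lambda_\infty|_{[\frac{j-1}{2^{m-1}},\frac{j}{2^{m-1}}]}$ rel endpoints, and concatenating over $j$ gives $f\circ\lambda_m\simeq f\circ\lambda_\infty$ rel endpoints, completing the reduction. The one delicate point is the basepoint bookkeeping in the null-homotopy, which the path-connected choice of $O_b$ resolves.
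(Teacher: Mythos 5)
Your proposal is correct and follows essentially the same route as the paper's proof: part (1) is the identical construction of a map on $\bbhp$ sending $\ell_n$ to $f\circ(\lambda_n\cdot\lambda_{n+1}^{-})$ followed by the telescoping identity, and part (2) is the same compactness/semilocal-simple-connectivity argument along $f(B)$, producing one level $m$ whose cell boundary loops have null-homotopic $f$-images so that $f_{\#}(\dinf)=[f\circ(\lambda_1\cdot\lambda_m^{-})]\in f_{\#}(\bbs)\leq H^{\alpha}$. The only cosmetic differences are that the paper shrinks the full sub-copies $\bbd_{n,j}$ into the small neighborhoods rather than just the cells, and absorbs your explicit basepoint conjugation into the phrase ``lie in the same path component.''
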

\begin{proof}
(1) Since $f$ maps $B$ to a point, the loops $f\circ \lambda_n$ are null at $\alpha(1)$. Define a map $f':(\bbhp,\bpp)\to (X,x_0)$ such that $f'\circ\iota=\alpha$ and $f'\circ\ell_n=f\circ (\lambda_n\cdot \lambda_{n+1}^{-})$ for $n\in\bbn$. Since $(f')_{\#}(C)^{\alpha}=(f)_{\#}(S)\leq H^{\alpha}$, we have $(f')_{\#}(C)\leq H$ and since $H$ is $(\cinfty,c_{\infty})$-closed, we have $(f')_{\#}(c_{\infty})\in H$. But \[(f')_{\#}(c_{\infty})=[\alpha]\left[\left(\prod_{n=1}^{\infty}(f\circ(\lambda_n\cdot \lambda_{n+1}^{-}))\right)\right][\alpha^{-}]=[\alpha\cdot(f\circ\lambda_1)\cdot\alpha^{-}]=[\alpha]f_{\#}(\dinf)[\alpha^{-}].\] Thus $f_{\#}(\dinf)\in H^{\alpha}$.

(2) For every $0\leq t\leq 1$, choose an open neighborhood $U_t$ of $f(t,0)$ such that every loop in the path component of $f(t,0)$ in $U_t$ is null-homotopic in $X$. Find a path-connected open set $W_t$ in $\bbd$ such that $(t,0)\in W_t\subseteq f^{-1}(U_t)$. Recall from Proposition \ref{largeextension} that $\bbd_{n,j}$ is the homeomorphic copy of $\bbd$ beneath the arc $\ell_{n,j}$. There exists $n\in\bbn$ such that for each $j=1,2,\dots,2^{n-1}$, we have $\bbd_{n,j}\subseteq W_{t_j}$ for some $t_j$. Note that $f(\bbd_{n,j})$ and $f(t_j,0)$ lie in the same path component of $U_{t_j}$ so that if $\zeta_{j}$ is a loop traversing the outer curve of $\bbd_{n,j}$, then $f\circ \zeta_{j}$ is null-homotopic in $X$. It follows that $f\circ (\lambda_{n}\cdot\lambda_{\infty}^{-})$ is null-homotopic in $X$. Thus $f_{\#}(\dinf)=[f\circ (\lambda_1\cdot\lambda_{n}^{-})][f\circ (\lambda_{n}\cdot\lambda_{\infty}^{-})]=[f\circ (\lambda_1\cdot\lambda_{n}^{-})]\in f_{\#}(S)\leq H^{\alpha}$.
\end{proof}
\begin{lemma}\label{technicallemma}
Suppose $N\trianglelefteq \pionex$ is a normal $(\bbk,w_{\infty})$-closed subgroup. Let $f:(\bbd,d_0)\to (X,x_0)$ be a map such that $f_{\#}(\bbf)\leq N$. Let $\mathcal{J}\subseteq\mathscr{D}$ be a collection of dyadic unital pairs $(n,j)$ so that the corresponding intervals $I_{n,j}=\left(\frac{j-1}{2^{n-1}},\frac{j}{2^{n-1}}\right)$ satisfy:
\begin{enumerate}
\item if $(n_1,j_1),(n_2,j_2)\in\mathcal{J}$ and $(n_1,j_1)\neq(n_2,j_2)$, then $I_{n_1,j_1}\cap I_{n_2,j_2}=\emptyset$,
\item $U=\bigcup_{(n,j)\in\mathcal{J}}I_{n,j}$ is dense in $\ui$.
\end{enumerate}
Let $s:\ui\to\bbd$ be the path defined as $s(t)=(t,0)$ if $t\in \ui\backslash U$ and $s|_{\overline{I_{n,j}}}\equiv\ell_{n,j}$ if $(n,j)\in \mathcal{J}$. Then $f_{\#}([\lambda_1\cdot s^{-}])\in N$.
\end{lemma}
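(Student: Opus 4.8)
The plan is to separate the statement into a purely topological computation carried out inside $\pi_1(\bbd,d_0)$ and a transfer step that pushes this computation into $X$ through the functoriality of the closure operator. The transfer step is short, so I would isolate it first. Concretely, I claim it suffices to prove the $\pi_1(\bbd,d_0)$-level assertion
\[ [\lambda_1\cdot s^{-}]\in cl_{\bbk,w_{\infty}}(\bbf), \]
where the closure is computed with respect to the test space $(\bbb,d_0)$. Granting this, the lemma is immediate: since $f_{\#}(\bbf)\leq N$ and $N$ is $(\bbk,w_{\infty})$-closed, Proposition \ref{continuity} together with monotonicity gives
\[ f_{\#}\bigl([\lambda_1\cdot s^{-}]\bigr)\in f_{\#}\bigl(cl_{\bbk,w_{\infty}}(\bbf)\bigr)\leq cl_{\bbk,w_{\infty}}\bigl(f_{\#}(\bbf)\bigr)\leq cl_{\bbk,w_{\infty}}(N)=N. \]
Thus the entire content is the membership $[\lambda_1\cdot s^{-}]\in cl_{\bbk,w_{\infty}}(\bbf)$, a statement about the test spaces alone, independent of $f$ and $N$.

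To prove that membership I would apply Remark \ref{comparisonpropremark}: it is enough to construct a based map $\Psi:(\bbb,d_0)\to(\bbd,d_0)$ with $\Psi_{\#}(\bbk)\leq\bbf$ and $\Psi_{\#}(w_{\infty})=[\lambda_1\cdot s^{-}]$, for then $[\lambda_1\cdot s^{-}]=\Psi_{\#}(w_{\infty})\in cl_{\bbk,w_{\infty}}(\Psi_{\#}(\bbk))\leq cl_{\bbk,w_{\infty}}(\bbf)$. The design of $\Psi$ is dictated by the self-similar/retraction picture of Proposition \ref{transfinitepathproductcomparisonprop}(1). Let $\rho:\bbd\to B\cup\bigcup_{(n,j)\in\mathcal{J}}\bbd(n,j)$ be the downward vertical projection onto the copy of $\bbb$ whose upper arc is $s$ and whose lower arc is $\linf$. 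Because $\lambda_1$ lies above $s$, one has $\rho\circ\lambda_1\equiv s$, whence $[\lambda_1\cdot s^{-}]\in\ker\rho_{\#}$; and $\ker\rho_{\#}$ is generated by the classes of the arcs of $\bbd$ lying strictly between $s$ and $\lambda_1$, each of which is (a conjugate of) a free generator $d_{m,k}\in\bbf$. The map $\Psi$ should send $\linf$ to a reparametrization of $s$ and $\uinf$ to a path $s'$ that agrees with $s$ on the Cantor set $\ui\setminus U$ (identified with $\mcc$) and is homotopic rel endpoints to $\lambda_1$; here one uses that $[\lambda_1\cdot\lambda_n^{-}]\in\bbf$ for every $n$ while $\lambda_n\to\linf$, so that the class of $\lambda_1$ is distributed as a transfinite product of finite standard corrections over the dense family $\{I_{n,j}\mid(n,j)\in\mathcal{J}\}$. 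Built this way from the canonical tree paths $\delta_t$ and the generators $d_{n,j}$ of Lemma \ref{fistransitive}, $\Psi$ sends each generator $[\uinf|_{[0,b]}\cdot\linf|_{[a,b]}^{-}\cdot\uinf|_{[0,a]}^{-}]$ of $\bbk$ to the class of a finite concatenation of standard paths (so into $\bbf$), while the full loop $\uinf\cdot\linf^{-}$ maps to $s'\cdot s^{-}\simeq\lambda_1\cdot s^{-}$.

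The main obstacle is the explicit construction and verification of $\Psi$ (equivalently, a direct proof of $[\lambda_1\cdot s^{-}]\in cl_{\bbk,w_{\infty}}(\bbf)$): one must arrange \emph{simultaneously} that $\Psi_{\#}(w_{\infty})$ is exactly $[\lambda_1\cdot s^{-}]$ and that \emph{every} $\bbk$-generator lands in $\bbf$. Note that these cannot be achieved by the naive assignment $\uinf\mapsto\lambda_1$, $\linf\mapsto s$, since $\lambda_1$ and $s$ meet only at the endpoints of $B$ whereas the two boundary arcs of $\bbb$ must agree along a Cantor set; the accumulated class of $\lambda_1$ must instead be manufactured transfinitely from the small arcs trapped between $s$ and $\lambda_1$. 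Verifying that each such contribution reduces either to a trivial loop or to a single generator $d_{m,k}$ will require a case analysis parallel to Cases I–III in the proof of Proposition \ref{transfinitepathproductcomparisonprop}(1), tracking for each dyadic unital pair whether its arc lies above $s$, strictly beneath some $\ell_{n,j}$ with $(n,j)\in\mathcal{J}$, or straddles an endpoint of an interval of $\mathcal{J}$. The density hypothesis (2) is precisely what guarantees that the wild points of the resulting test configuration form a Cantor set, so that $[\lambda_1\cdot s^{-}]$ is captured by the coarser operator $cl_{\bbk,w_{\infty}}$ rather than only by $cl_{\bbf,\dinf}$ — which is the distinction this lemma is built to respect.
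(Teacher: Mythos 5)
Your outer transfer step is valid: \emph{if} $[\lambda_1\cdot s^{-}]\in cl_{\bbk,w_{\infty}}(\bbf)$ held, then Proposition \ref{continuity}, monotonicity, and the $(\bbk,w_{\infty})$-closedness of $N$ would give the lemma. But notice what this reduction does: by Proposition \ref{comparisonprop}, the membership $[\lambda_1\cdot s^{-}]\in cl_{\bbk,w_{\infty}}(\bbf)$ is \emph{equivalent} to the assertion that \emph{every} $(\bbk,w_{\infty})$-closed subgroup $H\leq\pionex$ of every space, normal or not, with $f_{\#}(\bbf)\leq H$ also contains $f_{\#}([\lambda_1\cdot s^{-}])$. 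That is the lemma with the normality hypothesis deleted --- a strictly stronger statement. The paper's own proof uses normality of $N$ in an essential, pinpointed way: it constructs a path $\gamma\simeq\lambda_1$ (via a nontrivial induction, with continuity and the homotopy $\gamma\simeq\lambda_1$ verified through the retractions $r_n$ and shape injectivity) agreeing with $s$ off $U$ and differing from $s$ over each $I_{n,j}$ by a finite standard loop $\gamma_{n,j}\cdot\ell_{n,j}^{-}$; it then needs $f_{\#}([\alpha\cdot\gamma_{n,j}\cdot\ell_{n,j}^{-}\cdot\alpha^{-}])\in N$ for the \emph{wild} conjugating paths $\alpha=\gamma|_{[0,a]}$, and only normality of $N$ in $\pionex$ delivers this; finally it applies $(\bbk,w_{\infty})$-closedness through Proposition \ref{transfinitepathprodchar} to the pair $f\circ\gamma$, $f\circ s$. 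Nothing in the paper supports the normality-free version (note that the corresponding arrow in the results chart carries the hypothesis ``$H$ normal''), so the membership cannot simply be ``granted''; your proof is incomplete precisely at its core step, which you yourself defer as ``the main obstacle.''

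Moreover, the route you propose to that membership cannot work as described. Since $[\lambda_1\cdot s^{-}]\notin\bbf$ (in the one-dimensional space $\bbd$ reduced representatives are unique; elements of $\bbf$ have reduced representatives that are finite concatenations of standard paths, whereas the reduced loop $\lambda_1\cdot s^{-}$ traverses infinitely many distinct semicircles), any $\Psi:(\bbb,d_0)\to(\bbd,d_0)$ with $\Psi_{\#}(w_{\infty})=[\lambda_1\cdot s^{-}]$ must spread the disagreement between $\Psi\circ\uinf$ and $\Psi\circ\linf$ over infinitely many components of $\ui\backslash\mcc$ --- indeed, in your configuration (the two arcs agree exactly on the Cantor set, with the class of $\lambda_1$ ``distributed transfinitely'' over the dense family $\mathcal{J}$) the disagreement occurs on a dense family of components. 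The image of the $\bbk$-generator attached to a component $(a,b)$ is then a conjugate $[P\cdot L\cdot P^{-}]$ of a finite standard loop $L$ by the initial path $P=\Psi\circ\uinf|_{[0,a]}$, which is itself a transfinite concatenation passing over infinitely many $\mathcal{J}$-intervals. Such a conjugate has an infinite reduced representative whenever $L\neq 1$, hence lies in the normal closure of $\bbf$ but \emph{not} in $\bbf$; so the requirement $\Psi_{\#}(\bbk)\leq\bbf$ fails exactly where it is needed. The same misconception appears in your claim that $\ker\rho_{\#}$ is ``generated by'' conjugates of the free generators $d_{m,k}$: in this wild setting kernels of such retractions are not generated by that finite-type data, and conjugation by wild paths does not preserve $\bbf$. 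This conjugation problem --- not the bookkeeping of distributing $[\lambda_1]$ --- is the real difficulty, and it is precisely what the paper's strategy (work in $X$, absorb wild conjugates using normality of $N$, then invoke transfinite path products) is designed to circumvent.
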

\begin{proof}
The lemma is clear if $\mathcal{J}$ is finite. Assume $\mathcal{J}$ is infinite. Then $(1,1)\notin\mathcal{J}$. We define a path $\gamma:\ui\to \bbd$, which is homotopic to $\lambda_1$. If $t\in \ui\backslash U$, set $\gamma(t)=(t,0)$. On the intervals $\overline{I_{n,j}}$, $(n,j)\in \mathcal{J}$, we define $\gamma$ to be a path $\gamma_{n,j}$, which is a finite concatenation of standard paths from $\ell_{n,j}(0)$ to $\ell_{n,j}(1)$, using the following inductive procedure:

First, if $a=\frac{j}{2^{n-1}}\leq\frac{j'}{2^{n-1}}=b$ are dyadic rationals, let $\Lambda_n(a,b)$ denote the arc $\prod_{i=j+1}^{j'}\ell_{n,i}$ on the $n$-th level from $(a,0)$ to $(b,0)$. In the case that $a=b$, $\Lambda_n(a,b)$ is the constant path.

To begin the induction, put $J_1=\{(1,1)\}$. Inductively, assume that $J_q$ has been defined as a nonempty, finite set of dyadic unital pairs disjoint from $\mathcal{J}$. Let $(m,p)$ be the smallest (in the dictionary order) element of $J_q$. By Assumption (2), there is a minimal $n>m$ such that there is a $j$ with the property that $(n,j) \in\mathcal{J}$ and $I_{n,j} \subset I_{m,p}$; here, $I_{m,p}$ is defined analogous to $I_{n,j}$. Let $j_1<j_2<\cdots<j_r$ be the complete list of all such $j$'s. Let $k_1<k_2<\cdots <k_u$ be the complete (but possibly empty) list of all $k$'s such that $(n,k_i) \notin \mathcal{J}$ and $\overline{ \bigcup_i I_{n,k_i} \cup \bigcup_i I_{n,j_i}}=\overline{I_{m,p}}$. Define \[\gamma_{n,j_1}=\left(\Lambda_{n}\left(\frac{p-1}{2^{m-1}},\frac{j_1-1}{2^{n-1}}\right)\right)^{-}\cdot \ell_{m,p}\cdot \left(\Lambda_{n}\left(\frac{j_1}{2^{n-1}},\frac{p}{2^{m-1}}\right)\right)^{-}\]
(see Figure \ref{inductionpic2}). For $i>1$, define $\gamma_{n,j_i}=\ell_{n,j_i}$. Define $J_{q+1}$ from $J_q$ by removing $(m,p)$ and adding $(n,k_1), (n,k_2),\dots, (n,k_u)$. Since $\mathcal{J}$ is infinite, $J_{q+1}$ is guaranteed to be nonempty even if no $k_i$ exist. This completes the induction.

By Assumption (1), $\gamma_{n,j}$ has now been defined for every $(n,j) \in \mathcal{J}$. Hence, $\gamma$ has been defined. Note that $\gamma$ is uniformly continuous, because for every $\epsilon>0$, only finitely many $\bbd_{m,p}$ have diameter $> \epsilon$.

Consider the retraction $r_n : \bbd\to E_n$ from the introduction to Section 4. Observe that for $(m,p) \in J_q$ as in the above induction, we have $r_m(\gamma( I_{m,p}\backslash I_{n,j_1} )) \subseteq B$. Since the corresponding statement also applies to the elements $(n,k_i) \in J_{q+1}$, we have that $r_n\circ\gamma$ restricted to $\overline{I_{m,p}}$ is homotopic to $\ell_{m,p}$. We conclude that, for all $n\in\bbn$, $r_n\circ\gamma$ is homotopic to $r_n\circ\lambda_1=\lambda_1$ and thus $\gamma\simeq \lambda_1$.

Since $[\gamma_{n,j}\cdot \ell_{n,j}^{-}]\in\bbf$ for each $(n,j)\in\mathcal{J}$ and $N$ is normal, $\ds f_{\#}\left([\alpha\cdot\gamma_{n,j}\cdot \ell_{n,j}^{-}\cdot\alpha^{-}]\right)\in N$ for every path $\alpha:\ui\to \bbd$ from $d_0$ to $\ell_{n,j}(0)$. Therefore, the paths $s$ and $\gamma$ agree on $\ui\backslash U$ and for each component $(a,b)=I_{n,j}$ of $U$, we have
 \[
f_{\#}\left([\gamma|_{[0,b]}\cdot s|_{[a,b]}^{-}\cdot\gamma|_{[0,a]}^{-}]\right)=
\ds f_{\#}\left(\left[\gamma|_{[0,a]}\cdot\left(\gamma_{n,j}\cdot \ell_{n,j}^{-}\right)\cdot\gamma|_{[0,a]}^{-}\right]\right)\in N.\]Since $X$ is assumed to have transfinite path products relative to $N$, we conclude that $f_{\#}([\lambda_1\cdot s^{-}])=f_{\#}([\gamma\cdot s^{-}])\in N$.
\end{proof}
\begin{figure}[H]
\centering \includegraphics[height=1.5in]{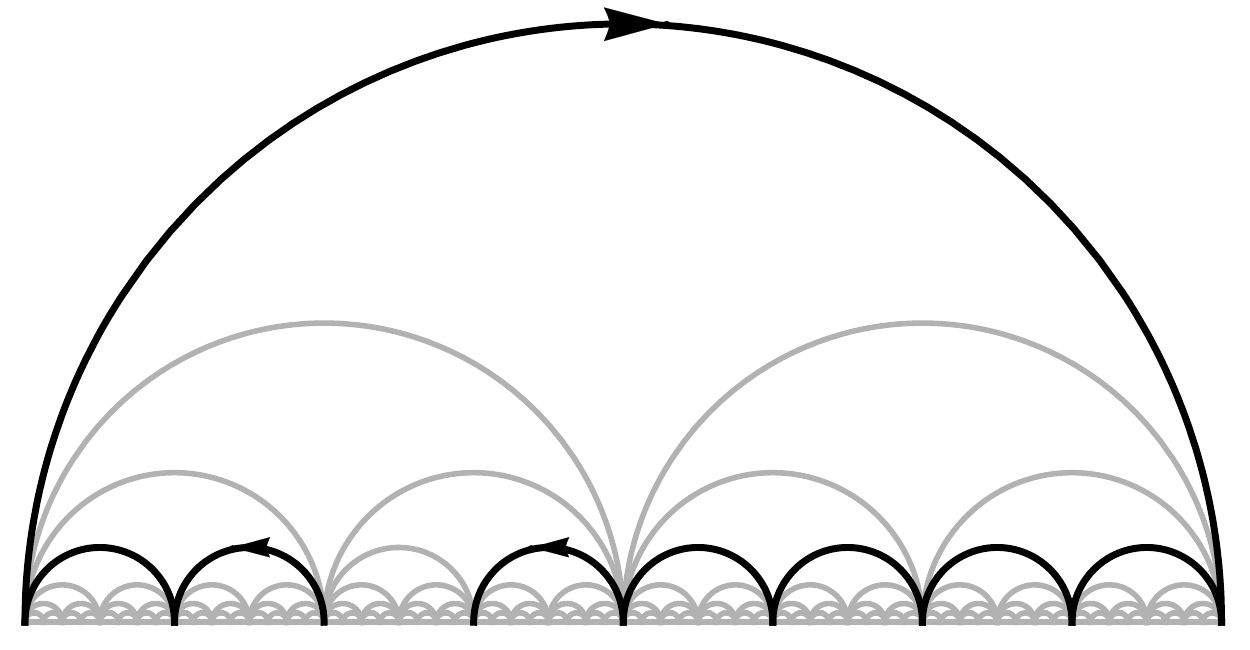}
\caption{\label{inductionpic2} An example of the definition of $\gamma_{n,j_1}$ in $\bbd_{m,p}$ when $n=m+3$ and $j_1=2^{n-m}(p-1)+3$.}
\end{figure}
\begin{theorem}\label{tpdtheorem}
Suppose $\wild(X)$ is totally path disconnected and $N\trianglelefteq \pionex$ is a normal subgroup. Then $N$ is $(\bbf,\dinf)$-closed if and only if $N$ is $(W,w_{\infty})$-closed. In particular, the closure operators $cl_{D,d_{\infty}}$ and $cl_{W,w_{\infty}}$ agree on the normal subgroups of $\pionex$.
\end{theorem}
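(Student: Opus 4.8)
The forward implication needs no hypotheses and is already available: if $N$ is $(\bbf,\dinf)$-closed, then $N$ is $(W,w_{\infty})$-closed by Proposition~\ref{transfinitepathproductcomparisonprop}(1). So the plan is to prove the reverse implication, that a normal, $(W,w_{\infty})$-closed subgroup $N$ is $(\bbf,\dinf)$-closed whenever $\wild(X)$ is totally path disconnected; the closed-operator statement will then follow formally. Throughout I would exploit two standing consequences of the hypotheses on $N$: first, that $N$ is $(\cinfty,c_{\infty})$-closed (combine Proposition~\ref{transfinitepathproductcomparisonprop}(2) with Proposition~\ref{normalsubgroup}), and second, that $X$ has transfinite path products relative to $N$ (Proposition~\ref{transfinitepathprodchar}). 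Given a map $f:(\bbd,d_0)\to(X,x_0)$ with $f_{\#}(\bbf)\le N$, the goal is to show $f_{\#}(\dinf)\in N$.

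The central device is to factor $\dinf=[\lambda_1\cdot\lambda_{\infty}^{-}]$ through an intermediate path $s$ running along the upper arcs of $\bbd$, and to split the analysis according to where $f$ detects wildness along the base arc. First I would set $A=\{t\in\ui : f(t,0)\in\wild(X)\}$, which is closed by Proposition~\ref{wildpullbackprop}, and pass to its boundary $\partial A$, which is a nowhere dense closed set. Because each interior component of $A$ is carried by $f$ into the totally path disconnected space $\wild(X)$, the base arc is \emph{constant} there; on the components of $\ui\setminus A$ the base arc avoids $\wild(X)$ altogether. The key combinatorial step is to choose a family $\mathcal J\subseteq\scrd$ of pairwise disjoint dyadic intervals $I_{n,j}$ whose union $U$ is dense in $\ui$ and each of whose closures $\overline{I_{n,j}}$ lies inside a single component of $\ui\setminus\partial A$ --- so that $\overline{I_{n,j}}$ is either disjoint from $A$ (a \emph{tame} interval) or contained in the closure of one interior component of $A$ (a \emph{constant} interval). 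This is a dyadic Whitney-type decomposition of the dense open set $\ui\setminus\partial A$, possible precisely because $\partial A$ is nowhere dense. Let $s$ be the associated path furnished by Lemma~\ref{technicallemma}, equal to $\ell_{n,j}$ on each $\overline{I_{n,j}}$ and to the base arc elsewhere.

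With $s$ constructed, the two halves of the argument recombine via $f_{\#}(\dinf)=f_{\#}([\lambda_1\cdot s^{-}])\cdot f_{\#}([s\cdot\lambda_{\infty}^{-}])$. The first factor lies in $N$ immediately by Lemma~\ref{technicallemma}. For the second factor I would apply the transfinite path products property relative to $N$ to the pair $p=f\circ s$ and $q=f\circ\lambda_{\infty}$, which agree on the closed set $A_0=\ui\setminus U$ (containing $\{0,1\}$), whose complementary components are exactly the intervals $I_{n,j}$. For a component $(a,b)=I_{n,j}$ the condition to be verified is $f_{\#}\big([\sigma_a\cdot\ell_{n,j}\cdot(\lambda_{\infty}|_{[a,b]})^{-}\cdot\sigma_a^{-}]\big)\in N$, where $\sigma_a=s|_{[0,a]}$. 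Using the self-similar homeomorphism $T_{n,j}\colon\bbd_{n,j}\to\bbd$ (under which $\ell_{n,j}\cdot(\lambda_{\infty}|_{[a,b]})^{-}$ represents $(T_{n,j}^{-1})_{\#}(\dinf)$), this class equals $[\alpha_{n,j}]\,(g_{n,j})_{\#}(\dinf)\,[\alpha_{n,j}]^{-}$ for the restricted map $g_{n,j}=f\circ T_{n,j}^{-1}$ and the path $\alpha_{n,j}=f\circ\sigma_a$ from $x_0$ to $f(a,0)$. A short normality argument --- conjugating by $\sigma_a$ an $\bbf$-loop whose $f_{\#}$-image lies in $N$ --- shows $(g_{n,j})_{\#}(\bbs)\le N^{\alpha_{n,j}}$. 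On a tame interval Lemma~\ref{transfiniteprep}(2) then gives $(g_{n,j})_{\#}(\dinf)\in N^{\alpha_{n,j}}$, while on a constant interval $g_{n,j}(B)$ is a single point and Lemma~\ref{transfiniteprep}(1), together with the $(\cinfty,c_{\infty})$-closedness of $N$, yields the same conclusion. In either case the component condition holds, so the transfinite path products property gives $f_{\#}([s\cdot\lambda_{\infty}^{-}])\in N$, whence $f_{\#}(\dinf)\in N$ and $N$ is $(\bbf,\dinf)$-closed. The final ``in particular'' statement then follows from Corollary~\ref{agreeonnormalsubgroupscorollary}, since both $(\bbf,\dinf)$ and $(W,w_{\infty})$ are normal closure pairs (Propositions~\ref{conjugatesforF} and~\ref{conjugatesforK}).

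The hardest part, I expect, is the interleaving of the two regimes: arranging a single dyadic family $\mathcal J$ that is simultaneously disjoint, dense, and ``pure'' (each closure entirely tame or entirely constant), and then gluing the two different conclusions of Lemma~\ref{transfiniteprep}(1) and~\ref{transfiniteprep}(2) across the nowhere dense set $\partial A$ through transfinite path products. The most delicate bookkeeping lies in reducing each component condition to the self-similar sub-map $g_{n,j}$ and in confirming that the normality of $N$ genuinely absorbs conjugation by the ``wild'' initial segment $\sigma_a$, which is itself not a finite edge path and so lies outside $\bbf$.
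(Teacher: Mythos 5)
Your proposal is correct and takes essentially the same route as the paper's own proof: the paper likewise splits the base arc along the nowhere dense closed boundary of $f^{-1}(\wild(X))$ into a dense open set whose components are either ``tame'' (image missing $\wild(X)$) or carried by $f$ to a single point (via total path disconnectedness), chooses a disjoint dense dyadic family $\mathcal{J}$ subordinate to this decomposition, gets $f_{\#}([\lambda_1\cdot s^{-}])\in N$ from Lemma~\ref{technicallemma}, verifies the component conditions with Lemma~\ref{transfiniteprep}(2) on tame intervals and Lemma~\ref{transfiniteprep}(1) (using $(\cinfty,c_{\infty})$-closedness) on constant intervals, and finishes with the transfinite path products property and Corollary~\ref{agreeonnormalsubgroupscorollary}. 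The only point to tighten is your normality step: one conjugates the sub-loops of $\bbd_{n,j}$ by a \emph{tame} path (a finite concatenation of standard paths from $d_0$ to $(a,0)$) so as to land in $\bbf$ and hence in $N$ after applying $f_{\#}$, and then uses normality of $N$ to conclude that $N^{\alpha}$ is independent of the path $\alpha$, which legitimizes replacing the tame conjugating path by the wild segment $\sigma_a$ --- exactly the paper's phrase ``$\leq N^{\alpha}$ for every path $\alpha$ from $x_0$ to $f(a,0)$.''
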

\begin{proof}
One direction follows from Proposition \ref{transfinitepathproductcomparisonprop}. For the other direction, suppose $X$ has transfinite path products relative to $N$ and $f:(\bbd,d_0)\to (X,x_0)$ is a map such that $f_{\#}(\bbf)\leq N$. If $f(B)=x_0$ or $f(B)\subseteq X\backslash \wild(X)$, then we may apply Lemma \ref{transfiniteprep} (recall that $S\leq D$) to conclude that $f_{\#}(\dinf)\in N$. Thus we may assume that $f|_{B}$ is nonconstant and has image intersecting $\wild(X)$. By Proposition \ref{wildpullbackprop}, $Y=f^{-1}(X\backslash \wild(X))\cap ((0,1)\times \{0\})$ is open in $(0,1)\times \{0\}$. Let $Z$ be the (possibly empty) interior of $((0,1)\times \{0\})\backslash Y$ in $(0,1)\times \{0\}$. Note that $V=Y\cup Z$ is open and dense in $B$.

Since $\wild(X)$ is totally path disconnected and $f(Z)\subseteq \wild(X)$, each connected component of $Z$ must be mapped by $f$ to a single point. Let $\mathcal{J}_{Y}$ be a collection of dyadic unital pairs $(n,j)$ such that the union of the corresponding intervals $I_{n,j}=\left(\frac{j-1}{2^{n-1}},\frac{j}{2^{n-1}}\right)$ are disjoint and dense in $Y$. Similarly, let $\mathcal{J}_{Z}$ be a (possibly empty) collection of dyadic unital pairs $(n,j)$ such that the union of the corresponding intervals $I_{n,j}$ are disjoint and dense in $Z$. Let $\mathcal{J}= \mathcal{J}_{Y}\cup \mathcal{J}_{Z}$ and $U=\bigcup_{(n,j)\in\mathcal{J}}I_{n,j}\subseteq V$. Note that both Conditions (1) and (2) in Lemma \ref{technicallemma} are satisfied so the path $s:\ui\to \bbd$ as defined in the statement of the Lemma satisfies $f_{\#}([\lambda_1\cdot s^{-}])\in N$.

Fix a dyadic unital pair $(n,j)\in \mathcal{J}$ and put $(a,b)=I_{n,j}$. Consider $\bbd_{n,j}\subseteq \bbd$ and the homeomorphism $T_{n,j}:\bbd_{n,j}\to\bbd$ as defined in Proposition \ref{largeextension}. Set $f_{n,j}=f\circ T_{n,j}^{-1}$. Let $\beta_{n,j}:\ui\to\bbd$ be the path which is the restriction of $\lambda_{\infty}$ to $\overline{I_{n,j}}$. Since $N$ is normal and $f_{\#}(\bbf)\leq N$, we have $(f_{n,j})_{\#}(\bbs)\leq (f_{n,j})_{\#}(\bbf)\leq N^{\alpha}$ for every path $\alpha:\ui\to X$ from $x_0$ to $f(a,0)$. Notice that
\begin{enumerate}
\item $f_{n,j}(B)\subseteq X\backslash \wild(X)$ if $(n,j)\in \mathcal{J}_{Y}$,
\item $f_{n,j}(B)$ is a single point if $(n,j)\in \mathcal{J}_{Z}$.
\end{enumerate}
In either case, we may apply Lemma \ref{transfiniteprep} to see that $(f_{n,j})_{\#}(d_{\infty})=[f\circ(\ell_{n,j}\cdot\beta_{n,j}^{-})]\in N^{\alpha}$ for $\alpha=f\circ s|_{[0,a]}$.

By construction, the paths $s$ and $\lambda_{\infty}$ agree on $\ui\backslash U$. Moreover, for every component $(a,b)=I_{n,j}$ of $U$, we have \[f_{\#}([s|_{[0,b]}\cdot\lambda_{\infty}|_{[a,b]}^{-}\cdot s|_{[0,a]}^{-}])=f_{\#}([s|_{[0,a]}\cdot\ell_{n,j}\cdot\beta_{n,j}^{-}\cdot s|_{[0,a]}^{-}])\in N.\]
Since $X$ is assumed to have transfinite products relative to $N$, we conclude that $f_{\#}([s\circ \lambda_{\infty}^{-}])\in N$. Thus $f_{\#}(\dinf)=f_{\#}([\lambda_1\cdot s^{-}])f_{\#}([s\cdot\lambda_{\infty}^{-}])\in N$.
The last statement of the theorem follows from Corollary \ref{agreeonnormalsubgroupscorollary}.
\end{proof}
We conclude this paper by considering spaces $X$ with a discrete wild set $\wild(X)$.
\begin{lemma}\label{wildendsonlylemma}
Suppose $H\leq\pi_1(X,x_0)$ is $(C,c_{\infty})$-closed, $\gamma\in P(X,x_0)$, and $A$ is a nowhere dense closed subset of $\ui$ containing $\{0,1\}$. If $\alpha,\beta\in P(X,\gamma(1))$ are paths such that
\begin{enumerate}
\item $\alpha|_{A}=\beta|_{A}$,
\item $[\alpha|_{[0,b]}\cdot\beta|_{[a,b]}^{-}\cdot\alpha|_{[0,a]}^{-}]\in H^{\gamma}$ for all components $(a,b)$ of $\ui\backslash A$,
\item $\alpha((0,1))\cap \wild(X)=\emptyset$ and $\beta((0,1))\cap \wild(X)=\emptyset$,
\end{enumerate}
then $[\alpha\cdot\beta^{-}]\in H^{\gamma}$.
\end{lemma}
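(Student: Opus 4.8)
Write $v=\gamma(1)$ and $w=\alpha(1)=\beta(1)$, and for each $s\in A$ set $D_s:=[\alpha|_{[0,s]}\cdot\beta|_{[0,s]}^{-}]$; since $s\in A$ forces $\alpha(s)=\beta(s)$ (Hypothesis (1)), this is a loop based at $v$, with $D_0=1$ and $D_1=[\alpha\cdot\beta^{-}]$. The goal is $D_1\in H^{\gamma}$, so I would study the coset function $\Phi\colon A\to\pi_1(X,v)/\!\sim$, $\Phi(s)=H^{\gamma}D_s$, and aim to prove it is constant. A direct computation gives, for a component $(a,b)$ of $\ui\backslash A$, that $D_b=h\,D_a$ where $h=[\alpha|_{[0,b]}\cdot\beta|_{[a,b]}^{-}\cdot\alpha|_{[0,a]}^{-}]\in H^{\gamma}$ by Hypothesis (2); hence $\Phi(a)=\Phi(b)$. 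This is the only place Hypothesis (2) enters.

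\textbf{Reduction on the non-wild interior.} Hypothesis (3) is used next: for $s^{*}\in A\cap(0,1)$ the point $\alpha(s^{*})=\beta(s^{*})$ is not wild, so there is a neighborhood $U\in\mct_{\alpha(s^{*})}$ whose inclusion induces the trivial map on $\pi_1$. For $s\in A$ close to $s^{*}$ (so that $\alpha,\beta$ map the intervening interval into $U$), the loop $\alpha|_{[s^{*},s]}\cdot\beta|_{[s^{*},s]}^{-}$ is null-homotopic in $X$, whence $D_s=D_{s^{*}}$ \emph{as elements}, not merely as cosets. Thus the gap-extension of $\Phi$ is locally constant at every point of the connected interval $(0,1)$, and I would conclude that $\Phi$ is constant on $A\cap(0,1)$, with common value $\Phi_{*}$. (If $0$, resp.\ $1$, is isolated in $A$, the gap relation already gives $\Phi_{*}=\Phi(0)=1$, resp.\ $\Phi(1)=\Phi_{*}$, and that endpoint needs no further work.)

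\textbf{Crossing the two wild endpoints via $(\cinfty,c_{\infty})$-closedness.} It remains to connect $\Phi_{*}$ to $\Phi(0)=1$ and to $\Phi(1)$ across the possibly wild points $v$ and $w$. Near $v$: choose $t_n\downarrow 0$ in $A$ and put $\rho_n=D_{t_n}D_{t_{n+1}}^{-}$. These are loops based at $v$ that shrink to $v$, and $\rho_n\in H^{\gamma}$ because $\Phi(t_n)=\Phi(t_{n+1})=\Phi_{*}$. Building $f\colon(\bbhp,\bpp)\to(X,x_0)$ with $f\circ\iota=\gamma$ and $f\circ\ell_n=\rho_n$ gives $f_{\#}(c_n)=[\gamma\cdot\rho_n\cdot\gamma^{-}]\in H$, so $f_{\#}(\cinfty)\leq H$; since $H$ is $(\cinfty,c_{\infty})$-closed we get $f_{\#}(c_{\infty})=[\gamma\cdot D_{t_1}\cdot\gamma^{-}]\in H$ (the infinite concatenation telescopes to $D_{t_1}$), i.e.\ $\Phi_{*}=\Phi(t_1)=1$. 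Near $w$ I would work with the outward tail loops $D''_s:=[\alpha|_{[s,1]}^{-}\cdot\beta|_{[s,1]}]$ based at $w$ and the increments $\Delta_n=D''_{s_n}(D''_{s_{n+1}})^{-}$ for $s_n\uparrow 1$ in $A$. The decisive point is that $\Delta_n$ is, after telescoping the conjugating paths \emph{outward along $\alpha$}, the conjugate of $D_{s_{n+1}}D_{s_n}^{-1}\in H^{\gamma}$ by the path $\gamma\cdot\alpha|_{[0,s_n]}\cdot\alpha|_{[s_n,s_{n+1}]}\cdot\alpha|_{[s_{n+1},1]}=\gamma\cdot\alpha$; hence $\Delta_n\in K:=H^{\gamma\alpha}$. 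A second $\bbhp$-map with whisker $\gamma\cdot\alpha$ and circles $\Delta_n$ then yields $[D''_{s_1}]\in K$, which unwinds (conjugating back by $\alpha$) to $D_1D_{s_1}^{-1}\in H^{\gamma}$, and since $D_{s_1}\in H^{\gamma}$ we conclude $D_1\in H^{\gamma}$.

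\textbf{Main obstacle.} The routine part is the interior reduction of the second paragraph. The crux is the $w$-crossing: one must simultaneously arrange that the outward increments $\Delta_n$ form a genuine null-sequence of loops \emph{at $w$} and that they lie in the \emph{correctly conjugated} subgroup $K=H^{\gamma\alpha}$. The telescoping of the conjugating paths along $\alpha$ all the way to $w$ is exactly what places each $\Delta_n$ in $K$ on the nose, and this is what allows the argument to run without assuming $H$ normal; a naive conjugation (e.g.\ along $\beta$, or keeping the basepoint at $v$) lands the increments in a subgroup differing from $K$ by conjugation by the unknown tail class, and the application of homotopical Hausdorffness would then fail. Handling the bookkeeping of these changes of basepoint, and verifying that the two infinite concatenations telescope to $D_{t_1}$ and $D''_{s_1}$ respectively, is where the technical care lies.
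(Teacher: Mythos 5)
Your proof is correct. The gap relation $D_b=hD_a$, the local constancy of the coset function at interior points of $A$ (via semilocal simple connectivity at non-wild points), the connectedness argument, and both endpoint crossings all check out; the only blemishes are cosmetic (e.g.\ $\Delta_n$ is the conjugate of $D_{s_n}D_{s_{n+1}}^{-1}$ rather than $D_{s_{n+1}}D_{s_n}^{-1}$, and one should say explicitly that the null sequences are the concrete representatives $\alpha|_{[0,t_n]}\cdot\beta|_{[t_{n+1},t_n]}^{-}\cdot\alpha|_{[0,t_{n+1}]}^{-}$, resp.\ $\alpha|_{[s_n,1]}^{-}\cdot\beta|_{[s_n,s_{n+1}]}\cdot\alpha|_{[s_{n+1},1]}$, not the homotopy classes). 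Your route differs from the paper's in a genuine way. The paper first realizes the data as a map $f:\bbb\to X$ with $f_{\#}(W)\leq H^{\gamma}$ and then runs a four-case analysis on whether $\alpha(0)$ and $\alpha(1)$ are wild: the no-wild-endpoint case is settled by pulling back along the comparison map $\bbd\to\bbb$ from Proposition \ref{transfinitepathproductcomparisonprop}(1) and invoking the finite-subdivision argument of Lemma \ref{transfiniteprep}(2); the one-wild-endpoint case interleaves Cantor-complementary intervals (handled by Hypothesis (2)) with rescaled copies of $\bbb$ (handled by the previous case) along a specific sequence $t_n\downarrow 0$ and then crosses the wild point with exactly your telescoping $\bbhp$-trick; the remaining cases follow by reflection (with the same $H^{\gamma\cdot\alpha}$ bookkeeping you perform) and by splitting $\bbb$ at $1/3$ and $2/3$. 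So the decisive device at wild endpoints — a null sequence of increment loops mapped in via $\bbhp$, with $(\cinfty,c_{\infty})$-closedness and a telescoping homotopy — is the same in both proofs, but your treatment of the interior replaces the paper's test-space machinery and case analysis with an elementary locally-constant-function argument, making the proof self-contained (no appeal to $\bbb$, to Lemma \ref{transfiniteprep}, or to the $\bbd\to\bbb$ retraction). What the paper's organization buys instead is reusability: its Cases I--III are invoked individually in the proof of Theorem \ref{discretewildsettheorem}, whereas your argument proves the lemma as a single unit.
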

\begin{proof}
By the proof of Proposition \ref{transfinitepathprodchar}, we may reparameterize $\alpha,\beta$ and construct a map $f:\bbb\to X$ such that $f\circ \uinf= \alpha$, $f\circ \linf=\beta$ and $f_{\#}(W)\leq H^{\gamma}$. It suffices to show that $[\alpha\cdot\beta^{-}]=f_{\#}(w_{\infty})\in H^{\gamma}$. Let $x_1=f(d_0)$ and $x_2=f(1,0)$. We consider four cases.

Case I: Suppose $x_1\notin \wild(X)$ and $x_2\notin \wild(X)$. Then $f(\bbb)\subseteq X\backslash \wild(X)$. Recall from the proof of Proposition \ref{transfinitepathproductcomparisonprop} (1) that there is a map $g:(\bbd,d_0)\to(\bbb,d_0)$ such that $g|_{B}=id_{B}$, $g_{\#}(D)\leq W$ and $g_{\#}(d_{\infty})=w_{\infty}$. Therefore $f\circ g:(\bbd,d_0)\to (X,\alpha(1))$ is a map such that $f\circ g(B)\subseteq X\backslash\wild(X)$ and $(f\circ g)_{\#}(S)\leq (f\circ g)_{\#}(D)\leq f_{\#}(W)\leq  H^{\gamma}$. By (2) of Lemma \ref{transfiniteprep}, we have $f_{\#}(w_{\infty})=(f\circ g)_{\#}(\dinf)\in H^{\gamma}$.

Case II: Suppose $x_1\in \wild(X)$ and $x_2\notin \wild(X)$. Define $(t_{n})_{n\in\bbn}$ to be the sequence in the Cantor set $\mcc$ given by $t_{2m-1}=\frac{1}{3^{m-1}}$ and $t_{2m}=\frac{2}{3^{m}}$. Note that $\{(x,y)\in \bbb\mid t_{n+1}\leq x\leq t_n\}$ is homeomorphic to either $\bbb$ when $n$ is odd or $S^1$ when $n$ is even. Since $\alpha((0,1])\cap \wild(X)=\emptyset$ and $\beta((0,1])\cap \wild(X)=\emptyset$, we have $[\alpha|_{[0,t_{n}]}\cdot \beta|_{[t_{n+1},t_n]}^{-}\cdot \alpha|_{[0,t_{n+1}]}^{-}]\in H^{\gamma}$ by assumption when $n$ is even and $[\alpha|_{[t_{n+1},t_{n}]}\cdot \beta|_{[t_{n+1},t_n]}^{-}]\in H^{\gamma\cdot\alpha|_{[0,t_{n+1}]}}$ by Case I when $n$ is odd. Thus $[\alpha|_{[0,t_{n}]}\cdot \beta|_{[t_{n+1},t_n]}^{-}\cdot \alpha|_{[0,t_{n+1}]}^{-}]\in H^{\gamma}$ for all $n\in \bbn$. Define a map $k:\bbhp\to X$ by $k\circ \iota=\gamma$, and $k\circ \ell_{n}=\alpha|_{[0,t_{n}]}\cdot \beta|_{[t_{n+1},t_n]}^{-}\cdot \alpha|_{[0,t_{n+1}]}^{-}$. Since $k_{\#}(C)\leq H$ and $H$ is $(C,c_{\infty})$-closed, we have $k_{\#}(c_{\infty})\in H$. Thus $[k\circ \ell_{\infty}]\in H^{\gamma}$. However,
\begin{eqnarray*}
[k\circ \ell_{\infty}] &=& \left[\prod_{n=1}^{\infty}\left(\alpha|_{[0,t_{n}]}\cdot \beta|_{[t_{n+1},t_n]}^{-}\cdot \alpha|_{[0,t_{n+1}]}^{-}\right)\right]\\
&=& \left[\alpha|_{[0,t_{1}]}\right] \left[\prod_{n=1}^{\infty}\left(\beta|_{[t_{n+1},t_n]}^{-}\cdot \alpha|_{[0,t_{n+1}]}^{-}\cdot \alpha|_{[0,t_{n+1}]}\right)\right]\\
&=&\left[\alpha|_{[0,t_{1}]}\right]\left[\prod_{n=1}^{\infty}\beta|_{[t_{n+1},t_n]}^{-}\right]\\
 &=& \left[\alpha\cdot\beta^{-}\right]
\end{eqnarray*}

Case III: Suppose $x_1\notin \wild(X)$ and $x_2\in \wild(X)$. Define $f':\bbb\to X$ by $f'(x,y)=f(1-x,y)$. Since $(f')_{\#}(W)\leq H^{\gamma\cdot\alpha}$, we may use Case II to conclude that $(f')_{\#}(w_{\infty})=[\alpha^{-}\cdot\beta]\in H^{\gamma\cdot\alpha}$. Conjugating by $[\alpha]$ and inverting gives $[\alpha\cdot\beta^{-}]\in H^{\gamma}$.

Case IV: Suppose $x_1\in \wild(X)$ and $x_2\in \wild(X)$. Define maps $f_1,f_2:\bbb\to X$ so that $f_1\circ \uinf\equiv\alpha|_{[0,1/3]}$, $f_1\circ \linf\equiv\beta|_{[0,1/3]}$, $f_2\circ \uinf\equiv\alpha|_{[2/3,1]}$, and $f_2\circ \linf\equiv\beta|_{[2/3,1]}$. Applying Case II to $f_1$, we see that $[\alpha|_{[0,1/3]}\cdot \beta|_{[0,1/3]}^{-}]\in H^{\gamma}$. Applying Case III to $f_2$, we see that $[\alpha|_{[2/3,1]}\cdot \beta|_{[2/3,1]}^{-}]\in H^{\gamma\cdot\alpha|_{[0,2/3]}}$. Thus $[\alpha\cdot \beta|_{[2/3,1]}^{-}\cdot\alpha|_{[0,2/3]}^{-}]\in H^{\gamma}$. By assumption, we have $[\alpha|_{[0,2/3]}\cdot\beta|_{[1/3,2/3]}^{-}\cdot\alpha|_{[0,1/3]}^{-}]\in H^{\gamma}$. It follows that
\[
[\alpha\cdot\beta^{-}] = [\alpha\cdot\beta|_{[2/3,1]}^{-}\cdot\alpha|_{[0,2/3]}^{-}][\alpha|_{[0,2/3]}\cdot \beta|_{[1/3,2/3]}^{-}\cdot \alpha|_{[0,1/3]}^{-}][\alpha|_{[0,1/3]}\cdot \beta|_{[0,1/3]}^{-}]\in H^{\gamma}.\]
\end{proof}
\begin{lemma}\label{wildlooplemma}
Suppose $N\trianglelefteq\pi_1(X,x_0)$ is a normal $(P,p_{\tau})$-closed subgroup, $\gamma\in P(X,x_0)$, and $A$ is a nowhere dense closed subset of $\ui$ containing $\{0,1\}$. If $\alpha,\beta\in P(X,\gamma(1))$ are paths such that
\begin{enumerate}
\item $\alpha|_{A}=\beta|_{A}$,
\item $[\alpha|_{[0,b]}\cdot\beta|_{[a,b]}^{-}\cdot\alpha|_{[0,a]}^{-}]\in N^{\gamma}$ for every component $(a,b)$ of $\ui\backslash A$,
\item there is a point $x_1\in X$ such that $x_1\in\alpha([a,b])\cup \beta([a,b])$ for all components $(a,b)$ of $\ui\backslash A$,
\end{enumerate}
then $[\alpha\cdot\beta^{-}]\in N^{\gamma}$.
\end{lemma}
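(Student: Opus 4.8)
The plan is to run the hypothesis through the transfinite products property and compare $\alpha$ and $\beta$ by a single pair of maps from $\bbhp$. Since $N$ is $(\pinfty,p_{\tau})$-closed, Proposition~\ref{transfiniteproductprop} shows that $X$ has transfinite products relative to $N$, and this is the engine of the argument. First I would use Remark~\ref{nowheredensermk} to assume $A$ is nowhere dense, and then, exactly as in the proof of Proposition~\ref{transfinitepathprodchar}, reparametrize by a non-decreasing continuous surjection carrying the middle-third Cantor set $\mcc$ onto $A$; the components of $\ui\setminus\mcc$ either map homeomorphically onto a component of $\ui\setminus A$ (inheriting conditions (1)--(3)) or collapse to a point of $A$ (contributing only degenerate data). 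Thus I may assume $A=\mcc$. A preliminary reduction removes the isolated points of $A$: if $t\in A$ is isolated with adjacent gaps $(a,t),(t,b)$, then condition (2) for $(a,b)$ factors as the product of condition (2) for $(t,b)$ and for $(a,t)$ (using $\alpha(t)=\beta(t)$), while (1) and (3) are inherited; iterating toward the perfect kernel lets me assume $A$ is a Cantor set.

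The decisive consequence of condition (3) is then that $\alpha|_{A}=\beta|_{A}\equiv x_1$. Indeed, every $t\in\mcc$ is a limit of complementary components $(a_j,b_j)$ with $diam\,[a_j,b_j]\to 0$; by uniform continuity the sets $\alpha([a_j,b_j])\cup\beta([a_j,b_j])$ shrink to $\{\alpha(t)\}$, yet each contains $x_1$ by (3), forcing $\alpha(t)=x_1$. In particular $\gamma(1)=\alpha(0)=x_1$, and for every component $\overline{I_{n}^{k}}$ of $\ui\setminus\mcc$ both $\alpha|_{\overline{I_{n}^{k}}}$ and $\beta|_{\overline{I_{n}^{k}}}$ are loops based at $x_1$ whose diameters tend to $0$ as $n\to\infty$. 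Indexing the components as in the definition of $\ell_{\tau}$ (the circle of $\bbh$ that $\ell_{\tau}$ traverses on $\overline{I_{n}^{k}}$ being $\ell_{2^{n-1}+k-1}$), these loops form null-sequences at $x_1$, so I obtain well-defined continuous maps $a,b:(\bbhp,\bpp)\to(X,x_0)$ with $a\circ\iota=b\circ\iota=\gamma$, and $a\circ\ell_{2^{n-1}+k-1}=\alpha|_{\overline{I_{n}^{k}}}$, $b\circ\ell_{2^{n-1}+k-1}=\beta|_{\overline{I_{n}^{k}}}$. By construction $a\circ\ell_{\tau}\equiv\alpha$ and $b\circ\ell_{\tau}\equiv\beta$, whence $a_{\#}(c_{\tau})=[\gamma\cdot\alpha\cdot\gamma^{-}]$ and $b_{\#}(c_{\tau})=[\gamma\cdot\beta\cdot\gamma^{-}]$.

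Next I would verify the coset hypotheses of transfinite products. For the component $\overline{I_{n}^{k}}=[a,b]$ we have $a_{\#}(c_m)b_{\#}(c_m)^{-1}=[\gamma\cdot(\alpha|_{[a,b]}\cdot\beta|_{[a,b]}^{-})\cdot\gamma^{-}]$, where $m=2^{n-1}+k-1$. Condition (2) reads $[\alpha|_{[0,a]}\cdot(\alpha|_{[a,b]}\cdot\beta|_{[a,b]}^{-})\cdot\alpha|_{[0,a]}^{-}]\in N^{\gamma}$, and here $\alpha|_{[0,a]}$ is a \emph{loop} at $x_1=\gamma(1)$. Since $N\trianglelefteq\pionex$, the conjugate $N^{\gamma}$ is normal in $\pi_1(X,\gamma(1))$, so this conjugation does not affect membership; thus $[\alpha|_{[a,b]}\cdot\beta|_{[a,b]}^{-}]\in N^{\gamma}$ and therefore $Na_{\#}(c_m)=Nb_{\#}(c_m)$ for every $m$. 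Because $X$ has transfinite products relative to $N$, it follows that $Na_{\#}(c_{\tau})=Nb_{\#}(c_{\tau})$, i.e.\ $a_{\#}(c_{\tau})b_{\#}(c_{\tau})^{-1}=[\gamma\cdot\alpha\cdot\beta^{-}\cdot\gamma^{-}]\in N$, which is equivalent to $[\alpha\cdot\beta^{-}]\in N^{\gamma}$, as desired.

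I expect the main obstacle to lie in the first paragraph rather than in the clean transfinite-products step: namely, the rigorous reduction to a perfect (hence Cantor) set $A$ together with the proof that condition (3) forces $\alpha|_{A}\equiv x_1$. The delicate point is that stripping isolated points of $A$ must be iterated through a Cantor--Bendixson exhaustion, and at limit stages a merged component may absorb infinitely many original gaps; re-establishing condition (2) for such a component is itself a transfinite combination, which I anticipate handling by invoking transfinite products relative to $N$ again on the subinterval. Once $A$ is a Cantor set and $\alpha|_{A}\equiv x_1$ is secured, the remaining ingredients---the null-sequence verification via uniform continuity, the identifications $a\circ\ell_{\tau}\equiv\alpha$, $b\circ\ell_{\tau}\equiv\beta$, and the use of normality to absorb the conjugating prefixes---are routine.
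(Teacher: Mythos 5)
Your core mechanism is sound and, in fact, coincides with the paper's: build two maps $(\bbhp,\bpp)\to(X,x_0)$ whose loops are the component-restrictions of $\alpha$ and $\beta$, check that the cosets $Na_{\#}(c_m)=Nb_{\#}(c_m)$ agree using hypothesis (2) together with normality of $N$, and then invoke transfinite products relative to $N$ (Proposition \ref{transfiniteproductprop}). In the case where $A$ is perfect this works exactly as you say, because then every point of $A$ is a limit point, condition (3) forces $\alpha|_{A}=\beta|_{A}\equiv x_1$, and the component restrictions really are null sequences of loops based at $x_1=\gamma(1)$.

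The genuine gap is the reduction of a general nowhere dense closed $A$ to a perfect one. First, the order of your two reductions cannot be interchanged: after reparametrizing by $h:\ui\to\ui$ with $h(\mcc)=A$, a component of $\ui\setminus\mcc$ that $h$ collapses to an \emph{isolated} point $p$ of $A$ satisfies $\alpha'([c,d])\cup\beta'([c,d])=\{\alpha(p)\}$, so condition (3) fails for it; hence your argument that ``every $t\in\mcc$ is a limit of gaps containing $x_1$'' breaks down precisely at the preimages of isolated points, where $\alpha(h(t))$ need not equal $x_1$. Second, the Cantor--Bendixson stripping that is supposed to remove those isolated points is circular: passing from $A$ to its derived set (which is unavoidable, since $A$ may have infinitely many isolated points) merges infinitely many gaps into one, and re-establishing hypothesis (2) for a merged gap is itself an instance of the lemma being proved. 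Your suggestion to handle this by transfinite induction does not obviously terminate: if the Cantor--Bendixson rank of $A$ is of limit type, the derived set has the \emph{same} rank (since $1+\mu=\mu$ for infinite $\mu$), so the induction does not descend; and when $A$ is countable the perfect kernel is empty, so there is no ``clean case'' to reduce to at all. The paper sidesteps all of this with one extra idea that your proposal is missing: condition (3) is used not only to force $\alpha=x_1$ at limit points, but to choose, inside each gap $(a,b)$ adjacent to an isolated point $a$, a connecting path $\rho_a$ (a subpath of $\alpha|_{[a,b]}$ or $\beta|_{[a,b]}$) from $x_1$ to $\alpha(a)$. Replacing $\alpha,\beta$ by the loops $L,M$ with $L|_{[a,b]}\equiv\rho_a\cdot\alpha|_{[a,b]}\cdot\rho_b^{-}$ and $M|_{[a,b]}\equiv\rho_a\cdot\beta|_{[a,b]}\cdot\rho_b^{-}$ collapses all of $A$ to $x_1$ in a single step (continuity and the homotopies $L\simeq\rho_0\cdot\alpha\cdot\rho_1^{-}$, $M\simeq\rho_0\cdot\beta\cdot\rho_1^{-}$ follow from compactness), the conjugations by $\rho_a$ are absorbed by normality of $N$, and then the transfinite products property is applied to a single loop $\zeta$ in $\bbh$ encoding the order type of \emph{all} the gaps at once --- no induction on $A$ is needed. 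Without this (or some equivalent device for isolated points), your proof covers only the perfect case.
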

\begin{proof}
Let $\mathscr{C}$ denote the set of components of $\ui\backslash A$. First, we note that if $\mathscr{C}$ is finite, then the conclusion is clear since, in this case, $[\alpha\cdot\beta^{-}]$ factors as a product of the elements $[\alpha|_{[0,b]}\cdot\beta|_{[a,b]}^{-}\cdot\alpha|_{[0,a]}^{-}]\in N^{\gamma}$, $(a,b)\in\mathscr{C}$. Therefore, we assume $A$ and $\mathscr{C}$ are infinite. It follows from assumption (3) that if $a$ is a limit point of $A$, then there exists a sequence $t_n\to a$ in $\ui$ such that either $\alpha(t_n)=x_1$ or $\beta(t_n)=x_1$ for all $n$. Hence $\alpha(a)=x_1$ for all limit points $a$ of $A$. Additionally, since $A$ is compact, if $U$ is any open neighborhood of $x_1$, then we must have $\alpha([a,b])\cup\beta([a,b])\subseteq U$ for all but finitely many $(a,b)\in\mathscr{C}$.

For each point $a\in A$, we define a path $\rho_{a}$ from $x_1$ to $\alpha(a)=\beta(a)$. If $a$ is a limit of point of $A$, let $\rho_{a}:\{a\}\to X$ be the degenerate constant path at $x_1$. If $a\in A\cap [0,1)$ is an isolated point, there is a $b\in A$ such that $(a,b)\in\mathscr{C}$. If there exists a smallest $s\in [a,b]$ such that $\alpha(s)=x_1$, define $\rho_{a}\equiv \alpha|_{[a,s]}^{-}$. If no such $s$ exists, then there exists a smallest $s\in [a,b]$ such that $\beta(s)=x_1$; in this case set $\rho_{a}\equiv \beta|_{[a,s]}^{-}$. If $1$ is an isolated point of $A$, take $\rho_{1}$ to be any path from $x_1$ to $\alpha(1)=\beta(1)$. Define loops $L,M:\ui\to X$ so that $L(A)=M(A)=x_1$ and if $(a,b)$ is a component of $\ui\backslash A$, set $L|_{[a,b]}\equiv \rho_a\cdot \alpha|_{[a,b]}\cdot \rho_{b}^{-}$ and $M|_{[a,b]}\equiv \rho_a\cdot \beta|_{[a,b]}\cdot \rho_{b}^{-}$. Note that for any open neighborhood $U$ of $x_1$, all but finitely many $L|_{[a,b]}$ and $M|_{[a,b]}$ have image in $U$. It follows that $L$ and $M$ are continuous.

The loops $L$ and $M$ are constructed from $\alpha$ and $\beta$ respectively by inserting an at most countably infinite number of loops $\rho_{a}\cdot\rho_{a}^{-}$ at the isolated points $a\in A$ with $0<a<1$, prepending $\rho_0$ and appending $\rho_{1}^{-}$. Each such loop contracts in it's own image. Since all but finitely many of these contractions lie within a given neighborhood of $x_1$, there are homotopies $L\simeq \rho_{0}\cdot\alpha\cdot\rho_{1}^{-}$ and $M\simeq \rho_{0}\cdot\beta\cdot\rho_{1}^{-}$. Thus $[\alpha\cdot\beta^{-}]=[\rho_{0}^{-}\cdot L \cdot M ^{-}\cdot\rho_0]$. We now seek to show $[L\cdot M^{-}]\in N^{\gamma\cdot\rho_{0}^{-}}$.

Fix a component $(a,b)$ of $\ui\backslash A$ and recall $[\alpha|_{[0,b]}\cdot\beta|_{[a,b]}^{-}\cdot\alpha|_{[0,a]}^{-}]\in N^{\gamma}$.  Conjugating by $[\alpha|_{[0,a]}^{-}]$ gives $[\alpha|_{[a,b]}\cdot \beta|_{[a,b]}^{-}]=[\alpha|_{[0,a]}^{-}\cdot\alpha|_{[0,b]}\cdot\beta|_{[a,b]}^{-}\cdot\alpha|_{[0,a]}^{-}\cdot\alpha|_{[0,a]}] \in N^{\gamma\cdot\alpha|_{[0,a]}}$. Thus \[[L|_{[a,b]}\cdot M|_{[a,b]}^{-}]=[\rho_a\cdot\alpha|_{[a,b]}\cdot \beta|_{[a,b]}^{-}\cdot\rho_{a}^{-}]  \in   N^{\gamma\cdot\alpha|_{[0,a]}\cdot \rho_{a}^{-}}=N^{\gamma\cdot \rho_{0}^{-}}\]where the equality $N^{\gamma\cdot\alpha|_{[0,a]}\cdot \rho_{a}^{-}}=N^{\gamma\cdot \rho_{0}^{-}}$ follows from the normality of $N$.

Enumerate the infinitely many components of $\ui\backslash A$ as $(a_1,b_1),(a_2,b_2),(a_3,b_3),\dots$. The loop $L$ induces a map $f_{\alpha}:\bbhp\to X$ such that $f_{\alpha}\circ\iota=\gamma\cdot\rho_{0}^{-}$ and $f|_{\alpha}\circ \ell_{n}\equiv L|_{[a_n,b_n]}$. Similarly, $M$ induces a map $f_{\beta}:\bbhp\to X$ such that $f_{\beta}\circ\iota=\gamma\cdot\rho_{0}^{-}$ and $f|_{\beta}\circ \ell_{n}\equiv M|_{[a_n,b_n]}$. Note that there is a loop $\zeta$ in $\bbh$ based at $b_0$ such that $f_{\alpha}\circ \zeta\equiv L$ and $f_{\beta}\circ \zeta\equiv M$. Since $\bbh$ is assumed to have transfinite products relative to $N$ (recall Proposition \ref{transfiniteproductprop}) and $(f_{\alpha})_{\#}(c_n)(f_{\beta})_{\#}(c_n)^{-1}=[\gamma\cdot\rho_{0}^{-}]\left[L|_{[a_n,b_n]}\cdot M|_{[a_n,b_n]}^{-}\right][\rho_{0}\cdot\gamma^{-}]\in N$ for each $n\in\bbn$, we have $[\gamma\cdot\rho_{0}^{-}][ L \cdot M ^{-}][\rho_0\cdot\gamma^{-}]=(f_{\alpha})_{\#}([\iota\cdot\zeta\cdot\iota]) (f_{\beta})_{\#}([\iota\cdot\zeta\cdot\iota])^{-1}\in N$, completing the proof.
\end{proof}
\begin{theorem}\label{discretewildsettheorem}
Suppose $\wild(X)$ is discrete and $N\trianglelefteq \pionex$ is a normal subgroup. Then $N$ is $(D,d_{\infty})$-closed if and only if $N$ is $(P,p_{\tau})$-closed. In particular, the closure operators $cl_{D,d_{\infty}}$ and $cl_{P,p_{\tau}}$ agree on the normal subgroups of $\pionex$.
\end{theorem}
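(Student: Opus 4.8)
The plan is to prove the nontrivial equivalence for normal subgroups and then deduce the statement about closure operators from Corollary \ref{agreeonnormalsubgroupscorollary}, noting that $(\bbf,\dinf)$ is a normal closure pair by Proposition \ref{conjugatesforF} and that $(\pinfty,p_{\tau})$ is normal by Lemma \ref{wellpointedlemma}, since $\bbhp$ is well-pointed. One implication is free of any hypothesis on $\wild(X)$: if $N$ is $(\bbf,\dinf)$-closed then $N$ is $(\bbk,w_{\infty})$-closed by Proposition \ref{transfinitepathproductcomparisonprop}(1), and since $N$ is normal it is then $(\pinfty,p_{\tau})$-closed by Proposition \ref{transfinitepathproductcomparisonprop}(3).

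For the converse I would first reduce the target. A discrete space is totally path disconnected, so Theorem \ref{tpdtheorem} applies and shows that a normal $N$ is $(\bbf,\dinf)$-closed if and only if it is $(\bbk,w_{\infty})$-closed; by Proposition \ref{transfinitepathprodchar} the latter is equivalent to $X$ having transfinite path products relative to $N$. Thus it suffices to show that if $N\trianglelefteq\pionex$ is normal and $(\pinfty,p_{\tau})$-closed and $\wild(X)$ is discrete, then $X$ has transfinite path products relative to $N$. Under these hypotheses $N$ is automatically $(\cinfty,c_{\infty})$-closed (Propositions \ref{ptautoctau} and \ref{normalsubgroup}) and $X$ has transfinite products relative to $N$ (Proposition \ref{transfiniteproductprop}); consequently both Lemma \ref{wildendsonlylemma} and Lemma \ref{wildlooplemma} are available, and the same holds for each path-conjugate $N^{\gamma}$ because both closure pairs are normal.

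The decisive consequence of discreteness is a finiteness statement: for any path $p:\ui\to X$, Proposition \ref{wildpullbackprop} shows $p^{-1}(\wild(X))$ is closed, hence compact, so its image $p(\ui)\cap\wild(X)$ is a compact subset of the discrete space $\wild(X)$ and therefore finite. Thus a pair of paths $p,q$ meets only finitely many wild points $w_1,\dots,w_k$, in sharp contrast to the merely totally-path-disconnected case. I would also record the reformulation that, in the definition of transfinite path products, the hypothesis already supplies that each single-component coset $[p|_{[0,b]}\cdot q|_{[a,b]}^{-}\cdot p|_{[0,a]}^{-}]$ lies in $N$, so the entire content of the property is the gluing of these (possibly transfinitely many) cosets across the closed set $A$.

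The gluing I would organize by induction on the number $k$ of wild points met by $(p,q)$, proving the stronger statement in which the basepoint is replaced by an arbitrary base path $\gamma$ and $N$ by $N^{\gamma}$ — the precise form in which Lemmas \ref{wildendsonlylemma} and \ref{wildlooplemma} are phrased. The base case $k=0$ is exactly Lemma \ref{wildendsonlylemma} (Case I), since both paths then avoid $\wild(X)$, and this lemma simultaneously absorbs all accumulation of components at regular points via $(\cinfty,c_{\infty})$-closedness. For the inductive step I would single out one wild point $w$, use Lemma \ref{wildlooplemma} to collapse the portion of the decomposition that funnels through $w$ into a single $w$-based transfinite product — this is where $(\pinfty,p_{\tau})$-closedness enters — and apply the inductive hypothesis to the complementary stretches, which meet at most $k-1$ wild points, recombining the pieces by a final appeal to transfinite products relative to $N$. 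The main obstacle is precisely this decomposition: the set $A$ cannot in general be enlarged while preserving both the agreement $p|_A=q|_A$ and the per-component coset hypothesis, so the visits to $w$ must be isolated without refining $A$ at parameters where $p$ and $q$ disagree. This is exactly what the auxiliary paths $\rho_a$ to the wild point in Lemma \ref{wildlooplemma} and the endpoint case analysis (Cases II–IV) of Lemma \ref{wildendsonlylemma} are engineered to accomplish, and marshalling them into one induction that tracks the finitely many wild values is the technical heart of the proof.
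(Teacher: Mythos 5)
Your framing and all the reductions are exactly those of the paper: the easy direction via Proposition \ref{transfinitepathproductcomparisonprop}, the passage through Theorem \ref{tpdtheorem} (discrete $\Rightarrow$ totally path disconnected) to reduce the converse to showing $N$ is $(\bbk,w_{\infty})$-closed, the observation via Proposition \ref{wildpullbackprop} that a pair of paths meets only finitely many wild points, and the identification of Lemmas \ref{wildendsonlylemma} and \ref{wildlooplemma} as the two tools. But your argument stops exactly where the paper's work begins: the inductive step is never carried out, and you yourself defer it as ``the technical heart.'' That deferral hides a genuine gap, and your induction-on-$k$ organization runs into a concrete obstruction. The interval cannot, in general, be cut into finitely many stretches that either funnel through the chosen wild point $w$ or avoid $w$ entirely: components of $\ui\backslash A$ whose closed images meet $w$ can interleave \emph{infinitely} with components whose images meet no wild point at all, accumulating at parameters mapping to $w$. (Components meeting a \emph{different} wild point $w'$ cannot interleave infinitely with them, since preimages of distinct wild points are disjoint compact sets and hence a positive distance apart --- but wild-free components can.) Lemma \ref{wildlooplemma} cannot absorb such a mixed stretch, because its hypothesis (3) requires \emph{every} component to meet $x_1=w$. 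The paper resolves precisely this difficulty by a construction you do not supply: it builds the closed set $A$ out of all Cantor-set parameters mapping into the finite wild set together with the endpoints of all $\mathcal{I}$-components whose closed images meet it, disposes of the wild-free components (Types II and III) with Lemma \ref{wildendsonlylemma}, observes that each Type III component is then isolated between two Type I components, and deletes one endpoint of each (forming $A^{\ast}$) so as to merge it with a Type I neighbor. After this merging, every component of $\ui\backslash A^{\ast}$ meets a wild point, and the minimum-distance argument splits the components into finitely many convex blocks of constant singleton wild image (each handled by Lemma \ref{wildlooplemma}) plus finitely many singletons of larger wild image (whose cosets are already known to lie in $N$).

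A second problem is your ``final appeal to transfinite products relative to $N$'' to recombine the pieces. If your decomposition were finite this appeal would be unnecessary (a finite product of cosets suffices, which is all the paper uses); if it is infinite --- as the interleaving phenomenon above forces in general --- then transfinite products cannot do the job: Definition \ref{tproddef} is a property of null-sequences of \emph{loops at a single point}, whereas gluing infinitely many path-pieces along a closed subset of $\ui$ is exactly the transfinite \emph{path} products property (Definition \ref{tppdef}) that you are in the middle of proving, so the appeal would be circular. In short: your easy direction, your reduction, and your inventory of tools are correct and agree with the paper, but the combinatorial decomposition ($A$, the type classification, the $A^{\ast}$ merging, and the finite convex-block splitting) is the actual content of the theorem, and it is missing from your proof.
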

\begin{corollary}\label{heuplcorollary}
Suppose $X$ is a metric space such that $\wild(X)$ is discrete and $N\trianglelefteq \pionex$ is a normal subgroup. Then $p_N:\wt{X}_{N}\to X$ has the unique path lifting property if and only if $X$ has transfinite products relative to $N$. In particular, $p_{K}:\wt{X}_{K}\to X$ has the unique path lifting property if $K=cl_{P,p_{\tau}}(N)$.
\end{corollary}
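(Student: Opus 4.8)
The plan is to obtain the corollary as a formal consequence of the three preceding characterizations, with the hypotheses on $X$ (metrizability) and on $\wild(X)$ (discreteness) supplying exactly the inputs needed to chain them together. First I would establish the stated biconditional. Since $X$ is metrizable, the path space $P(X)$ carries the topology of uniform convergence and is in particular first countable, so Theorem \ref{uplchar} yields that $p_N:\wt{X}_{N}\to X$ has the unique path lifting property if and only if $N$ is $(D,\dinf)$-closed. Because $N$ is normal and $\wild(X)$ is discrete, Theorem \ref{discretewildsettheorem} allows me to trade $(D,\dinf)$-closedness for $(P,p_{\tau})$-closedness. Finally, Proposition \ref{transfiniteproductprop} translates $(P,p_{\tau})$-closedness into the assertion that $X$ has transfinite products relative to $N$. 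Concatenating these three equivalences proves the first statement.

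For the ``in particular'' clause I would apply this same biconditional with $N$ replaced by $K=cl_{P,p_{\tau}}(N)$; the only point requiring attention is that Theorem \ref{discretewildsettheorem} is stated for normal subgroups, so I must first verify that $K$ is normal. This is where the structural results of Section \ref{Testmapsection} enter. The test space $\bbhp$ is well-pointed (Definition \ref{defhplus}), so by Lemma \ref{wellpointedlemma} the pair $(P,p_{\tau})$ is a normal closure pair, and Corollary \ref{normalclosureprop} then guarantees that $cl_{P,p_{\tau}}(N)$ is normal whenever $N$ is; thus $K\trianglelefteq\pionex$. Moreover $K$ is $(P,p_{\tau})$-closed, since any closure operator is idempotent: $cl_{P,p_{\tau}}(K)=cl_{P,p_{\tau}}(cl_{P,p_{\tau}}(N))=cl_{P,p_{\tau}}(N)=K$, and a subgroup is closed precisely when it equals its own closure. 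Applying the biconditional to the normal, $(P,p_{\tau})$-closed subgroup $K$ then gives that $p_K:\wt{X}_{K}\to X$ has the unique path lifting property.

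At the level of this corollary the genuine difficulty is minimal, since all the substantive content is packaged inside Theorem \ref{discretewildsettheorem}, which I am free to invoke. The one step that warrants care is confirming the normality of $K$ so that the discrete-wild-set theorem legitimately applies to $K$ rather than only to $N$; this reduces to recognizing $(P,p_{\tau})$ as a normal closure pair through the well-pointedness of $\bbhp$. Once that observation is in place, the three equivalences assemble immediately to yield both the biconditional and its consequence for $K$.
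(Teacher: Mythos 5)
Your proposal is correct and is essentially the argument the paper intends: the corollary is stated without proof as an immediate consequence of chaining Theorem \ref{uplchar}, Theorem \ref{discretewildsettheorem}, and Proposition \ref{transfiniteproductprop}. Your extra care in verifying that $K=cl_{P,p_{\tau}}(N)$ is normal (via the well-pointedness of $\bbhp$, Lemma \ref{wellpointedlemma}, and Corollary \ref{normalclosureprop}) and $(P,p_{\tau})$-closed (idempotence) correctly fills in the one step the paper leaves implicit.
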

\begin{corollary}\label{heuplcorollary2}
Suppose $X$ is a metric space such that $\wild(X)$ is discrete and $N\leq \pionex$ contains the commutator subgroup of $\pionex$. Then $p_N:\wt{X}_{N}\to X$ has the unique path lifting property if and only if $X$ is homotopically Hausdorff relative to $N$. In particular, $p_{K}:\wt{X}_{K}\to X$ has the unique path lifting property if $K=cl_{C,c_{\infty}}(N)$.
\end{corollary}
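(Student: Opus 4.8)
The plan is to derive the corollary formally from the four characterization results already in hand, by reading off a single chain of equivalences. First I would record that the hypothesis $[\pi_1,\pi_1]\le N$ forces $N\trianglelefteq\pionex$, so the normality assumption of Theorem~\ref{discretewildsettheorem} and the commutator assumption of Proposition~\ref{abelianfactorprop} are both met. Since $X$ is metrizable, both $X$ and the path space $P(X)$ are first countable, so the converse directions of Theorem~\ref{uplchar} and Theorem~\ref{homhausrelchar} are available as full biconditionals.

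With these hypotheses in place, I would assemble the chain
\[
p_N\text{ has UPL}\;\Leftrightarrow\;N\text{ is }(\bbf,\dinf)\text{-closed}\;\Leftrightarrow\;N\text{ is }(\pinfty,p_{\tau})\text{-closed}\;\Leftrightarrow\;N\text{ is }(\cinfty,c_{\infty})\text{-closed}\;\Leftrightarrow\;X\text{ is homotopically Hausdorff rel.\ }N,
\]
where the successive equivalences are Theorem~\ref{uplchar}, Theorem~\ref{discretewildsettheorem} (using that $\wild(X)$ is discrete and $N$ is normal), Proposition~\ref{abelianfactorprop} (using $[\pi_1,\pi_1]\le N$), and Theorem~\ref{homhausrelchar}. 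Reading off the outermost two terms gives the stated equivalence.

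For the final clause I would set $K=cl_{C,c_{\infty}}(N)$ and verify that $K$ itself satisfies the hypotheses of the main statement so that the equivalence can be reapplied. Since $\bbhp$ is well-pointed, $(\cinfty,c_{\infty})$ is a normal closure pair by Lemma~\ref{wellpointedlemma}, hence $K$ is normal by Corollary~\ref{normalclosureprop}; moreover $N\le K$ together with $[\pi_1,\pi_1]\le N$ gives $[\pi_1,\pi_1]\le K$. By idempotence of the closure operator $K$ is $(\cinfty,c_{\infty})$-closed, so $X$ is homotopically Hausdorff relative to $K$ by Theorem~\ref{homhausrelchar}; applying the main equivalence with $K$ in place of $N$ then yields that $p_K$ has the unique path lifting property.

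The corollary is essentially a bookkeeping assembly once the component results are granted: all the genuine difficulty has already been absorbed into Theorem~\ref{discretewildsettheorem} (via the technical Lemmas~\ref{wildendsonlylemma} and~\ref{wildlooplemma}) and Proposition~\ref{abelianfactorprop}. The only step that is not a verbatim citation is the final clause, and there the one point to watch is that the closure $K$ must be confirmed to be normal and to contain the commutator subgroup before the equivalence can legitimately be reapplied to $K$.
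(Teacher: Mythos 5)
Your proposal is correct and follows the paper's intended route: the paper states this corollary without a separate proof, as exactly the assembly you describe --- Theorem \ref{uplchar}, Theorem \ref{discretewildsettheorem}, Proposition \ref{abelianfactorprop}, and Theorem \ref{homhausrelchar}, with metrizability supplying first countability of $X$ and $P(X)$, and commutator containment supplying normality. Your treatment of the final clause is also the intended one, with the minor remark that normality of $K=\clccinf(N)$ already follows from $[\pi_1,\pi_1]\leq N\leq K$, so the appeal to Lemma \ref{wellpointedlemma} and Corollary \ref{normalclosureprop} is harmless but unnecessary.
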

\begin{proof}[Proof of Theorem \ref{discretewildsettheorem}]
The last statement of the theorem follows from Corollary \ref{agreeonnormalsubgroupscorollary} once the equivalence is proven. One direction follows from Proposition \ref{transfinitepathproductcomparisonprop}. Suppose the normal subgroup $N\trianglelefteq \pionex$ is $(P,p_{\tau})$-closed. By Theorem \ref{tpdtheorem}, it suffices to show $N$ is $(W,w_{\infty})$-closed. Let $f:(\mathbb{W},d_0)\to (X,x_0)$ be a map such that $f_{\#}(W)\leq N$. Denote $\alpha=f\circ \uinf$ and $\beta=f\circ \linf$ and observe $\alpha|_{\mcc}=\beta|_{\mcc}$ where $\mcc$ is the Cantor set. We seek to show that $[\alpha\cdot\beta^{-}]\in N$.

Our claim is trivial if $\wild(X)=\emptyset$. Since $(W,w_{\infty})$ is a normal closure pair, we are free to change the basepoint so that $x_0\in \wild(X)$. If $\alpha(1)=\beta(1)\neq x_0$, find a path $\gamma$ from $x_0$ to $\alpha(1)$. Using the self-similarity of $\bbb$, we may replace $f$ with a map $g:\bbb\to \bbh$ such that $g\circ \uinf|_{[0,1/3]}\equiv \alpha$, $g\circ \linf|_{[0,1/3]}\equiv \beta$ and $g\circ \uinf|_{[1/3,1]}=g\circ \linf|_{[1/3,1]}\equiv\gamma$. Clearly $g_{\#}(\bbk)=f_{\#}(\bbk)$ and $g_{\#}(w_{\infty})=f_{\#}(w_{\infty})$. Therefore, without loss of generality, we may assume $\alpha,\beta$ are loops in $X$ based at $x_0$.

Since $\bbb$ is a Peano continuum, $f^{-1}(\wild(X))$ is closed in $\bbb$ by Proposition \ref{wildpullbackprop} and therefore is compact. The continuous image of a compact set in a discrete space is finite. Therefore, we may list the points of $f(\bbb)\cap \wild(X)$ as the finite set $\Omega=\{x_0,x_1,\dots,x_n\}$. Recall that $\mathcal{I}$ denotes the set of components of $\ui\backslash \mcc$. Let
\begin{itemize}
\item[] $A_0=\{t\in \mcc\mid f(t,0)\in \Omega\}$,
\item[] $A_1=\{a\in \mcc\mid (a,b)\in \mathcal{I}\text{ and }(\alpha([a,b])\cup\beta([a,b]))\cap \Omega\neq\emptyset \}$,
\item[] $A_2=\{b\in \mcc\mid (a,b)\in \mathcal{I}\text{ and }(\alpha([a,b])\cup\beta([a,b]))\cap \Omega\neq\emptyset \}$,
\end{itemize}
and $A=A_0\cup A_1\cup A_2$. Certainly, $A$ is nowhere dense in $\ui$; we check that $A$ is closed in $\ui$ by showing $A$ is closed in $\mcc$. Choose a point $t\in \mcc\backslash A$. Notice that $t\notin\alpha^{-1}(\Omega)\cup\beta^{-1}(\Omega)$. If $t=a$ for some $(a,b)\in\mathcal{I}$, then it must be the case that $(\alpha([a,b])\cup\beta([a,b]))\cap \Omega=\emptyset$. Thus $[a,b]\cap A=\emptyset$. Since $a$ does not lie in the closed set $\alpha^{-1}(\Omega)\cup \beta^{-1}(\Omega)$, there is a $c\in \mcc$ such that $c<a$ and $(c,a]\cap (\alpha^{-1}(\Omega)\cup \beta^{-1}(\Omega))=\emptyset$. Thus $a\in (c,b)$ and by the definition of $A$, we have $(c,b)\cap A=\emptyset$. Similarly, if $t=b$ for some $(a,b)\in\mathcal{I}$, we may find a $c\in \mcc$ with $b<c$ such that $(a,c)\cap A=\emptyset$. Finally, if $t$ is not an endpoint of any element of $\mathcal{I}$, then we may find $c,c'\in \mcc$ with $c<t<c'$ such that $(c,c')\cap (\alpha^{-1}(\Omega)\cup\beta^{-1}(\Omega))=\emptyset$. Again, by the definition of $A$, we have $(c,c')\cap A=\emptyset$, finishing the proof that $A$ is closed.

The open set $\ui\backslash A$ is the disjoint union of open intervals $(r,s)$, each of which is either equal to some $(a,b)\in\mathcal{I}$ or to the union of infinitely many $(a,b)\in\mathcal{I}$ and points $t\in \mcc\backslash A$. We further classify the components $(r,s)$ of $\ui\backslash A$ as follows:
\begin{enumerate}
\item $(r,s)$ is Type I if $(r,s)\in \mathcal{I}$.
\item $(r,s)$ is Type II if $(r,s)\notin \mathcal{I}$ and $\Omega\cap\{\alpha(r),\alpha(s)\}\neq \emptyset$.
\item $(r,s)$ is Type III if $(r,s)\notin \mathcal{I}$ and $\Omega\cap\{\alpha(r),\alpha(s)\}=\emptyset$.
\end{enumerate}

If $(r,s)$ is Type I, then $\Omega\cap (\alpha([r,s])\cup\beta([r,s]))\neq\emptyset$ and $[\alpha|_{[0,s]}\cdot\beta|_{[r,s]}^{-}\cdot\alpha|_{[0,r]}^{-}] \in N$ by assumption. If $(r,s)$ is Type II or III, then $\Omega\cap (\alpha((r,s))\cup\beta((r,s)))=\emptyset$. By applying Lemma \ref{wildendsonlylemma} to the paths $\alpha|_{[r,s]}$ and $\beta|_{[r,s]}$ and $\gamma=\alpha|_{[0,r]}$, we see that $[\alpha|_{[0,s]}\cdot\beta|_{[r,s]}^{-}\cdot\alpha|_{[0,r]}^{-}] \in N$.

A component $(r,s)$ of $\ui\backslash A$ is Type III if and only if $\Omega\cap(\alpha([r,s])\cup\beta([r,s]))=\emptyset$. Hence, if $(r,s)$ is Type III, the definition of $A$ guarantees the existence of $q,t\in A$ such that $(q,r)$ and $(s,t)$ are Type I components and thus $\Omega$ intersects both $\alpha([q,r])\cup\beta([q,r])$ and $\alpha([s,t])\cup\beta([s,t])$. In particular, $r$ and $s$ are isolated points of $A$. Let \[A^{\ast}=A\backslash \{s\mid (r,s)\text{ is a Type III component of }\ui\backslash A\}.\] Since $A^{\ast}$ is constructed by removing only isolated points of $A$, $A^{\ast}$ is closed and nowhere dense. In particular, $\ui\backslash A^{\ast}$ is formed by combining each type III component with a unique Type I component. It follows that for every component $(r,s)$ of $\ui\backslash A^{\ast}$, we have $\Omega\cap(\alpha([r,s])\cup\beta([r,s]))\neq \emptyset$ and $[\alpha|_{[0,s]}\cdot\beta|_{[r,s]}^{-}\cdot\alpha|_{[0,r]}^{-}] \in N$.

Let $\mathscr{C}$ denote the set of components of $\ui\backslash A^{\ast}$ with the natural linear ordering inherited from $\ui$. Recall that a subset $\mathscr{J}\subset \mathscr{C}$ is convex if whenever $I_1,I_2\in \mathscr{J}$ and $I_1<I<I_2$, then $I\in \mathscr{J}$. If $I=(r,s)\in \mathscr{C}$, define the \textit{wild image} of $I$ as the nonempty finite set $wim(I)=\Omega\cap(\alpha([r,s])\cup\beta([r,s]))$. Since $\Omega$ is finite, the continuity of $\alpha$ and $\beta$ guarantee that there can only be finitely many $I\in\mathscr{C}$ such that $|wim(I)|>1 $. Also note that any two points from different sets of $f^{-1}(x_0),f^{-1}(x_1),\dots ,f^{-1}(x_n)$ are separated by a minimum distance. Therefore, we may write $\mathscr{C}$ as a disjoint union of finitely many single-point sets $\{I_1\},\{I_2\},\dots,\{I_m\}$ with $|wim(I_j)|>1$ and finitely many convex sets $\mathscr{J}_1,\mathscr{J}_2,\dots,\mathscr{J}_{m'}\subseteq \mathscr{C}$ such that for each $i\in \{1,2,\dots,{m'}\}$, the convex set $\mathscr{J}_i$ has constant wild image of cardinality $1$, i.e. there exists $x_k\in \Omega$ such that $I,I'\in\mathscr{J}_i$ $\Rightarrow$ $wim(I)=\{x_k\}=wim(I')$. Using this decomposition of $\mathscr{C}$, we may find points $0=p_0<p_1<\cdots<p_n=1$ in $A^{\ast}$ such that for each $j\in\{1,2,\dots,n\}$, either:
\begin{enumerate}
\item $(p_{j-1},p_j)=I_j\in\mathscr{C}$ with $|wim(I_j)|>1$ and thus $[\alpha|_{[0,p_j]}\cdot\beta|_{[p_{j-1},p_j]}^{-}\cdot \alpha|_{[0,p_{j-1}]}^{-}]\in N$ by construction of $A^{\ast}$
\item or $\alpha|_{[p_{j-1},p_j]}$ and $\beta|_{[p_{j-1},p_j]}$ satisfy the conditions of Lemma \ref{wildlooplemma} using the nowhere dense set $A^{\ast}\cap [p_{j-1},p_j]$, path $\gamma=\alpha|_{[0,p_{j-1}]}$, and unique wild point $x_k\in\Omega$ such that $\Omega\cap (\alpha([p_{j-1},p_j])\cup \beta([p_{j-1},p_j]))=\{x_k\}$. By applying this Lemma, we see that $[\alpha|_{[0,p_j]}\cdot\beta|_{[p_{j-1},p_j]}^{-}\cdot \alpha|_{[0,p_{j-1}]}^{-}]\in N$.
\end{enumerate}
Finally, since $[\alpha\cdot\beta^{-}]$ is a product of the elements $[\alpha|_{[0,p_j]}\cdot\beta|_{[p_{j-1},p_j]}^{-}\cdot \alpha|_{[0,p_{j-1}]}^{-}]$, $1\leq j\leq n$, we conclude that $[\alpha\cdot\beta^{-}]\in N$.
\end{proof}
\begin{example}
Since $\wild(\bbh)=\{b_0\}$ is discrete, we may apply Theorem \ref{discretewildsettheorem}. Hence, if $N\trianglelefteq\pioneh$ is a normal subgroup, then $p_N:\wt{\bbh}_{N}\to \bbh$ is a generalized regular covering if and only if $\bbh$ has transfinite products relative to $N$.
\end{example}
\noindent\textbf{Acknowledgements.} This work was partially supported by a grant from the Simons Foundation (\#245042 to Hanspeter Fischer).

\begin{thebibliography}{99}
\expandafter\ifx\csname url\endcsname\relax

\fi
\expandafter\ifx\csname urlprefix\endcsname\relax

\fi

\bibitem{AT} A.~Arhangel'skii, M.~Tkachenko, \emph{Topological groups and related structures}, Series in pure and applied mathematics, Atlantis Studies in Mathematics, 2008.

\bibitem{BS} W.A. Bogley, A.J. Sieradski, \emph{Universal path spaces}, Unpublished notes. 1998.

\bibitem{Brazcat} J.~Brazas, \emph{Generalized covering space theories}, Theory and Appl. of Categories 30 (2015) 1132--1162.

\bibitem{Braztopgrp} J. Brazas, \emph{The fundamental group as a topological group}, Topology Appl. 160 (2013) 170–-188.


\bibitem{BF13} J.~Brazas, P.~Fabel, \emph{Thick Spanier groups and the first shape group}, Rocky Mountain J. Math. 44 (2014) 1415--1444.
%
\bibitem{BFqtop} J.~Brazas, P.~Fabel, \emph{On fundamental groups with the quotient topology}, J. Homotopy and Related Structures 10 (2015) 71--91.


\bibitem{BDLM08} N.~Brodskiy, J.~Dydak, B.~Labuz, A.~Mitra, \emph{Covering
maps for locally path-connected spaces}, Fund. Math. 218 (2012) 13--46.

\bibitem{CC00} J.W.~Cannon, G.R.~Conner, \textit{The combinatorial structure of the Hawaiian earring group}, Topology Appl. 106 (2000) 225--271.

\bibitem{CConedim} J.W.~Cannon, G.R.~Conner, \textit{On the fundamental groups of one-dimensional spaces}, Topology Appl. 153 (2006) 2648--2672.


\bibitem{CMRZZ08} G.R.~Conner, M.~Meilstrup, D.~Repov\u{s}, A.~Zastrow, M.~\u{Z}eljko, \textit{On small homotopies of loops}, Topology Appl. 155 (2008) 1089--1097.


\bibitem{Corsonhomology} S.~Corson, \textit{On subgroups of first homology}, Preprint. arXiv:1610.05422. 2016.

\bibitem{CF} M.L.~Curtis, M.K.~Fort, Jr., \emph{The fundamental group of one-dimensional spaces}, Proc. Amer. Math. Soc. 10 (1959) 140--148.


\bibitem{Edafreesigmaproducts} K. Eda, \emph{Free $\sigma$-products and noncommutatively slender groups}, J. of Algebra 148 (1992), 243–-263.

\bibitem{EK99subgroups} K.~Eda, \emph{Free Subgroups of the Fundamental Group of the Hawaiian Earring}, J. Algebra. 219 (1999) 598--605.

\bibitem{EdaSpatial} K.~Eda, \emph{The fundamental groups of one-dimensional spaces and spatial homomorphisms}, Topology Appl. 123 (2002) 479--505.

\bibitem{EdaAlgTopofPeano} K.~Eda, \emph{Algebraic topology of Peano continua}, Topol. Appl. 153 (2005) 213-226; correction: Topol. Appl. 154 (2007) 771-773.

\bibitem{Edaonedim} K.~Eda, \emph{Homotopy types of one-dimensional Peano continua}, Fund. Math. 209 (2010) 27--42.

\bibitem{Edasingularonedim16} K. Eda, \emph{Singular homology groups of one-dimensional Peano Continua}, Fund. Math. 232 (2016) 99--115.

\bibitem{EdaFischer} K.~Eda, H.~Fischer \emph{Cotorsion-free groups from a topological viewpoint}, Topology Appl. 214 (2016) 21--34.

\bibitem{EK98} K.~Eda, K.~Kawamura, \emph{The fundamental groups of one-dimensional spaces}, Topology Appl. 87 (1998) 163--172.

%
\bibitem{FischerGuilbault} H.~Fischer, C.R.~Guilbault \emph{On the fundamental groups of trees of manifolds}, Pacific J. Math 221 (2005) 49--79.
%
\bibitem{FRVZ11} H.~Fischer, D.~Repov\v{s}, Z.~Virk, and A.~Zastrow, \emph{On semilocally simply-connected spaces}, Topology Appl. 158 (2011) 397--408.

\bibitem{FZ05} H.~Fischer, A.~Zastrow, \emph{The fundamental groups of
subsets of closed surfaces inject into their first shape groups}, Algebraic
and Geometric Topology 5 (2005) 1655--1676.

\bibitem{FZ07} H.~Fischer and A.~Zastrow, \emph{Generalized universal
covering spaces and the shape group}, Fund. Math. 197 (2007) 167--196.

\bibitem{FZ013caley} H.~Fischer, A.~Zastrow, \emph{Combinatorial $\bbr$-trees as generalized Caley graphs for fundamental groups of one-dimensional spaces}, Geometriae Dedicata 163 (2013) 19--43.






\bibitem{Spanier66} E.~Spanier, \textit{Algebraic Topology}, McGraw-Hill, 1966.



\bibitem{VZ13} Z.~Virk, A.~Zastrow \emph{A homotopically Hausdorff space which does not admit a generalized universal covering}, Topology Appl. 160 (2013) 656--666.

\bibitem{VZ14} Z.~Virk, A.~Zastrow \emph{The comparison of topologies related to various concepts of generalized covering spaces}, Topology Appl. 170 (2014) 52--62.

\bibitem{Zastrow} A.~Zastrow \emph{Generalized $\pi_1$-determined covering spaces}, Unpublished notes. 2002.



\end{thebibliography}
\end{document}